\documentclass{amsart}
\usepackage{amsmath,amssymb,amsfonts,amsthm}
\usepackage{mathtools}
\usepackage{mathrsfs}
\usepackage{mathabx}

\usepackage{graphicx}
\usepackage{subfigure}
\usepackage{float}

\usepackage{tikz}
\usetikzlibrary{calc,patterns}
\usepackage{pgfplots}
\pgfplotsset{compat=1.17}

\usepackage{multicol}

\usepackage{xcolor}

\usepackage[normalem]{ulem}

\usepackage{cite}
\usepackage{hyperref}
\numberwithin{equation}{section}

\allowdisplaybreaks

\newtheorem{thm}{Theorem}[section]
\newtheorem{prop}[thm]{Proposition}
\newtheorem{lem}[thm]{Lemma}
\newtheorem{cor}[thm]{Corollary}
\newtheorem{lemma}[thm]{Lemma}
\newtheorem{thmy}{Theorem}

\makeatletter
\newcommand*{\rom}[1]{\expandafter\@slowromancap\romannumeral #1@}
\makeatother
\renewcommand{\theequation}
{\thesection.\arabic{equation}}

\DeclareMathOperator\supp{supp}

\newtheorem{remark}[thm]{Remark}

\def\mr{\mathbb{R}}

\def\Lp{L^{p}}
\def\L1{L^{1}}

\date{}
\title{Two-parameter variational estimates for averages over tori}

\begin{document}
\author[Juyoung Lee]{Juyoung Lee}
\author[Sanghyuk Lee]{Sanghyuk Lee}
\author[Feng Zhang]{Feng Zhang}
\author[Shuijiang Zhao]{Shuijiang Zhao}

\address{Korea Institute for Advanced Study, 85
Hoegiro, Dongdaemun-gu, Seoul 02455, Republic of Korea}
\email{juyounglee@kias.re.kr}

\address{Department of Mathematical Sciences and RIM, Seoul National University, Seoul 08826, Republic of Korea}
\email{shklee@snu.ac.kr}

\address{School of Mathematical Sciences, Xiamen University, Xiamen 361005,
P. R. China}
\email{fengzhang@stu.xmu.edu.cn}

\address{Research Institute of Mathematics, Seoul National University, Seoul 08826, Republic of Korea}
\email{shuijiangzhao25@snu.ac.kr}

\makeatletter
\@namedef{subjclassname@2020}{\textup{2020} Mathematics Subject Classification}
\makeatother
\subjclass[2020]{Primary 42B15, 42B25; Secondary 42B20}

\keywords{two-parameter variational inequalities, averages over tori}

\begin{abstract}
One-parameter variational inequalities are well developed, whereas
their multi-parameter counterparts remain much less understood.
We explore two-parameter variational inequalities for averages over
tori in $\mathbb{R}^3$. To capture the underlying two-parameter
structure, we introduce a local two-parameter $r$-variation norm that
combines rectangular increments with variations along the boundary.
The resulting variation operator pointwise dominates the corresponding
two-parameter local maximal function and, unlike the maximal function,
also captures oscillation across the two parameters. We establish
sharp $L^p$--$L^q$ bounds for this variation operator up to endpoints.
For comparison, we also obtain sharp $L^p$ bounds up to
endpoints for the corresponding local one-parameter variation operator,
revealing a genuine difference between the one- and two-parameter
boundedness regions. The proof combines square function estimates for
two-parameter propagators with local smoothing estimates through
mixed-norm interpolation.
\end{abstract}

\maketitle


\section{Introduction}\label{sec-intro}
Variational inequalities have been widely studied in probability, ergodic theory, and harmonic analysis because they imply pointwise convergence results and strengthen maximal estimates for families of operators. A landmark result is due to L\'epingle \cite{Lep}, who established variational inequalities for martingales. Later, Pisier--Xu \cite{PX} and Bourgain \cite{Bou} provided alternative proofs of L\'epingle's result. Bourgain \cite{Bou} then used L\'epingle's result to obtain variational inequalities for ergodic averages, initiating a systematic study of variational inequalities for various operators; see \cite{BORSS,CJRW1,CJRW2,GRY,JSW,OSTTW,BMSW,MST} and the references therein.

Although one-parameter variational inequalities have been studied in various settings, their multi-parameter counterparts remain far less understood. In particular, it is not yet clear what the appropriate notion of multi-parameter variation should be or whether L\'epingle's variational inequality extends to the multi-parameter setting. We refer the reader to \cite{JRW,Dab,KLMP,BMSW2,MSW} for related work on variational and oscillation inequalities in this setting.

Since variational inequalities strengthen maximal estimates, the theory of multi-parameter maximal operators provides a natural starting point. Maximal operators associated with multi-parameter averages over hypersurfaces have received considerable attention. The lacunary setting was studied in \cite{RS}, while two-parameter maximal operators associated with homogeneous surfaces were investigated in \cite{MR,MRZ}. Related results under suitable Fourier decay assumptions were obtained in \cite{Cho,Heo}. Strong spherical maximal operators were considered in \cite{Cho,Erd}, with more recent progress made in \cite{LLO1,LLO2,CGY,Zah,HZ}.

Of particular relevance here, the first and second authors studied maximal operators associated with two-parameter averages over tori in $\mr^3$ in \cite{LL}. Let $0<c_0<1$ be fixed. For $0<s<c_0t$, define the smooth parametrization
\begin{align*}
	\Phi_{t,s}(\theta,\phi):=\big((t+s\cos \theta)\cos \phi,(t+s\cos \theta)\sin \phi, s\sin \theta\big),
\end{align*}
and set $\mathbb{T}_{t,s}:=\{\Phi_{t,s}(\theta,\phi):\theta,\phi\in [0,2\pi)\}$. The measure $\sigma_{t,s}$ on $\mathbb{T}_{t,s}$ is given by
\begin{align*}
	\langle f,\sigma_{t,s}\rangle:=\int_{[0,2\pi)^{2}}f(\Phi_{t,s}(\theta,\phi))\,d\theta\,d\phi.
\end{align*}
We consider the two-parameter averaging operator
\[\mathcal{A}f(x,t,s):=f\ast\sigma_{t,s}(x).\]

For the two-parameter maximal operator
\[\mathcal{M}f(x):=\sup_{0<s<c_0t}|\mathcal{A}f(x,t,s)|,\]
the supremum is taken over the set $\{(t,s):0<s<c_0t\}$ to ensure that $\mathbb{T}_{t,s}$ remains a torus. It was shown in \cite{LL} that $\mathcal{M}$ is bounded on $\Lp(\mr^3)$ if and only if $p>2$. Moreover, nearly sharp $L^p$--$L^q$ estimates were obtained for the maximal operator
\[\mathcal{M}_cf(x):=\sup_{(t,s)\in \mathbb{J}_0}|\mathcal{A}f(x,t,s)|,\]
where $\mathbb{J}_0:=[1,2]\times [2^{-3},2^{-2}]$.
More precisely, let $\mathcal{Q}$ be the union of the open quadrilateral with vertices
\begin{align}\label{ali-max-vert}
    P_1:=(0,0), \ P_2:=(\tfrac{3}{7},\tfrac{1}{7}), \ P_3:=(\tfrac{5}{11},\tfrac{2}{11}), \ P_4:=(\tfrac{1}{2},\tfrac{1}{2}),
\end{align}
and the half-open line segment $[P_1,P_4)$. Then $\mathcal{M}_c$ is bounded from $\Lp(\mr^3)$ to $L^q(\mr^3)$ if $(1/p,1/q)\in \mathcal{Q}$, and fails to be bounded if $(1/p,1/q)\notin \overline{\mathcal{Q}}\setminus\{P_4\}$.

Motivated by these maximal estimates, we explore two-parameter variational inequalities for averages over tori. As a first step toward a broader multi-parameter theory, we introduce a local variation operator that pointwise dominates $\mathcal{M}_c$ while also measuring oscillation across the two parameters. We determine its sharp $L^p$--$L^q$ boundedness range up to endpoints.

\subsection{One-parameter variation}
To put our results in context, we first recall the definition of one-parameter variation. For a function $a$ defined on an interval $\mathbb{I}\subset\mr$, the one-parameter $r$-variation $[a]_{r,1}$ is given by
\begin{align}\label{ali-def-one-param-var}
	[a]_{r,1}:=\sup_{L\in \mathbb{N}} \sup_{\substack{t_{1}<\cdots<t_{L}\\ \{t_j\}_j\subset \mathbb{I}}}\Big(\sum_{j=1}^{L-1} |a(t_{j+1})-a(t_{j})|^{r}\Big)^{\frac{1}{r}}
\end{align}
for any $1\leq r<\infty$. For $r=\infty$, $[a]_{\infty,1}$ is defined as above, with the $\ell^r$ norm replaced by the supremum.

Various variational inequalities for spherical averages have been established in \cite{JSW,BORSS,Whe}. In particular, we recall the local variation bounds for the circular averaging operator
\begin{align*}
    \mathcal{A}^{cir}f(x,t):=\int_{\mathbb{S}^1} f(x-ty)d\sigma(y),
\end{align*}
where $d\sigma$ is the normalized surface measure on the unit circle $\mathbb{S}^1$. For the family of circular averages $\{\mathcal{A}^{cir}f(x,t)\}_{t\in [1,2]}$, the associated variation operator is given by $V_r(\mathcal{A}^{cir}f)(x):=[\mathcal{A}^{cir}f(x,\cdot)]_{r,1}$. Jones--Seeger--Wright \cite{JSW} showed that $V_r(\mathcal{A}^{cir})$ is bounded on $\Lp(\mr^2)$ if either $2<p\leq4$ and $r>2$, or $p>4$ and $r>p/2$. Moreover, the $L^p(\mr^2)$ boundedness fails if $r<\max\{2,p/2\}$. Recently, Beltran--Carbery--Roncal--Seeger \cite{BCRS} showed that the $L^p(\mr^2)$ boundedness also fails in the endpoint case $r=2$.

\subsection{Two-parameter variation}
In the one-parameter case, when $r=1$, the quantity in \eqref{ali-def-one-param-var} coincides with the total variation of a function of one variable. This suggests seeking a two-parameter analogue among the classical notions of total variation for functions of two variables. Several such notions have been studied previously; see \cite{AC,Owe,CA}. Their relations were considered by Clarkson and Adams \cite{CA}. For further information, see, for example, \cite{Chi1,Chi2,DW}.

We now introduce a two-parameter analogue of the one-parameter $r$-variation.
Let $\mathcal{A}$ be a function on $\mathbb{J}_0$. For a rectangle $Q:=[t_{1},t_{2}]\times [s_{1},s_{2}]\subset\mathbb{J}_0$, we define
\begin{align*}
	\mathcal{A}(Q):=\mathcal{A}(t_2,s_2)-\mathcal{A}(t_1,s_2)+\mathcal{A}(t_1,s_1)-\mathcal{A}(t_2,s_1).
\end{align*}
The assignment of signs to the vertices of $Q$ is shown in Figure~\ref{Fig.rectangle}.
We then define
\begin{align*}
	\{\mathcal{A}\}_{r,2}:=\sup_{ \{I_{j}\}_j,\{J_{k}\}_k}\Big(\sum_{j,k} |\mathcal{A}(I_{j}\times J_{k})|^{r}\Big)^{\frac{1}{r}}
\end{align*}
for any $1\leq r<\infty$, where the supremum is taken over all possible partitions of $[1,2]$ into $\{I_{j}\}_{j}$ and $[2^{-3},2^{-2}]$ into $\{J_{k}\}_{k}$. Replacing the $\ell^r$ norm by a supremum gives the definition of $\{\mathcal{A}\}_{\infty,2}$. The two-parameter variation norm of $\mathcal{A}$ is defined by
\begin{align*}
	[\mathcal{A}]_{r,2}:=\{\mathcal{A}\}_{r,2}+[\mathcal{A}(1,\cdot)]_{r,1}+[\mathcal{A}(\cdot,2^{-3})]_{r,1}+|\mathcal{A}(1,2^{-3})|.
\end{align*}

\begin{figure}[t]
	\centering
	\begin{tikzpicture}[scale=0.4]
		\draw  (0,0) -- (6,0) -- (6,3) -- (0,3) -- (0,0);
		\node[right] at (6,3) {$(t_2,s_2)$};
		\node[right] at (6,0) {$(t_2,s_1)$};
		\node[left] at (0,0) {$(t_1,s_1)$};
		\node[left] at (0,3) {$(t_1,s_2)$};
		\node [above] at (6,3) {$ +$};
		\node [above] at (0,3) {$ -$};
		\node [below] at (0,0) {$ +$};
		\node [below] at (6,0) {$ -$};
	\end{tikzpicture}
		\caption{Signs assigned to the vertices of $Q$}
	\label{Fig.rectangle}
\end{figure}

We define the $r$-variation operator $V_r({\mathcal{A}})$ by
\begin{equation}
\label{ali-def-two-parameter-variation}
\begin{aligned}
	V_{r}(\mathcal{A}f)(x):=&\widetilde{V}_{r}(\mathcal{A}f)(x)+[\mathcal{A}f(x,1,\cdot)]_{r,1}
	\\&\qquad+[\mathcal{A}f(x,\cdot,2^{-3})]_{r,1}+|\mathcal{A}f(x,1,2^{-3})|,
\end{aligned}
\end{equation}
where $\widetilde{V}_{r}(\mathcal{A}f)(x):=\{\mathcal{A}f(x,\cdot,\cdot)\}_{r,2}$.

The boundary variation terms in $[\mathcal{A}]_{r,2}$ are essential for the pointwise control of the corresponding maximal function. As shown in Propositions~\ref{prop-Vr-domina-maximal} and \ref{prop-Vr-domina-line}, the full norm controls both the supremum of $\mathcal{A}$ on $\mathbb{J}_0$ and the one-parameter variations of its restrictions to every horizontal and vertical line. In particular,
\[
    \mathcal{M}_cf(x)\leq V_r(\mathcal{A}f)(x).
\]
Thus, $V_r(\mathcal{A})$ pointwise dominates the corresponding two-parameter local maximal function while also measuring oscillation across the two parameters.

The definition of $[\mathcal{A}]_{r,2}$ is related to the Hardy--Krause variation when $r=1$. Its rectangular component $\{\mathcal{A}\}_{r,2}$ is the ``grid-like'' variation introduced in \cite{FV,FV1,FV2}, and it agrees with the Vitali variation when $r=1$. Two other natural notions, based respectively on arbitrary rectangular partitions and coordinatewise ordered sequences, are defined and compared with $[\mathcal{A}]_{r,2}$ in Section~\ref{subsec-def-var}. Lemma~\ref{lem-ali-rela-var-par} and \eqref{ali-compa-seqvar} show that $[\mathcal{A}]_{r,2}$ controls both alternatives after an arbitrarily small increase in their variation exponents. In this sense, $[\mathcal{A}]_{r,2}$ is the strongest of these notions. Another seemingly natural corner-oscillation quantity is unsuitable in continuous settings because its finiteness forces a continuous function to be constant; see Lemma~\ref{lem-osc-continuous-constant}.

Finiteness of this norm also has pointwise regularity consequences analogous to those of one-parameter variation. Proposition~\ref{prop-Vr-continuity} guarantees the existence of limits in each coordinate quadrant and restricts the discontinuity set of $\mathcal{A}$ to a countable union of straight lines parallel to the coordinate axes. Finally, Lemmas~\ref{lem-embed-1-var} and \ref{lem-embedding} control the boundary and rectangular components, respectively, by parameter-space norms of the function and its derivatives. Consequently, the variation estimates reduce to mixed-norm estimates for $\mathcal{A}f$ and its parameter derivatives, which are obtained below from local smoothing and square function estimates.

\subsection{\texorpdfstring{$L^p$ bounds}{Lp bounds}}
Our first result establishes the $L^p$ boundedness of $V_r(\mathcal{A})$.
\begin{thm}\label{thm-two-para}
    Let $\mathfrak{Q}_1$ be the closed pentagon with vertices $P_1,Q_1,P_4,Q_2,Q_4$ (see Figure~\ref{Fig.bound2}), where $Q_1:=(1/2,0)$, $Q_2:=(1/4,1/2)$, and $Q_4:=(1/8,3/8)$.
	Then ${V}_{r}(\mathcal{A})$ is bounded on $\Lp(\mathbb{R}^3)$ if $(1/p,1/r)$ lies either in the interior of $\mathfrak{Q}_1$ or on the half-open line segment $[P_1,Q_1)$. Furthermore, if $(1/p,1/r)\notin \mathfrak{Q}_1\setminus [Q_1,P_4]$, then ${V}_{r}(\mathcal{A})$ is not bounded on $\Lp(\mathbb{R}^3)$.
\end{thm}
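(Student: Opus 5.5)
We will prove the sufficiency part by reducing $V_r(\mathcal{A})$ to mixed-norm estimates in the parameters, and the necessity part by testing on explicit examples. First decompose $f$ with a two-parameter Littlewood--Paley decomposition adapted to the two curvatures of $\mathbb{T}_{t,s}$: by stationary phase, $\widehat{\sigma_{t,s}}(\xi)$ is a sum of terms $e^{i(\pm t|\xi'|\pm s|(|\xi'|\mp t^{-1}\cdot 0,\xi_3)|)}$-type phases with amplitude decaying like $(1+|\xi|)^{-1/2}(1+|\xi'|... )$. We write $\mathcal{A}f=\sum_{j,k}\mathcal{A}_{j,k}f$, where $2^j$ is the frequency scale in $|\xi|$ and $2^k\lesssim 2^j$ measures the size of $\xi_3$ relative to the angle. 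By Lemma~\ref{lem-embed-1-var} and Lemma~\ref{lem-embedding}, for $r\ge p$ (or $r$ near $p$ after interpolation) we have, for each piece,
\[
\|\{\mathcal{A}_{j,k}f\}_{r,2}\|_{L^p_x}\lesssim \|\mathcal{A}_{j,k}f\|_{L^p}^{1-\frac1p-\frac1p+\cdots}\,\|\partial_t\mathcal{A}_{j,k}f\|_{L^p}^{\frac1p}\|\partial_s\partial_t\mathcal{A}_{j,k}f\|_{L^p}^{\cdots},
\]
schematically $\lesssim 2^{(j+k)/r}\cdot\|\mathcal{A}_{j,k}f\|_{L^p_{x,t,s}}$ (each derivative in $t$ costs $2^j$, in $s$ costs $2^{k}$-ish, and the $r$-variation embedding is $B^{1/r}_{r,1}$-type). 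For the boundary terms $[\mathcal{A}f(x,1,\cdot)]_{r,1}$, $[\mathcal{A}f(x,\cdot,2^{-3})]_{r,1}$ we use the one-parameter version, i.e.\ the local one-parameter variation theorem we also prove (Jones--Seeger--Wright type argument).

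The key input is then the mixed-norm bound
\[
\|\mathcal{A}_{j,k}f\|_{L^p(\mathbb{R}^3\times\mathbb{J}_0)}\lesssim 2^{-\beta(p)j-\gamma(p)k+\epsilon(j+k)}\|f\|_p ,
\]
obtained by interpolating three estimates: the trivial $L^2$ bound with decay $2^{-(j+k)/2}$ (Plancherel plus two-parameter averaging in $(t,s)$, giving an extra gain since we integrate in both parameters), the $L^\infty$ bound, and the sharp local smoothing (Guth--Wang--Zhang for cones, applied in $t$) combined with the square function estimate for two-parameter propagators $e^{it\sqrt{-\Delta}}e^{is\cdot}$ at $p=4$ and near $p=8$ (this is where $Q_4=(1/8,3/8)$ arises). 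Summation in $j,k$ is then possible exactly when the exponent $\frac1r+\frac1r-\beta-\gamma$ (with the rectangular part costing $1/r$ in each variable) is negative for all $j\ge k$; working out the linear constraints should give the edges $P_1Q_1$... rather, the edges $[Q_1,P_4]$? no: the edges $Q_2Q_4$, $Q_4P_1$ (from $r>p/\cdot$ type conditions) and $P_4Q_2$ (from $r>2$). Mixed-norm interpolation (complex interpolation between the variation pieces in the form of Besov embeddings) handles interior points; the segment $[P_1,Q_1)$, i.e.\ $r=\infty$, follows from the maximal estimate of \cite{LL} plus the trivial $p=\infty$ case. The main obstacle is the square function estimate for the two-parameter propagator at $p$ near $8$ giving the vertex $Q_4$; we would first try to deduce it from decoupling for the cone in the $(\xi',\xi_3)$ variables treating $s$ as a second time variable, via a Córdoba-type biorthogonality argument.

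For necessity: $p\le 2$ fails already for $\mathcal{M}_c\le V_r$ (Knapp/ $|x|^{-1}\log$ example from \cite{LL}), giving edge $[Q_1,P_4]$ excluded. The condition $r\ge 2$ (edge $P_4Q_2$) follows by adapting the Jones--Seeger--Wright randomized example: take $f$ a sum of random-sign frequency-$2^j$ pieces so that $\mathcal{A}f(x,t,s)$ oscillates like a Brownian-type path in $t$; restricted to a line, $r$-variation with $r<2$ blows up. The edges $P_1Q_4$ and $Q_4Q_2$ come from rectangular examples: $f$ the characteristic function of a $\delta$-neighborhood of a torus (resp.\ of a circle $\{|x'|=1,x_3=0\}$), for which $\mathcal{A}f(x,\cdot,\cdot)$ has $\sim\delta^{-1}\times\delta^{-1}$ (resp.\ $\delta^{-1}$) rectangles of increments of size $\delta$ (resp.\ $\delta^{1/2}$) on a set of $x$ of measure $\sim\delta^{a}$; computing $\|V_rf\|_p/\|f\|_p$ gives the linear constraints $1/r\ge\ldots$ cutting out $\mathfrak{Q}_1$.
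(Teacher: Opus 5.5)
There is a genuine gap on each side. The architecture you describe is the paper's: frequency decomposition, the embedding Lemmas~\ref{lem-embed-1-var} and \ref{lem-embedding}, local smoothing plus a square function, dyadic summation, and $\mathcal M_c$ both for the segment $[P_1,Q_1)$ and for the failure at $p\le2$. What is missing is the estimate that actually produces the pentagon, and the counterexamples that produce its two slanted edges.

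\textbf{Sufficiency.} The bound you make central lives in $L^p(\mathbb R^3\times\mathbb J_0)$ and enters the embedding at cost $2^{(j+k)/r}$. (For $r>p$ you cannot control $L^r_{t,s}$ by $L^p_{t,s}$; you must reduce to $r=p$ by monotonicity.) No argument of this type gets past the trapezoid $P_1Q_1P_4Q_2$. In the dominant regime $h=\lambda$, $\mathcal U_{t,s}f_{\lambda,\lambda}$ obeys $\lambda^{1-4/p+\epsilon}$ in $L^p_{x,t,s}$ for $p\ge4$. The embedding costs $\lambda^{2/r-1}$, so you get exactly $1/r<2/p$, as for circular averages.

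To reach the triangle $P_1Q_2Q_4$ you need $L^p_x(L^r_{t,s})$ bounds with $r<p$ that beat H\"older. These come from the $L^\infty\to L^\infty(L^2_{t,s})$ square function bound $\|\mathcal Sf_{\lambda,h}\|_\infty\lesssim\lambda^{1/2+\epsilon}\|f\|_\infty$ in \eqref{ali-main-jk} and \eqref{ali-main-j<k<2j}. Interpolated with $L^4$ local smoothing, it gives $\lambda^{1/2-2/p+\epsilon}$ at $r=2p/(p-2)$. Combined with the $r=p$ bound and the cost $\lambda^{2/r-1}$, this yields precisely $1/r<1/p+1/4$ for $r\le 2p/(p-2)$, and $1/r<3/p$ for $2p/(p-2)\le r\le p$. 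So $Q_4$ is just the point $p=8$, $r=8/3$ where these two regimes meet. It is not the trace of a square function estimate ``near $p=8$''.

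Decoupling or C\'ordoba-type orthogonality is the wrong tool for an $L^\infty$-input bound. The paper instead proceeds in three steps:
\begin{enumerate}
\item it expands the rescaled amplitude in a Fourier series and applies Plancherel in $(t,s)$, reducing to operators whose pieces have Fourier support in the intersection of two thin shells (Lemma~\ref{lem-intersection});
\item it proves $L^2\to L^\infty(L^2)$ from the small measure of that intersection;
\item it passes to $L^\infty$ input by localizing to $\lambda\times\lambda\times h^2/\lambda$ boxes.
\end{enumerate}
Two smaller points. Plancherel gives no extra gain in $L^2_{x,t,s}$ from integrating in both parameters. The boundary terms are not covered by a variation theorem for $\mathcal A_1$: $t\mapsto\mathcal Af(x,t,2^{-3})$ and $s\mapsto\mathcal Af(x,1,s)$ are different families. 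The paper handles them by the local-constancy transfer \eqref{ali-boundary-transfer} and the same mixed-norm bounds.

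\textbf{Necessity.} The examples as described do not work. A single thin neighborhood of one torus or one circle produces large rectangular increments only on $O(\delta^{-1})$ cells of a $\delta$-grid, namely near the curves in parameter space where tangency occurs, not on $\sim\delta^{-2}$ cells. Even with the counts you state, the resulting constraints have the form $1/r\le\frac12+c/p$, which cannot give the edges $1/r=3/p$ or $1/r=1/p+1/4$.

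The missing idea is a checkerboard construction. Take $f=\sum_j(-1)^j\chi_{E_j}$, with $\sim\delta^{-1}$ disjoint cylindrical shells of cross-section $\delta\times\delta^{1/2}$ at radial spacing $\delta$. The height $\delta^{1/2}$ exploits the vertical tangency of the cross-sectional circle at $\theta=\pi$. Place them so that at grid points $(t_m,s_n)$ only the shell indexed by $m-n$ is met, with mass $\sim\delta^{1/2}$. Then all four corners of each of the $\sim\delta^{-2}$ cells contribute with the same sign. Hence $\widetilde V_r(\mathcal Af)\gtrsim\delta^{1/2-2/r}$ on a set of measure $\sim\delta^{5/2}$, while $\|f\|_p\sim\delta^{1/(2p)}$. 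This is $L_4\ge0$, i.e.\ $1/r\le1/p+1/4$ when $q=p$.

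The edge $1/r\le3/p$ needs a separate one-parameter example. Use $\sim\delta^{-1}$ disjoint pieces of neighborhoods of distinct tori, arranged so that along $s=c_0t$ the averages alternate between $\gtrsim1$ and $0$ on a ball of radius $\sim\delta$, with $\|f\|_p\lesssim1$. Then transfer to $V_r(\mathcal A)$ via \eqref{ali-diagonal-var-comparison}.

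Your randomized route to $r\ge2$ is plausible but unverified. The paper instead uses a deterministic chirp $2^{k/2}e^{-i2^{k-1}y_3^2}$ times a cutoff, whose diagonal average flips sign $\sim2^k$ times by stationary phase.
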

\begin{figure}[t]
\centering
\begin{tikzpicture}[scale=0.7]
	\draw[thick,->]  (0,0) -- (7,0);
	\draw[thick,->] (0,0) -- (0,7);
	\node[left] at (0,7) {$\frac{1}{r}$};
	\node[right] at (7,0) {$\frac{1}{p}$};

	\draw (6,-2pt) -- (6,2pt);
	\node[left] at (0,6) {$\frac{1}{2}$};

    \draw (2,-2pt) -- (2,2pt);
	\node[below] at (2,0) {$\frac{1}{6}$};

	\draw (-2pt,4.5) -- (2pt,4.5);
	\node[left] at (0,4.5) {$\frac{3}{8}$};

	\draw (3,-2pt) -- (3,2pt);
	\node[below] at (3,0) {$\frac{1}{4}$};

	\draw (1.5,-2pt) -- (1.5,2pt);
	\node[below] at (1.5,0) {$\frac{1}{8}$};

	\draw (-2pt,6) -- (2pt,6);
	\node[below] at (6,0) {$\frac{1}{2}$};

	\draw[densely dotted] (6,0) -- (6,6) -- (3,6) -- (1.5,4.5) -- (0,0);
        \draw[densely dotted] (3,6) -- (2,6) -- (1.5,4.5);
	\fill[fill=black!18,
    fill opacity=0.75] (6,0) -- (6,6) -- (3,6) -- (1.5,4.5) -- (0,0);

	\node[left] at (4,3) {$\mathfrak{Q}_1$};
	\node[left,font=\footnotesize] at (0,0) {$P_{1}$};
	\node[above right,font=\footnotesize] at (6,0) {$Q_1$};
	\node[above right,font=\footnotesize] at (6,6) {$P_4$};
	\node[above right,font=\footnotesize] at (3,6) {$Q_2$};
	\node[above left,font=\footnotesize] at (1.5,4.5) {$Q_4$};
    \node[above,font=\footnotesize] at (2,6) {$Q_3$};

	\node[circle, fill, inner sep=0pt,minimum size=4pt] at (0,0) {};
	\node[circle,draw=black, fill=white, inner sep=0pt,minimum size=4pt] at (6,0) {};
	\node[circle,draw=black, fill=white, inner sep=0pt,minimum size=4pt] at (3,6) {};
	\node[circle,draw=black, fill=white, inner sep=0pt,minimum size=4pt] at (1.5,4.5) {};
	\node[circle,draw=black, fill=white, inner sep=0pt,minimum size=4pt] at (6,6) {};
    \node[circle,draw=black, fill=white, inner sep=0pt,minimum size=4pt] at (2,6) {};

\end{tikzpicture}
\caption{The pentagon $\mathfrak{Q}_1$.}
\label{Fig.bound2}
\end{figure}

For $t\in [1,2]$, let
\[\mathcal{A}_1f(x,t):=\mathcal{A}f(x,t,c_0t).\]  For simplicity, we take $c_0=2^{-3}$ in what follows, as the precise value of $c_0$ is not essential. As mentioned earlier, $\mathcal M$ is bounded on $L^p$ precisely when $p>2$. The same sharp range holds for the circular maximal function and for the one-parameter maximal operator $f\mapsto\sup_{t\in[1,2]}|\mathcal A_1f(x,t)|$; see \cite{Lee03} and \cite[p.~2]{LL}, respectively. However, the situation for variational inequalities is significantly different.

With a slight abuse of notation, we define the associated variation operator by
\[
V_r(\mathcal{A}_1f)(x):=[\mathcal{A}_1f(x,\cdot)]_{r,1}.
\]
We establish below the $L^p$ bounds for $V_r(\mathcal{A}_1)$. We restrict attention to the local form of $V_r(\mathcal{A}_1)$ in order to compare it directly with $V_r(\mathcal{A})$. In Figure~\ref{Fig.bound2}, the boundedness regions for the circular-average variation operator and for $V_r(\mathcal A_1)$ are, respectively, the trapezoids with vertices $P_1,Q_1,P_4,Q_2$ and $P_1,Q_1,P_4,Q_3$, where $Q_3:=(1/6,1/2)$. By contrast, the boundedness region for $V_r(\mathcal A)$ is the pentagon $\mathfrak Q_1$. The different shapes of these regions highlight a genuine distinction between the one-parameter variational problem and its two-parameter counterpart.

\begin{thm}\label{thm-one-para}
    Let $\mathfrak{Q}_2$ be the closed trapezoid with vertices $P_1,Q_1,P_4,Q_3$ in Figure~\ref{Fig.bound2}. If $(1/p,1/r)$ lies in the interior of $\mathfrak{Q}_2$ or in the half-open line segment $[P_1,Q_1)$, then
    \begin{align}\label{eq-vari-one}
		\|V_r(\mathcal{A}_1f)\|_{\Lp(\mathbb{R}^{3})}\leq C \|f\|_{\Lp(\mathbb{R}^{3})}.
	\end{align}
    If $(1/p,1/r)\notin \mathfrak{Q}_2\setminus[Q_1,P_4]$, then \eqref{eq-vari-one} fails.
\end{thm}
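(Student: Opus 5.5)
The strategy is the standard one for local variation operators: reduce $V_r$ to Sobolev-type norms in the parameter $t$, decompose in frequency, and bound each piece with local smoothing estimates for $\mathcal{A}_1$. Write $\mathcal{A}_1 f(x,t)=f\ast\sigma_{t,c_0t}(x)$, i.e. averages over the dilated torus $t\,\mathbb{T}_{1,c_0}$. So $\mathcal{A}_1$ is a one-parameter family of dilates of a fixed surface with two nonvanishing principal curvatures only on part of the torus. The Gaussian curvature vanishes on the circles $\theta=\pm\pi/2$, where one principal curvature vanishes. This is why the endpoint $Q_3=(1/6,1/2)$ differs from the circular one $Q_2=(1/4,1/2)$.

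\textbf{Sufficiency.} Decompose $f=\sum_{k\ge0}P_kf$ with Littlewood--Paley projections at frequency $2^k$. For $k=0$ the operator is smooth in $t$, and the one-dimensional embedding (Lemma~\ref{lem-embed-1-var}) bounds everything by $\|f\|_p$. For $k\ge1$ I use the embedding
\[
[a]_{r,1}\lesssim \|a\|_{L^r([1,2])}^{1-1/r}\|\partial_t a\|_{L^r([1,2])}^{1/r}+\|a\|_{L^\infty}
\]
together with the fact that $\partial_t$ costs a factor $2^k$. This gives
\[
\|V_r(\mathcal{A}_1P_kf)\|_{L^p}\lesssim 2^{k/r}\,\|\mathcal{A}_1P_kf\|_{L^p_xL^r_t}+(\text{maximal part}).
\]
Interpolation between $r=\infty$, where the known maximal bound for $p>2$ from \cite{LL} applies, and $r=p$ leaves two estimates to supply:
\begin{enumerate}
\item local smoothing $\|\mathcal{A}_1P_kf\|_{L^p(\mathbb{R}^3\times[1,2])}\lesssim 2^{-k(1/p+\epsilon)}\|f\|_p$ in the appropriate range;
\item an $L^2$-based square function estimate on the vertical segment $1/r=1/2$.
\end{enumerate}
These together give a summable geometric series in the interior of $\mathfrak{Q}_2$. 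For the line $1/r=1/2$ I use the Jones--Seeger--Wright approach: a square function in $t$ controls the $2$-variation up to the maximal function. The needed estimate is $\|(\int_1^2|\mathcal{A}_1P_kf|^2dt)^{1/2}\|_p\lesssim 2^{-k(1/2+\delta)}\|f\|_p$ for $p>6$, which is where $Q_3$ comes from.

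Decay of the torus measure is $|\widehat{\sigma}(\xi)|\lesssim|\xi|^{-1/2}$ near the degenerate directions, i.e. decay like a curve in the nondegenerate tangential direction, and $|\xi|^{-1}$ elsewhere. Accordingly I split $\sigma$ by a partition of unity. On the nondegenerate part, the $2$-curvature local smoothing in $\mathbb{R}^3$, via decoupling of Bourgain--Demeter, gives the circular-type range. Near the flat circles I further decompose dyadically in the distance $2^{-l}$ to $\theta=\pm\pi/2$ and rescale. Each piece behaves like a cone-type surface with one curvature of size $2^{-l}$, and applying $\ell^p$ decoupling for the cone in $\mathbb{R}^3$ (sharp local smoothing $p\ge6$ of Guth--Wang--Zhang type) gives gain $2^{-k(2/p)}$. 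Summing in $l$ costs an $\epsilon$ loss. The critical exponent then moves from $4$ to $6$. I expect this step to be the main obstacle: establishing the $L^p_xL^r_t$ bound with the correct exponent uniformly over the degenerate pieces, and interpolating it to fill the open trapezoid.

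\textbf{Necessity.} For $r<2$ I would adapt the argument of Jones--Seeger--Wright and Beltran--Carbery--Roncal--Seeger: a randomized sum of bumps gives unbounded $r$-variation even for $L^\infty$ data, which excludes $1/r>1/2$. For $p\le 2$ I take $f$ as the characteristic function of a $\delta$-neighborhood of a torus, since $V_r$ dominates $\sup_t|\mathcal{A}_1f|$; this gives the standard failure. The edge $P_1Q_3$ comes from a Knapp-type example. Choose $f$ adapted to a $\delta\times\delta^{1/2}\times\delta^{1/2}$-type plate near the flat circle, or more precisely a $\delta\times\delta^{1/3}$ tube tangent to the direction of vanishing curvature. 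As $t$ varies over an interval of length $\delta$, $\mathcal{A}_1f$ oscillates between $0$ and about $\delta^{1/2}$ about $\delta^{-1}$ times, at least on a set of measure $\gtrsim\delta^{a}$. Comparing $\delta^{-1/r}\delta^{1/2}\delta^{a/p}$ with $\|f\|_p$ yields the condition $1/r\le 3/p$ (the line through $P_1$ and $Q_3$). Optimizing the plate dimensions against the degenerate curvature is the computation I would check carefully.
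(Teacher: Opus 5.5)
You have the paper's skeleton for sufficiency: Lemma~\ref{lem-embed-1-var} with $N\approx|\xi|$, a frequency decomposition, local smoothing in $L^p(\mathbb R^3\times[1,2])$, and H\"older in $t$. However, the inputs you name do not prove the theorem, and one of them is false.

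\textbf{Sufficiency.} The bound $\|\mathcal A_1P_kf\|_{L^p(\mathbb R^3\times[1,2])}\lesssim2^{-k(1/p+\epsilon)}\|f\|_p$ handles only $r=p$, hence $r\ge p$ by monotonicity. Interpolating with $r=\infty$ adds nothing. The substantive range is $1/p<1/r<\min\{1/2,3/p\}$, and there you need the gain $2^{-k(\min\{1/2,3/p\}-\epsilon)}$.

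Your square-function estimate with gain $2^{-k(1/2+\delta)}$ is false for every $p$. Take suitable $f$ with $\widehat f$ supported in $\{|\bar\xi|\le1,\ |\xi_3-2^k|\le1\}$. Stationary phase at the flat circles $\theta=\pm\pi/2$ gives $\|\mathcal A_1f\|_{L^p_xL^2_t}\gtrsim2^{-k/2}\|f\|_p$, because on these frequencies the propagator is essentially the vertical translation $e^{\pm ic_0t|\xi_3|}$, which does not smooth. For $p>6$ the estimate would even make $V_2(\mathcal A_1)$ bounded on $L^p$, contradicting the necessary condition $1/r\le3/p$. No estimate at $r=2$ is needed: since $r>2$ strictly, the $\epsilon$-losses are absorbed.

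Your explanation of $Q_3$ is also misattributed. The two-curvature part of the torus (frequencies with $|\xi_3|\lesssim|\bar\xi|$) gives the sphere-type gain $2^{-k(3/p-\epsilon)}$ for $p\ge4$, not the circular one. The flat-circle regime $|\xi_3|\gg|\bar\xi|$ caps the gain at $1/2$, and $Q_3$ is simply where $3/p=1/2$.

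For comparison, the paper first covers the interior of $\mathfrak Q_1\subset\mathfrak Q_2$ by Theorem~\ref{thm-two-para} and \eqref{ali-diagonal-var-comparison}. On the remaining part ($p\ge4$, $2/p\le1/r<\min\{1/2,3/p\}$) it splits $f$ into anisotropic pieces $f_{\lambda,h}$ and applies Lemma~\ref{lem-embed-1-var} with $N=\max\{\lambda,h\}$. It then uses Lemma~\ref{lem-A1-A2}(b) and Lemma~\ref{lem-one-para-tori}, i.e.\ the local smoothing estimates of \cite[Proposition~4.2]{LL}. The dyadic sums converge because $1/r<1/2$ and $1/r<3/p$.

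\textbf{Necessity.} The edge $[P_1,Q_3]$ cannot come from a Knapp-type plate or tube near the flat circle. Since the line $1/r=3/p$ passes through $P_1$, the example must show that $V_r(\mathcal A_1)$ is unbounded on $L^\infty$ for every finite $r$. That forces $t\mapsto\mathcal A_1f(x,t)$ to make many oscillations of size comparable to $\|f\|_\infty$. Plates adapted to the flat circle give oscillations of size at most about $\delta^{1/2}\|f\|_\infty$, and at most about $\delta^{-1}$ of them for features at scale $\delta$. Hence $V_r\lesssim\delta^{1/2-1/r}\|f\|_\infty$, which stays bounded for $r>2$. Your count of $\delta^{-1}$ oscillations while $t$ runs over an interval of length $\delta$ is also inconsistent.

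What is needed is a focusing example. The paper takes $f_k=\sum_m\chi_{E_{2m}}$, where $E_{2m}$ is the $\varepsilon_02^{-k}$-thickening, in the minor radius, of the outer part $\theta\in[0,\pi/8]$ of $\mathbb T_{t_{2m},c_0t_{2m}}$, with $t_n=1+n2^{-k-2}$. Then $\mathcal A_1f_k(x,t_{2m})\ge C$ and $\mathcal A_1f_k(x,t_{2m-1})=0$ for $|x|\le\varepsilon_0^22^{-k}$. This gives the ratio $2^{k(1/r-3/p)}$ with $\|f_k\|_p\lesssim1$.

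Your example for $p\le2$ also fails. A $\delta$-neighbourhood of a single torus does not produce the failure, because a dilate $x+\mathbb T_{t,c_0t}$ can be tangent to it along a whole flat circle only for $x$ in a set of measure $O(\delta^{3/2})$. The right example is a thin slab $\{|y_3|\le\delta,\ |\bar y|\le4\}$, with a logarithmic refinement at $p=2$. Taking $t=x_3/c_0$, the torus $x+\mathbb T_{t,c_0t}$ meets this slab tangentially along its bottom circle, so $\mathcal A_1f\gtrsim\delta^{1/2}$ on a set of measure $\approx1$. The paper simply cites \cite{LL} for this.

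Finally, your randomized argument for $r\ge2$ is plausible but unspecified. The paper instead uses the explicit chirp $2^{k/2}e^{-i2^{k-1}y_3^2}\vartheta(y_3)\psi(\bar y)$ of Proposition~\ref{prop-nece-r>2}(a). By stationary phase at $\theta=\pi/2$, its averages make about $2^k$ oscillations of size $\approx1$ on a fixed set, while $\|f_k\|_p\lesssim2^{k/2}$.
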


We note that Theorem~\ref{thm-two-para} and \eqref{ali-compa-seqvar} below imply, up to a harmless loss in the variation exponent, that $V_r(\mathcal{A}_1)$ is bounded on $L^p(\mr^3)$ whenever $(1/p,1/r)$ lies in the boundedness region of Theorem~\ref{thm-two-para}.

\begin{remark}\label{rem-two-impli-one}
     \normalfont
     The argument in \cite{BCRS} can be adapted to show that the variational inequalities in Theorems~\ref{thm-two-para} and \ref{thm-one-para} fail at the endpoint \(r=2\), thereby establishing the sharpness of the requirement \(r>2\); we do not include the details here.
\end{remark}

\subsection{\texorpdfstring{$L^p$--$L^q$ bounds}{Lp--Lq bounds}}
To present the main theorem on the $L^p$--$L^q$ bounds for $V_r(\mathcal{A})$, we first recall the restriction inherited from the maximal operator. Since $V_r(\mathcal{A})$ dominates $\mathcal{M}_c$, the unboundedness of $\mathcal{M}_c$ whenever $(1/p,1/q)\notin\overline{\mathcal{Q}}\setminus\{P_4\}$ implies the same for $V_r(\mathcal{A})$. The parameter $r$ imposes additional restrictions within $\mathcal{Q}$, and we therefore define the type set
\[\mathcal{T}(r):=\big\{\big(\tfrac1p,\tfrac1q\big)\in \mathcal{Q}:V_r(\mathcal{A}) \text{ is bounded from~}L^p(\mr^3) \text{ to~} L^q(\mr^3)\big\}.\]

We introduce the following affine functions:
\begin{equation}\label{ali-affine-functions}
\begin{aligned}
    L_1(p,q,r)&:=\frac{3}{q}-\frac{1}{r},\\
    L_2(p,q,r)&:=-\frac{3}{2p}+\frac{5}{2q}-\frac{1}{r}+\frac{1}{2},\\
    L_3(p,q,r)&:=-\frac{2}{p}+\frac{1}{q}-\frac{1}{r}+1,\\
    L_4(p,q,r)&:=-\frac{1}{2p}+\frac{5}{2q}-\frac{2}{r}+\frac{1}{2},\\
    L_5(p,q,r)&:=-\frac{1}{p}+\frac{1}{q}-\frac{2}{r}+1.
\end{aligned}
\end{equation}
The corresponding weak inequalities $L_j(p,q,r)\geq0$ appear as
necessary conditions in Section~\ref{sec-nec-cond}. For
$j=1,\ldots,5$, define
\[
    \mathcal{R}_j(r)
    :=\left\{\left(\frac1p,\frac1q\right)\in\mathcal Q:
    L_j(p,q,r)>0\right\}.
\]

The following theorem characterizes the type set $\mathcal{T}(r)$ up to endpoints.

\begin{thm}\label{thm-Lp-Lq}
For $1\leq r\leq\infty$, let
\[
\mathfrak{P}(r):=\bigcap_{j=1}^{5}\mathcal{R}_j(r).
\]
Then
\[
\mathfrak{P}(r)\subset\mathcal{T}(r)\subset
\Big(
\overline{\mathcal Q}
\cap\{L_1(p,q,r)\geq0\}
\cap\bigcap_{j=2}^5\{L_j(p,q,r)\geq0\}
\Big)\setminus\{P_4\}.
\]
\end{thm}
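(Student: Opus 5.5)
The proof splits into two parts: sufficiency, $\mathfrak{P}(r)\subset\mathcal{T}(r)$, and necessity, where each condition $L_j\ge 0$ is shown to be necessary.

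\emph{Sufficiency.} First I would reduce $V_r(\mathcal{A})$ to mixed-norm estimates in the parameters. By Lemma~\ref{lem-embed-1-var}, the boundary terms $[\mathcal{A}f(x,1,\cdot)]_{r,1}$ and $[\mathcal{A}f(x,\cdot,2^{-3})]_{r,1}$ are controlled by Sobolev-type norms in one parameter. By Lemma~\ref{lem-embedding}, $\widetilde V_r(\mathcal{A}f)(x)$ is controlled by $L^r$-based norms of $\mathcal{A}f(x,\cdot,\cdot)$ and of $\partial_t\partial_s$ derivatives of fractional order $1/r$ on $\mathbb{J}_0$.

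Next I would decompose $f$ Littlewood--Paley into pieces of frequency $\sim 2^j$, and further decompose by the angular structure of $\widehat{\sigma_{t,s}}$. The torus has two principal curvatures, and one of them vanishes along two circles, so the pieces are localized near and away from the degenerate region. For each piece $\mathcal{A}f_j$ I would bound the mixed norm $\|\mathcal{A}f_j\|_{L^q_x L^r_{t,s}}$ and its fractional parameter derivatives, which cost roughly $2^{2j/r}$. Three estimates feed in:
\begin{itemize}
\item the $L^2$ estimate from Fourier decay;
\item the $L^1\to L^\infty$ kernel estimate;
\item the square function and local smoothing estimates for the two-parameter propagators $e^{i(t,s)\cdot\Phi(\xi)}$ developed for $\mathcal{M}_c$ in \cite{LL}.
\end{itemize}
Interpolating these, with mixed-norm interpolation in $(p,q,r)$, gives a bound $C2^{-\epsilon j}\|f\|_p$ exactly when $L_j(p,q,r)>0$ for $j=1,\dots,5$. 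Each $L_j$ should correspond to one vertex estimate: for example, $L_1$ comes from the $L^1\to L^\infty$ estimate together with the $2^{2j/r}$ derivative cost, and $L_5$ comes from the $L^2$-based local smoothing estimate. Summing the geometric series in $j$ finishes sufficiency, and the segment $[P_1,P_4)$ comes along by the same argument.

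\emph{Necessity.} Restriction to $\overline{\mathcal Q}\setminus\{P_4\}$ is inherited from $\mathcal{M}_c\le V_r(\mathcal{A})$ (Proposition~\ref{prop-Vr-domina-maximal}) together with the counterexamples of \cite{LL}. For each $L_j$ I would build a Knapp-type test function $f$: a characteristic function of a $\delta$-neighbourhood of a torus piece, a thin slab, a tube along a degenerate circle, or a small ball. I would then choose parameters $(t,s)$ on a grid of spacing $\delta$ in one or both directions, so that $\mathcal{A}f(x,\cdot,\cdot)$ oscillates, taking values $\gtrsim$ some size alternately on about $\delta^{-1}$ or $\delta^{-2}$ grid rectangles. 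This forces $V_r(\mathcal{A}f)(x)\gtrsim \delta^{a}\delta^{-b/r}$ on a set of measure $\delta^{c}$. Comparing with $\|f\|_p$ as $\delta\to0$ yields $L_j\ge0$. The coefficients $-1/r$ and $-2/r$ in the $L_j$ indicate which examples oscillate in one parameter and which oscillate in both.

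\emph{Main obstacle.} I expect the hardest step to be the sufficiency estimates near the degenerate circles, and specifically matching the $L_2$ and $L_4$ conditions. There the two-parameter square function estimate must be combined with local smoothing under mixed-norm interpolation without losing the $\epsilon$ gain. My first attempt would be a further dyadic decomposition in distance to the degenerate set, interpolating between the square function bound and the $L^2$ bound on each piece.
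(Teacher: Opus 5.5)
Your sufficiency argument has a genuine gap. You take ``the square function estimates for the two-parameter propagators developed for $\mathcal M_c$ in \cite{LL}'' as a black box, and then simply assert that interpolation gives decay exactly when all $L_j>0$. Those square function bounds are not available from \cite{LL}, which supplies local smoothing and fixed-time estimates. Local smoothing plus H\"older and the embedding lemma give, for $q=p$, only $1/r<\min\{2/p,1/2\}$, which is the trapezoid $P_1Q_1P_4Q_2$ and not $1/r<\min\{3/p,1/p+1/4,1/2\}$.

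The missing idea is a family of new estimates for $\mathcal S f=\|\mathcal U_{t,s}f\|_{L^2_{t,s}(\mathbb J_0)}$ in the intermediate regimes $\lambda^{1/2+\delta}\le h\le\lambda^2$:
\begin{itemize}
\item $L^\infty\to L^\infty$ with constant $\lambda^{1/2+\epsilon}$;
\item $L^2\to L^6$ with $\lambda^{1/2}h^{-1/3}$ (resp.\ $\lambda^{1/6}$ for $h\ge\lambda$);
\item $L^2\to L^\infty$ with $\lambda h^{-1/2}$ (resp.\ $\lambda^{1/2}$).
\end{itemize}
They are proved in Section~\ref{sec-square-func-est}. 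One rescales, expands the amplitude in a Fourier series, and applies Plancherel in $(t,s)$. This turns $\mathcal S$ into a $(\tau,\sigma)$-square function of multipliers supported where a $\lambda^{-1}$-shell about $\{|\bar\xi|=\tau\}$ meets a thin shell about a rescaled sphere. By Lemma~\ref{lem-intersection} that intersection is a thin annulus in $\bar\xi$ times a short interval in $\xi_3$. Lemma~\ref{lem-annulus-L2-L6} and Bernstein in $x_3$ then finish; for the $L^\infty$ input one also localizes to anisotropic boxes. These bounds are the $L^2$-in-parameter interpolation data at $P_1$, $P_6$, $Q_1$. Without them your scheme stops at the smaller region.

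Your decomposition also aims at the wrong degeneracy. The binding conditions $L_2,\dots,L_5$ do not come from the flat circles $\theta=\pm\pi/2$ but from the equators. There $\Phi_{t,s}(0,\cdot)$ and $\Phi_{t,s}(\pi,\cdot)$ depend only on $t+s$ and $t-s$, so the two-parameter family is locally one-parameter. In frequency this is $h\approx\lambda^{1/2}$, where $|\xi|-|\bar\xi|=O(1)$. In the paper, $L_2,\dots,L_5$ are the dyadic exponents at $h\approx\lambda^{1/2}$ after summing in $h$. $L_1$ is the exponent at $h\approx\lambda$, coming through the square-function input; an $L^1\to L^\infty$ bound plays no role and lies outside $\overline{\mathcal Q}$.

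An isotropic Littlewood--Paley decomposition with a uniform $2^{2j/r}$ parameter cost cannot see this, and it actually fails where $h\gg\lambda$. At bounded horizontal frequency the $t$-derivative costs $O(1)$. Charging $h^{2/r}$ against the decay $h^{-1/2}$ gives $\sum_h h^{2/r-1/2}$, which diverges for $r\le4$. What you need is:
\begin{itemize}
\item the anisotropic $(\lambda,h)$ decomposition with thresholds $\lambda^{1/2+\delta},\lambda,\lambda^2$;
\item Lemma~\ref{lem-embedding} with $N_1=\lambda$ and $N_2=\max\{\lambda,h\}$;
\item interpolation first in $(1/p,1/q)$ on the plane $r=p$, then in $r$ with breakpoints $r_{p,q}$ and $r_q$;
\item the $h$-summation, where the auxiliary conditions $M_1,M_2,M_3>0$ that cut out $\mathcal Q$ also enter.
\end{itemize}

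The same equatorial geometry has to drive your counterexamples. Use shells of radial width $2^{-k}$ and height $2^{-k/2}$ at radius $t-s$, or boxes $2^{-k}\times2^{-k/2}\times2^{-k/2}$ near $x_1=t+s$. With alternating signs $(-1)^{j_{m,n}}$ arranged so all four corners of every grid rectangle add with the same sign, these give $L_4$ and $L_5$. A single such set, giving diagonal increments only, gives $L_2$ and $L_3$. A tube around a flat circle does not produce these conditions.
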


\subsubsection*{Description of the boundedness regions}
The geometry of $\mathfrak{P}(r)$ changes according to the range of $r$. We describe the set in the following six cases.

\begin{figure}[H]
    \centering

    \begin{minipage}[t]{0.48\textwidth}
        \centering
        \resizebox{0.82\linewidth}{!}{%
        \begin{tikzpicture}[scale=1.3]
            \draw[thick,->]  (0,0) -- (6.5,0);
            \draw[thick,->] (0,0) -- (0,6.5);
            \node[left] at (0,6.5) {$\frac{1}{q}$};
            \node[left] at (0,6) {$\frac{1}{2}$};
            \node[right] at (6.5,0) {$\frac{1}{p}$};
            \draw (-2pt,6) -- (2pt,6);

            \draw (2/3,2/3) -- (6,6);
            \draw[densely dotted] (2/3,2/3) -- (2,2/3);
            \draw[densely dotted] (0,0) -- (6,6) -- (60/11,24/11) -- (36/7,12/7) -- (0,0);

            \node[left,font=\footnotesize] at (0,0) {$P_{1}$};
            \node[above right,font=\footnotesize] at (6,6) {$P_4$};
            \node[above right,font=\footnotesize] at (60/11,24/11) {$P_3$};
            \node[below,font=\footnotesize] at (36/7,12/7) {$P_2$};
            \node[above,font=\footnotesize] at (0.45,0.7) {$P_1(r)$};
            \node[below,font=\footnotesize] at (2,2/3) {$\tilde{P_1}(r)$};

            \fill[fill=black!18, fill opacity=0.75]
            (2/3,2/3) -- (6,6) -- (60/11,24/11) -- (36/7,12/7) -- (2,2/3) -- cycle;

            \node[circle,draw=black, fill=white, inner sep=0pt,minimum size=4pt] at (0,0) {};
            \node[circle,draw=black, fill=white, inner sep=0pt,minimum size=4pt] at (60/11,24/11) {};
            \node[circle,draw=black, fill=white, inner sep=0pt,minimum size=4pt] at (36/7,12/7) {};
            \node[circle,draw=black, fill=white, inner sep=0pt,minimum size=4pt] at (6,6) {};
            \node[circle,draw=black, fill=white, inner sep=0pt,minimum size=4pt] at (2/3,2/3) {};
            \node[circle,draw=black, fill=white, inner sep=0pt,minimum size=4pt] at (2,2/3) {};
        \end{tikzpicture}
        }
        \par\vspace{-1mm}{\footnotesize (a) $r>14/3$.}\par
    \end{minipage}
    \hfill
    \begin{minipage}[t]{0.48\textwidth}
        \centering
        \resizebox{0.82\linewidth}{!}{%
        \begin{tikzpicture}[scale=1.3]
            \draw[thick,->]  (0,0) -- (6.5,0);
            \draw[thick,->] (0,0) -- (0,6.5);
            \node[left] at (0,6.5) {$\frac{1}{q}$};
            \node[left] at (0,6) {$\frac{1}{2}$};
            \node[right] at (6.5,0) {$\frac{1}{p}$};
            \draw (-2pt,6) -- (2pt,6);

            \draw (1,1) -- (6,6);
            \draw[densely dotted] (1,1) -- (3,1);
            \draw[densely dotted] (0,0) -- (6,6) -- (60/11,24/11) -- (36/7,12/7) -- (0,0);

            \node[above right,font=\footnotesize] at (6,6) {$P_4$};
            \node[above right,font=\footnotesize] at (60/11,24/11) {$P_3$};
            \node[above,font=\footnotesize] at (1,1) {$P_1(r)$};
            \node[below,font=\footnotesize] at (3,1) {$\tilde{P_1}(r)$};
            \node[below,font=\footnotesize] at (9/2,3/2) {$P_2(r)$};
            \node[below right,font=\footnotesize] at (16/3,2) {$\tilde{P_2}(r)$};

            \fill[fill=black!18, fill opacity=0.75]
            (1,1) -- (3,1) -- (9/2,3/2) -- (16/3,2) -- (60/11,24/11) -- (6,6) -- cycle;

            \node[circle,draw=black, fill=white, inner sep=0pt,minimum size=4pt] at (60/11,24/11) {};
            \node[circle,draw=black, fill=white, inner sep=0pt,minimum size=4pt] at (6,6) {};
            \node[circle,draw=black, fill=white, inner sep=0pt,minimum size=4pt] at (1,1) {};
            \node[circle,draw=black, fill=white, inner sep=0pt,minimum size=4pt] at (3,1) {};
            \node[circle,draw=black, fill=white, inner sep=0pt,minimum size=4pt] at (9/2,3/2) {};
            \node[circle,draw=black, fill=white, inner sep=0pt,minimum size=4pt] at (16/3,2) {};
        \end{tikzpicture}
        }
        \par\vspace{-1mm}{\footnotesize (b) $11/3<r\leq14/3$.}\par
    \end{minipage}

    \caption{The regions $\mathfrak{P}(r)$ in cases \textnormal{(a)} and \textnormal{(b)}.}
    \label{Fig.regions-Pr-ab}
\end{figure}

\medskip
\noindent\textbf{(a) $r>14/3$.}
The region $\mathfrak{P}(r)$ in Figure~\ref{Fig.regions-Pr-ab}(a) coincides with $\mathcal{R}_1(r)$ and has vertices $P_1(r), \tilde{P_1}(r)$, $P_2, P_3, P_4$, where $P_2, P_3, P_4$ are given in \eqref{ali-max-vert}, and
\[P_1(r):=(\tfrac{1}{3r},\tfrac{1}{3r}),\  \tilde{P_1}(r):=(\tfrac{1}{r},\tfrac{1}{3r}).\]
The line segment connecting $P_1(r)$ and $\tilde{P_1}(r)$ lies on the line $L_1(p,q,r)=0$. Taking $r=\infty$, we recover the $L^p$--$L^q$ bounds for the maximal operator $\mathcal{M}_c$. As $r\rightarrow\infty$, both $P_1(r)$ and $\tilde{P_1}(r)$ tend to $P_1$. Compared with the boundedness region for $\mathcal{M}_c$, the region below the segment $[P_1(r),\tilde{P_1}(r)]$ is excluded.

\medskip
\noindent\textbf{(b) $11/3<r\leq14/3$.}
Compared with case \textnormal{(a)}, the region below the segment $[P_2(r),\tilde{P_2}(r)]$ is excluded; see Figure~\ref{Fig.regions-Pr-ab}(b). The region $\mathfrak{P}(r)$ equals $\mathcal{R}_1(r)\cap\mathcal{R}_2(r)$ and has vertices $P_1(r), \tilde{P_1}(r), P_2(r), \tilde{P_2}(r), P_3, P_4$, where
\[P_2(r):=(\tfrac34-\tfrac{3}{2r},\tfrac14-\tfrac{1}{2r}),\ \tilde{P_2}(r):=(\tfrac{4}{9r}+\tfrac13,\tfrac{2}{3r}).\]
The segment $[P_2(r),\tilde{P_2}(r)]$ is given by $L_2(p,q,r)=0$. Note that $P_2(r)=\tilde{P_2}(r)=P_2$ when $r=14/3$, and $\tilde{P_2}(r)=P_3$ when $r=11/3$.

\begin{figure}[H]
    \centering

    \begin{minipage}[t]{0.48\textwidth}
        \centering
        \resizebox{0.82\linewidth}{!}{%
        \begin{tikzpicture}[scale=1.3]
            \draw[thick,->]  (0,0) -- (6.5,0);
            \draw[thick,->] (0,0) -- (0,6.5);
            \node[left] at (0,6.5) {$\frac{1}{q}$};
            \node[left] at (0,6) {$\frac{1}{2}$};
            \node[right] at (6.5,0) {$\frac{1}{p}$};
            \draw (-2pt,6) -- (2pt,6);

            \draw (9/8,9/8) -- (6,6);
            \draw[densely dotted] (0,0) -- (6,6) -- (60/11,24/11) -- (36/7,12/7) -- (0,0);
            \draw[densely dotted] (36/31,36/31) -- (108/31,36/31) -- (117/31,39/31) --  (1164/217,480/217) -- (852/155,384/155);
            \draw[densely dotted] (3,3) -- (60/11,24/11);

            \node[above right,font=\footnotesize] at (6,6) {$P_4$};
            \node[above,font=\footnotesize] at (32/31,38/31) {$P_1(r)$};
            \node[above,font=\footnotesize] at (2.9,3) {$P_5$};
            \node[below,font=\footnotesize] at (108/31,36/31) {$\tilde{P_1}(r)$};
            \node[below right,font=\footnotesize] at (117/31,39/31) {$P_2(r)$};
            \node[below right,font=\footnotesize] at (1164/217,480/217) {$P_3(r)$};
            \node[right,font=\footnotesize] at (852/155,384/155) {$\tilde{P_3}(r)$};

            \fill[fill=black!18, fill opacity=0.75]
            (36/31,36/31) -- (108/31,36/31) -- (117/31,39/31) -- (1164/217,480/217) -- (852/155,384/155) -- (6,6) -- cycle;

            \node[circle,draw=black, fill=white, inner sep=0pt,minimum size=4pt] at (6,6) {};
            \node[circle, fill, inner sep=0pt,minimum size=4pt] at (3,3) {};
            \node[circle,draw=black, fill=white, inner sep=0pt,minimum size=4pt] at (36/31,36/31) {};
            \node[circle,draw=black, fill=white, inner sep=0pt,minimum size=4pt] at (108/31,36/31) {};
            \node[circle,draw=black, fill=white, inner sep=0pt,minimum size=4pt] at (117/31,39/31) {};
            \node[circle,draw=black, fill=white, inner sep=0pt,minimum size=4pt] at (1164/217,480/217) {};
            \node[circle,draw=black, fill=white, inner sep=0pt,minimum size=4pt] at (852/155,384/155) {};
        \end{tikzpicture}
        }
        \par\vspace{-1mm}{\footnotesize (c) $10/3<r\leq11/3$.}\par
    \end{minipage}
    \hfill
    \begin{minipage}[t]{0.48\textwidth}
        \centering
        \resizebox{0.82\linewidth}{!}{%
        \begin{tikzpicture}[scale=1.3]
            \draw[thick,->]  (0,0) -- (6.5,0);
            \draw[thick,->] (0,0) -- (0,6.5);
            \node[left] at (0,6.5) {$\frac{1}{q}$};
            \node[left] at (0,6) {$\frac{1}{2}$};
            \node[right] at (6.5,0) {$\frac{1}{p}$};
            \draw (-2pt,6) -- (2pt,6);

            \draw (4/3,4/3) -- (6,6);
            \draw[densely dotted] (0,0) -- (6,6) -- (60/11,24/11) -- (36/7,12/7) -- (0,0);
            \draw[densely dotted] (3,3) -- (60/11,24/11);
            \draw[densely dotted] (4/3,4/3) -- (8/3,4/3) -- (4,8/5) -- (36/7,16/7) -- (28/5,16/5);

            \node[above right,font=\footnotesize] at (6,6) {$P_4$};
            \node[above,font=\footnotesize] at (2.9,3) {$P_5$};
            \node[above,font=\footnotesize] at (3.5/3,4.3/3) {$P_1(r)$};
            \node[below,font=\footnotesize] at (8/3,4/3) {$K_1(r)$};
            \node[below,font=\footnotesize] at (4,8/5) {$K_2(r)$};
            \node[below,font=\footnotesize] at (36/7,16/7) {$P_3(r)$};
            \node[right,font=\footnotesize] at (28/5,16/5) {$\tilde{P_3}(r)$};

            \fill[fill=black!18, fill opacity=0.75]
            (4/3,4/3) -- (8/3,4/3) -- (4,8/5) -- (36/7,16/7) -- (28/5,16/5) -- (6,6) -- cycle;

            \node[circle, fill, inner sep=0pt,minimum size=4pt] at (3,3) {};
            \node[circle,draw=black, fill=white, inner sep=0pt,minimum size=4pt] at (6,6) {};
            \node[circle,draw=black, fill=white, inner sep=0pt,minimum size=4pt] at (4/3,4/3) {};
            \node[circle,draw=black, fill=white, inner sep=0pt,minimum size=4pt] at (8/3,4/3) {};
            \node[circle,draw=black, fill=white, inner sep=0pt,minimum size=4pt] at (4,8/5) {};
            \node[circle,draw=black, fill=white, inner sep=0pt,minimum size=4pt] at (36/7,16/7) {};
            \node[circle,draw=black, fill=white, inner sep=0pt,minimum size=4pt] at (28/5,16/5) {};
        \end{tikzpicture}
        }
        \par\vspace{-1mm}{\footnotesize (d) $8/3<r\leq10/3$.}\par
    \end{minipage}

    \caption{The regions $\mathfrak{P}(r)$ in cases \textnormal{(c)} and \textnormal{(d)}.}
    \label{Fig.regions-Pr-cd}
\end{figure}

\medskip
\noindent\textbf{(c) $10/3<r\leq11/3$.}
The region $\mathfrak{P}(r)$ in Figure~\ref{Fig.regions-Pr-cd}(c) coincides with $\bigcap_{j=1}^{3}\mathcal{R}_j(r)$ and has vertices $P_1(r), \tilde{P_1}(r), P_2(r), P_3(r), \tilde{P_3}(r), P_4$, where
\[P_3(r):=(\tfrac{4}{7}-\tfrac{3}{7r},\tfrac{1}{7}+\tfrac{1}{7r}), \ \tilde{P_3}(r):=(\tfrac{1}{5r}+\tfrac{2}{5},\tfrac{7}{5r}-\tfrac{1}{5}). \]
$P_3(r)$ lies on the segment connecting $P_3$ and $P_5:=(1/4,1/4)$. The segments $[P_2(r),P_3(r)]$ and $[P_3(r),\tilde{P_3}(r)]$ are determined by $L_2(p,q,r)=0$ and $L_3(p,q,r)=0$, respectively. Note that $P_3(r)=\tilde{P_3}(r)=P_3$ when $r=11/3$, and that $\tilde{P_1}(r)=P_2(r)$ when $r=10/3$.

\medskip
\noindent\textbf{(d) $8/3<r\leq10/3$.}
The region $\mathfrak{P}(r)$ in Figure~\ref{Fig.regions-Pr-cd}(d) equals $\bigcap_{j=1}^4\mathcal{R}_j(r)$ and has vertices $P_1(r), K_1(r), K_2(r), P_3(r), \tilde{P_3}(r), P_4$, with
\[K_1(r):=(1-\tfrac{7}{3r},\tfrac{1}{3r}),\ K_2(r):=(\tfrac{1}{r},\tfrac{1}{r}-\tfrac{1}{5}).\]
The segments $[P_1(r),K_1(r)]$, $[K_1(r),K_2(r)]$, and $[K_2(r),P_3(r)]$ are determined by $L_1(p,q,r)\allowbreak=0$, $L_4(p,q,r)=0$, and $L_2(p,q,r)=0$, respectively. If $r=10/3$, then $K_1(r)=K_2(r)$, and this point coincides with $\tilde{P_1}(r)=P_2(r)$ in Figure~\ref{Fig.regions-Pr-cd}(c). If $r=8/3$, then $P_1(r)=K_1(r)$, and this point coincides with $P_5(r):=(\tfrac{1}{r}-\tfrac{1}{4},\tfrac{1}{r}-\tfrac{1}{4})$ in Figure~\ref{Fig.regions-Pr-ef}(e).

\begin{figure}[H]
    \centering

    \begin{minipage}[t]{0.48\textwidth}
        \centering
        \resizebox{0.82\linewidth}{!}{%
        \begin{tikzpicture}[scale=1.3]
            \draw[thick,->]  (0,0) -- (6.5,0);
            \draw[thick,->] (0,0) -- (0,6.5);
            \node[left] at (0,6.5) {$\frac{1}{q}$};
            \node[left] at (0,6) {$\frac{1}{2}$};
            \node[right] at (6.5,0) {$\frac{1}{p}$};
            \draw (-2pt,6) -- (2pt,6);

            \draw (33/20,33/20) -- (6,6);
            \draw[densely dotted] (0,0) -- (6,6) -- (60/11,24/11) -- (36/7,12/7) -- (0,0);
            \draw[densely dotted] (33/20,33/20) -- (93/20,9/4) -- (681/140,333/140) -- (573/100,411/100);
            \draw[densely dotted] (3,3) -- (60/11,24/11);

            \node[above,font=\footnotesize] at (2.9,3) {$P_5$};
            \node[above right,font=\footnotesize] at (6,6) {$P_{4}$};
            \node[above,font=\footnotesize] at (3/2,35/20) {$P_5(r)$};
            \node[below,font=\footnotesize] at (93/20,9/4) {$K_2(r)$};
            \node[right,font=\footnotesize] at (681/140,333/140) {$P_3(r)$};
            \node[right,font=\footnotesize] at (573/100,411/100) {$\tilde{P_3}(r)$};

            \fill[fill=black!18, fill opacity=0.75]
            (33/20,33/20) -- (93/20,9/4) -- (681/140,333/140) -- (573/100,411/100) -- (6,6) -- cycle;

            \node[circle, fill, inner sep=0pt,minimum size=4pt] at (3,3) {};
            \node[circle,draw=black, fill=white, inner sep=0pt,minimum size=4pt] at (6,6) {};
            \node[circle,draw=black, fill=white, inner sep=0pt,minimum size=4pt] at (33/20,33/20) {};
            \node[circle,draw=black, fill=white, inner sep=0pt,minimum size=4pt] at (93/20,9/4) {};
            \node[circle,draw=black, fill=white, inner sep=0pt,minimum size=4pt] at (681/140,333/140) {};
            \node[circle,draw=black, fill=white, inner sep=0pt,minimum size=4pt] at (573/100,411/100) {};
        \end{tikzpicture}
        }
        \par\vspace{-1mm}{\footnotesize (e) $5/2<r\leq8/3$.}\par
    \end{minipage}
    \hfill
    \begin{minipage}[t]{0.48\textwidth}
        \centering
        \resizebox{0.82\linewidth}{!}{%
        \begin{tikzpicture}[scale=1.3]
            \draw[thick,->]  (0,0) -- (6.5,0);
            \draw[thick,->] (0,0) -- (0,6.5);
            \node[left] at (0,6.5) {$\frac{1}{q}$};
            \node[left] at (0,6) {$\frac{1}{2}$};
            \node[right] at (6.5,0) {$\frac{1}{p}$};
            \draw (-2pt,6) -- (2pt,6);

            \draw (2,2) -- (6,6);
            \draw[densely dotted] (0,0) -- (6,6) -- (60/11,24/11) -- (36/7,12/7) -- (0,0);
            \draw[densely dotted] (3,3) -- (60/11,24/11);
            \draw[densely dotted] (2,2) -- (9/2,5/2) -- (5,3) -- (29/5,23/5);

            \node[above right,font=\footnotesize] at (6,6) {$P_4$};
            \node[above,font=\footnotesize] at (2.9,3) {$P_5$};
            \node[above,font=\footnotesize] at (1.8,2.1) {$P_5(r)$};
            \node[below,font=\footnotesize] at (9/2,5/2) {$\tilde{P_5}(r)$};
            \node[below,font=\footnotesize] at (5.19,3) {$P_4(r)$};
            \node[right,font=\footnotesize] at (29/5,23/5) {$\tilde{P_3}(r)$};

            \fill[fill=black!18, fill opacity=0.75]
            (2,2) -- (9/2,5/2) -- (5,3) -- (29/5,23/5) -- (6,6) -- cycle;

            \node[circle, fill, inner sep=0pt,minimum size=4pt] at (3,3) {};
            \node[circle,draw=black, fill=white, inner sep=0pt,minimum size=4pt] at (6,6) {};
            \node[circle,draw=black, fill=white, inner sep=0pt,minimum size=4pt] at (2,2) {};
            \node[circle,draw=black, fill=white, inner sep=0pt,minimum size=4pt] at (9/2,5/2) {};
            \node[circle,draw=black, fill=white, inner sep=0pt,minimum size=4pt] at (5,3) {};
            \node[circle,draw=black, fill=white, inner sep=0pt,minimum size=4pt] at (29/5,23/5) {};
        \end{tikzpicture}
        }
        \par\vspace{-1mm}{\footnotesize (f) $2<r\leq5/2$.}\par
    \end{minipage}

    \caption{The regions $\mathfrak{P}(r)$ in cases \textnormal{(e)} and \textnormal{(f)}.}
    \label{Fig.regions-Pr-ef}
\end{figure}

\medskip
\noindent\textbf{(e) $5/2<r\leq8/3$.}
The region $\mathfrak{P}(r)$ in Figure~\ref{Fig.regions-Pr-ef}(e) coincides with $\bigcap_{j=2}^4\mathcal{R}_j(r)$. Its vertices are
\[
P_5(r),\ K_2(r),\ P_3(r),\ \tilde{P_3}(r),\ P_4.
\]
The segment $[P_5(r),K_2(r)]$ is given by $L_4(p,q,r)=0$. If $r=5/2$, then $K_2(r)=P_3(r)$, and this point coincides with $\tilde{P_5}(r):=(1-\tfrac{3}{2r},\tfrac{1}{2r})$ in Figure~\ref{Fig.regions-Pr-ef}(f).

\medskip
\noindent\textbf{(f) $2<r\leq5/2$.}
The region $\mathfrak{P}(r)$ in Figure~\ref{Fig.regions-Pr-ef}(f) equals $\bigcap_{j=3}^5\mathcal{R}_j(r)$ and has vertices $P_5(r), \tilde{P_5}(r), P_4(r):=(\tfrac1r,\tfrac3r-1), \tilde{P_3}(r), P_4$. The segments $[P_5(r),\tilde{P_5}(r)]$, $[\tilde{P_5}(r),P_4(r)]$, and $[P_4(r),\tilde{P_3}(r)]$ are determined by $L_4(p,q,r)=0$, $L_5(p,q,r)=0$, and $L_3(p,q,r)=0$, respectively. If $r=5/2$, then $\tilde{P_5}(r)=P_4(r)$, and this point coincides with $K_2(r)=P_3(r)$ in Figure~\ref{Fig.regions-Pr-ef}(e). As $r\rightarrow2$, both $P_5(r)$ and $\tilde{P_5}(r)$ tend to $P_5$, while $P_4(r)$ and $\tilde{P_3}(r)$ tend to $P_4$.

\subsection{Further directions}
It is natural to study variation over parameter domains beyond the
fixed rectangle $\mathbb J_0$. General parallelograms are not obviously
reducible to $\mathbb J_0$, since the definition of
$[\mathcal A]_{r,2}$ is tied to coordinatewise rectangular increments.
One may also consider strips such as
$[1,2]\times(0,2^{-2}]$ or
$[1,\infty)\times[2^{-3},2^{-2}]$. In the former case,
$\mathbb T_{t,s}$ degenerates to a circle as $s\to0$, so the
boundedness region need not coincide with that of the local operator
$V_r(\mathcal A)$.

The comparisons in Section~\ref{subsec-def-var} point to a more
structural question: which notion of two-parameter variation 
captures joint oscillation, controls the corresponding maximal
function, and admits a L\'epingle-type inequality? The full variation
norm used here has the first two properties and, up to an arbitrarily
small increase in the target variation exponent, controls the notions
based on arbitrary rectangular partitions and coordinatewise ordered
sequences. However, the sequence-based variation does not control the
full norm in the reverse direction, and the corner-oscillation
quantity is unsuitable for continuous parameter families.
We do not know whether an appropriate long-variation
analogue of the present rectangular norm satisfies a L\'epingle-type
inequality.
Suitably formulated multi-parameter jump or oscillation
estimates, distinct from the corner-oscillation quantity considered in
Section~\ref{subsec-def-var}, may provide intermediate substitutes.

The present results concern the fixed relative-scale rectangle
$\mathbb J_0$ and may be viewed as a model for the within-box
short-variation estimates needed in a global theory. A genuinely
two-parameter global result over $0<s<c_0t$ would require uniform
local estimates on dyadic boxes with
$t\sim2^j$, $s\sim2^k$, and $k\leq j+O(1)$, including the regime
$s/t\to0$, together with control of the long variation across this
two-dimensional dyadic index set. A more accessible first problem is
the family obtained by applying the common dilations
$(t,s)\mapsto(2^\ell t,2^\ell s)$ to a fixed relative-scale region.
Here only the inter-box scale $\ell$ is one-dimensional, although the
variation within each box remains two-parameter. Consequently, the
usual one-parameter long--short decomposition and L\'epingle-type
tools provide a plausible approach, but a global estimate does not
follow formally from the present local results.

\subsection{Key ingredients of the proofs}
We briefly explain the key ideas in the proofs of our main theorems. We begin with the $L^p$ case in Theorem~\ref{thm-two-para}, which follows directly from Theorem~\ref{thm-Lp-Lq} by setting $p=q$ and is therefore not proved separately. Nevertheless, this case provides a convenient setting for explaining our method.

The term $|\mathcal{A}f(x,1,2^{-3})|$ is controlled directly by Minkowski's inequality or by the maximal bounds established in \cite{LL}. Thus, to establish the $L^p$ bounds for $V_r(\mathcal{A})$, it suffices to focus on three parts:
\[\widetilde{V}_r(\mathcal{A}f),\quad [\mathcal{A}f(x,1,\cdot)]_{r,1}, \quad \,[\mathcal{A}f(x,\cdot,2^{-3})]_{r,1}.\]
The key term is $\widetilde{V}_r(\mathcal{A}f)$.
The remaining two terms are treated in Section~\ref{sec-one-para} by combining the one-parameter variation embedding with mixed-norm estimates for the corresponding boundary operators, obtained through a Fourier-series argument.

\subsubsection*{Two-parameter propagator}
Since $\widetilde{V}_r(\mathcal{A})\leq \widetilde{V}_p(\mathcal{A})$ for $r\geq p$, the range $r>p$ follows from the case $r=p$.
It therefore suffices to consider $r\leq p$.
By the asymptotic expansion of $\widehat{\sigma_{t,s}}(\xi)$ (see \eqref{ali-expansion-sigmats} and \eqref{asm-B-symbol}) and symmetry, the analysis of $\widetilde{V}_r(\mathcal{A})$ reduces to that of the two-parameter propagator
\begin{align}\label{def-wjk}
    \mathcal{U}_{t,s}^Bf(x):=\int_{\mr^3}e^{i(x\cdot\xi+t|\bar{\xi}|+s|\xi|)}B(\xi,t,s) \widehat{f}(\xi)d\xi,
    \quad \xi=(\bar{\xi},\xi_3)\in\mr^2\times\mr,
\end{align}
where $B(\xi,t,s)\in C^\infty(\mr^3\times \mathbb{J}_0)$ satisfies
\begin{align}\label{ali-condition-on-B}
    \sup_{(t,s)\in\mathbb{J}_0}|\partial_{t,s}^{\gamma}\partial_{\xi}^{\alpha}B(\xi,t,s)|\leq C_{\gamma,\alpha}(1+|\bar{\xi}|)^{-|(\alpha_1,\alpha_2)|}(1+|\xi|)^{-\alpha_3}
\end{align}
for all multi-indices $\gamma$ and $\alpha$. For convenience, we shall suppress the dependence on the symbol $B$ throughout the paper, since all constants $C_{\gamma,\alpha}$ are uniform with respect to $B$. By the embedding lemma (see Lemma~\ref{lem-embedding}) and frequency localization, the $L^p$ bound for $\widetilde{V}_r(\mathcal{A})$ is reduced to estimating the following quantities. Writing $\lambda$ and $h$ for the horizontal and vertical dyadic frequency scales, respectively, we have
\begin{align*}
   \|\mathcal{U}_{t,s}f_{\lambda,h}\|_{L_x^p(\mr^3;L_{t,s}^r(\mathbb{J}_0))},\quad \lambda,h\geq2,
\end{align*}
where
\begin{align*}
    \widehat{f_{\lambda,h}}(\xi):=\widehat{f}(\xi)
    \varphi(|\bar{\xi}|/\lambda)\varphi(|\xi_3|/h).
\end{align*}
Here $\varphi$ is the cutoff fixed in the Notation subsection below.
As in the maximal estimate for $\mathcal{M}_c$, which corresponds to the limiting case $r=\infty$, the diagonal frequency regime $h=\lambda$ is dominant in the $L^p$ analysis. The local smoothing estimates established in \cite[Proposition~2.3]{LL}, together with H\"older's inequality, imply that for $\lambda\geq 2$ and any $\epsilon>0$,
\begin{align*}
    \|\mathcal{U}_{t,s}f_{\lambda,\lambda}\|_{L_x^p(\mr^3;L_{t,s}^r(\mathbb{J}_0))}\lesssim
        \begin{cases}
	    			\lambda^{1-\frac{4}{p}+\epsilon}\|f\|_{\Lp(\mr^3)}, & p\geq 4,\\[2pt]
	    			\lambda^{\epsilon}\|f\|_{\Lp(\mr^3)}, & 2\leq p\leq4.
	    	\end{cases}
\end{align*}
\subsubsection*{Two-parameter square function}
The preceding estimates yield a partial boundedness region for
$\widetilde{V}_r(\mathcal{A})$, namely the trapezoid with vertices
$P_1,Q_1,P_4,Q_2$. To enlarge this region by including the triangle
with vertices $P_1,Q_2,Q_4$, we use the square function
\begin{align}\label{def-square-function}
    \mathcal{S}f(x):=
    \|\mathcal{U}_{t,s}f\|_{L^2_{t,s}(\mathbb{J}_0)}.
\end{align}
The $L^\infty$ square function estimates needed for the $L^p$ bounds
are established in the two intermediate frequency regimes:
\eqref{ali-main-jk} covers
$\lambda^{1/2+\delta}\leq h\leq\lambda$, while the first estimate in
Proposition~\ref{prop-main-j<k<2j} covers
$\lambda\leq h\leq\lambda^2$.

Interpolating the diagonal case $h=\lambda$ of \eqref{ali-main-jk} with the
local smoothing estimate for $p=4$ and applying H\"older's inequality,
we obtain
\[
\|\mathcal{U}_{t,s}f_{\lambda,\lambda}\|
_{L_x^p(\mr^3;L_{t,s}^r(\mathbb{J}_0))}
\lesssim
\lambda^{\frac{1}{2}-\frac{2}{p}+\epsilon}
\|f\|_{\Lp(\mr^3)}
\]
for $p\geq4$ and $r\leq2p/(p-2)$. The relation $r=2p/(p-2)$ determines the new boundary of the
$L^p$ exponent region. Combined with the estimates in the remaining
frequency regimes and the dyadic summation in Section
\ref{sec-Lp-Lq-bounds}, this estimate yields the sharp boundedness
region for $\widetilde{V}_r(\mathcal{A})$ up to endpoints. For
Theorem~\ref{thm-one-para}, the local smoothing estimates alone yield
the corresponding sharp boundedness region up to endpoints for
$V_r(\mathcal{A}_1)$.

The role of square function estimates here differs from that in the
variational theory for circular averages. Let $\mathcal{W}_\pm$ be
the two-dimensional wave operator defined by
\begin{align}\label{ali-def-2d-wave}
    \mathcal{W}_\pm g(y,t)
    :=\int_{\mr^2}e^{i(y\cdot\eta\pm t|\eta|)}
    \widehat{g}(\eta)\,d\eta
\end{align}
for $(y,t)\in\mr^2\times[1,2]$. For global variational inequalities
for circular averages, one uses
\begin{align*}
    \|\mathcal{W}_\pm g\|_{\Lp(\mr^2;L^2([1,2]))}
    \lesssim
    \lambda^{\frac12-\frac2p+\epsilon}\|g\|_{\Lp(\mr^2)}
\end{align*}
for $p\geq4$, any $\epsilon>0$, and
$\supp\widehat g\subset\mathbb A_\lambda$. These bounds follow from
the sharp local smoothing estimate in \cite{GWZ} and H\"older's
inequality, while the optimality of the power of $\lambda$, up to the
$\epsilon$-loss, follows from the duality argument in \cite{LRS}.
For the two-parameter propagator considered here, however, the
available local smoothing estimates alone do not yield the
corresponding sharp square function bounds.

To obtain the sharp $L^p$--$L^q$ bounds for $V_r(\mathcal{A})$ up to
endpoints, we also use the $L^2\rightarrow L^\infty(L^2)$ and
$L^2\rightarrow L^6(L^2)$ square function estimates established in
Section~\ref{sec-square-func-est}. Interpolating these estimates with
the local smoothing estimates in \cite{LL}, we obtain the mixed-norm
bounds needed to control $\widetilde{V}_r(\mathcal A)$. Unlike in the
$L^p$ theory, where the diagonal case $h=\lambda$ is dominant, the
regime $h\approx\lambda^{1/2}$ also plays an essential role in the
$L^p$--$L^q$ analysis. The same anisotropic scale is reflected in the
constructions used to establish necessity; see Proposition~\ref{prop-nece-r>2} (c)
and Proposition~\ref{prop-nec-Lp-Lq}.

\subsection{Organization of the paper}
In Section~\ref{sec-pre}, we compare different notions of two-parameter variation, establish maximal control and pointwise properties, and prove the embedding lemmas that connect variation norms to parameter-space estimates. We also record the asymptotic expansion of $\widehat{\sigma_{t,s}}(\xi)$ and the local smoothing estimates used throughout the paper. Section~\ref{sec-square-func-est} begins with an auxiliary annular estimate and then establishes three types of square function estimates for two-parameter propagators and averages over tori. In Section~\ref{sec-Lp-Lq-bounds}, we interpolate these estimates with local smoothing estimates to obtain the mixed-norm bounds for $\widetilde{V}_r(\mathcal{A})$. Section~\ref{sec-one-para} treats the two boundary variation terms $[\mathcal{A}f(x,1,\cdot)]_{r,1}$ and $[\mathcal{A}f(x,\cdot,2^{-3})]_{r,1}$, thereby completing the proofs of the sufficient parts of Theorems~\ref{thm-two-para} and \ref{thm-Lp-Lq}. It also proves the sufficient part of Theorem~\ref{thm-one-para} using the local smoothing estimates for $\mathcal{A}_1f$. Finally, Section~\ref{sec-nec-cond} establishes the necessary conditions in Theorems~\ref{thm-two-para}, \ref{thm-one-para}, and \ref{thm-Lp-Lq} through explicit constructions.

\subsection{Notation}
\begingroup
\setlength{\leftmargini}{2em}
\begin{itemize}
    \item For $x=(x_1,x_2,x_3)\in\mr^3$, we write $\bar x=(x_1,x_2)$, so that $x=(\bar x,x_3)$. We use the same convention for all other variables in $\mr^3$; for example, $\xi=(\bar{\xi},\xi_3)$.
    \item We denote the Fourier transform and its inverse by $\mathcal F$ and $\mathcal F^{-1}$, respectively. We also use $\widehat{\ }$ and $^\vee$ for the same transforms. When necessary, we indicate the variable with respect to which the Fourier transform or inverse Fourier transform is applied, for example by writing $\mathcal F_x$ or $\mathcal F_\xi^{-1}$.
    \item We write $\mathbb D:=2^{\mathbb Z}$ for the set of dyadic numbers and fix a cutoff function $\varphi\in C_c^\infty((2^{-1},2))$ such that
	\[\sum_{\rho\in\mathbb D}\varphi(s/\rho)=1,\quad s>0.\]
	For $\rho\in\mathbb D$, set $\varphi_\rho(\cdot):=\varphi(\cdot/\rho)$. If $R>0$ and $\star\in\{<,\leq,>,\geq\}$, define $\varphi_{\star R}:=\sum_{\rho\in\mathbb D:\,\rho\star R}\varphi_\rho$. The letters $\lambda,h\in\mathbb D$ denote the horizontal and vertical dyadic frequency scales, respectively, and are also used as indices of frequency-localized objects. For a function $f$ on $\mr^3$, define
	\begin{align*}
	\mathcal{F}(f_{\lambda,h})(\xi)&:=\varphi_\lambda(|\bar{\xi}|)\varphi_h(|\xi_3|)\widehat{f}(\xi),
	\\
	\mathcal{F}(f_{\leq\lambda,\leq h})(\xi)&:=\varphi_{\leq\lambda}(|\bar{\xi}|)\varphi_{\leq h}(|\xi_3|)\widehat{f}(\xi),
	\end{align*}
	and define \(f_{\lambda,\leq h}\), \(f_{\leq\lambda,h}\), \(f_{\lambda,<h}\), and \(f_{\lambda,\geq h}\) similarly. Unless otherwise indicated, all sums in $\lambda$ and $h$ are over $\mathbb D$.
    \item For nonnegative quantities $A$ and $B$, we write $A\lesssim B$, or equivalently $B\gtrsim A$, if there exists a constant $C>0$, independent of $A$ and $B$, such that $A\leq CB$. We write $A\approx B$ if both $A\lesssim B$ and $B\lesssim A$ hold.
	\item We set $\mathbb{I}:=[1,2]$ and $\mathbb{I}^{\circ}:=[0,2]$. For $a\in\mathbb R$, we denote $\mathbb{I}_a:=[2^a,2^{a+1}]$ and $\mathbb{I}_a^{\circ}:=[0,2^{a+1}]$. For $\lambda,h\in\mathbb D$ and $R>0$, we set
\begin{align*}
    \mathbb{A}_\lambda&:=\left\{\eta \in \mathbb{R}^2: \lambda/2 \leq|\eta| \leq 2\lambda\right\},
    &\mathbb{A}_R^{\circ}&:=\left\{\eta \in \mathbb{R}^2:|\eta| \leq 2R\right\},\\
    \mathbb{B}_h&:=\left\{\zeta \in \mathbb{R}: h/2 \leq|\zeta| \leq 2h\right\},
    &\mathbb{B}_R^{\circ}&:=\left\{\zeta \in \mathbb{R}:|\zeta| \leq 2R\right\}.
\end{align*}
    \item The letters $j$ and $k$ are used as integer counting indices when needed; dyadic frequency-localized objects are indexed by $\lambda$ and $h$.
    \item For a set $E$, we write $\chi_E$ for its characteristic function.
\end{itemize}
\endgroup

 \section{Two-parameter variation and analytic preliminaries}\label{sec-pre}

The full variation norm $[\mathcal A]_{r,2}$ consists of a rectangular component measuring joint oscillation, two boundary-variation terms
measuring coordinatewise variation, and a base-value term. We begin by placing its rectangular component among other natural notions of two-parameter variation.

\subsection{Comparison with other notions of two-parameter variation}\label{subsec-def-var}
In addition to the grid-like variation $\{\mathcal{A}\}_{r,2}$ introduced in the Introduction, we consider two other natural notions of two-parameter variation for $1\leq r<\infty$. The first is obtained from finite rectangular partitions of $\mathbb{J}_0$ and is defined by
\begin{align}\label{ali-def-var-par}
    \{\mathcal{A}\}_{r,2}^{par}:=\sup_{\mathscr{P}}\Big(\sum_{Q\in \mathscr{P}} |\mathcal{A}(Q)|^{r}\Big)^{\frac{1}{r}},
\end{align}
where the supremum is taken over all finite partitions $\mathscr{P}$ of $\mathbb{J}_0$ into rectangles whose sides are parallel to the coordinate axes.

The second notion, used in \cite{Dab,JRW,KLMP}, is based on ordered sequences. For $\mathbf{z}_j:=(t_j,s_j)$, we write $\mathbf{z}_j\leq \mathbf{z}_{j+1}$ if $t_j\leq t_{j+1}$ and $s_j\leq s_{j+1}$, and define
\begin{align*}
    \{\mathcal{A}\}_{r,2}^{seq}:=\sup_{L\in \mathbb{N}} \sup_{(1,2^{-3})\leq \mathbf{z}_{1}\leq\cdots\leq\mathbf{z}_{L}\leq (2,2^{-2})}\Big(\sum_{j=1}^{L-1} |\mathcal{A}(\mathbf{z}_{j+1})-\mathcal{A}(\mathbf{z}_j)|^{r}\Big)^{\frac{1}{r}}.
\end{align*}
When $r=1$, this is the Arzel\`a variation. We now compare these two notions with the variation norm used in this paper and then examine another natural, but ultimately inappropriate, definition.

After an affine rescaling of the two coordinate intervals, $\{\mathcal{A}\}_{r,2}$ and $\{\mathcal{A}\}_{r,2}^{par}$ correspond, respectively, to the $r$-variation and the controlled $r$-variation in \cite[Definition~1]{FV}.
Thus, \cite[Theorem~1 (i)--(ii)]{FV} applies to the rectangular-increment convention used here; its constant depends only on $r$ and $\epsilon$, and not on the partition or on $\mathcal{A}$. We record the resulting comparison for later use.
\begin{lemma}\label{lem-ali-rela-var-par}
    For any function $\mathcal{A}$ defined on $\mathbb{J}_0$, one has $\{\mathcal{A}\}_{1,2}=\{\mathcal{A}\}_{1,2}^{par}$. Moreover, for $r\in [1,\infty)$ and $\epsilon>0$, there exists $C_{r,\epsilon}\geq1$ such that
\begin{align}\label{ali-rela-var-par}
    {C_{r,\epsilon}}^{-1}\{\mathcal{A}\}_{r+\epsilon,2}^{par}\leq \{\mathcal{A}\}_{r,2}\leq \{\mathcal{A}\}_{r,2}^{par}.
\end{align}
\end{lemma}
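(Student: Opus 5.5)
The plan splits into three parts: the elementary inequality $\{\mathcal{A}\}_{r,2}\leq \{\mathcal{A}\}_{r,2}^{par}$, the equality at $r=1$, and the harder lower bound, which I will take from \cite[Theorem~1]{FV} as the text just before the lemma indicates.

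\emph{Upper bound.} Every grid partition $\{I_j\times J_k\}_{j,k}$ coming from partitions $\{I_j\}$ of $[1,2]$ and $\{J_k\}$ of $[2^{-3},2^{-2}]$ is itself a finite partition of $\mathbb{J}_0$ into axis-parallel rectangles. Hence each sum in the definition of $\{\mathcal{A}\}_{r,2}$ appears among the sums in \eqref{ali-def-var-par}. Taking suprema gives $\{\mathcal{A}\}_{r,2}\leq \{\mathcal{A}\}_{r,2}^{par}$ for every $r\in[1,\infty)$.

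\emph{Equality at $r=1$.} Given an arbitrary rectangular partition $\mathscr{P}$, collect all $t$-coordinates and all $s$-coordinates of the corners of rectangles in $\mathscr{P}$. These generate a grid refinement $\{I_j\times J_k\}$ in which every $Q\in\mathscr{P}$ is a union of grid cells. The rectangular increment is additive: if $Q$ is split into subrectangles along grid lines, then $\mathcal{A}(Q)$ equals the sum of the increments of the pieces. This is a telescoping of the alternating vertex sums, and it is consistent with the sign convention in Figure~\ref{Fig.rectangle}. The triangle inequality then gives
\[
\sum_{Q\in\mathscr P}|\mathcal{A}(Q)|\leq\sum_{j,k}|\mathcal{A}(I_j\times J_k)|\leq\{\mathcal{A}\}_{1,2}.
\]
Taking the supremum over $\mathscr{P}$ yields $\{\mathcal{A}\}_{1,2}^{par}\leq\{\mathcal{A}\}_{1,2}$, and combined with the upper bound this gives equality. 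For $r>1$ this refinement argument fails, because $|\sum a_i|^r$ is not bounded by $\sum|a_i|^r$. That failure is exactly why the $\epsilon$-loss appears.

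\emph{Lower bound with loss.} For the inequality $\{\mathcal{A}\}^{par}_{r+\epsilon,2}\lesssim_{r,\epsilon}\{\mathcal{A}\}_{r,2}$, I first apply the affine rescaling $(t,s)\mapsto(t-1,\,8s-1)$, which maps $\mathbb{J}_0$ onto $[0,1]^2$. This preserves both grid and rectangular partitions, and it preserves rectangular increments with the same signs. Under this rescaling, $\{\mathcal{A}\}_{r,2}$ and $\{\mathcal{A}\}^{par}_{r,2}$ become the $r$-variation and the controlled $r$-variation of \cite[Definition~1]{FV}, so \cite[Theorem~1 (i)--(ii)]{FV} gives the bound. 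One detail needs checking: that FV's rectangular increment agrees with ours up to a global sign, which is harmless inside absolute values.

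The main obstacle is this lower bound if one wants it independently of FV. To prove it directly, I would normalize $\{\mathcal{A}\}_{r,2}=1$ and fix a rectangular partition $\mathscr P$. I would then sort the rectangles dyadically by the size of $|\mathcal{A}(Q)|\sim2^{-m}$ and show that the number $N_m$ of rectangles at level $m$ satisfies $N_m\lesssim_\epsilon 2^{m(r+\epsilon')}$ for some small $\epsilon'<\epsilon$. Summing $\sum_m N_m2^{-m(r+\epsilon)}$ then converges. The counting step would use a combinatorial selection: from a family of disjoint rectangles with large increments, extract a large subfamily whose projections form a grid-compatible configuration, for instance via the Erd\H{o}s--Szekeres or Dilworth lemma applied to the orderings of their $t$- and $s$-projections. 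Each such extraction loses a subpolynomial factor, and this loss is what produces the $\epsilon$.
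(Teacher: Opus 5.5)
Your proposal is correct and matches the paper. The paper also obtains the entire lemma by affinely rescaling $\mathbb{J}_0$ and citing \cite[Theorem~1 (i)--(ii)]{FV}; your direct proofs of $\{\mathcal{A}\}_{r,2}\leq\{\mathcal{A}\}_{r,2}^{par}$ and of the $r=1$ equality (common grid refinement plus additivity of rectangular increments) are valid and simply make explicit what the paper takes from that reference.

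Do not rely on your optional FV-free sketch, however. Interior-disjoint rectangles with strictly nested $t$-projections and disjoint $s$-projections contain no two members that are cells of a common grid, so a Dilworth or Erd\H{o}s--Szekeres selection of a grid-compatible subfamily can fail completely, rather than losing only a subpolynomial factor.
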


As an immediate consequence of this lemma, we have
\begin{align}\label{ali-compa-seqvar}
    \{\mathcal{A}\}_{r,2}^{seq}\lesssim_{r,\epsilon} [\mathcal{A}]_{r-\epsilon,2}
\end{align}
for any $r\in (1,\infty)$ and $\epsilon\in (0,r-1)$.
Indeed, for an increasing sequence $\{\mathbf z_j\}_{j=1}^{L}\subset \mathbb{J}_0$, with $\mathbf z_j=(t_j,s_j)$, we have
\begin{align*}
    |\mathcal{A}(\mathbf{z}_{j+1})-\mathcal{A}(\mathbf{z}_j)|&\leq |\mathcal{A}([t_j,t_{j+1}]\times [s_j,s_{j+1}])|+|\mathcal{A}([1,t_j]\times [s_j,s_{j+1}])|\\
    &\qquad +|\mathcal{A}([t_j,t_{j+1}]\times [2^{-3},s_j])|+|\mathcal{A}(1,s_{j+1})-\mathcal{A}(1,s_j)|\\
    &\qquad +|\mathcal{A}(t_{j+1},2^{-3})-\mathcal{A}(t_j,2^{-3})|.
\end{align*}
In each of the three rectangular terms, the rectangles indexed by $j$ have pairwise disjoint interiors and can therefore be completed to a finite rectangular partition of $\mathbb{J}_0$. Taking the $\ell^r$ norm and using \eqref{ali-def-var-par} and \eqref{ali-rela-var-par}, together with monotonicity of one-parameter variation in the exponent for the two boundary terms, yields \eqref{ali-compa-seqvar}.

In particular, if $\mathcal{A}_1(t):=\mathcal{A}(t,2^{-3}t)$, then for every fixed $\eta>0$ and $r\geq1$,
\begin{align}\label{ali-diagonal-var-comparison}
    [\mathcal{A}_1]_{r+\eta,1}\lesssim_{r,\eta}[\mathcal{A}]_{r,2}.
\end{align}
For $r>1$, this follows from \eqref{ali-compa-seqvar} by using the exponent $r+\eta$ and the loss $\eta$. For $r=1$, the displayed decomposition above, \cite[Theorem~1 (i)]{FV}, and monotonicity of one-parameter variation give the same conclusion. The dependence of the constant on the fixed pair $(r,\eta)$ is harmless in the limiting argument in Section~\ref{sec-nec-cond}.

However, for any $1\leq r_1, r_2<\infty$, the reverse inequality
\[[\mathcal A]_{r_1,2}\lesssim\{\mathcal A\}_{r_2,2}^{\mathrm{seq}}\]
does not hold. We present a simple example for which $\{\mathcal A\}_{1,2}^{\mathrm{seq}}<\infty$ while $\{\mathcal A\}_{r,2}=\infty$ for every $1\leq r<\infty$. After an affine change of variables, it suffices to work on $[0,1]^2$. Let $\mathcal A(t,s)$ be the characteristic function of the set $\{(t,s)\in[0,1]^2:t+s\geq 1\}$. A direct verification gives $\{\mathcal A\}_{1,2}^{\mathrm{seq}}=1$. On the other hand, for each $m\in\mathbb N$, let
\[I_j:=\Big[\frac{j}{m}, \frac{j+1}{m}\Big], \qquad J_k:=\Big[\frac{k}{m}, \frac{k+1}{m}\Big], \qquad 0\leq j,k\leq m-1.\]
Then $\mathcal{A}(I_j\times J_k)=-1$ when $j+k=m-1$. There are exactly $m$ such pairs $(j,k)$, which implies $\{\mathcal A\}_{r,2}\geq m^{1/r}$. The result now follows by letting $m\to \infty$.

For $1\leq r<\infty$, one might attempt to define $\{\mathcal{A}\}_{r,2}^{osc}$ by
\begin{align*}
		\{\mathcal{A}\}_{r,2}^{osc}:=\sup_{\mathscr{P}}\Big(\sum_{Q\in \mathscr{P}}\max_{c_{Q}} |\mathcal{A}({c_{Q}})-\mathcal{A}({l_{Q}})|^{r}\Big)^{\frac{1}{r}},
\end{align*}
where $c_{Q}$ denotes a corner of $Q$ and $l_{Q}$ denotes the bottom-left corner of $Q$; one could equally choose any fixed vertex in place of the bottom-left corner. However, this definition does not provide a suitable notion of two-parameter
	variation in natural continuous settings (see Lemma~\ref{lem-osc-continuous-constant} below). Roughly speaking, the reason is
	that, after subdividing a rectangle into many thin horizontal or vertical
	strips, the same oscillation along one coordinate direction is counted
	repeatedly, which forces divergence unless $\mathcal{A}$ is constant.

\begin{lem}\label{lem-osc-continuous-constant}
	Let $1\leq r<\infty$. If $\mathcal{A}\in C(\mathbb{J}_0)$, then
	\[
	\{\mathcal{A}\}_{r,2}^{osc}<\infty
	\quad\Longleftrightarrow\quad
	\mathcal{A}\ \text{is constant on }\mathbb{J}_0.
	\]
\end{lem}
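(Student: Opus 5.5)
The implication from constancy to finiteness is immediate: if $\mathcal{A}$ is constant, every summand vanishes and $\{\mathcal{A}\}_{r,2}^{osc}=0$. The work is in the converse. We argue by contraposition: assuming $\mathcal{A}\in C(\mathbb{J}_0)$ is not constant, we exhibit partitions $\mathscr{P}_m$ of $\mathbb{J}_0$ with $\sum_{Q\in\mathscr{P}_m}\max_{c_Q}|\mathcal{A}(c_Q)-\mathcal{A}(l_Q)|^r\to\infty$.

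\textbf{Step 1 (a nonzero increment along an axis).} If $\mathcal{A}$ is not constant, then $\mathcal{A}$ is nonconstant along some horizontal or vertical segment in $\mathbb{J}_0$. Indeed, otherwise $\mathcal{A}(t,s)=\mathcal{A}(t,2^{-3})=\mathcal{A}(1,2^{-3})$ for all $(t,s)$. By symmetry of the argument, assume there are $s_0$ and $t_1<t_2$ in $[1,2]$ with $|\mathcal{A}(t_2,s_0)-\mathcal{A}(t_1,s_0)|=:2\delta>0$. The vertical case is handled identically, with strips in the other direction. Continuity, in fact uniform continuity on the compact set $\mathbb{J}_0$, gives $\eta>0$ such that $|\mathcal{A}(t_2,s)-\mathcal{A}(t_1,s)|\geq\delta$ for all $s\in[s_0,s_0+\eta]$ (or $[s_0-\eta,s_0]$ if $s_0$ is near the top edge).

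\textbf{Step 2 (thin strips).} For $m\in\mathbb{N}$, cut $[s_0,s_0+\eta]$ into $m$ intervals $[\sigma_k,\sigma_{k+1}]$ of length $\eta/m$. Consider the rectangles $Q_k:=[t_1,t_2]\times[\sigma_k,\sigma_{k+1}]$, whose bottom-left corner is $l_{Q_k}=(t_1,\sigma_k)$. The bottom-right corner $(t_2,\sigma_k)$ is an admissible choice of $c_{Q_k}$, so
\[
\max_{c_{Q_k}}|\mathcal{A}(c_{Q_k})-\mathcal{A}(l_{Q_k})|\geq|\mathcal{A}(t_2,\sigma_k)-\mathcal{A}(t_1,\sigma_k)|\geq\delta .
\]
Now complete $\{Q_k\}_k$ to a finite rectangular partition $\mathscr{P}_m$ of $\mathbb{J}_0$, for instance by the grid generated by the lines $t=t_1,t_2$ and $s=\sigma_k$. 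This is legitimate because all summands are nonnegative. Hence
\[
\{\mathcal{A}\}_{r,2}^{osc}\geq\Big(\sum_k\delta^r\Big)^{1/r}=m^{1/r}\delta .
\]
Letting $m\to\infty$ gives $\{\mathcal{A}\}_{r,2}^{osc}=\infty$, which contradicts finiteness.

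The only mildly delicate point is Step 1: we must choose the strip direction so that the bottom-left corner and another corner of each thin rectangle lie on a segment carrying a uniformly nonzero increment, and we must use continuity to spread a single nonzero increment across an interval of transverse positions. If the oscillation were instead along a vertical segment $\{t_0\}\times[s_1,s_2]$, we would use vertical strips $[\tau_k,\tau_{k+1}]\times[s_1,s_2]$ and compare the corners $(\tau_k,s_1)$ and $(\tau_k,s_2)$ in the same way. The remark about any fixed reference vertex follows by the same argument with the relevant pair of corners.
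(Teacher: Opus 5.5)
Your proposal is correct and follows essentially the same route as the paper. Both arguments reduce to a nonzero increment along an axis-parallel segment, use continuity to keep $F(s)=\mathcal{A}(t_2,s)-\mathcal{A}(t_1,s)$ bounded away from zero on an interval, and cut this into $N$ thin strips whose bottom-left and bottom-right corners differ by a fixed amount, which yields the lower bound $N^{1/r}\eta$. The only cosmetic difference is how the axis-parallel pair is found: the paper passes through the intermediate corner $(t_2,s_1)$, whereas you observe that a function constant on every horizontal and vertical segment is constant.
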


\begin{proof}
	The implication from right to left is immediate. For the converse, assume that $\mathcal{A}$ is continuous and nonconstant on $\mathbb{J}_0$ and that $\{\mathcal{A}\}_{r,2}^{osc}<\infty$. Then there exist $(t_{1},s_{1})$, $(t_{2},s_{2})\in \mathbb{J}_0$ such that $\mathcal{A}(t_{1},s_{1})\neq \mathcal{A}(t_{2},s_{2})$. Hence either $\mathcal{A}(t_{1},s_{1})\neq \mathcal{A}(t_{2},s_{1})$ or $\mathcal{A}(t_{2},s_{1})\neq \mathcal{A}(t_{2},s_{2})$.

	By symmetry, it suffices to treat the first case. Interchanging $t_1$ and $t_2$ if necessary, we may assume that $t_1<t_2$. Set
	\[F(s):=\mathcal{A}(t_{2},s)-\mathcal{A}(t_{1},s).\]
	Then $F$ is continuous and $F(s_{1})\neq 0$. Thus there exist $\eta>0$ and an interval
		$I=[\alpha,\beta]\subset \mathbb{I}_{-3}$ containing $s_{1}$ such that $|F(s)|\ge \eta$ for all $s\in I$. For a fixed $N\in \mathbb{N}$, we consider the partition of
	$[t_{1},t_{2}]\times I$ into the horizontal strips
	\[Q_{j}:=[t_{1},t_{2}]\times [y_{j-1},y_{j}],\qquad 1\le j\le N,\]
	where $\alpha=y_{0}<y_{1}<\cdots<y_{N}=\beta$. These strips can be completed to a finite rectangular partition of $\mathbb{J}_0$. Consequently,
	\[\max_{c_{Q_{j}}}\bigl|\mathcal{A}(c_{Q_{j}})-\mathcal{A}(l_{Q_{j}})\bigr|\ge
	\bigl|\mathcal{A}(t_{2},y_{j-1})-\mathcal{A}(t_{1},y_{j-1})\bigr|=|F(y_{j-1})|\ge \eta.\]
	It follows that
	\[\{\mathcal{A}\}_{r,2}^{osc}\ge
	\Bigg(
	\sum_{j=1}^{N}
	\max_{c_{Q_{j}}}
	\bigl|\mathcal{A}(c_{Q_{j}})-\mathcal{A}(l_{Q_{j}})\bigr|^{r}
	\Bigg)^{1/r}\ge N^{1/r}\eta.\]
	Since $N$ is arbitrary, we obtain $\{\mathcal{A}\}_{r,2}^{osc}=\infty$, contradicting the assumed finiteness. Therefore $\mathcal{A}$ must be constant on $\mathbb{J}_0$.
\end{proof}

\begin{remark}
    For the torus averages considered here, $\mathcal A f(x,\cdot,\cdot)$ is continuous on $\mathbb{J}_0$ for every $f\in\mathcal S(\mathbb R^3)$ and $x\in\mathbb R^3$. Lemma~\ref{lem-osc-continuous-constant} shows that its corner-oscillation quantity is finite only when this parameter function is constant. Thus, this quantity is not suitable for measuring parameter oscillation in the present setting.
\end{remark}

\subsection{Maximal control and pointwise properties}
Throughout this subsection, let $1\leq r<\infty$. We first record that the two-parameter maximal function is controlled by the associated two-parameter variation norm.

\begin{prop}\label{prop-Vr-domina-maximal}
	\begin{align*}
		\sup_{(t,s)\in \mathbb{J}_0}|\mathcal{A}(t,s)|\leq [\mathcal{A}]_{r,2}.
	\end{align*}
\end{prop}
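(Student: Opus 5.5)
Fix $(t,s)\in\mathbb{J}_0$. The plan is to write $\mathcal{A}(t,s)$ as a telescoping sum that starts at the base corner $(1,2^{-3})$ and uses exactly one term from each of the four components of $[\mathcal{A}]_{r,2}$. We use the identity
\begin{align*}
\mathcal{A}(t,s)&=\mathcal{A}([1,t]\times[2^{-3},s])+\big(\mathcal{A}(1,s)-\mathcal{A}(1,2^{-3})\big)\\
&\qquad+\big(\mathcal{A}(t,2^{-3})-\mathcal{A}(1,2^{-3})\big)+\mathcal{A}(1,2^{-3}).
\end{align*}
To check it, expand the rectangular increment with the sign convention of Figure~\ref{Fig.rectangle}:
\[
\mathcal{A}([1,t]\times[2^{-3},s])=\mathcal{A}(t,s)-\mathcal{A}(1,s)+\mathcal{A}(1,2^{-3})-\mathcal{A}(t,2^{-3}).
\]
Adding the remaining three terms leaves only $\mathcal{A}(t,s)$. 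The degenerate cases $t=1$ or $s=2^{-3}$ are harmless, since then the rectangular increment vanishes.

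Next we bound each term by the corresponding component of $[\mathcal{A}]_{r,2}$.

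For the rectangular term with $1<t<2$ and $2^{-3}<s<2^{-2}$, take the partitions $\{[1,t],[t,2]\}$ of $[1,2]$ and $\{[2^{-3},s],[s,2^{-2}]\}$ of $[2^{-3},2^{-2}]$. Then $|\mathcal{A}([1,t]\times[2^{-3},s])|$ is one summand in the $\ell^r$ sum defining $\{\mathcal{A}\}_{r,2}$, so it is at most $\{\mathcal{A}\}_{r,2}$. If $t=2$ or $s=2^{-2}$, the one-piece partition $\{[1,2]\}$ or $\{[2^{-3},2^{-2}]\}$ works the same way.

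For the boundary terms, the two-point sequence $2^{-3}<s$ is admissible in \eqref{ali-def-one-param-var}. Hence $|\mathcal{A}(1,s)-\mathcal{A}(1,2^{-3})|\le[\mathcal{A}(1,\cdot)]_{r,1}$. In the same way, $|\mathcal{A}(t,2^{-3})-\mathcal{A}(1,2^{-3})|\le[\mathcal{A}(\cdot,2^{-3})]_{r,1}$.

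The base term $|\mathcal{A}(1,2^{-3})|$ is itself a summand of $[\mathcal{A}]_{r,2}$.

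Applying the triangle inequality to the identity and taking the supremum over $(t,s)\in\mathbb{J}_0$ gives $\sup_{(t,s)\in\mathbb{J}_0}|\mathcal{A}(t,s)|\le[\mathcal{A}]_{r,2}$, with constant exactly $1$.

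No step is a real obstacle. The only points needing care are confirming the sign convention in the four-corner expansion and confirming that the partition definitions allow intervals with a common endpoint, including the degenerate one-piece partitions used at the edges $t=2$ and $s=2^{-2}$.
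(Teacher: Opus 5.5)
Your proof is correct and follows the paper's argument: the paper uses the same rectangular-increment identity anchored at $(1,2^{-3})$, keeping $|\mathcal{A}(t,2^{-3})|$ as one term and implicitly bounding it by $[\mathcal{A}(\cdot,2^{-3})]_{r,1}+|\mathcal{A}(1,2^{-3})|$. You make that last step explicit and check the degenerate edge cases, which the paper leaves unstated.
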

\begin{proof}
	Note that
		\begin{align*}	|\mathcal{A}(t,s)|
		    	&\leq |\mathcal{A}(t,s)-\mathcal{A}(1,s)+\mathcal{A}(1,2^{-3})-\mathcal{A}(t,2^{-3})|  \\
			&\qquad +|\mathcal{A}(1,s)-\mathcal{A}(1,2^{-3})|+|\mathcal{A}(t,2^{-3})|
			\end{align*}
		for any $(t,s)\in \mathbb{J}_0$. Thus, we have
		    \begin{align*}
		  |\mathcal{A}(t,s)|  \leq \{\mathcal{A}\}_{r,2}+[\mathcal{A}(1,\cdot)]_{r,1}+[\mathcal{A}(\cdot,2^{-3})]_{r,1}+
		    	|\mathcal{A}(1,2^{-3})|
		=[\mathcal{A}]_{r,2},
		    \end{align*}
	   from which the desired inequality follows.
\end{proof}
In addition, the one-dimensional variation along any horizontal or vertical line is controlled by $[\mathcal{A}]_{r,2}$.

\begin{prop}\label{prop-Vr-domina-line}
	For any $(t,s)\in \mathbb{J}_0$, we have
    \begin{align*}
        [\mathcal{A}(t,\cdot)]_{r,1}\leq [\mathcal{A}]_{r,2}
        \quad\text{and}\quad
        [\mathcal{A}(\cdot,s)]_{r,1}\leq [\mathcal{A}]_{r,2}.
    \end{align*}
\end{prop}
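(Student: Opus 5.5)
We prove the first inequality by decomposing the increments of $\mathcal{A}(t,\cdot)$ into a rectangular part and a boundary part. The second inequality follows by the symmetric argument, with the roles of $t$ and $s$ interchanged.

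Fix $t\in\mathbb{I}$ and a finite sequence $2^{-3}\leq s_1<\cdots<s_L\leq 2^{-2}$. For each $j$ we write
\begin{align*}
\mathcal{A}(t,s_{j+1})-\mathcal{A}(t,s_j)=\mathcal{A}([1,t]\times[s_j,s_{j+1}])+\big(\mathcal{A}(1,s_{j+1})-\mathcal{A}(1,s_j)\big).
\end{align*}
This identity follows directly from the sign convention in Figure~\ref{Fig.rectangle}: the rectangular increment equals $\mathcal{A}(t,s_{j+1})-\mathcal{A}(1,s_{j+1})+\mathcal{A}(1,s_j)-\mathcal{A}(t,s_j)$. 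If $t=1$, the rectangular term vanishes, and the claim is immediate since $[\mathcal{A}(1,\cdot)]_{r,1}$ is one of the summands of $[\mathcal{A}]_{r,2}$.

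Next we apply Minkowski's inequality in $\ell^r$ over $j$. This gives
\[
\Big(\sum_j|\mathcal{A}(t,s_{j+1})-\mathcal{A}(t,s_j)|^r\Big)^{1/r}\leq\Big(\sum_j|\mathcal{A}([1,t]\times[s_j,s_{j+1}])|^r\Big)^{1/r}+[\mathcal{A}(1,\cdot)]_{r,1}.
\]
To bound the first term on the right by $\{\mathcal{A}\}_{r,2}$, we build a grid partition. Partition $[1,2]$ into $I_1=[1,t]$ and $I_2=[t,2]$ (take only $I_1$ if $t=2$). Partition $[2^{-3},2^{-2}]$ into the intervals $[s_j,s_{j+1}]$, together with $[2^{-3},s_1]$ and $[s_L,2^{-2}]$ when these are nondegenerate. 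The rectangles $I_1\times[s_j,s_{j+1}]$ are then among the grid cells $I_j\times J_k$ in the definition of $\{\mathcal{A}\}_{r,2}$. Dropping the other cells only decreases the sum, so the first term is at most $\{\mathcal{A}\}_{r,2}$.

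Taking the supremum over $L$ and over all sequences $\{s_j\}$ yields
\[
[\mathcal{A}(t,\cdot)]_{r,1}\leq\{\mathcal{A}\}_{r,2}+[\mathcal{A}(1,\cdot)]_{r,1}\leq[\mathcal{A}]_{r,2}.
\]
For the second inequality we use the decomposition through the bottom boundary $s=2^{-3}$. The rectangular increments are now $\mathcal{A}([t_j,t_{j+1}]\times[2^{-3},s])$, and the boundary term is $[\mathcal{A}(\cdot,2^{-3})]_{r,1}$; the grid argument is the same. The only point needing care is that the rectangles used must belong to a single grid partition, which the construction above guarantees. The case $r=\infty$ is handled the same way, with suprema in place of $\ell^r$ sums.
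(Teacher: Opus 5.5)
Your proposal is correct and follows essentially the paper's argument: split each increment of $\mathcal{A}(t,\cdot)$ into the rectangular increment over $[1,t]\times[s_j,s_{j+1}]$ plus the increment of $\mathcal{A}(1,\cdot)$, apply Minkowski's inequality in $\ell^r$, and bound the rectangular sum by $\{\mathcal{A}\}_{r,2}$ by completing these rectangles to a grid partition. The paper only writes out the first inequality and appeals to symmetry, whereas you also spell out the second via the bottom boundary $s=2^{-3}$.
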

\begin{proof}
	It suffices to estimate $[\mathcal{A}(t,\cdot)]_{r,1}$. For an increasing sequence $\{s_k\}_{k=1}^L\subset \mathbb{I}_{-3}$, the rectangular-partition argument used in the proof of \eqref{ali-compa-seqvar} gives
	   \begin{align*}
	   	 & \Big(\sum_{k=1}^{L-1}|\mathcal{A}(t,s_{k+1})-\mathcal{A}(t,s_{k})|^r\Big)^{1/r} \leq \Big(\sum_{k=1}^{L-1}|\mathcal{A}([1,t]\times[s_k,s_{k+1}])|^r\Big)^{\frac1r}\\
	   	  &\qquad \qquad \qquad +\Big(\sum_{k=1}^{L-1}|\mathcal{A}(1,s_{k+1})-\mathcal{A}(1,s_{k})|^r\Big)^{1/r} \leq [\mathcal{A}]_{r,2}.
	   \end{align*}
    Taking the supremum over all increasing sequences $\{s_k\}_k$ completes the proof.
\end{proof}

If $[\mathcal{A}]_{r,2}<\infty$ with $1\le r<\infty$, then Proposition~\ref{prop-Vr-domina-line} and the corresponding one-parameter variation property imply that the limits
\[
\lim_{s\downarrow 2^{-3}}\mathcal{A}(t_0,s)
\qquad\text{and}\qquad
\lim_{t\downarrow 1}\mathcal{A}(t,s_0)
\]
exist for every $(t_0,s_0)\in \mathbb{J}_0$. In fact, finiteness of this two-parameter variation also guarantees the existence of two-parameter limits in every open quadrant. The following proposition goes one step further: if $[\mathcal{A}]_{r,2}<\infty$, then the discontinuity set of $\mathcal{A}$ is contained in a countable union of straight lines parallel to the axes. This is a natural two-parameter analogue of the fact that a one-variable function with finite $r$-variation has at most countably many discontinuities. Arguments of a similar flavor appear in \cite[Theorem~1]{DW}, although the setting there is different from ours. We include the proof for the reader's convenience.

\begin{prop}\label{prop-Vr-continuity}
	If $[\mathcal{A}]_{r,2}<\infty$ with $1\leq r<\infty$, then for any $(t_{0},s_{0})\in \mathbb{J}_0$, the following quadrant limits exist whenever the corresponding quadrant is nonempty:
	    \begin{align*}
	    	\lim_{t\rightarrow t_{0}^{+},s\rightarrow s_{0}^{+}}\mathcal{A}(t,s), \quad \lim_{t\rightarrow t_{0}^{+},s\rightarrow s_{0}^{-}}\mathcal{A}(t,s), \quad \lim_{t\rightarrow t_{0}^{-},s\rightarrow s_{0}^{+}}\mathcal{A}(t,s), \quad \lim_{t\rightarrow t_{0}^{-},s\rightarrow s_{0}^{-}}\mathcal{A}(t,s).
	    \end{align*}
Furthermore, there exist countable sets $X\subset\mathbb{I}$ and $Y\subset \mathbb{I}_{-3}$ such that $\mathcal{A}$ is continuous at $(t_0,s_0)$ whenever $t_0\notin X$ and $s_0\notin Y$.
\end{prop}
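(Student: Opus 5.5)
The plan is to write $\mathcal{A}$ as a boundary part plus a rectangular part. Set
\[
\mathcal{A}(t,s)=\mathcal{A}([1,t]\times[2^{-3},s])+\mathcal{A}(1,s)+\mathcal{A}(t,2^{-3})-\mathcal{A}(1,2^{-3}),
\]
so $\mathcal{A}=G(t,s)+a(s)+b(t)-c$. Here $a=\mathcal{A}(1,\cdot)$ and $b=\mathcal{A}(\cdot,2^{-3})$ have finite one-parameter $r$-variation, by the definition of $[\mathcal{A}]_{r,2}$. By the standard one-variable fact, $a$ and $b$ have one-sided limits everywhere and only countably many discontinuities. Their discontinuity sets go into $Y$ and $X$ respectively. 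It therefore suffices to treat $G$. Note that $G$ vanishes on the lines $t=1$ and $s=2^{-3}$, and that $\{G\}_{r,2}=\{\mathcal{A}\}_{r,2}<\infty$, since rectangular increments of $a,b,c$ vanish.

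\textbf{Quadrant limits.} Fix the quadrant $t\to t_0^+,\ s\to s_0^+$; the other three are symmetric. I will show $G$ is Cauchy there. Suppose not. Then there are $\eta>0$ and pairs of points $(t,s),(t',s')$ arbitrarily close to $(t_0,s_0)$ in the open quadrant with $|G(t,s)-G(t',s')|\ge\eta$.

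Write $G(t,s)-G(t',s')$ as a rectangular increment over $[t_0+\delta,t]\times[s_0+\delta,s]$-type boxes plus differences along horizontal and vertical lines. The line variations are controlled by Proposition~\ref{prop-Vr-domina-line}, which applies to $G$ as well since $[G]_{r,2}\lesssim[\mathcal{A}]_{r,2}$.

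The main obstacle is that the line variations along $t\mapsto G(t,s)$ are not uniform in $s$ a priori, so a cleaner route is needed. I plan to use the function $F(t)=\lim_{s\to s_0^+}G(t,s)$. For each fixed $t$ this limit exists by Proposition~\ref{prop-Vr-domina-line}. I will then show
\[
\sup_{t}|G(t,s)-F(t)-(G(t_0',s)-F(t_0'))|\to0
\]
by contradiction. Choose a decreasing sequence $s_1>s_2>\cdots\downarrow s_0$ and points $t_k$ with rectangular increments $|G([t_0',t_k]\times[s_{k+1},s_k])|\ge\eta/2$. If infinitely many such disjoint-in-$s$ rectangles exist, they can be made pairwise interior-disjoint after passing to a subsequence of $s$-strips. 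The grid-versus-partition comparison (Lemma~\ref{lem-ali-rela-var-par}, with $r+\epsilon$) then gives $\{G\}^{par}_{r+\epsilon,2}=\infty$, which contradicts $\{G\}_{r,2}<\infty$. This yields uniform convergence in $t$ as $s\to s_0^+$, modulo a one-variable function. Combined with the existence of $\lim_{t\to t_0^+}F(t)$ (since $F$ inherits finite $r$-variation from $b$ and $G$), this gives the quadrant limit.

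\textbf{Continuity off countably many lines.} The key device is to define, for $\eta>0$, the set $X_\eta$ of $t_0$ for which some pair of adjacent quadrant limits, left versus right of $t_0$, differ by at least $\eta$ at some $s_0$. These differences are rectangular increments of thin vertical strips $[t_0-\delta,t_0+\delta]\times[s_0-\delta',s_0+\delta']$ in the limit. If $X_\eta$ contained $N$ points, the corresponding $N$ thin strips would be disjoint in $t$, so the grid variation would satisfy $\{G\}_{r,2}\ge N^{1/r}\eta/C$. The base row is handled by subtracting the value at $s=2^{-3}$, where $G=0$. Hence $X_\eta$ is finite, and $X=\bigcup_n X_{1/n}$ is countable. $Y$ is defined symmetrically.

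If $t_0\notin X$ and $s_0\notin Y$, then all four quadrant limits coincide. The limits along the axes through $(t_0,s_0)$ equal the common value as well, by the same strip argument applied to the one-sided line limits. Hence $G$, and therefore $\mathcal{A}$, is continuous at $(t_0,s_0)$.

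I expect the uniform-in-$t$ step in the quadrant-limit argument to be the hardest. This is where grid disjointness must be arranged carefully, and possibly where the $\epsilon$-loss comparison with $\{\cdot\}^{par}$ is genuinely needed.
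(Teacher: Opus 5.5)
Your quadrant-limit argument is sound, and it takes a different route from the paper. You obtain uniform convergence of $G(\cdot,s)$ to $F$ as $s\to s_0^{+}$ from disjoint horizontal rectangles and finish with a Moore--Osgood step. The paper instead argues by contradiction with nested disjoint rectangles near the corner. Both rest on disjoint rectangles plus Lemma~\ref{lem-ali-rela-var-par}.

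The gap is in the continuity part. Your sets $X_\eta$, $Y_\eta$ record only jumps between quadrant limits. Quadrant limits see $G$ only on the open quadrants, i.e.\ off the two lines through $(t_0,s_0)$. Nothing in your argument compares the values of $G$ on those lines, including $G(t_0,s_0)$ itself, with the common quadrant value. So ``all four quadrant limits coincide, hence $G$ is continuous'' does not follow.

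Concretely, take $\mathcal A=\chi_{\{(t_0,s_0)\}}$ with $(t_0,s_0)$ interior. Then $a\equiv b\equiv 0$, $G=\mathcal A$, and $[\mathcal A]_{r,2}=4^{1/r}$. Every quadrant limit and every one-sided axis limit vanishes at every point, so your construction gives $X=Y=\emptyset$, yet $\mathcal A$ is discontinuous at $(t_0,s_0)$. Likewise $G=\chi_{\{t=t_0,\,s>s_1\}}$ has $\{G\}_{r,2}\le 2^{1/r}$ and no quadrant jumps. Yet its limits along the line $t=t_0$ at heights $s>s_1$ equal $1$, so your claim that axis limits agree with the quadrant value for $t_0\notin X$ is also false.

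The fix is to define the exceptional sets through values rather than limits. Since $G(\cdot,2^{-3})=0$ and $G(1,\cdot)=0$, the difference $G(t,s)-G(t_0,s)$ equals $\pm G(R)$, where $R$ has $t$-side between $t_0$ and $t$ and $s$-side $[2^{-3},s]$. Similarly, $G(t,s)-G(t,s_0)$ equals $\pm$ the increment of a rectangle with $t$-side $[1,t]$ and $s$-side between $s_0$ and $s$.

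Set $X_\eta:=\{t_0:\limsup_{t\to t_0}\sup_s|G(t,s)-G(t_0,s)|\geq\eta\}$ and define $Y_\eta$ symmetrically. Points $t^{(1)},\dots,t^{(N)}\in X_\eta$ produce $N$ interior-disjoint strips, each with increment at least $\eta/2$. These strips have different heights, so they are not cells of a single grid, and you cannot bound $\{G\}_{r,2}$ from below directly as you wrote. Instead, complete them to a partition and use $N^{1/r_1}\eta/2\leq\{G\}^{\mathrm{par}}_{r_1,2}\lesssim\{G\}_{r,2}$ with $r_1>r$ (Lemma~\ref{lem-ali-rela-var-par}).

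Hence each $X_\eta$ and $Y_\eta$ is finite. For $t_0\notin X$ and $s_0\notin Y$, the bound $|G(t,s)-G(t_0,s_0)|\leq|G(t,s)-G(t,s_0)|+|G(t,s_0)-G(t_0,s_0)|$ gives continuity directly, without the quadrant limits. The paper reaches the same conclusion by using the oscillation at the point, which includes the value $\mathcal A(t_0,s_0)$. It then shows that the points with oscillation exceeding $1/k$ lie on finitely many axis-parallel lines.
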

\begin{proof}
	For each of the four quadrant limits, let $R$ denote the corresponding quadrant rectangle.
	The rectangular variation of $\mathcal A$ over $R$ is bounded by $\{\mathcal A\}_{r,2}$, since each pair of coordinate partitions of $R$ can be extended to a pair of partitions of the coordinate intervals of $\mathbb J_0$. Proposition~\ref{prop-Vr-domina-line} controls the two boundary variations associated with $R$, and Proposition~\ref{prop-Vr-domina-maximal} controls its base value. Hence the full variation of $\mathcal A$ over $R$ is bounded by a constant multiple of $[\mathcal A]_{r,2}$.
	After affinely rescaling the coordinate intervals of $R$ and reversing coordinate directions when necessary, it suffices to treat the case $(t_{0},s_{0})=(1,2^{-3})$; in this case, only the first limit is well-defined.
	Suppose that the limit
	\begin{align*}
		\lim_{t\rightarrow t_{0}^{+},s\rightarrow s_{0}^{+}}\mathcal{A}(t,s)
	\end{align*}
	 does not exist. Then there exists $\epsilon>0$ such that, for every $\delta>0$, the oscillation of $\mathcal{A}(t,s)$ in the square $(t_{0},t_{0}+\delta)\times (s_{0},s_{0}+\delta)$ is greater than $\epsilon$. Choose any $t_{1}\in (1,2)$ and $s_{1}\in (2^{-3},2^{-2})$. Note that, by Proposition~\ref{prop-Vr-domina-line},
	     \begin{align*}
	     	[\mathcal{A}(t_{1},\cdot)]_{r,1}+[\mathcal{A}(\cdot,s_{1})]_{r,1}\lesssim [\mathcal{A}]_{r,2}<\infty,
	     \end{align*}
	 which implies that the limits
	 \[
	     	\lim_{s\rightarrow s_{0}^{+}}\mathcal{A}(t_1,s)
	     	\quad\text{and}\quad
	     	\lim_{t\rightarrow t_0^{+}}\mathcal{A}(t,s_1)
	 \]
	 exist. Thus, we can choose a suitably small $\delta_{1}>0$ such that the oscillations of $\mathcal{A}(t_1,\cdot)$ and $\mathcal{A}(\cdot,s_1)$ on $(s_0,s_0+\delta_1)$ and $(t_0,t_0+\delta_1)$, respectively, are both less than $\epsilon/8$. Now choose two points $(x_1,y_1), (x_2,y_2)\in (t_{0},t_0+\delta_1)\times (s_0,s_0+\delta_1)$ such that
	 \begin{align*}
	 	|\mathcal{A}(x_1,y_1)-\mathcal{A}(x_2,y_2)|>\epsilon/2.
	 \end{align*}
		 Let $A_{1}:=\mathcal{A}(t_1,s_1)-\mathcal{A}(t_1,y_1)-\mathcal{A}(x_1,s_1)+\mathcal{A}(x_1,y_1)$ and $A_2:=\mathcal{A}(t_1,s_1)-\mathcal{A}(t_1,y_2)-\mathcal{A}(x_2,s_1)+\mathcal{A}(x_2,y_2)$. Then
		 \begin{align*}
		 	|A_1-A_2|
		 	&\geq |\mathcal{A}(x_1,y_1)-\mathcal{A}(x_2,y_2)|
		 	-|\mathcal{A}(t_1,y_2)-\mathcal{A}(t_1,y_1)|\\
		 	&\quad-|\mathcal{A}(x_2,s_1)-\mathcal{A}(x_1,s_1)| \\
		 	&\geq \epsilon/2-\epsilon/8-\epsilon/8=\epsilon/4,
		 \end{align*}
	 which implies that $|A_1|\geq \epsilon/8$ or $|A_{2}|\geq \epsilon/8$. Thus, we obtain a rectangle $Q_{1}$ such that $|\mathcal{A}(Q_{1})|\geq \epsilon/8$. Suppose that $Q_1,\ldots,Q_m$ have been constructed. Choose $(t_{m+1},s_{m+1})$ sufficiently close to $(t_0,s_0)$ so that $[t_0,t_{m+1}]\times[s_0,s_{m+1}]$ is disjoint from $Q_1,\ldots,Q_m$. We then choose $\delta_{m+1}>0$ so that the corresponding oscillation square lies in $(t_0,t_{m+1})\times(s_0,s_{m+1})$. Repeating the preceding argument produces a rectangle $Q_{m+1}$ in this smaller rectangle such that $|\mathcal A(Q_{m+1})|\geq\epsilon/8$. Hence, for every $N\in\mathbb N$, we obtain pairwise disjoint rectangles $\{Q_k\}_{k=1}^N$ in $\mathbb{J}_0$ satisfying $|\mathcal A(Q_k)|\geq\epsilon/8$.
	 Fix $r<r_1<\infty$. Since these rectangles can be completed to a finite rectangular partition of $\mathbb{J}_0$, Lemma~\ref{lem-ali-rela-var-par} gives
\begin{align*}
    \frac{\epsilon}{8}N^{1/r_1}
    &\leq \Big(\sum_{k=1}^N|\mathcal A(Q_k)|^{r_1}\Big)^{1/r_1}
    \leq \{\mathcal A\}_{r_1,2}^{par}
    \lesssim_{r,r_1}\{\mathcal A\}_{r,2}
    \leq [\mathcal A]_{r,2}.
\end{align*}
Letting $N\to\infty$ gives a contradiction. Therefore, the limit $\lim_{t\to t_0^+,s\to s_0^+}\mathcal{A}(t,s)$ exists, and the first part of the proposition is proved.

	To establish the second part, it suffices to show that, for each $k\geq 1$, every point at which $\mathcal{A}$ has oscillation exceeding $1/k$ lies on one of finitely many lines parallel to the coordinate axes. If this fails, then there exist infinitely many points $(t_i,s_i)$ with distinct $t$-coordinates and distinct $s$-coordinates such that the oscillation of $\mathcal{A}$ at each of these points is greater than $1/k$. For any $N\in\mathbb{N}$, we can find rectangles $\{Q_i\}_{i=1}^N$ centered at $(t_i,s_i)$ such that their projections onto both the $t$-axis and the $s$-axis are pairwise disjoint. Then, for each fixed $1\le i\le N$, there exist points $(\alpha_i,\beta_i),(\gamma_i,\delta_i)\in Q_i$ such that $\alpha_i\le \gamma_i$ and
\[
|\mathcal A(\alpha_i,\beta_i)-\mathcal A(\gamma_i,\delta_i)|>1/k.
\]
If $\beta_i\le \delta_i$, then, by the triangle inequality,
\begin{align*}
	|\mathcal A(\alpha_i,\beta_i)-\mathcal A(\gamma_i,\delta_i)|
	&\le |\mathcal A([\alpha_i,\gamma_i]\times [2^{-3},\beta_i])|
	+|\mathcal A([1,\gamma_i]\times [\beta_i,\delta_i])| \\*
	&\quad +|\mathcal A(\alpha_i,2^{-3})-\mathcal A(\gamma_i,2^{-3})|
	+|\mathcal A(1,\beta_i)-\mathcal A(1,\delta_i)|.
\end{align*}
Similarly, if $\delta_i<\beta_i$, then
\begin{align*}
	|\mathcal A(\alpha_i,\beta_i)-\mathcal A(\gamma_i,\delta_i)|
	&\le |\mathcal A([\alpha_i,\gamma_i]\times [2^{-3},\delta_i])|
	+|\mathcal A([1,\alpha_i]\times [\delta_i,\beta_i])| \\*
	&\quad +|\mathcal A(\alpha_i,2^{-3})-\mathcal A(\gamma_i,2^{-3})|
	+|\mathcal A(1,\beta_i)-\mathcal A(1,\delta_i)|.
\end{align*}
In either case, for any $r<r_1<\infty$, Minkowski's inequality
and Lemma~\ref{lem-ali-rela-var-par} yield
\begin{align*}
	\frac{N^{1/r_1}}{k}
	&\le \Big(\sum_{i=1}^N |\mathcal A(\alpha_i,\beta_i)-\mathcal A(\gamma_i,\delta_i)|^{r_1}\Big)^{1/r_1} \\
	&\le 2\{\mathcal A\}_{r_1,2}^{par}
	+[\mathcal A(1,\cdot)]_{r_1,1}
	+[\mathcal A(\cdot,2^{-3})]_{r_1,1} \\
	&\lesssim_{r,r_1} [\mathcal A]_{r,2}.
\end{align*}
Letting $N\to\infty$ yields a contradiction, and the proof is complete.
\end{proof}
\begin{remark}
	Even if $[\mathcal{A}]_{r,2}<\infty$, the full limit $\lim_{(t,s)\rightarrow (t_0,s_0)}\mathcal{A}(t,s)$ need not exist at a point $(t_0,s_0)\in \mathbb{J}_0$. For example, let $\mathcal{A}$ be the characteristic function of the rectangle $Q$ with vertices
	\begin{align*}
		K_1:=(\tfrac{5}{4},\tfrac{5}{32}), \ K_2:=(\tfrac{7}{4},\tfrac{5}{32}), \ K_3:=(\tfrac{7}{4},\tfrac{7}{32}), \ K_4:=(\tfrac{5}{4},\tfrac{7}{32}).
	\end{align*}
		Then $[\mathcal A]_{r,2}<\infty$, whereas $\mathcal A$ is
		discontinuous on the boundary of $Q$; see Figure~\ref{Fig.def}.
	\begin{figure}[t]
		\centering
		\begin{tikzpicture}[scale=0.6]
			\draw  (0,0) -- (6,0) -- (6,3) -- (0,3) -- (0,0);
			\node[right] at (6,3) {$(2,2^{-2})$};
			\node[right] at (6,0) {$(2,2^{-3})$};
			\node[left] at (0,0) {$(1,2^{-3})$};
			\node[left] at (0,3) {$(1,2^{-2})$};
			\fill[pattern=north east lines] (3/2,3/4) -- (9/2,3/4) -- (9/2,9/4) -- (3/2,9/4) -- (3/2,3/4);
			\node[left] at (3/2,3/4) {$K_1$};
			\node[right] at (9/2,3/4) {$K_2$};
			\node[right] at (9/2,9/4) {$K_3$};
			\node[left] at (3/2,9/4) {$K_4$};
		\end{tikzpicture}
			\caption{A rectangle example showing that finite two-parameter variation need not imply full continuity.}
			\label{Fig.def}
	\end{figure}
\end{remark}
\begin{remark}
   If the hypothesis $[\mathcal A]_{r,2}<\infty$ in
Proposition~\ref{prop-Vr-continuity} is replaced by
$\{\mathcal A\}_{r,2}^{\mathrm{seq}}<\infty$, then the same argument shows that the limits
\[
\lim_{t\to t_0^+,\, s\to s_0^+}\mathcal A(t,s)
\quad\text{and}\quad
\lim_{t\to t_0^-,\, s\to s_0^-}\mathcal A(t,s)
\]
exist whenever they are well-defined. However, the corresponding limits in the other two coordinate quadrants need not exist, and the set of discontinuities need not have the same axis-parallel structure as in Proposition~\ref{prop-Vr-continuity}. Indeed, let
\[
\mathcal A(t,s):=\sum_{n=1}^\infty 2^{-n}\chi_{\{t^2+8s\ge 2+1/n\}}.
\]
Then the discontinuity set of $\mathcal A$ is a countable union of curves in the plane. On the other hand, since $\mathcal A(\mathbf z_j)\le \mathcal A(\mathbf z_{j+1})$ whenever $\mathbf z_j\le \mathbf z_{j+1}$, we have
\[
\{\mathcal A\}_{1,2}^{\mathrm{seq}}
=\mathcal A(2,2^{-2})-\mathcal A(1,2^{-3})
=1.
\]
\end{remark}

\subsection{Embedding lemmas for variation norms}
We now establish embedding lemmas for the one- and two-parameter variation norms. Recall that
    \begin{align*}
    	[\mathcal{A}]_{r,2}=\{\mathcal{A}\}_{r,2}+[\mathcal{A}(1,\cdot)]_{r,1}+[\mathcal{A}(\cdot,2^{-3})]_{r,1}+|\mathcal{A}(1,2^{-3})|.
    \end{align*}
For the one-parameter variation seminorm, we have the following embedding lemma.
\begin{lem}\label{lem-embed-1-var}
	Suppose that $\mathbb I$ is a closed interval in $(0,\infty)$ and $a\in C^1(\mathbb I)$. Then, for any $r\in (1,\infty)$ and any $N>0$,
	    \begin{align*}
	    	[a]_{r,1}\lesssim N^{1/r}\|a\|_{L^r(\mathbb I)}+N^{1/r-1}\|a^{\prime}\|_{L^r(\mathbb I)}.
	    \end{align*}
\end{lem}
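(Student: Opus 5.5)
Our plan is to split the interval into pieces of length about $|\mathbb I|/N$ and, on each piece, estimate the oscillation of $a$ by its local average plus the local $L^1$ mass of $a'$. Small $N$ is easy: if $N\lesssim |\mathbb I|^{-1}$ (say $N\le 1$ after normalizing $|\mathbb I|\approx 1$, which is the only case used here), then $[a]_{r,1}\le [a]_{1,1}=\|a'\|_{L^1(\mathbb I)}\lesssim \|a'\|_{L^r}$, and this is dominated by $N^{1/r-1}\|a'\|_{L^r}$ since $1/r-1<0$. So it suffices to take $N\ge 1$ an integer, which we may do at no cost.

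Fix an increasing sequence $t_1<\cdots<t_L$ in $\mathbb I$. Partition $\mathbb I$ into $N$ consecutive intervals $I_1,\dots,I_N$ of length $|\mathbb I|/N$. Each difference $a(t_{j+1})-a(t_j)$ is of one of two kinds.

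\emph{Short jumps} have $t_j,t_{j+1}$ in the same $I_m$ or in adjacent ones. Then $|a(t_{j+1})-a(t_j)|\le \int_{[t_j,t_{j+1}]}|a'|$. Since $\ell^r\subset\ell^1$ and the intervals $[t_j,t_{j+1}]$ are disjoint, the $r$-th powers of the short jumps inside a fixed pair $I_m\cup I_{m+1}$ sum to at most $(\int_{I_m\cup I_{m+1}}|a'|)^r$. By Hölder this is $\le (2|\mathbb I|/N)^{r-1}\|a'\|_{L^r(I_m\cup I_{m+1})}^r$. Summing over $m$ gives the contribution $N^{1/r-1}\|a'\|_{L^r}$, up to a constant.

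\emph{Long jumps} have $t_j$ and $t_{j+1}$ in non-adjacent intervals. Since the $t_j$ increase, at most $N$ such jumps occur, each starting from a distinct $I_m$, and each difference is bounded by $2\max_{\mathbb I}|a|$. Estimating by the sup directly would cost $N^{1/r}\sup|a|$, which is too much. Instead, we write $|a(t)|\le |\langle a\rangle_{I_m}|+\int_{I_m}|a'|$ for $t\in I_m$, where $\langle a\rangle_{I_m}$ is the average over $I_m$. Then the $\ell^r$ norm over long jumps is bounded by
\[
2\Big(\sum_m \big(|\langle a\rangle_{I_m}|+\textstyle\int_{I_m}|a'|\big)^r\Big)^{1/r}.
\]
By Hölder, $|\langle a\rangle_{I_m}|\lesssim N^{1/r}\|a\|_{L^r(I_m)}$, which produces $N^{1/r}\|a\|_{L^r}$. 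The $a'$ terms give $N^{1/r-1}\|a'\|_{L^r}$ exactly as for the short jumps.

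Adding the two contributions and taking the supremum over sequences gives the lemma. The only delicate point is the bookkeeping for the long jumps: the $t_j$ are increasing, so each $I_m$ is the starting interval of at most one long jump, and each $I_m$ lies in at most two of the pairs used for the short jumps. We treat this carefully, and the rest is Hölder.
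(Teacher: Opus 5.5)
Your proof is correct, and it takes a genuinely different route from the paper's.

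The paper does not decompose the interval. It quotes the multiplicative inequality $[a]_{r,1}\lesssim\|a\|_{L^r(\mathbb I)}^{1/r'}\|a'\|_{L^r(\mathbb I)}^{1/r}$ from Jones--Seeger--Wright. It then writes the right-hand side as $(N^{1/r}\|a\|_{L^r})^{1/r'}(N^{1/r-1}\|a'\|_{L^r})^{1/r}$ and applies Young's inequality.

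You instead prove the additive form directly. You partition $\mathbb I$ at scale $|\mathbb I|/N$, control short jumps by the local $L^1$ mass of $a'$ via H\"older, and control long jumps by local averages plus local oscillation. Monotonicity of the $t_j$ limits how many long jumps each cell can carry.

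The paper's route is one line but relies on an external lemma. Yours is self-contained and elementary. In fact, the additive bound for all $N>0$ (your small-$N$ case handles $N\lesssim|\mathbb I|^{-1}$) yields the multiplicative one upon choosing $N=\|a'\|_{L^r}/\|a\|_{L^r}$, so you have in effect reproved the cited inequality.

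Two small points should be made explicit.
\begin{itemize}
\item In the long-jump step you bound $|a(t_{j+1})-a(t_j)|$ by $|a(t_j)|+|a(t_{j+1})|$. You therefore also need that each $I_m$ is the \emph{terminal} cell of at most one long jump, not only the starting cell. This follows from the same monotonicity argument: if two long jumps $j<j'$ ended in the same $I_m$, then $t_{j'}\in[t_{j+1},t_{j'+1}]\subset I_m$, so jump $j'$ would not be long.
\item Your normalization $|\mathbb I|\approx1$ is actually without loss of generality, not just sufficient for the applications. The affine map from $[0,1]$ onto $\mathbb I$ turns the inequality on $\mathbb I$ with parameter $N$ into the inequality on $[0,1]$ with parameter $N|\mathbb I|$. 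Hence your constants are uniform in $\mathbb I$, as in the paper's version.
\end{itemize}
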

\begin{proof}
	We recall the following inequality established in \cite[p.~6729]{JSW}:
	    \begin{align*}
	    	[a]_{r,1}\lesssim \|a\|_{L^r(\mathbb I)}^{1/r^{\prime}}\|a^{\prime}\|_{L^r(\mathbb I)}^{1/r}.
	    \end{align*}
	Writing the right-hand side as $(N^{1/r}\|a\|_{L^r(\mathbb I)})^{1/r'}(N^{1/r-1}\|a^{\prime}\|_{L^r(\mathbb I)})^{1/r}$ and applying Young's inequality proves the claim.
\end{proof}

Next, we prove the following embedding lemma for $\{\mathcal{A}\}_{r,2}$.
\begin{lem}\label{lem-embedding}
	Suppose $\mathcal{A}\in C^{2}(\mathbb{J}_0)$. Then for any $r\in (1,\infty)$ and any $N_1,N_2>0$,
	\begin{align*}
		\{\mathcal{A}\}_{r,2}&\lesssim (N_1N_2)^{1/r}\|\mathcal{A}\|_{L^r(\mathbb{J}_0)}+N_{1}^{1/r-1}N_2^{1/r}\|\partial_{t}\mathcal{A}\|_{L^r(\mathbb{J}_0)}\\
		&\qquad +N_{1}^{1/r}N_2^{1/r-1}\|\partial_{s}\mathcal{A}\|_{L^r(\mathbb{J}_0)} +\left(N_{1}N_2\right)^{1/r-1}\|\partial_{t}\partial_{s}\mathcal{A}\|_{L^r(\mathbb{J}_0)}.
	\end{align*}
\end{lem}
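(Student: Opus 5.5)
The plan is to reduce the rectangular variation to a product of two one-parameter embeddings, applying Lemma~\ref{lem-embed-1-var} (in its Sobolev form) separately in each variable. Fix partitions $\{I_j\}$ of $\mathbb I$ and $\{J_k\}$ of $\mathbb I_{-3}$, write $I_j=[t_j,t_{j+1}]$ and $J_k=[s_k,s_{k+1}]$. Define the increment function in $t$,
\[
D_j(s):=\mathcal A(t_{j+1},s)-\mathcal A(t_j,s),
\]
so that $\mathcal A(I_j\times J_k)=D_j(s_{k+1})-D_j(s_k)$. Summing over $k$ gives
\[
\sum_k|\mathcal A(I_j\times J_k)|^r\le [D_j]_{r,1}^r.
\]
Apply the one-variable inequality $[a]_{r,1}^r\lesssim N_2\|a\|_{L^r}^r+N_2^{1-r}\|a'\|_{L^r}^r$, which is the $r$-th power form of Lemma~\ref{lem-embed-1-var} and follows from Young's inequality. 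This yields
\[
\sum_{j,k}|\mathcal A(I_j\times J_k)|^r\lesssim N_2\int_{\mathbb I_{-3}}\sum_j|D_j(s)|^r\,ds+N_2^{1-r}\int_{\mathbb I_{-3}}\sum_j|\partial_sD_j(s)|^r\,ds.
\]

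For each fixed $s$, the sum $\sum_j|D_j(s)|^r$ is at most $[\mathcal A(\cdot,s)]_{r,1}^r$. Applying the same embedding in $t$ gives
\[
[\mathcal A(\cdot,s)]_{r,1}^r\lesssim N_1\|\mathcal A(\cdot,s)\|_{L^r(\mathbb I)}^r+N_1^{1-r}\|\partial_t\mathcal A(\cdot,s)\|_{L^r(\mathbb I)}^r.
\]
Similarly, since $\partial_sD_j(s)=\partial_s\mathcal A(t_{j+1},s)-\partial_s\mathcal A(t_j,s)$, we have $\sum_j|\partial_sD_j(s)|^r\le[\partial_s\mathcal A(\cdot,s)]_{r,1}^r$. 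This is bounded by $N_1\|\partial_s\mathcal A(\cdot,s)\|^r_{L^r}+N_1^{1-r}\|\partial_t\partial_s\mathcal A(\cdot,s)\|^r_{L^r}$, and the hypothesis $\mathcal A\in C^2$ justifies applying the embedding to $\partial_s\mathcal A(\cdot,s)$, which is $C^1$ in $t$.

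Integrating in $s$ and using Fubini collects four terms:
\[
N_1N_2\|\mathcal A\|_r^r,\quad N_1^{1-r}N_2\|\partial_t\mathcal A\|_r^r,\quad N_1N_2^{1-r}\|\partial_s\mathcal A\|_r^r,\quad (N_1N_2)^{1-r}\|\partial_t\partial_s\mathcal A\|_r^r.
\]
Taking $r$-th roots with $(a+b+c+d)^{1/r}\le a^{1/r}+\dots+d^{1/r}$ and then the supremum over partitions gives exactly the claimed bound.

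The only point needing care is the first step: the variation $[D_j]_{r,1}$ is taken along an arbitrary sequence $s_k$. One must also confirm that the inequality of \cite{JSW} holds with an implicit constant independent of the interval length, which is fine since the intervals are fixed, and that it applies to the $C^1$ functions $D_j$ and $\partial_s\mathcal A(\cdot,s)$. I do not expect any genuine obstacle. The main point is noticing that the rectangular sums telescope into one-parameter variations of difference functions, so the two embeddings compose.
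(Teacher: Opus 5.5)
Your proposal is correct and follows essentially the same argument as the paper. The paper also writes each rectangular increment as an $s$-increment of the difference function $F_j(s)=\mathcal A(t_{j+1},s)-\mathcal A(t_j,s)$, applies Lemma~\ref{lem-embed-1-var} in $s$ with $N_2$, bounds $\sum_j|F_j(s)|^r$ and $\sum_j|\partial_sF_j(s)|^r$ by $t$-variations, applies the lemma again in $t$ with $N_1$, and collects the same four terms.
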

\begin{proof}
	Assume that $[1,2]=\bigcup_{j=1}^{m}[t_{j},t_{j+1}]$ and $[2^{-3},2^{-2}]=\bigcup_{k=1}^{n}[s_{k},s_{k+1}]$. For $s\in[2^{-3},2^{-2}]$, let $F_{j}(s):=\mathcal{A}(t_{j+1},s)-\mathcal{A}(t_j,s)$. Note that
	\[ \Big(\sum_{j=1}^{m}\sum_{k=1}^{n}|\mathcal{A}([t_j,t_{j+1}]\times [s_k,s_{k+1}])|^r\Big)^{1/r}=\Big(\sum_{j=1}^{m}\sum_{k=1}^{n}|F_{j}(s_{k+1})-F_{j}(s_k)|^r\Big)^{1/r}.\]

	Thus, we have
	    \begin{align}\label{eq-two-var}
		    	\Big(\sum_{j=1}^{m}\sum_{k=1}^{n}|\mathcal{A}([t_j,t_{j+1}]\times [s_k,s_{k+1}])|^r\Big)^{1/r}&\leq \Big(\sum_{j=1}^{m}[F_j]_{r,1}^{r}\Big)^{1/r}.
	    \end{align}
	 By Lemma~\ref{lem-embed-1-var}, for any $N_{2}>0$,
	     \begin{align*}
	     	[F_j]_{r,1}^r\lesssim N_2\int_{2^{-3}}^{2^{-2}}|F_{j}(s)|^r\,ds+N_2^{1-r}\int_{2^{-3}}^{2^{-2}}|\partial_s F_{j}(s)|^r\,ds.
	     \end{align*}
	This gives
\[  	\Big(\sum_{j=1}^{m}[F_j]_{r,1}^{r}\Big)^{1/r}
			    	\lesssim N_{2}^{1/r}I_{1}^{1/r}+N_{2}^{1/r-1}I_{2}^{1/r}, \]
where
\[
			I_1=\sum_{j=1}^{m}\int_{2^{-3}}^{2^{-2}}|F_{j}(s)|^r\,ds, \quad I_2=\sum_{j=1}^{m}\int_{2^{-3}}^{2^{-2}}|\partial_s F_{j}(s)|^r\,ds.
	     	\]

	   It remains to estimate $I_{1}$ and $I_{2}$.
	   Applying Lemma~\ref{lem-embed-1-var} again to $t\mapsto\mathcal A(t,s)$, we obtain, for any $N_{1}>0$,
	     \begin{align*}
	     	I_{1}
	     	\leq \int_{2^{-3}}^{2^{-2}}[\mathcal{A}(\cdot,s)]_{r,1}^{r}\,ds
	     	\lesssim N_1 \int_{\mathbb{J}_0}|\mathcal{A}(t,s)|^r\,dt\,ds+N_1^{1-r}\int_{\mathbb{J}_0}|\partial_{t}\mathcal{A}(t,s)|^{r}\,dt\,ds.
	     \end{align*}
	 Similarly, we deduce that
	      \begin{align*}
	      	I_{2}\lesssim N_1 \int_{\mathbb{J}_0}|\partial_{s}\mathcal{A}(t,s)
            |^r\,dt\,ds+N_1^{1-r}\int_{\mathbb{J}_0}|\partial_{t}\partial_{s}\mathcal{A}(t,s)|^{r}\,dt\,ds.
	      \end{align*}

	  Combining the above estimates and taking the supremum over all product partitions completes the proof.
\end{proof}

\subsection{\texorpdfstring{Asymptotic expansion of $\widehat{\sigma_{t,s}}(\xi)$}{Asymptotic expansion of the Fourier transform of d sigma t s}}\label{subsect-asymptotic}
We record an asymptotic expansion of the Fourier transform of $\sigma_{t,s}$, uniformly for $(t,s)\in\mathbb J_0$. Note that
\begin{align}\label{ali-fourier-sigma-ts}
    \widehat{\sigma_{t,s}}(\xi):=\int_{0}^{2\pi}e^{-is \xi_3\sin \theta} \widehat{\mu}\big((t+s\cos \theta)\bar{\xi}\big)\,d\theta,
\end{align}
where $\mu$ denotes arc-length measure on the unit circle. The Fourier transform $\widehat{\mu}$ has the following asymptotic expansion; see, for example, \cite[pp.~338,~347]{Ste1993}:
\begin{align}\label{ali-symp-mu}
    \widehat{\mu}(\bar\xi)=b_{0}(|\bar{\xi}|)+\sum_{\pm}e^{\pm i|\bar{\xi}|}b^{\pm}(|\bar{\xi}|)
\end{align}
where $b_0\in C_c^{\infty}(\mr)$ is supported in $\{r\in \mr:|r|\leq 2\}$, and $b^{\pm}$ are supported in $\{r\in \mr:|r|\geq 1\}$. Moreover, for any integer $N\geq 0$,
\[
\left|\left(\frac{d}{dr}\right)^Nb^{\pm}(r)\right|\leq C_N(1+|r|)^{-N-\frac12}.
\]

Combining \eqref{ali-fourier-sigma-ts} and \eqref{ali-symp-mu} gives
\begin{align*} \widehat{\sigma_{t,s}}(\xi)&=\int_0^{2\pi}e^{-is\xi_3\sin \theta}b_0\big((t+s\cos \theta)|\bar{\xi}|\big)\,d\theta\notag \\
    &\quad +\sum_{\pm}\int_0^{2\pi}e^{-is\xi_3\sin \theta\pm i(t+s\cos \theta)|\bar{\xi}|}b^{\pm}\big((t+s\cos\theta)|\bar{\xi}|\big)\,d\theta.
    \end{align*}
Thus, by the stationary phase method
(see, for example, \cite[Theorem~7.7.5]{Hor}), we obtain
\begin{equation}\label{ali-expansion-sigmats}
\begin{aligned}
	     \widehat{\sigma_{t,s}}(\xi)=B_{0,0}(\xi,t,s)+\sum_{\pm}&e^{\pm is |\xi_3|}B_{0,1}^{\pm}(\xi,t,s)\\ &+\sum_{\kappa,\kappa'=\pm}e^{\kappa it|\bar{\xi}|}e^{\kappa' is|\xi|}B_{1,1}^{\kappa,\kappa'}(\xi,t,s).
\end{aligned}
\end{equation}
Here, the symbols $B_{0,0}$, $B_{0,1}^{\pm}$, and $B_{1,1}^{\kappa,\kappa'}$ are supported in
\[
\{|\bar{\xi}|\le 2^3,\ |\xi_3|\le 2^3\},\
\{|\bar{\xi}|\le 2^3,\ |\xi_3|>2^2\},\
\{|\bar{\xi}|>2^{-2}\},
\]
respectively. Moreover, for any multi-indices $\gamma$ and $\alpha$ and any $(t,s)\in\mathbb{J}_0$,
\begin{equation}
\label{asm-B-symbol}
\begin{aligned}
    &|\partial_{t,s}^{\gamma}\partial_{\xi}^{\alpha}B_{0,0}(\xi,t,s)|+|\partial_{t,s}^{\gamma}\partial_{\xi}^{\alpha}B_{0,1}^{\pm}(\xi,t,s)|+|\partial_{t,s}^{\gamma}\partial_{\xi}^{\alpha}B_{1,1}^{\kappa,\kappa'}(\xi,t,s)|\\[2pt]
    &\qquad\qquad \leq C_{\gamma,\alpha} (1+|\bar{\xi}|)^{-|(\alpha_1,\alpha_2)|-\frac12}(1+|\xi|)^{-\alpha_3-\frac12}.
\end{aligned}
\end{equation}
The constants in \eqref{asm-B-symbol} are uniform on $\mathbb{J}_0$. Indeed, since
$\tfrac34\leq t+s\cos\theta\leq\tfrac94$ and
$\tfrac18\leq s\leq\tfrac14$, the amplitudes stay in a fixed symbol
class and the critical points in the high-frequency pieces are uniformly
nondegenerate. The symbols in \eqref{ali-expansion-sigmats} include the smooth remainders in the parameter-dependent stationary phase decomposition, so the displayed identity is exact. Equivalently, a finite expansion may be taken to arbitrarily high order, with the remainder terms decaying faster than any prescribed power after any fixed finite number of $t$, $s$, and $\xi$ derivatives. In Sections~\ref{sec-square-func-est}--\ref{sec-one-para}, we use at most one derivative in each parameter; hence \eqref{asm-B-symbol} applies uniformly to every differentiated multiplier used below.

\subsection{Local smoothing estimates for propagators and averages}
We record below the local smoothing estimates used in the remainder of the paper.
We continue to use $\lambda$ and $h$ for the horizontal and vertical dyadic frequency scales introduced above.

We begin with the following estimates for the two-dimensional wave propagator $\mathcal{W}_{\pm}$, obtained by interpolating two-dimensional local smoothing estimates with standard fixed-time and endpoint estimates; see \cite{GWZ,BORSS,LL} and, for related results, \cite{SS,Lee03}.

\begin{lemma}\label{lem-one-qq-p}
    Let $1\leq p\leq q \leq \infty$, $1/p+1/q\leq 1$, and $\lambda\geq2$. Suppose $\supp \widehat{g}\subset \mathbb{A}_\lambda$. Then, for any $\epsilon>0$, we have
    \begin{align*}
		  	\|\mathcal{W}_{\pm}g\|_{L^{q}(\mathbb{R}^{2}\times [1,2])}\lesssim
                \begin{cases}
                    \lambda^{\frac{1}{2}+\frac1p-\frac3q+\epsilon}\|g\|_{\Lp(\mr^2)}, & \frac{1}{p}+\frac{3}{q}\leq 1,
                    \\[2pt]
                    \lambda^{\frac{3}{2p}-\frac{3}{2q}+\epsilon}\|g\|_{\Lp(\mr^2)}, & \frac{1}{p}+\frac{3}{q}>1.
                \end{cases}
	\end{align*}
\end{lemma}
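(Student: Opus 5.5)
The plan is to interpolate a small number of known estimates for $\mathcal W_\pm$ with $\supp\widehat g\subset\mathbb A_\lambda$ and check that the resulting exponents match the two formulas. The exponents $\frac12+\frac1p-\frac3q$ and $\frac3{2p}-\frac3{2q}$ are affine in $(1/p,1/q)$ and agree on the line $\frac1p+\frac3q=1$. So it suffices to establish the bound, with $\epsilon$-loss, at the vertices of the region $\{1/p\ge 1/q,\ 1/p+1/q\le1\}$ cut by that line. The bound at every other point then follows by complex or Riesz--Thorin interpolation on the frequency-localized class; the localization can be removed by composing with a fixed Littlewood--Paley multiplier $P_\lambda$, which is uniformly bounded on every $L^p$.

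The vertices and the estimates I would use at each are as follows.

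(i) At $(1/p,1/q)=(0,0)$, i.e. $p=q=\infty$, the exponent is $\frac12$. This follows from the kernel bound: the kernel of $\mathcal W_\pm P_\lambda$ at fixed $t\in[1,2]$ has $L^1$ norm $\lesssim\lambda^{1/2}$, by stationary phase or the standard estimate for $e^{it\sqrt{-\Delta}}P_\lambda$.

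(ii) At $(1,0)$, i.e. $p=1$, $q=\infty$, the exponent is $\frac32$. This follows from $\|K_t\|_{L^\infty}\lesssim\lambda^2\cdot\lambda^{-1/2}=\lambda^{3/2}$ for $t\sim1$, using the decay of $\widehat{d\sigma}$.

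(iii) At $(\frac12,\frac12)$ the exponent is $0$, which is Plancherel.

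(iv) At $(\frac14,\frac14)$, i.e. $p=q=4$, we need $\lambda^{\epsilon}$. This is the sharp Guth--Wang--Zhang local smoothing estimate \cite{GWZ} in $\mathbb R^{2+1}$, and it lies on the line $\frac1p+\frac3q=1$. Note that at $(0,0)$ both formulas give $\frac12$, consistent with (i).

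Now consider the region $\frac1p+\frac3q\le1$. Its vertices are $(0,0)$, $(1,0)$ and $(\frac14,\frac14)$, and the claimed exponent $\frac12+\frac1p-\frac3q$ equals $\frac12$, $\frac32$ and $0$ there. These match (i), (ii) and (iv), so interpolation gives the first case.

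For the region $\frac1p+\frac3q>1$ with $1/p\ge1/q$ and $1/p+1/q\le1$, the vertices are $(\frac14,\frac14)$, $(\frac12,\frac12)$, $(1,0)$, and the segment from $(\frac14,\frac14)$ to $(1,0)$. The exponent $\frac3{2p}-\frac3{2q}$ equals $0$, $0$ and $\frac32$ at these three vertices, again matching (iv), (iii) and (ii). Along the diagonal between $(\frac14,\frac14)$ and $(\frac12,\frac12)$ one interpolates $L^4$ local smoothing with $L^2$, giving $\lambda^\epsilon$. That finishes the second case.

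The main obstacle is step (iv), which is a deep theorem; I will simply cite it. The remaining bookkeeping is to confirm that the region $\{1\le p\le q\le\infty,\ 1/p+1/q\le1\}$ is exactly the convex hull of $(0,0)$, $(1,0)$ and $(\frac12,\frac12)$. Since the point $(\frac14,\frac14)$ lies on its edge, the two subregions above are the triangles into which the line $\frac1p+\frac3q=1$ splits it, and the exponent function is affine on each.
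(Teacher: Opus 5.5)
Your proposal is correct and is essentially the paper's argument: the paper gives no detailed proof, saying only that the lemma follows by interpolating the Guth--Wang--Zhang $L^4$ local smoothing estimate with the fixed-time $L^\infty$ bound, the $L^1\to L^\infty$ kernel bound and Plancherel, which are exactly your four vertex estimates at $(0,0)$, $(1,0)$, $(\tfrac12,\tfrac12)$ and $(\tfrac14,\tfrac14)$. One harmless slip: at $(0,0)$ the second formula $\frac{3}{2p}-\frac{3}{2q}$ equals $0$, not $\frac12$, since the two formulas agree only on the line $\frac1p+\frac3q=1$; only the first formula applies at that vertex, so the argument is unaffected.
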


We also record the corresponding mixed-norm estimate.
\begin{lem}\label{lem-one-qp-p}
    Let $2\leq p\leq q\leq\infty$ and $\lambda\geq2$. Suppose $\supp \widehat{g}\subset \mathbb{A}_\lambda$. Then, for any $\epsilon>0$, we have
        \begin{align*}
		  	\|\mathcal{W}_{\pm}g\|_{L^{q}(\mathbb{R}^{2};L^p([1,2]))}\lesssim
                \begin{cases}
                    \lambda^{-\frac{2}{q}+\frac{1}{2}+\epsilon}\|g\|_{\Lp(\mr^2)}, & \frac{1}{p}+\frac{3}{q}\leq 1,
                    \\[2pt]
                    \lambda^{\frac{1}{2p}-\frac{1}{2q}+\epsilon}\|g\|_{\Lp(\mr^2)}, & \frac{1}{p}+\frac{3}{q}>1.
                \end{cases}
		  \end{align*}
\end{lem}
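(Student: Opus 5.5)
The plan is to prove the estimate at the vertices of the admissible region in the $(1/p,1/q)$-plane and then interpolate. The region $\{2\le p\le q\le\infty\}$ is the triangle with vertices $(0,0)$, $(1/2,0)$, $(1/2,1/2)$. The line $1/p+3/q=1$ cuts it into two pieces:
\begin{itemize}
\item the triangle with vertices $(0,0)$, $(1/2,0)$, $(1/2,1/6)$, $(1/4,1/4)$, where the claimed exponent is $\frac12-\frac2q$;
\item the triangle with vertices $(1/4,1/4)$, $(1/2,1/6)$, $(1/2,1/2)$, where the claimed exponent is $\frac1{2p}-\frac1{2q}$.
\end{itemize}
Both exponents are affine in $(1/p,1/q)$, and they agree on the dividing line. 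Hence, by mixed-norm (Benedek--Panzone) complex interpolation of the linear operator $g\mapsto \mathcal W_\pm g$ on functions with $\widehat g$ supported in $\mathbb A_\lambda$, it suffices to verify the bound at the five points $(0,0)$, $(1/2,0)$, $(1/2,1/6)$, $(1/4,1/4)$, $(1/2,1/2)$. The frequency restriction is harmless, since we may insert a fixed Littlewood--Paley projection, which is bounded on every $L^p$.

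On the diagonal $p=q$ the mixed norm $L^q(\mathbb R^2;L^q([1,2]))$ is just $L^q(\mathbb R^2\times[1,2])$. So the points $(0,0)$, $(1/4,1/4)$, $(1/2,1/2)$ follow directly from Lemma~\ref{lem-one-qq-p} with $p=q$, which gives the exponents $\frac12$, $\epsilon$, $\epsilon$ respectively, matching the claim. The new content is therefore the segment $p=2$, and specifically the two square-function bounds
\[
\|\mathcal W_\pm g\|_{L^6(\mathbb R^2;L^2([1,2]))}\lesssim\lambda^{1/6}\|g\|_{L^2},\qquad
\|\mathcal W_\pm g\|_{L^\infty(\mathbb R^2;L^2([1,2]))}\lesssim\lambda^{1/2}\|g\|_{L^2}.
\]
The third point on this segment, $q=2$, is trivial by Plancherel in $x$.

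To prove these two bounds I would insert a cutoff $\chi\in C_c^\infty$ equal to $1$ on $[1,2]$ and take the Fourier transform in $t$:
\[
\mathcal F_t(\chi\,\mathcal W_\pm g)(x,\tau)=\int e^{ix\cdot\eta}\,\widehat\chi(\tau\mp|\eta|)\,\widehat g(\eta)\,d\eta .
\]
By Plancherel in $t$ followed by Minkowski's inequality (since $q\ge2$), the left-hand sides are bounded by $\big\|\,\|\cdot\|_{L^q_x}\big\|_{L^2_\tau}$. Decomposing $|\eta|=\rho$ and using the rapid decay of $\widehat\chi(\tau\mp\rho)$, each $\tau$-slice is essentially a weighted average over $|\rho-\tau|\lesssim1$ of extension operators from circles of radius $\rho\approx\lambda$.

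\begin{itemize}
\item For $q=6$, I would apply the Stein--Tomas extension estimate $L^2(d\sigma_\rho)\to L^6(\mathbb R^2)$ for the circle of radius $\rho$. By scaling, its constant is $\rho^{1/2-1/3}=\rho^{1/6}\approx\lambda^{1/6}$.
\item For $q=\infty$, the trivial bound $L^2(d\sigma_\rho)\to L^\infty$ has constant equal to $(\text{length})^{1/2}\approx\lambda^{1/2}$.
\end{itemize}
Cauchy--Schwarz in $\rho$ against the decaying weight, then integrating in $\tau$, recovers $\int\|\widehat g\|_{L^2(d\sigma_\rho)}^2\,d\rho=\|g\|_2^2$.

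The main obstacle is this $L^2\to L^6(L^2)$ square-function step, together with making sure the vertex exponents are exactly those of the claim. As a check at $(1/2,1/6)$: the first formula gives $\frac12-\frac13=\frac16$ and the second gives $\frac14-\frac1{12}=\frac16$, so both agree with the Stein--Tomas bound. By contrast, Hölder in $t$ combined with Lemma~\ref{lem-one-qq-p} loses $\lambda^{1/p-1/q}$, which is why the square-function input is needed. With the five vertex bounds in hand, interpolating first along the segment $p=2$ and then between this segment and the diagonal completes the proof.
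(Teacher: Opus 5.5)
Your proposal is correct and is essentially the paper's proof. The paper interpolates among the same five points $(p,q)=(2,2),(4,4),(\infty,\infty),(2,\infty),(2,6)$, takes the diagonal cases from Lemma~\ref{lem-one-qq-p} and the two $p=2$ cases from Stein--Tomas and Plancherel, and your Fourier-transform-in-$t$ square-function argument spells out what the paper only cites from \cite{BORSS,LL}. The only slip is cosmetic: the piece $\{1/p+3/q\le 1\}$ has four vertices, so it is a quadrilateral rather than a triangle; also, interpolation must stay within each affine piece, since the claimed exponent is the maximum of the two affine functions and hence convex, and your splitting already arranges this.
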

\begin{proof}
    This follows by interpolation among the cases $(p,q)=(2,2),$ $(4,4),$ $(\infty,\infty),$ $(2,\infty)$, and $(2,6)$. The first three cases follow from Lemma~\ref{lem-one-qq-p}. The remaining cases $(p,q)=(2,6)$ and $(2,\infty)$ follow from the Stein--Tomas restriction theorem and Plancherel's theorem; see \cite[Lemma~2.4]{BORSS} and \cite[Section~2]{LL} for related arguments.
\end{proof}

Recall that our two-parameter average is defined by
\[
\mathcal{A}f(x,t,s)=f\ast \sigma_{t,s}(x).
\]
The following lemma treats the fully low-frequency and
low-horizontal-frequency regimes.

\begin{lem}\label{lem-A1-A2}
   Let $\alpha,\beta\in \{0,1\}$ and $1\leq p\leq q \leq \infty$. The following hold:
     \begingroup
     \addtolength{\leftmargini}{-2em}
     \begin{enumerate}
        \item[(a)] If $\supp \widehat{f}\subset \mathbb{A}_{2^3}^{\circ} \times \mathbb{B}_{2^3}^{\circ}$, then for $(t,s)\in \mathbb{J}_0$,
		     \begin{align*}
			    \|\partial_t^{\alpha} \partial_s^{\beta}\mathcal{A}f(\cdot,t,s)\|_{L^{q}(\mathbb{R}^{3})}
			    &\leq C_{\alpha,\beta} \|f\|_{\Lp(\mr^3)},\\[2pt]
		              \|\partial_t^{\alpha}\mathcal{A}_1f(\cdot,t)\|_{L^{q}(\mathbb{R}^{3})}
		              &\leq C_{\alpha} \|f\|_{\Lp(\mr^3)}.
	         \end{align*}
       \item[(b)] If $h\geq2^3$ and $\supp \widehat{f}\subset \mathbb{A}_{2^3}^{\circ}\times \mathbb{B}_h$, then for $(t,s)\in \mathbb{J}_0$,
	        \begin{align*}
			   \|\partial_t^{\alpha} \partial_s^{\beta} \mathcal{A}f(\cdot,t,s)\|_{L^{q}(\mathbb{R}^{3})}
			   &\leq C_{\alpha,\beta} h^{\beta-\frac{1}{2}+\frac{1}{p}-\frac{1}{q}}\|f\|_{\Lp(\mr^3)},\\[2pt]
		           \|\partial_t^{\alpha}\mathcal{A}_1f(\cdot,t)\|_{L^{q}(\mathbb{R}^{3})}
		           &\leq C_{\alpha} h^{\alpha-\frac{1}{2}+\frac{1}{p}-\frac{1}{q}}\|f\|_{\Lp(\mr^3)}.
	       \end{align*}
     \end{enumerate}
     \endgroup

\end{lem}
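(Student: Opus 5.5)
The plan is to write $\partial_t^\alpha\partial_s^\beta\mathcal{A}f(\cdot,t,s)=f\ast K_{t,s}^{\alpha,\beta}$, where $\widehat{K^{\alpha,\beta}_{t,s}}=\partial_t^\alpha\partial_s^\beta\widehat{\sigma_{t,s}}$ is multiplied by a frequency cutoff. We then bound the kernel in $L^1$ and $L^\infty$, and finish with Young's inequality. Differentiating under the integral in \eqref{ali-fourier-sigma-ts} is legitimate because the parametrization is smooth and $(t,s)$ ranges over the compact set $\mathbb{J}_0$. The case of $\mathcal{A}_1$ follows from the same argument, using $\partial_t\mathcal{A}_1f(x,t)=(\partial_t\mathcal{A}f+2^{-3}\partial_s\mathcal{A}f)(x,t,2^{-3}t)$.

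For (a), we insert a fixed bump $\chi\in C_c^\infty$ with $\chi\equiv1$ on $\mathbb{A}^\circ_{2^3}\times\mathbb{B}^\circ_{2^3}$, so that $\partial_t^\alpha\partial_s^\beta\mathcal{A}f=f\ast(\partial_t^\alpha\partial_s^\beta\sigma_{t,s}\ast\check\chi)$. The measure $\sigma_{t,s}$, together with its parameter derivatives (distributions of order at most $2$ supported on the torus), convolved with the Schwartz function $\check\chi$ gives a Schwartz function. Its seminorms are uniform over $(t,s)\in\mathbb{J}_0$. Hence the kernel lies in every $L^{u}$, $1\le u\le\infty$, uniformly in $(t,s)$. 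Young's inequality with $1+1/q=1/p+1/u$ (valid since $p\le q$) gives (a).

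For (b), we use \eqref{ali-expansion-sigmats}. On $|\bar\xi|\le 2^4$, $|\xi_3|\sim h$ with $h\ge 2^3$, the term $B_{0,0}$ vanishes. The term $B^{\kappa,\kappa'}_{1,1}$ contributes $e^{\kappa it|\bar\xi|+\kappa' is|\xi|}$ only where $|\bar\xi|>2^{-2}$. Rather than working with the expansion directly, I would treat \eqref{ali-fourier-sigma-ts} itself: $\widehat{\mu}((t+s\cos\theta)\bar\xi)$ is smooth with bounded derivatives in $\bar\xi$ because $|\bar\xi|\lesssim1$. Stationary phase in $\theta$ for the phase $s\xi_3\sin\theta$ (nondegenerate critical points at $\theta=\pm\pi/2$) then gives
\[
\widehat{\sigma_{t,s}}(\xi)=\sum_\pm e^{\pm is\xi_3}a_\pm(\xi,t,s),
\]
with $a_\pm$ symbols of order $-1/2$ in $\xi_3$ and smooth in $\bar\xi$. (A small correction enters through $\cos\theta$ in the amplitude, but the amplitude is smooth.) Each $\partial_s$ produces at most a factor $h$, while $\partial_t$ produces $O(1)$. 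This gives $h^{\beta-1/2}$ times symbols in $\xi_3$ of order $0$ on the dyadic shell.

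The resulting kernel is $K(x)=\check\eta(\bar x)\,k_h(x_3\pm s)$. Here $k_h(y)=\int e^{iy\zeta}m(\zeta)\varphi_h(\zeta)\,d\zeta$, with $|\partial_\zeta^N m|\lesssim h^{\beta-1/2}h^{-N}$. Integration by parts gives
\[
|k_h(y)|\lesssim h^{\beta-1/2}\,h\,(1+h|y|)^{-2},
\]
so $\|K\|_{L^u}\lesssim h^{\beta-1/2}h^{1-1/u}$. Young's inequality with $1-1/u=1/p-1/q$ yields the bound $h^{\beta-\frac12+\frac1p-\frac1q}$. For $\mathcal{A}_1$ the $s$-derivative enters with $\alpha$ in place of $\beta$, giving the stated exponent.

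The main obstacle is making the stationary phase expansion uniform, with symbol bounds in $\xi_3$ that are uniform over $\mathbb{J}_0$ and over $|\bar\xi|\le2^4$ after one derivative in each parameter. We must also check that the error terms are of order $h^{-\infty}$, so that they are harmless under the same Young's inequality argument. This can be quoted from \cite[Theorem~7.7.5]{Hor} as in Section~\ref{subsect-asymptotic}, since the $\theta$-amplitude and the phase depend smoothly on all parameters over a compact set.
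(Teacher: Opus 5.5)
Your proof is correct, but it takes a more self-contained route than the paper's. The paper handles the case $\alpha=\beta=0$ by citation: part (a) comes from \cite[Lemma~2.5]{LL} and part (b) from \cite[Proposition~2.6]{LL}. It then obtains the derivative bounds by reading off the costs of $\partial_t$ and $\partial_s$, namely $O(1)$ and $O(h)$, from the two-variable expansion \eqref{ali-expansion-sigmats} and its symbol bounds \eqref{asm-B-symbol}.

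You instead derive everything directly.
\begin{itemize}
\item In (a), the multiplier $\partial_t^\alpha\partial_s^\beta\widehat{\sigma_{t,s}}\,\chi$ is $C_c^\infty$ uniformly in $(t,s)$, so the kernel lies in every $L^u$.
\item In (b), you use $|\bar\xi|\lesssim 1$ to treat $\widehat{\mu}((t+s\cos\theta)\bar\xi)$ as a smooth amplitude, and you apply one-dimensional stationary phase in $\theta$ to \eqref{ali-fourier-sigma-ts}.
\item An explicit kernel bound plus Young's inequality then gives the exponent $\beta-\frac12+\frac1p-\frac1q$.
\end{itemize}
This makes clear where each power of $h$ comes from: $h^{-1/2}$ from stationary phase, $h^{\beta}$ from the $s$-derivative, and $h^{1/p-1/q}$ from a kernel concentrated at scale $h^{-1}$ in $x_3$. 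It also avoids splitting the bounded-horizontal region into the $B_{0,1}^{\pm}$ and $B_{1,1}^{\kappa,\kappa'}$ pieces of \eqref{ali-expansion-sigmats}. The paper's route gains brevity and matches how that expansion is used elsewhere in the paper.

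Two small points to tidy up.
\begin{itemize}
\item The kernel is not literally a tensor product $\check\eta(\bar x)\,k_h(x_3\pm s)$. The higher-order stationary-phase terms in $a_\pm$ depend jointly on $(\bar\xi,\xi_3)$, through $\theta$-derivatives of $\widehat{\mu}((t+s\cos\theta)\bar\xi)$ at the critical points. This is harmless: the same stationary-phase argument gives uniform mixed bounds $|\partial_{\bar\xi}^{\gamma}\partial_{\xi_3}^{N}a_\pm|\lesssim h^{-1/2-N}$ on the dyadic shell. Integrating by parts in all variables then gives $|K(x)|\lesssim h^{\beta+\frac12}(1+|\bar x|)^{-3}(1+h|x_3\pm s|)^{-2}$, hence $\|K\|_{L^u}\lesssim h^{\beta-\frac12+1-\frac1u}$ as you claimed.
\item Your remark that $B_{0,0}$ vanishes is not quite right at $h=2^3$, since $|\xi_3|\geq 4$ still overlaps $|\xi_3|\leq 2^3$. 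This does not matter, because you work from \eqref{ali-fourier-sigma-ts} rather than from the expansion.
\end{itemize}
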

\begin{proof}
For $\alpha=\beta=0$, part (a) follows from the fixed-frequency
estimate in \cite[Lemma~2.5]{LL}, while part (b) follows from
\cite[Proposition~2.6]{LL} with $\tau=2^{-3}$. Since $\tau$ is
fixed, the corresponding enlargement of the low-frequency supports
only changes the constant.

The bounds arising from parameter differentiation follow directly from the asymptotic
formula \eqref{ali-expansion-sigmats}. For instance, consider the term
\begin{equation*}
    e^{i(\kappa t|\bar\xi|+\kappa' s|\xi|)}
    B_{1,1}^{\kappa,\kappa'}(\xi,t,s),
\end{equation*}
where $\kappa,\kappa'=\pm$. On
$\mathbb A_{2^3}^{\circ}\times\mathbb B_h$, differentiation with
respect to $t$ costs $O(1)$, whereas differentiation with respect
to $s$ costs $O(h)$. Along $s=c_0t$, the derivative
$\partial_t+c_0\partial_s$ therefore costs $O(h)$. On the fully
low-frequency support all these costs are $O(1)$. Derivatives
falling on the amplitudes preserve their symbol order by
\eqref{asm-B-symbol}, and the differentiated remainders are absorbed
by taking the expansion order sufficiently large, as explained
after that estimate. This gives all four cases
$\alpha,\beta\in\{0,1\}$.
\end{proof}

We next record estimates for the remaining regimes, in which the
horizontal frequency is localized to $\mathbb A_\lambda$ with
$\lambda\geq2^3$.

	\begin{lem}\label{lem-A4-A6}
		Let $\alpha,\beta\in \{0,1\}$, $\lambda\geq2^3$, $2\leq p\leq q \leq \infty$, and $\delta>0$. For any $\epsilon>0$, the following hold:
		\begingroup
		\addtolength{\leftmargini}{-2em}
		\begin{enumerate}
		\item[(a)] If $\lambda\leq h\leq\lambda^2$ and $\supp \widehat{f}\subset \mathbb{A}_\lambda\times \mathbb{B}_h$, then we have
		  	\begin{align*}
		  	    	\|\partial_t^{\alpha}\partial_s^{\beta}\mathcal{A}f\|_{L^{q}(\mathbb{R}^{3}\times \mathbb{J}_0)}\lesssim
                    \begin{cases}
                        \lambda^{\alpha+1-\frac{1}{p}-\frac{5}{q}+\epsilon}h^{\beta-1+\frac{2}{p}}\|f\|_{\Lp(\mr^3)}, & \frac{1}{p}+\frac{3}{q}\leq 1,\\[2pt]
                        \lambda^{\alpha-\frac{1}{2}+\frac{1}{2p}-\frac{1}{2q}+\epsilon}h^{\beta-\frac{1}{2}+\frac{3}{2p}-\frac{3}{2q}}\|f\|_{\Lp(\mr^3)}, & \frac{1}{p}+\frac{3}{q}>1.
                    \end{cases}
		  	\end{align*}

        \item[(b)] If $\supp \widehat{f}\subset \mathbb{A}_\lambda\times \mathbb{B}_\lambda^{\circ}$, then the estimate in (a) holds with $h=\lambda$.

		\item[(c)] If $h\geq\lambda^2$ and $\supp \widehat{f}\subset \mathbb{A}_\lambda\times \mathbb{B}_h$, then we have
			    \begin{align*}
			    	\|\partial_t^{\alpha}\partial_s^{\beta}\mathcal{A}f\|_{L^{q}(\mathbb{R}^{3}\times \mathbb{J}_0)}\lesssim
                    \begin{cases}
                        \lambda^{\alpha+\frac{1}{p}-\frac{3}{q}+\epsilon}h^{\beta-\frac{1}{2}+\frac{1}{p}-\frac{1}{q}}\|f\|_{\Lp(\mr^3)}, & \frac{1}{p}+\frac{3}{q}\leq 1, \\[2pt]
                        \lambda^{\alpha-\frac{1}{2}+\frac{3}{2p}-\frac{3}{2q}+\epsilon}h^{\beta-\frac{1}{2}+\frac{1}{p}-\frac{1}{q}}\|f\|_{\Lp(\mr^3)}, & \frac{1}{p}+\frac{3}{q}>1.
                    \end{cases}
			    \end{align*}
		\item[(d)] If $\supp \widehat{f}\subset \mathbb{A}_\lambda\times \mathbb{B}_{\lambda^{1/2+\delta}}^{\circ}$, then we have
			\begin{align*}
				\|\partial_t^{\alpha}\partial_s^{\beta}\mathcal{A}f\|_{L^{q}(\mathbb{R}^{3}\times \mathbb{J}_0)}\lesssim
				\begin{cases}
					\lambda^{\alpha+\beta-\frac{1}{2}+\frac{3}{2p}-\frac{7}{2q}+300\delta+\epsilon}\|f\|_{\Lp(\mr^3)}, & \frac{1}{p}+\frac{3}{q}\leq 1,\\[2pt]
					\lambda^{\alpha+\beta-1+\frac{2}{p}-\frac{2}{q}+300\delta+\epsilon}\|f\|_{\Lp(\mr^3)}, & \frac{1}{p}+\frac{3}{q}> 1.
				\end{cases}
			\end{align*}

			\item[(e)] If $\lambda^{1/2+\delta}\leq h\leq\lambda$ and $\supp \widehat{f}\subset \mathbb{A}_\lambda\times \mathbb{B}_h$, then we have
			\begin{align*}
				\|\partial_t^{\alpha}\partial_s^{\beta}\mathcal{A}f\|_{L^{q}(\mathbb{R}^{3}\times \mathbb{J}_0)}\lesssim
				\begin{cases}
					\lambda^{\alpha+\beta-1+\frac{2}{p}-\frac{2}{q}+\epsilon}h^{1-\frac{1}{p}-\frac{3}{q}}\|f\|_{\Lp(\mr^3)}, & \frac{1}{p}+\frac{3}{q}\leq 1,\\[2pt]
					\lambda^{\alpha+\beta-1+\frac{2}{p}-\frac{2}{q}+\epsilon}\|f\|_{\Lp(\mr^3)}, & \frac{1}{p}+\frac{3}{q}> 1.
				\end{cases}
			\end{align*}

		\end{enumerate}
		\endgroup
\end{lem}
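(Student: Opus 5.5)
We reduce each case of the lemma to the propagator pieces of \eqref{ali-expansion-sigmats} and then to the two-dimensional wave estimates of Lemma~\ref{lem-one-qq-p}. First, derivatives in the parameters are harmless. Differentiating $e^{i(\kappa t|\bar\xi|+\kappa' s|\xi|)}B^{\kappa,\kappa'}_{1,1}$ in $t$ costs a factor $|\bar\xi|\approx\lambda$, and in $s$ a factor $|\xi|\approx\max\{\lambda,h\}$. Derivatives falling on the amplitude keep it in the same symbol class by \eqref{asm-B-symbol}. This accounts for the factors $\lambda^{\alpha}$ and $h^{\beta}$ in (a),(c), and for $\lambda^{\alpha+\beta}$ in (b),(d),(e). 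The amplitude decay $(1+|\bar\xi|)^{-1/2}(1+|\xi|)^{-1/2}$ contributes $\lambda^{-1/2}\max\{\lambda,h\}^{-1/2}$. The pieces $B_{0,0},B_{0,1}^{\pm}$ vanish on the support, since $|\bar\xi|\ge\lambda/2\ge4$. It therefore suffices to bound $\|\mathcal U_{t,s}f\|_{L^q(\mr^3\times\mathbb J_0)}$ for a symbol of order zero.

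\emph{Regime $h\ge\lambda^{1/2+\delta}$.} Here we use the substitution $|\xi|=|\xi_3|+O(\lambda^2/h)$. When $h\ge\lambda^2$, the phase $s|\xi|$ equals $s|\xi_3|$ up to a symbol of order $0$. Freezing $\xi_3$, we get a product of a 2D wave propagator in $(\bar x,t)$ and a 1D operator $e^{is|\xi_3|}$ in $(x_3,s)$. The $x_3$ part is handled by Bernstein, giving $h^{1/p-1/q}$, since the $s$-integration gives no gain. This produces (c) after applying Lemma~\ref{lem-one-qq-p} in $(\bar x,t)$, using Minkowski and the fact that Lemma~\ref{lem-one-qq-p} is uniform in the frozen variable.

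When $\lambda\le h\le\lambda^2$, the extra phase $s\lambda^2/h$ in $\bar\xi$ is not negligible. We instead view $(\bar\xi,\xi_3)\mapsto t|\bar\xi|+s|\xi|$ as a genuinely 2D-in-time propagator. I would quote \cite[Proposition~2.3]{LL}, the local smoothing for the torus propagator in these regimes, for the endpoints $p=q=4$ and $p=q=\infty$, with $p=q=2$ coming from Plancherel. For the $p<q$ corner I would use the $L^2\to L^6$ and $L^2\to L^\infty$ bounds from Stein--Tomas and Bernstein in the $\xi_3$ variable at scale $h$ and the $\bar\xi$ variable at scale $\lambda$. Interpolation then yields the two-piece formulas, whose breakline is $1/p+3/q=1$ as in Lemma~\ref{lem-one-qq-p}.

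Case (e), $\lambda^{1/2+\delta}\le h\le\lambda$, is analogous. Here $|\xi|\approx\lambda$ and $|\xi|=|\bar\xi|+\xi_3^2/(2|\bar\xi|)+\dots$, so the propagator is essentially a 2D wave in $\bar\xi$ with time $t+s$, plus a Schrödinger-type correction in $\xi_3$ of size $h^2/\lambda\ge\lambda^{2\delta}$. The corresponding $L^p$ bounds are again those of \cite{LL}.

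\emph{Regime $|\xi_3|\lesssim\lambda^{1/2+\delta}$: cases (b) and (d).} For (d), the correction $s\xi_3^2/|\bar\xi|$ is $O(\lambda^{2\delta})$. After decomposing $\xi_3$ into $O(\lambda^{300\delta})$ pieces, or expanding the correction as a symbol with $\lambda^{O(\delta)}$ losses, the operator becomes $\mathcal W_\pm$ evaluated at time $t+s$, tensored with a trivial operator in $x_3$. Lemma~\ref{lem-one-qq-p} then gives the stated exponents, with the $\lambda^{300\delta}$ loss absorbing the decomposition. The $s$-integral is free because $t+s$ ranges over an interval. Case (b) follows by summing (d) and (e) over dyadic $h\le\lambda$. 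The sum converges since the $h$-exponent $1-1/p-3/q$ is nonnegative, so the largest term $h=\lambda$ dominates, up to an $\epsilon$ loss from the $\log\lambda$ terms.

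\emph{Main obstacle.} I expect the hardest step to be the intermediate regime $\lambda\le h\le\lambda^2$ of (a), and likewise (e). There the phase is genuinely two-parameter, and the sharp $L^4$ local smoothing exponent for the torus propagator must be taken from \cite{LL}. The $L^p$--$L^q$ corner points must also be matched so that the interpolation produces exactly the stated $h$-powers. My first step there would be to verify the four interpolation vertices $(2,2),(4,4),(\infty,\infty),(2,6)$ directly against the claimed exponents.
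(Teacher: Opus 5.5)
\textbf{Parts (c) and (d).} These are sound. Part (d) is the paper's argument. You should state the Bernstein factor $\lambda^{(\frac12+\delta)(\frac1p-\frac1q)}$ in $x_3$ explicitly, since it is what turns $\frac1p-\frac3q$ into $\frac{3}{2p}-\frac{7}{2q}$. Your direct proof of (c) replaces $|\xi|$ by $|\xi_3|$ and combines Lemma~\ref{lem-one-qq-p} in $(\bar x,t)$ with Bernstein in $x_3$. That is a valid self-contained substitute for the citation of \cite[Corollary~2.10]{LL}. (Minor: $B_{0,0}$ and $B_{0,1}^{\pm}$ do not vanish when $\lambda=2^3$, since $|\bar\xi|\geq 4$ does not exclude $|\bar\xi|\leq 2^3$; that case goes to Lemma~\ref{lem-A1-A2}.)

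\textbf{The gap: part (e), and with it your route to (b).} Part (e) is not one of the estimates taken from \cite{LL}; the paper proves it. Your sketch does not. The correction $s\xi_3^2/(2|\bar\xi|)$ has size $h^2/\lambda$, which can be as large as $\lambda$, and it couples $\bar\xi$ with $\xi_3$. So it can be neither absorbed into the amplitude nor treated as a tensor factor.

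The entire factor $h^{1-\frac1p-\frac3q}$ comes from the $(p,q)=(\infty,\infty)$ vertex. That vertex is an $L^1_x$ bound $\lesssim h\lambda^{-1}$, uniform in $(t,s)$, for the kernel of $e^{i(t|\bar\xi|+s|\xi|)}\varphi_\lambda(|\bar\xi|)\varphi_h(|\xi_3|)B_{1,1}^{+,+}$, and you give no argument for it. The paper proves it as follows:
\begin{itemize}
\item Split the multiplier into caps of angular size $\lambda^{-1/2}$ on $\mathbb S^2$, i.e.\ a $\lambda^{-1/2}$-sector in $\bar\xi/|\bar\xi|$ times a $\lambda^{-1/2}$-slab in $\xi_3/|\bar\xi|$.
\item On each cap the phase is linear modulo an error with bounded derivatives after parabolic rescaling.
\item Each piece lives in a $\lambda\times\lambda^{1/2}\times\lambda^{1/2}$ box with amplitude $O(\lambda^{-1})$, so its kernel has $L^1$ norm $O(\lambda^{-1})$.
\item There are $O(\lambda^{1/2}\cdot h\lambda^{-1/2})=O(h)$ caps.
\end{itemize}
Your heuristic ($\lambda^{1/2}$ for the wave factor, times $(h^2/\lambda)^{1/2}$ for the Schr\"odinger factor, times $\lambda^{-1}$) predicts the right number, but only such a localization proves it.

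This vertex is then interpolated with Plancherel at $(2,\infty)$ and with the vertices $(2,2),(4,4),(2,6)$. At those three the claimed $h$-power vanishes, so they follow from (b) because $\mathbb A_\lambda\times\mathbb B_h\subset\mathbb A_\lambda\times\mathbb B_\lambda^{\circ}$. Hence (b), quoted from \cite{LL}, must be the input to (e). Deriving (b) by summing (d) and (e) runs the logic backwards.

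\textbf{The $(2,6)$ vertex of (a).} This also cannot be obtained as you describe. For the order-zero propagator it reads $\|\mathcal U_{t,s}f\|_{L^6(\mathbb R^3\times\mathbb J_0)}\lesssim\lambda^{\frac16}h^{\frac12}\|f\|_{L^2}$.
\begin{itemize}
\item Freezing $s$ and using 2D Strichartz in $(\bar x,t)$ plus Bernstein in $x_3$ gives only $\lambda^{\frac12}h^{\frac13}$.
\item Freezing $t$ and using 3D wave bounds in $(x,s)$ gives only $\lambda^{\frac13}h^{\frac12}$.
\end{itemize}
A focusing datum $\widehat f=e^{-i(t_0|\bar\xi|+s_0|\xi|)}\chi$, with $\chi$ a cutoff to $\mathbb A_\lambda\times\mathbb B_h$, shows that no fixed-parameter argument can do better. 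Both bounds exceed $\lambda^{\frac16}h^{\frac12}$ when $h<\lambda^2$. The averaging in $t$ and $s$ has to be exploited jointly. One way: decompose $\bar\xi$ into $\lambda^{-1/2}$-sectors $\nu$, apply $\ell^2$ decoupling for the cone in $(\bar x,t)$, then use 2D Strichartz in $(\bar x\cdot\nu,x_3,s)$ and Bernstein in the transverse direction. Alternatively, as the paper does, simply quote (a)--(c) from \cite[Corollary~2.10]{LL}.
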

\begin{proof}
    The estimates in parts (a)--(c) follow from
    \cite[Corollary~2.10 (a)--(c)]{LL} by taking $\tau=2^{-3}$
    and translating to the present dyadic scale notation. The cited
    result includes all choices of $\alpha,\beta\in\{0,1\}$.
    It remains to prove (d) and (e).

    We first establish (e). By interpolation, it is enough to consider the five cases $(p,q)=(2,2),\, (4,4),\, (2,6),\, (2,\infty),\, (\infty,\infty)$. The first three cases follow directly from (b), while the case $(p,q)=(2,\infty)$ follows from the Cauchy--Schwarz inequality and Plancherel's theorem. Thus, it remains to consider the case $(p,q)=(\infty,\infty)$. By \eqref{ali-expansion-sigmats}, the terms involving $B_{0,0}$ or
    $B_{0,1}^{\pm}$ occur only at bounded horizontal frequencies and are
    absorbed by Lemma~\ref{lem-A1-A2}. The four terms involving
    $B_{1,1}^{\kappa,\kappa'}$ are treated identically after changing
    the signs of the phases. It therefore suffices to prove that the
    inverse Fourier transform of the multiplier
    \[
    e^{i(t|\bar{\xi}|+s|\xi|)}\varphi_\lambda(|\bar{\xi}|)\varphi_h(|\xi_3|)B_{1,1}^{+,+}(\xi,t,s)
    \]
    has $L^1$ norm bounded by $Ch\lambda^{-1}$, uniformly for
    $(t,s)\in \mathbb{J}_0$.

     Let $\mathfrak{V}_\lambda \subset\mathbb{S}^1$ be a maximal $\lambda^{-1/2}$-separated set. Let $\{w_\nu\}_{\nu\in\mathfrak{V}_\lambda}$ be a partition of unity on $\mathbb{S}^1$, with each $w_\nu$ supported in an arc centered at $\nu$ of length comparable to $\lambda^{-1/2}$ and satisfying $\|\partial_\theta^m w_\nu\|_{L^\infty(\mathbb{S}^1)}\lesssim_m\lambda^{m/2}$ for every integer $m\geq0$. For each $\nu \in \mathfrak{V}_\lambda$, set
\[
\omega_\nu(\bar{\xi}):= w_\nu(\bar{\xi}/|\bar{\xi}|).
\]
Let $\rho \in C_c^{\infty}((-1,1))$ satisfy $\sum_{n\in \mathbb{Z}} \rho(\cdot - n) = 1$. Define
\[
\begin{aligned}
K^{\lambda,h,n,\nu}(x,t,s)
&=\int_{\mr^3}e^{i(x\cdot\xi+t|\bar\xi|+s|\xi|)}\\
&\qquad\times\varphi_\lambda(|\bar\xi|)\omega_\nu(\bar\xi)
\varphi_h(|\xi_3|)
\rho\Big(\frac{\lambda^{1/2}\xi_3}{|\bar\xi|}-n\Big)
B_{1,1}^{+,+}(\xi,t,s)\,d\xi.
\end{aligned}
\]
Thus, we have
\begin{align*}
	  & \left|
	   \int_{\mr^3}e^{i(x\cdot\xi +t|\bar{\xi}|+s|\xi|)}
	   \varphi_\lambda(|\bar{\xi}|)\varphi_h(|\xi_3|) B_{1,1}^{+,+}(\xi,t,s)\,d\xi
	  \right|
	  \\
		   &\qquad \quad \qquad \le \sum_{\nu \in \mathfrak{V}_\lambda}\sum_{n\in \mathbb{Z}}|K^{\lambda,h,n,\nu}(x,t,s)|.
\end{align*}
The multiplier defining $K^{\lambda,h,n,\nu}$ is supported in a cap
on $\mathbb{S}^2$ of diameter $O(\lambda^{-1/2})$ centered at
\[
\theta_{\nu,n}
:=
\frac{(\nu,\lambda^{-1/2}n)}
{\sqrt{1+\lambda^{-1}n^2}}.
\]
On this support, the phase differences
$t(|\bar{\xi}|-\nu\cdot\bar{\xi})$ and
$s(|\xi|-\theta_{\nu,n}\cdot\xi)$ have uniformly bounded derivatives
under the corresponding parabolic rescaling and can therefore be
absorbed into the amplitude. In coordinates adapted to
$\theta_{\nu,n}$, the resulting multiplier is supported in a box of
dimensions $\lambda\times\lambda^{1/2}\times\lambda^{1/2}$. 
Since $|\bar\xi|\sim|\xi|\sim\lambda$,
\eqref{asm-B-symbol} shows  that the amplitude has size
$O(\lambda^{-1})$ and that, after multiplying it by $\lambda$ and
rescaling the box to unit size, all its derivatives are uniformly
bounded.

Set
\[
y=x+t(\nu,0)+s\theta_{\nu,n}
\]
and choose an orthonormal basis
$\{\theta_{\nu,n},e_{\nu,n,1},e_{\nu,n,2}\}$ of $\mathbb R^3$ and write
$
y=y_\parallel\theta_{\nu,n}
+y_{\perp,1}e_{\nu,n,1}
+y_{\perp,2}e_{\nu,n,2}.
$
Integration by parts then gives, for $N$ sufficiently large,
\[
|K^{\lambda,h,n,\nu}(x,t,s)|
\lesssim_N \lambda
\bigl(1+\lambda|y_\parallel|
+\lambda^{1/2}|y_{\perp,1}|
+\lambda^{1/2}|y_{\perp,2}|\bigr)^{-N}.
\]
The change of variables in this majorant has Jacobian $\lambda^{-2}$;
hence
\[
\|K^{\lambda,h,n,\nu}(\cdot,t,s)\|_{L_x^1(\mr^3)}
\lesssim \lambda\lambda^{-2}=\lambda^{-1}.
\]
This estimate holds uniformly in $(t,s)\in\mathbb J_0$. Since $\#\mathfrak{V}_\lambda\approx\lambda^{1/2}$ and only $O(h\lambda^{-1/2})$ values of $n$ give nonzero multipliers, summing over $(\nu,n)$ yields
\[
\sum_{\nu,n}\|K^{\lambda,h,n,\nu}(\cdot,t,s)\|_{L_x^1}
\lesssim \lambda^{1/2}(h\lambda^{-1/2})\lambda^{-1}
=h\lambda^{-1}.
\]
Parameter derivatives falling on the phase contribute at most $\lambda^{\alpha+\beta}$, while derivatives falling on the amplitude preserve the symbol bounds. Hence the resulting bound is $\lambda^{\alpha+\beta-1}h$, which proves (e) for $(p,q)=(\infty,\infty)$.

We now turn to (d). The support considerations and symmetry reduction used above
for part (e) reduce the proof to estimating the propagator associated
with $B_{1,1}^{+,+}$. We define
\begin{align*}
    B(\xi,t,s):=(1+|\bar{\xi}|)^{1/2}(1+|\xi|)^{1/2}B_{1,1}^{+,+}(\xi,t,s).
\end{align*}
By \eqref{asm-B-symbol}, $B(\xi,t,s)$ satisfies the symbol estimates in \eqref{ali-condition-on-B}.
Let $\mathcal{U}_{t,s}$ be as in \eqref{def-wjk}.
Thus, it remains to show that
    \begin{align*}
	    	\|\mathcal{U}_{t,s}f_{\lambda,\leq \lambda^{1/2+\delta}}\|_{L^{q}_{x,t,s}(\mathbb{R}^{3}\times \mathbb{J}_0)}\lesssim
    		\begin{cases}
	    		\lambda^{\frac{1}{2}+\frac{3}{2p}-\frac{7}{2q}+300\delta+\epsilon}\|f\|_{\Lp(\mr^3)}, & \frac{1}{p}+\frac{3}{q}\leq 1,\\[2pt]
	    		\lambda^{\frac{2}{p}-\frac{2}{q}+300\delta+\epsilon}\|f\|_{\Lp(\mr^3)}, & \frac{1}{p}+\frac{3}{q}>1.
    		\end{cases}
    	\end{align*}
		        We first consider the case $1/p+3/q\leq1$. The desired estimate is
		        trivial when $\lambda^\delta<2$, so we may assume that
		        $\lambda^\delta\geq2$. Thus,
       \begin{align*}
	       	 \mathcal{U}_{t,s}f_{\lambda,\leq \lambda^{1/2+\delta}}(x)=\sum_{\lambda^{1/2}<h\leq\lambda^{1/2+\delta}}\mathcal{U}_{t,s}f_{\lambda,h}(x)+\mathcal{U}_{t,s}f_{\lambda,\leq \lambda^{1/2}}(x).
       \end{align*}
	       For the second term, the same argument applies after rescaling the
	       vertical frequency at $\lambda^{1/2}$ and replacing the annular
	       cutoff in $\xi_3$ by a smooth low-frequency cutoff. The resulting
	       amplitude satisfies the estimates below with $\delta=0$, and hence
	       gives the required bound for
	       $\mathcal{U}_{t,s}f_{\lambda,\leq\lambda^{1/2}}$.

	       It remains to estimate the first sum. It contains
	       $O(\log(\lambda^\delta))$ dyadic scales, and this factor is absorbed
	       by $\lambda^\delta$. It therefore suffices to prove that
	       \begin{align}\label{ali-sec2-qqp-k<j/2}
	       	 \|\mathcal{U}_{t,s}f_{\lambda,h}\|_{L^q_{x,t,s}(\mr^3\times\mathbb{J}_0)}\lesssim \lambda^{\frac{1}{2}+\frac{3}{2p}-\frac{7}{2q}+299\delta+\epsilon}\|f\|_{L^p(\mr^3)}
       \end{align}
	       holds uniformly for $\lambda^{1/2}<h\leq\lambda^{1/2+\delta}$.

        Let $\widetilde{\varphi}\in C_c^{\infty}((1/4,3))$ be such that $\widetilde{\varphi}\varphi=\varphi$. After a change of variables, we have
       \begin{align*}
       	\mathcal{U}_{t,s}f_{\lambda,h}(x)&=\lambda^2h\int_{\mr^3}e^{ix\cdot (\lambda\bar{\xi},h\xi_3)}e^{i(t+s)\lambda|\bar{\xi}|} B_{\lambda,h,1}(\xi,t,s)\widehat{f_{\lambda,h}}(\lambda\bar{\xi},h\xi_3)\,d\xi,
       \end{align*}
       where
       \begin{align*}
       	B_{\lambda,h,1}(\xi,t,s):=e^{is(|(\lambda\bar{\xi},h\xi_3)|-\lambda|\bar{\xi}|)}\widetilde{\varphi}(|\bar{\xi}|)\widetilde{\varphi}(|\xi_3|)B(\lambda\bar{\xi},h\xi_3,t,s).
       \end{align*}
	       The support of $B_{\lambda,h,1}(\cdot,t,s)$ is contained in
	       $(-\pi,\pi)^3$. Moreover, for every multi-index $\eta$,
	       \[
	       \sup_{\xi}|\partial_\xi^{\eta}B_{\lambda,h,1}(\xi,t,s)|
	       \leq C_{\eta}\lambda^{2\delta |\eta|}
	       \]
	       uniformly in $(t,s)\in \mathbb{J}_0$. Expanding
	       $B_{\lambda,h,1}(\cdot,t,s)$ in a Fourier series on
	       $[-\pi,\pi]^3$, we have
       \begin{align*}
       	B_{\lambda,h,1}(\xi,t,s)=\sum_{\ell\in \mathbb{Z}^3}C_{\ell,1}(t,s)e^{i\ell\cdot\xi}, 
       \end{align*}
	       where, uniformly in $(t,s)\in \mathbb{J}_0$,
	      $
	       |C_{\ell,1}(t,s)|\lesssim \lambda^{200\delta}(1+|\ell|)^{-100}.
	      $
	       Minkowski's inequality and the change of variables give
       \begin{align*}
	       	\|\mathcal{U}_{t,s}f_{\lambda,h}\|_{L^q_{x,t,s}(\mr^3\times \mathbb{J}_0)}\lesssim \lambda^{200\delta}\|\mathcal{L}f_{\lambda,h}\|_{L^q_{x,t,s}(\mr^3\times \mathbb{J}_0)},
       \end{align*}
       where
       \begin{align*}
           \mathcal{L}f(x,t,s):=\int_{\mr^3}e^{i(x\cdot\xi+(t+s)|\bar{\xi}|)}\widehat{f}(\xi)\,d\xi.
       \end{align*}

       Note that
     $
           \mathcal{L}f(x,t,s)=\mathcal{W}_+\left(f(\cdot, x_3)\right)(\bar{x},t+s).
     $
       Using a change of variables, Lemma~\ref{lem-one-qq-p}, and Minkowski's and Bernstein's inequalities, we obtain
       \begin{align*}
       	\|\mathcal{L}f_{\lambda,h}\|_{L^q_{x,t,s}(\mr^3\times \mathbb{J}_0)}
	       	&\lesssim \lambda^{\frac{1}{2}+\frac{1}{p}-\frac{3}{q}+\epsilon}
	       	\|f_{\lambda,h}\|_{L_{x_3}^{q}(\mr;L_{\bar{x}}^{p}(\mr^2))}\\
	       	&\leq \lambda^{\frac{1}{2}+\frac{1}{p}-\frac{3}{q}+\epsilon}
	       	\|f_{\lambda,h}\|_{L_{\bar{x}}^{p}(\mr^2;L_{x_3}^{q}(\mr))}\\
	       	&\lesssim \lambda^{\frac{1}{2}+\frac{3}{2p}-\frac{7}{2q}+\delta+\epsilon}\|f\|_{L^p(\mr^3)}.
	       \end{align*}
		       Together with the factor $\lambda^{200\delta}$ above, this proves
		       \eqref{ali-sec2-qqp-k<j/2}.

		       If $1/p+3/q>1$, we instead use the second estimate in Lemma
		       \ref{lem-one-qq-p} and repeat the mixed-norm argument with
		       Minkowski's and Bernstein's inequalities to obtain
	       \[
	       \|\mathcal{L}f_{\lambda,h}\|_
	       {L^q_{x,t,s}(\mr^3\times\mathbb{J}_0)}
	       \lesssim
	       \lambda^{\frac{2}{p}-\frac{2}{q}+\delta+\epsilon}
	       \|f\|_{L^p(\mr^3)}.
		       \]
		       Together with the factor $\lambda^{200\delta}$ above, this yields
		       the corresponding dyadic estimate when $1/p+3/q>1$.

		       Finally, on the frequency support in (d), we have
		       $|\bar{\xi}|\sim\lambda$ and $|\xi|\lesssim\lambda^{1+\delta}$.
		       By the definition of $B$, the multiplier obtained after restoring the original amplitude
		       $B_{1,1}^{+,+}$ and applying the parameter derivatives is bounded by
		       $\lambda^{\alpha-\frac12}|\xi|^{\beta-\frac12}
		       \lesssim\lambda^{\alpha+\beta-1+\delta/2}$.
		       Derivatives falling on the amplitude preserve its symbol bounds.
		       The additional factor $\lambda^{\delta/2}$ is absorbed by the
		       $300\delta$ loss in (d), which proves (d).
\end{proof}

The preceding estimates cover every frequency regime used below.
Lemma~\ref{lem-A1-A2} handles the fully low-frequency and
low-horizontal-frequency regimes. For $\lambda\geq2^3$, parts
(d), (e), (a)--(b), and (c) of Lemma~\ref{lem-A4-A6} treat,
respectively,
\[
 h<\lambda^{1/2+\delta},\qquad
 \lambda^{1/2+\delta}\leq h\leq\lambda,\qquad
 \lambda\leq h\leq\lambda^2,\qquad
 h\geq\lambda^2.
\]
Section~\ref{sec-square-func-est} provides the square-function estimates used for interpolation.
 \section{Square function estimates}\label{sec-square-func-est}
In this section, we establish the square function estimates for
$\mathcal{S}$ needed in Section~\ref{sec-Lp-Lq-bounds}; see
\eqref{def-square-function} for its definition. To handle the phase
$t|\bar{\xi}|+s|\xi|$ in $\mathcal{U}_{t,s}$, we consider four
frequency ranges:
\begin{align*}
    h<\lambda^{1/2+\delta},\quad
    \lambda^{1/2+\delta}\leq h\leq\lambda,\quad
    \lambda\leq h\leq\lambda^2,\quad
    h\geq\lambda^2.
\end{align*}
Here $\delta>0$ is a small parameter that determines the frequency
threshold, while $\epsilon>0$ below is reserved for losses in the
analytic estimates. In the first range,
$|\xi|=|\bar{\xi}|+O(\lambda^{2\delta})$; the resulting phase
correction is absorbed into the amplitude with an admissible
$\lambda^{C\delta}$ loss in the proof of Lemma~\ref{lem-A4-A6} (d).
In the last range, $|\xi|=|\xi_3|+O(\lambda^2/h)$, and
Proposition~\ref{prop-L6L2-L2-k>2j} applies the analogous reduction with
$|\xi_3|$ in place of $|\bar{\xi}|$. The two intermediate ranges
require more detailed support analysis and are treated in
Propositions~\ref{prop-main-jk} and \ref{prop-main-j<k<2j}.

\subsection{An auxiliary annular estimate}
The following lemma will play a key role in the square function estimates below.
\begin{lem}\label{lem-annulus-L2-L6}
	Let $\tau\in [2^{-2},2^2+2^{-2}]$ and $0< \eta\leq 1$. Suppose that
 	$f\in L^2(\mathbb{R}^2)$ satisfies
 	\[
 \supp \widehat{f}\subset \bigl\{\bar{\xi}\in \mathbb{R}^2:\bigl||\bar{\xi}|-\tau\bigr|\leq \eta\bigr\}.
 	\]
 	Then
 	\[
	\|f\|_{L^6(\mathbb{R}^2)}\lesssim \eta^{1/2}\|f\|_{L^2(\mathbb{R}^2)},
 	\]
	where the implicit constant is independent of $\tau$, $\eta$, and $f$.
 \end{lem}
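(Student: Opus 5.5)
The plan is to use the Stein--Tomas restriction theorem for the unit circle at its endpoint $L^2(\mathbb S^1)\to L^6(\mathbb R^2)$, applied to each circle of radius $\rho\in[\tau-\eta,\tau+\eta]$, and then integrate over $\rho$ with Minkowski and Cauchy--Schwarz. Since $\tau\geq 2^{-2}$ and $\eta\leq1$, we may write $\rho\ge 0$ throughout. Indeed, if $\tau-\eta<0$ the annulus is just contained in a disc of radius $\lesssim 1$, and the same argument applies with the radial integral over $[0,\tau+\eta]$, an interval of length $\lesssim\eta$ whenever $\eta\gtrsim1$.

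Write $f$ in polar coordinates on the frequency side:
\[
f(x)=\int_{\tau-\eta}^{\tau+\eta}\Big(\int_{\mathbb S^1}e^{i\rho x\cdot\omega}\widehat f(\rho\omega)\,d\sigma(\omega)\Big)\rho\,d\rho
=:\int_{\tau-\eta}^{\tau+\eta}E_\rho g_\rho(x)\,\rho\,d\rho,
\]
where $g_\rho(\omega):=\widehat f(\rho\omega)$ and $E_\rho$ is the extension operator for the circle of radius $\rho$. By scaling, $E_\rho g(x)=E_1g(\rho x)$, so the Stein--Tomas inequality $\|E_1g\|_{L^6(\mathbb R^2)}\lesssim\|g\|_{L^2(\mathbb S^1)}$ gives
\[
\|E_\rho g_\rho\|_{L^6}\lesssim\rho^{-1/3}\|g_\rho\|_{L^2(\mathbb S^1)}.
\]
Since $\rho\lesssim1$ and $\rho\gtrsim\eta$ on the relevant range, the powers of $\rho$ can be handled uniformly. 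More carefully, if $\tau\ge 2\eta$ then $\rho\sim\tau\sim 1$, and all powers of $\rho$ are harmless; if $\tau<2\eta$ then $\eta\sim1$, and we instead use the trivial bound $\|f\|_{L^6}\lesssim\|\widehat f\|_{L^{6/5}}\lesssim\|f\|_{L^2}$ by Hausdorff--Young and Hölder on a set of bounded measure. This handles the degenerate case directly.

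In the main case, apply Minkowski's inequality and then Cauchy--Schwarz in $\rho$ over an interval of length $2\eta$:
\[
\|f\|_{L^6}\lesssim\int_{\tau-\eta}^{\tau+\eta}\|g_\rho\|_{L^2(\mathbb S^1)}\,d\rho\lesssim\eta^{1/2}\Big(\int\|g_\rho\|_{L^2(\mathbb S^1)}^2\rho\,d\rho\Big)^{1/2}=\eta^{1/2}\|\widehat f\|_{L^2}\approx\eta^{1/2}\|f\|_{L^2},
\]
using Plancherel at the end. The constants depend only on the Stein--Tomas constant and the bounds $\rho\sim1$, so they are independent of $\tau$, $\eta$ and $f$.

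The only delicate point is justifying the endpoint Stein--Tomas estimate $L^2\to L^6$ for curves in $\mathbb R^2$, where $6=2(d+1)/(d-1)$ with $d=2$. This is classical, so there is no real obstacle beyond the bookkeeping of uniformity in the radius, which the case split above handles.
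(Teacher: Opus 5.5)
Your proof is correct and follows the paper's argument essentially step for step. The case split is the same: for $\eta\geq\tau/2$ you use a trivial bound from the bounded frequency support, where the paper uses Bernstein and you use Hausdorff--Young plus H\"older, which amounts to the same thing. For $\eta<\tau/2$ you use polar coordinates, Stein--Tomas for $\mathbb{S}^1$ rescaled to radius $\rho\sim1$, Minkowski, Cauchy--Schwarz in $\rho$, and Plancherel, exactly as the paper does. The preliminary remarks before your case split (about $\tau-\eta<0$ and ``$\rho\gtrsim\eta$'') are unnecessary, and the latter fails when $\tau<2\eta$, so you can simply drop them.
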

\begin{proof}
	We first consider the case $\eta\geq \tau/2$. Note that
 	\[
	\supp \widehat{f}\subset \{|\bar{\xi}|\leq \tau+\eta\}\subset B(0,6).
 	\]
	If $\eta\geq \tau/2$, then $\eta\geq 2^{-3}$, and hence Bernstein's inequality yields
 	\[
	\|f\|_{L^6(\mathbb{R}^2)}\lesssim \|f\|_{L^2(\mathbb{R}^2)}
	\lesssim \eta^{1/2}\|f\|_{L^2(\mathbb{R}^2)}.
	\]
	It remains to consider the case $\eta<\tau/2$. Set
 	\[
 	Eg(\bar{x}):=\int_{\mathbb{S}^1} e^{i\bar{x}\cdot \omega}g(\omega)\,d\sigma(\omega),
 	\]
 	and
 	\[
	g_{\rho}(\omega):=\widehat{f}(\rho\omega), \quad (\omega,\rho)\in \mathbb{S}^1\times [\tau-\eta,\tau+\eta].
 	\]
 	By polar coordinates, we have
 	\[
 	f(\bar{x})
	=\int_{\tau-\eta}^{\tau+\eta}\int_{\mathbb{S}^1}
	e^{i\bar{x}\cdot \rho\omega}\widehat{f}(\rho\omega)\,\rho\,d\sigma(\omega)\,d\rho=\int_{\tau-\eta}^{\tau+\eta}\rho\, Eg_{\rho}(\rho\bar{x})\,d\rho.
 	\]
	By the Stein--Tomas restriction theorem for $\mathbb{S}^1$ and scaling,
	\[
	\|Eg(\rho\cdot)\|_{L^6(\mathbb{R}^2)}
	=\rho^{-1/3}\|Eg\|_{L^6(\mathbb{R}^2)}
	\lesssim \rho^{-1/3}\|g\|_{L^2(\mathbb{S}^1)}
	\lesssim \|g\|_{L^2(\mathbb{S}^1)}.
	\]
	The last inequality is uniform for
	$\rho\in[\tau-\eta,\tau+\eta]\subset[2^{-3},6]$.
	Therefore, by
 	Minkowski's inequality,
 	\[
 	\|f\|_{L^6(\mathbb{R}^2)}
	\leq \int_{\tau-\eta}^{\tau+\eta}\rho\, \|E g_{\rho}(\rho\cdot)\|_{L^6(\mathbb{R}^2)}\,d\rho
	\lesssim \int_{\tau-\eta}^{\tau+\eta}\|g_{\rho}\|_{L^2(\mathbb{S}^1)}\,d\rho.
 	\]
 	Applying the Cauchy--Schwarz inequality and Plancherel's theorem, we obtain
 	\begin{align*}
		\|f\|^2_{L^6(\mathbb{R}^2)}
		&\lesssim \eta\int_{\tau-\eta}^{\tau+\eta}\|g_{\rho}\|_{L^2(\mathbb{S}^1)}^2\,d\rho\\
		&\sim \eta\int_{\tau-\eta}^{\tau+\eta}\int_{\mathbb{S}^1}|\widehat{f}(\rho\omega)|^2\,\rho\,d\sigma(\omega)\,d\rho
		\sim\eta \|f\|_{L^2(\mathbb{R}^2)}^2,
 	\end{align*}
 	which completes the proof.
 \end{proof}

\subsection{Intermediate-frequency estimates}

\subsubsection*{The regime $\lambda^{1/2+\delta}\leq h\leq\lambda$}

\begin{prop}\label{prop-main-jk}
	Let $\delta,\epsilon>0$, $\lambda\geq2^3$, and $\lambda^{1/2+\delta}\leq h\leq\lambda$. Then,
    \begin{align}
       \|\mathcal{S}f_{\lambda,h}\|_{L^{\infty}(\mr^3)} & \lesssim_{\delta,\epsilon} \lambda^{\frac{1}{2}+\epsilon}\|f\|_{L^\infty(\mr^3)} \label{ali-main-jk},\\
        \|\mathcal{S}f_{\lambda,h}\|_{L^6(\mathbb{R}^3)}&\lesssim \lambda^{\frac12}h^{-\frac13}\|f\|_{L^2(\mr^3)},\label{ali-L6L2-L2-j/2-j} \\
        \|\mathcal{S}f_{\lambda,h}\|_{L^{\infty}(\mr^3)}&\lesssim \lambda h^{-\frac12}\|f\|_{L^2(\mr^3)}\label{ali-LinftyL2-L2-j/2-j}.
    \end{align}
\end{prop}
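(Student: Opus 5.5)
The plan is to exploit the fact that, for fixed $x$, $\mathcal{U}_{t,s}f(x)$ is a two-dimensional Fourier integral in $(t,s)$ with "frequencies" $(a,b)=(|\bar\xi|,|\xi|)$. I would first remove the $(t,s)$-dependence of $B$ by expanding $B(\xi,t,s)\chi(t,s)$ in a Fourier series in $(t,s)$ on a slightly larger box; here $\chi$ is a smooth cutoff equal to $1$ on $\mathbb{J}_0$. The symbol bounds \eqref{ali-condition-on-B} give coefficients decaying rapidly in the series index, so it suffices to treat $B=B(\xi)$, with a harmless shift of the phase. Then Plancherel in $(t,s)$, after inserting the smooth cutoff $\chi$, gives the pointwise bound
\[
\mathcal{S}f_{\lambda,h}(x)^2\lesssim \sum_{(m,n)\in\mathbb{Z}^2}|f_{m,n}(x)|^2,
\]
up to Schwartz-tail errors in the frequency variables. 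Here $\widehat{f_{m,n}}$ is $B\widehat{f_{\lambda,h}}$ restricted smoothly to $\{|\bar\xi|\in[m,m+1],\ |\xi|\in[n,n+1]\}$. Since $|\xi_3|=\sqrt{|\xi|^2-|\bar\xi|^2}$ and $|\xi_3|\sim h$, fixing $(a,b)$ to unit cells forces $|\xi_3|$ into two intervals of length $\sim\lambda/h$, because $\partial(a,b)/\partial\xi_3\sim h/\lambda$. Hence each piece has Fourier support in the product of a width-$1$ annulus $\{||\bar\xi|-m|\le 1\}$ of radius $\sim\lambda$ and $O(1)$ intervals of length $\sim\lambda/h$ in $\xi_3$. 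These pieces have bounded overlap. The hypothesis $h\ge\lambda^{1/2+\delta}$ ensures $\lambda/h\ll h$, so the intervals genuinely localize $\xi_3$.

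For \eqref{ali-LinftyL2-L2-j/2-j}, I would bound $|f_{m,n}(x)|\le\|\widehat{f_{m,n}}\|_{L^1}\le |\supp\widehat{f_{m,n}}|^{1/2}\|f_{m,n}\|_{L^2}$. The support has measure $\lesssim\lambda\cdot\lambda/h$, and summing with orthogonality gives $\mathcal{S}f_{\lambda,h}(x)^2\lesssim \lambda^2h^{-1}\|f\|_2^2$. For \eqref{ali-L6L2-L2-j/2-j}, I would write $\|\mathcal{S}f_{\lambda,h}\|_6^2\le\|\sum|f_{m,n}|^2\|_{L^3}\le\sum\|f_{m,n}\|_{L^6}^2$. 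Because each support is a product set, I would apply Bernstein in $x_3$, which costs $(\lambda/h)^{1/2-1/6}$. In $\bar x$ I would rescale Lemma~\ref{lem-annulus-L2-L6} by $\lambda$: the annulus becomes one of radius $\sim1$ and width $\eta=\lambda^{-1}$, and the scaling costs $\lambda^{2(1/2-1/6)}\lambda^{-1/2}=\lambda^{1/6}$. Minkowski's inequality to commute the $L^6_{x_3}$ and $L^6_{\bar x}$ norms is trivial. The product $\lambda^{1/6}(\lambda/h)^{1/3}=\lambda^{1/2}h^{-1/3}$ is the claimed exponent, and $\sum\|f_{m,n}\|_2^2\lesssim\|f\|_2^2$ closes the estimate.

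For \eqref{ali-main-jk}, I would dualize: $\mathcal{S}f(x)=\sup_{\|g\|_{L^2(\mathbb{J}_0)}\le1}|K_g*f(x)|$, where $K_g=\int g(t,s)K_{t,s}\,dt\,ds$ and $K_{t,s}$ is the kernel of $\mathcal{U}_{t,s}$ on the $(\lambda,h)$ frequency piece. It then suffices to prove $\|K_g\|_{L^1}\lesssim\lambda^{1/2+\epsilon}$ uniformly in $g$. I would get this from $L^2$ size and support size. By Plancherel in $\xi$ and the change of variables $\xi\mapsto(a,b,\omega)$, whose Jacobian is $\sim\lambda\cdot\lambda/h$, we get $\|K_g\|_{L^2}\lesssim\lambda h^{-1/2}$. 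For the support, I would use the cap decomposition $(\nu,n)$ from the proof of Lemma~\ref{lem-A4-A6}(e). Each $K_{t,s}$ piece is concentrated near $\bar x\approx -(t+s\theta_{\nu,n}')\nu$ and $x_3\approx -s\lambda^{-1/2}n$. Here $|n|\lesssim h\lambda^{-1/2}$, so $|x_3|\lesssim h/\lambda$, and $|\bar x|\lesssim1$. Thus $K_g$ is essentially supported in a set of measure $\lesssim h/\lambda$, with rapidly decaying tails at scale $\lambda^{-1+\epsilon}$. Cauchy--Schwarz then gives $\|K_g\|_1\lesssim(h/\lambda)^{1/2}\lambda h^{-1/2}=\lambda^{1/2}$, and the tails cost only $\lambda^{\epsilon}$.

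I expect the main obstacle to be the last step: making the essential-support localization of $K_g$ rigorous uniformly in $g$. This requires integration by parts in the cap-adapted coordinates, with the lower bound $h\ge\lambda^{1/2+\delta}$ keeping the $\xi_3$-caps nondegenerate. The orthogonality and tail bookkeeping in the Plancherel-in-$(t,s)$ step is routine by comparison.
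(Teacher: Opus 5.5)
Your proposal is correct and follows essentially the paper's argument. Plancherel in $(t,s)$ localizes $(|\bar\xi|,|\xi|)$ to unit cells, which is the paper's continuous family $\mathcal T^{\lambda,h}_{\tau,\sigma}$. The $L^2\to L^6$ and $L^2\to L^\infty$ bounds then follow from Lemma~\ref{lem-annulus-L2-L6}, Bernstein in $x_3$, and the support measure $\lambda^2/h$, and \eqref{ali-main-jk} comes from combining the $L^2$ bound with the localization of the kernel to a $1\times1\times h/\lambda$ box.

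The remaining differences are cosmetic. You phrase the last step dually, via $\|K_g\|_{L^1}$ and Cauchy--Schwarz, rather than through the paper's near/far box decomposition. Your direct bound $|\xi_3|\gtrsim h$ also removes the need to treat $h\le 2^7\lambda^{1/2}$ separately. Finally, the kernel localization you flag as the main obstacle follows from direct integration by parts in $\bar\xi$ and $\xi_3$, without the cap decomposition.
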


Fix $\psi\in C_c^{\infty}((-1,1))$ such that $\widehat{\psi}(s)>0$
for $s\in [2^{-3},2^3]$. We use the following localization lemma in
the proof of Proposition~\ref{prop-main-jk}.

\begin{lem}\label{lem-intersection}
	Let $\lambda\geq2^3$ and $2^7\lambda^{1/2}\leq h\leq\lambda$. Suppose that
	$(\tau,\sigma)\in [2^{-2},2^2+2^{-2}]^2$ and
	\[
		\psi\bigl(\lambda(|(\bar{\xi},(h/\lambda)\xi_3)|-\sigma)\bigr)
	\psi(\lambda(|\bar{\xi}|-\tau))
		\varphi(|\bar{\xi}|)\varphi(|\xi_{3}|)\neq0.
	\]
	Then
	\begin{align}\label{ali-range-barxi}
		\tau-\lambda^{-1}\leq |\bar{\xi}|\leq \tau+\lambda^{-1},
	\end{align}
	and
	\begin{align}\label{ali-range-xi3}
		\sqrt{\sigma^2-\tau^2}-2^8h^{-1}
		\leq (h/\lambda)|\xi_3|
		\leq \sqrt{\sigma^2-\tau^2}+2^8h^{-1}.
	\end{align}
\end{lem}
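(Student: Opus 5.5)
The plan is to read off both ranges directly from the supports of the two cutoffs, treating \eqref{ali-range-barxi} as immediate and \eqref{ali-range-xi3} as a perturbation computation. Since $\psi\in C_c^\infty((-1,1))$, nonvanishing of $\psi(\lambda(|\bar\xi|-\tau))$ forces $\lambda\bigl||\bar\xi|-\tau\bigr|<1$, which is exactly \eqref{ali-range-barxi}. In the same way, nonvanishing of the first factor gives
\[
\bigl|\,|(\bar\xi,(h/\lambda)\xi_3)|-\sigma\bigr|<\lambda^{-1}.
\]
The factors $\varphi(|\bar\xi|)\varphi(|\xi_3|)$ then provide $|\bar\xi|,|\xi_3|\in(1/2,2)$. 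All these constraints will be used together below.

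For \eqref{ali-range-xi3}, write $u:=(h/\lambda)|\xi_3|$, so that $u^2=|(\bar\xi,u)|^2-|\bar\xi|^2$. With $|(\bar\xi,u)|=\sigma+a$ and $|\bar\xi|=\tau+b$, where $|a|,|b|<\lambda^{-1}$, we get
\[
u^2=\sigma^2-\tau^2+2\sigma a-2\tau b+a^2-b^2,
\qquad
\bigl|u^2-(\sigma^2-\tau^2)\bigr|\le (2\sigma+2\tau+2)\lambda^{-1}\le 2^{5}\lambda^{-1},
\]
using $\sigma,\tau\le 2^2+2^{-2}$. In particular $\sigma^2-\tau^2\ge u^2-2^5\lambda^{-1}$. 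Since $|\xi_3|>1/2$, we also have $u\ge h/(2\lambda)$, and hence $u^2\ge h^2/(4\lambda^2)\ge 2^{12}\lambda^{-1}$ because $h\ge 2^7\lambda^{1/2}$. It follows that $\sigma^2-\tau^2>0$, so $w:=\sqrt{\sigma^2-\tau^2}$ is well defined and $w\gtrsim u\ge h/(2\lambda)$.

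To pass from squares to the values themselves, use
\[
|u-w|=\frac{|u^2-w^2|}{u+w}\le \frac{2^5\lambda^{-1}}{u}\le \frac{2^5\lambda^{-1}}{h/(2\lambda)}=2^6h^{-1}\le 2^8h^{-1},
\]
which is \eqref{ali-range-xi3}.

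This last division is the only delicate point. One needs $u+w$ bounded below by a multiple of $h/\lambda$, which is where the hypothesis $h\ge 2^7\lambda^{1/2}$ enters: it guarantees that the perturbation $2^5\lambda^{-1}$ of $u^2$ is small compared with $u^2\sim(h/\lambda)^2$. The upper bound $h\le\lambda$ is not needed for this estimate; it serves only to keep $u\lesssim 1$, consistent with the ranges of $\tau$ and $\sigma$.
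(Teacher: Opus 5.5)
Your proof is correct and follows essentially the same route as the paper: read off $\bigl||\bar\xi|-\tau\bigr|,\ \bigl||(\bar\xi,(h/\lambda)\xi_3)|-\sigma\bigr|<\lambda^{-1}$ from the $\psi$-supports, bound $\bigl|u^2-(\sigma^2-\tau^2)\bigr|$ by $O(\lambda^{-1})$, and rationalize the difference of square roots. The one variation is that you divide by $u\geq h/(2\lambda)$ instead of by $\sqrt{\sigma^2-\tau^2}$, which the paper bounds below via $\sigma-\tau\geq 2^{-8}(h/\lambda)^2$. Consequently, your argument uses the hypothesis $h\geq 2^7\lambda^{1/2}$ only to guarantee $\sigma^2>\tau^2$, so that the square root is defined; it is not needed to bound the denominator, contrary to what your closing remark suggests.
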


\begin{proof}
		Put $R=|(\bar{\xi},(h/\lambda)\xi_3)|$. The support condition on
		$\psi$ gives
	    \begin{align}\label{ali-barxi-R}
	        \bigl||\bar{\xi}|-\tau\bigr|\leq \lambda^{-1},\qquad|R-\sigma|\leq \lambda^{-1},
	    \end{align}
	and hence \eqref{ali-range-barxi}. Since $\varphi(|\xi_3|)\neq 0$, $h/(2\lambda)\leq (h/\lambda)|\xi_3|\leq 2h/\lambda$.
	Note that
    \begin{align}\label{ali-2(k-j)xi3}
        R-|\bar{\xi}|=\frac{(h/\lambda)^2|\xi_3|^2}{R+|\bar{\xi}|}.
    \end{align}
	Using $\varphi(|\bar{\xi}|)\varphi(|\xi_3|)\neq 0$, we have
	$1\leq R+|\bar{\xi}|\leq 2^3$, and thus
	\[2^{-5}(h/\lambda)^2\leq R-|\bar{\xi}|\leq 2^2(h/\lambda)^2.\]
	Since $|(\sigma-\tau)-(R-|\bar{\xi}|)|\leq2\lambda^{-1}$,
	and since $2\lambda^{-1}\leq2^{-13}(h/\lambda)^2$ by the assumption $h\geq2^7\lambda^{1/2}$, we obtain
	\[
	\begin{aligned}
	\sigma-\tau
\geq R-|\bar{\xi}|-2\lambda^{-1} \geq2^{-8}(h/\lambda)^2
	>2\lambda^{-1}.
	\end{aligned}
	\]
	     By \eqref{ali-barxi-R} and \eqref{ali-2(k-j)xi3}, we have
		\[(\sigma-\lambda^{-1})^2-(\tau+\lambda^{-1})^2\leq (h/\lambda)^2|\xi_3|^2\leq(\sigma+\lambda^{-1})^2-(\tau-\lambda^{-1})^2,\]
		 equivalently,
		\[\sigma^2-\tau^2-2\lambda^{-1}(\sigma+\tau)\leq (h/\lambda)^2|\xi_3|^2\leq\sigma^2-\tau^2+2\lambda^{-1}(\sigma+\tau).\]
		  Hence, using $\sigma+\tau<2^4$ and
		$\sigma-\tau\geq2^{-8}(h/\lambda)^2>2\lambda^{-1}$,
		and rationalizing the difference, we obtain
		\[
		\begin{aligned}
		\left|(h/\lambda)|\xi_3|-\sqrt{\sigma^2-\tau^2}\right|
		&=
		\frac{\left|(h/\lambda)^2|\xi_3|^2-(\sigma^2-\tau^2)\right|}
		{(h/\lambda)|\xi_3|+\sqrt{\sigma^2-\tau^2}}\leq
		\frac{2\lambda^{-1}(\sigma+\tau)}
		{\sqrt{\sigma^2-\tau^2}}\\
		&=
		2\lambda^{-1}
		\sqrt{\frac{\sigma+\tau}{\sigma-\tau}}
		\leq2^8h^{-1}.
		\end{aligned}
		\]
	This proves \eqref{ali-range-xi3}.
\end{proof}

We now prove Proposition~\ref{prop-main-jk}.
\begin{proof}[Proof of Proposition~\ref{prop-main-jk}]
We first dispose of the range
$\lambda^{1/2+\delta}\leq h\leq2^7\lambda^{1/2}$.
As in the proof of Lemma~\ref{lem-A4-A6} (d), we write
$e^{is|\xi|}=e^{is|\bar\xi|}e^{is(|\xi|-|\bar\xi|)}$ and absorb the
second factor into the amplitude. Since $h^2/\lambda\leq2^{14}$,
the resulting symbol bounds are uniform.
Applying the Stein--Tomas and Plancherel estimates in the
horizontal variables, together with the one-dimensional Bernstein
inequality in $x_3$, gives
\[
\begin{aligned}
\|\mathcal{S}f_{\lambda,h}\|_{L^\infty}
&\lesssim \lambda^{1/2}\|f\|_{L^\infty},\\
\|\mathcal{S}f_{\lambda,h}\|_{L^6}
&\lesssim \lambda^{1/6}h^{1/3}\|f\|_{L^2},\\
\|\mathcal{S}f_{\lambda,h}\|_{L^\infty}
&\lesssim \lambda^{1/2}h^{1/2}\|f\|_{L^2}.
\end{aligned}
\]
Here the powers $h^{1/3}$ and $h^{1/2}$ result from
applying Bernstein's inequality in the $x_3$-variable with $q=6$
and $q=\infty$, respectively.
Because $h\leq2^7\lambda^{1/2}$, these imply
\eqref{ali-main-jk}--\eqref{ali-LinftyL2-L2-j/2-j} in this range.
It therefore remains to consider $2^7\lambda^{1/2}\leq h\leq\lambda$.

Let $\widetilde{\varphi}\in C_c^{\infty}((1/4,3))$ be such that $\widetilde{\varphi}\varphi=\varphi$. After a change of variables, we write
\begin{align*}
			\mathcal{U}_{t,s}f_{\lambda,h}(x)
			&=\int_{\mr^3}\int_{\mr^3}
			e^{i(\lambda\bar{x}-\bar{y},\,hx_3-y_3)\cdot(\bar{\xi},\xi_3)}
			e^{i(t\lambda|\bar{\xi}|+s\lambda|(\bar{\xi},(h/\lambda)\xi_3)|)} \\
			&\qquad \times \varphi(|\bar{\xi}|)\varphi(|\xi_3|)
				B_{\lambda,h,2}(\xi,t,s)\,f(\lambda^{-1}\bar{y},h^{-1}y_3)\,d\xi\,dy,
		\end{align*}
		where
		\[
		B_{\lambda,h,2}(\xi,t,s)
		=\widetilde{\varphi}(|\bar{\xi}|)\widetilde{\varphi}(|\xi_3|)
		\,B(\lambda\bar{\xi},h\xi_3,t,s).
		\]
        Note that for any multi-index $\alpha$,
		$
			\sup_{\xi}|\partial_\xi^{\alpha}B_{\lambda,h,2}(\xi,t,s)|\leq C_{\alpha}
	        $
		holds uniformly in $(t,s)\in \mathbb{J}_0$. Expanding $B_{\lambda,h,2}(\cdot,t,s)$ in a Fourier series on $[-\pi,\pi]^3$, we have
			\begin{align*}
				B_{\lambda,h,2}(\xi,t,s)=\sum_{\ell\in \mathbb{Z}^3}C_{\ell,2}(t,s)e^{i\ell\cdot\xi},
			\end{align*}
			where $|C_{\ell,2}(t,s)|\lesssim (1+|\ell|)^{-100}$. Note that
			$\widehat{\psi}$ is bounded below on $[2^{-3},2^3]$. Using the bound
			on $C_{\ell,2}$, Minkowski's inequality, and then Plancherel's theorem
			in $t$ and $s$, we obtain
    \begin{align}
        \mathcal{S}f_{\lambda,h}(x) & \lesssim \left(\int_{\mr^2}|\mathcal{U}_{t,s}f_{\lambda,h}(x)\widehat{\psi}(t)\widehat{\psi}(s)|^{2}\,dt\,ds\right)^{1/2}\notag\\
	        &\lesssim \sum_{\ell\in \mathbb{Z}^3}\frac{\lambda}
        {(1+|\ell|)^{100}}\left(\int_{\mr^2}|\mathcal{T}^{\lambda,h}_{\tau,\sigma}(\mathcal{D}_{\lambda,h}^\ell f)(\lambda\bar{x},hx_3)|^2\,d\tau\,d\sigma \right)^{1/2},\label{ali-Wjk-j/2<k<j}
    \end{align}
        where
        \[\mathcal{D}_{\lambda,h}^\ell f:=f(\lambda^{-1}(x_1+\ell_1), \lambda^{-1}(x_2+\ell_2),h^{-1}(x_3+\ell_3)),\quad \ell:=(\ell_1,\ell_2,\ell_3),\]
        and $\mathcal{T}^{\lambda,h}_{\tau,\sigma}f:=K^{\lambda,h}_{\tau,\sigma}*f$ with
		\begin{align*}
			K_{\tau,\sigma}^{\lambda,h}(x):=\int_{\mr^3}e^{i x\cdot \xi}\psi\bigl(\lambda(|(\bar{\xi},(h/\lambda)\xi_{3})|-\sigma)\bigr)\psi(\lambda(|\bar{\xi}|-\tau)) \varphi(|\bar{\xi}|)\varphi(|\xi_{3}|)\,d\xi.
		\end{align*}

		Let
		\begin{align*}
\mathcal{T}^{\lambda,h}f(x):=\left(\int_{\mr^2}|\mathcal{T}^{\lambda,h}_{\tau,\sigma} f(x)|^2\,d\tau\,d\sigma\right)^{1/2}.
		\end{align*}
	      By the support conditions on $\psi$ and $\varphi$, the
		$(\tau,\sigma)$-integration defining $\mathcal T^{\lambda,h}$ is restricted to
	        $[2^{-2},2^2+2^{-2}]^2$.
	In this normalization, the spatial change of variables has Jacobian
	$\lambda^2h$, while Plancherel's theorem applied in the $(t,s)$-variables
	gives $\lambda$ in \eqref{ali-Wjk-j/2<k<j}. Taking the $L^q_x$-norm there, using the rapid decay of
        the Fourier coefficients, and scaling back, we obtain, for any $2\leq p\leq q\leq \infty$,
        \begin{equation}\label{Wjk-Sjk-j/2<k<j}
    \Vert \mathcal{S}f_{\lambda,h}\Vert_{ L^q(\mr^3)}
    \lesssim
	    \lambda(\lambda^2h)^{\frac1p-\frac1q}
    \Vert \mathcal{T}^{\lambda,h}\Vert_{L^p(\mr^3)\to L^q(\mr^3)}\Vert f\Vert_{L^p(\mr^3)}.
    \end{equation}
     Thus, \eqref{ali-L6L2-L2-j/2-j} and \eqref{ali-LinftyL2-L2-j/2-j} are reduced to
		\begin{align}\label{ali-L6L2-L2-improve-k}
			\|\mathcal{T}^{\lambda,h}f\|_{L^6(\mr^3)}\lesssim \lambda^{-\frac{7}{6}}h^{-\frac{2}{3}}\|f\|_{L^2(\mr^3)},
		\end{align}
		and
		\begin{align}\label{ali-Sjk-Linfty-L2}
			\|\mathcal{T}^{\lambda,h}f\|_{L^{\infty}(\mr^3)}\lesssim (\lambda h)^{-1}\|f\|_{L^2(\mr^3)}.
		\end{align}

	Since $\mathcal F[\mathcal{T}_{\tau,\sigma}^{\lambda,h}f]=\widehat{f}\cdot\mathcal F[K_{\tau,\sigma}^{\lambda,h}]$, it suffices to consider the parameters for which the multiplier is nonzero. Lemma~\ref{lem-intersection} then gives $\sigma>\tau$, and for every such $(\tau,\sigma)\in [2^{-2},2^2+2^{-2}]^2$,
		 \begin{align}\label{fourier-support--Sjk}
		 	\supp \mathcal F[ \mathcal{T}_{\tau,\sigma}^{\lambda,h}f ]\subset \{\xi:\bigl||\bar{\xi}|-\tau\bigr|\leq \lambda^{-1},\  \bigl||\xi_3|-\lambda/h\sqrt{\sigma^2-\tau^2}\bigr|\leq 2^8\lambda h^{-2}\}.
		 \end{align}
		 Thus, in the rescaled frequency variables, the horizontal radial and vertical thicknesses are $O(\lambda^{-1})$ and $O(\lambda h^{-2})$, respectively, and the support has measure $O(h^{-2})$.

		 For each fixed $\xi$, the set of admissible $(\tau,\sigma)$ has measure $O(\lambda^{-2})$.
		 Combining \eqref{fourier-support--Sjk} with Minkowski's inequality,
		 Lemma~\ref{lem-annulus-L2-L6}, and Bernstein's inequality gives
	\[ \|\mathcal{T}^{\lambda,h}f\|_{L^{6}(\mr^3)}^2
		 	\lesssim
		 	\lambda^{-\frac13}h^{-\frac43}
		 	\int_{\mr^2}\int_{\mr^3}
		 	|\mathcal{T}^{\lambda,h}_{\tau,\sigma}f(x)|^2
		 	\,dx\,d\tau\,d\sigma.\]
Thus, by Plancherel's theorem we obtain
		 \begin{align*}
	\|\mathcal{T}^{\lambda,h}f\|_{L^{6}(\mr^3)}^2	 &\lesssim \lambda^{-\frac13}h^{-\frac43}
		 	\int_{\mr^3}
		 	\int_{\left|\tau-|\bar{\xi}|\right|\leq \lambda^{-1}}
		 	\int_{\left|\sigma-|(\bar{\xi},(h/\lambda)\xi_3)|\right|\leq \lambda^{-1}}
		 	d\tau\,d\sigma\,|\widehat{f}(\xi)|^2\,d\xi \\
		 	&\lesssim
		 	\lambda^{-\frac73}h^{-\frac43}\|f\|_{L^2(\mr^3)}^2,
		 \end{align*}
		 which proves \eqref{ali-L6L2-L2-improve-k}.

	     To show \eqref{ali-Sjk-Linfty-L2}, we note that, by \eqref{fourier-support--Sjk}, the support of $\mathcal F[\mathcal{T}_{\tau,\sigma}^{\lambda,h}f]$ has measure $\lesssim h^{-2}$. Consequently, Minkowski's inequality, the Fourier inversion formula, H\"older's inequality, and Plancherel's theorem yield
		 \begin{align*}
		 	\|\mathcal{T}^{\lambda,h}f\|_{L^{\infty}(\mr^3)}^2&\lesssim h^{-2}\int_{\mr^2}\int_{\mr^3}|\mathcal{T}^{\lambda,h}_{\tau,\sigma}f(x)|^2\,dx\,d\tau\,d\sigma\\
		 	&\lesssim (\lambda h)^{-2}\|f\|_{L^2(\mr^3)}^2.
		 \end{align*}

		Finally, to prove \eqref{ali-main-jk}, we cover $\mr^3$ by finitely overlapping boxes $\{Q\}$ of dimensions $\lambda\times \lambda\times h^2/\lambda$, with their shortest side parallel to the $x_3$-axis. Let $\{\eta_Q\}$ be a partition of unity subordinate to this covering.

		Set $f_Q:=f\eta_Q$ and, for each pair $Q,Q'$, define
	\begin{align*}
		d_{1}(Q,Q')&:=\inf \{|\bar{x}-\bar{x}'|:x\in Q,\ x'\in Q' \},\\
		d_{2}(Q,Q')&:=\inf \{|x_3-x'_3|:x\in Q,\ x'\in Q' \}.
	\end{align*}
	    Fix a small constant $0<c<\min\{\delta,\epsilon\}/10$, and set
	\begin{align*}
	G_{Q'}:=\sum_{\substack{d_1(Q,Q')< \lambda^{1+c}\\\text{and } d_2(Q,Q')< (h^2/\lambda)^{1+c}}}f_Q,
	\qquad
	F_{Q'}:=\sum_{\substack{d_1(Q,Q')\geq \lambda^{1+c}\\\text{or } d_2(Q,Q')\geq (h^2/\lambda)^{1+c}}}f_Q.
	\end{align*}
	Then
		\begin{align}
			\|\mathcal{T}^{\lambda,h}f\|_{L^{\infty}(\mr^3)}
				& \leq \sup_{Q'}\|\mathcal{T}^{\lambda,h}\big( G_{Q'}\big)\|_{L^{\infty}(Q')}+\sup_{Q'}\|\mathcal{T}^{\lambda,h}\big(F_{Q'}\big)\|_{L^{\infty}(Q')}\notag \\
				&\leq (\lambda h^2)^c\sup_{Q} \|\mathcal{T}^{\lambda,h}f_Q\|_{L^{\infty}(\mr^3)}+\sup_{Q'}\|\mathcal{T}^{\lambda,h}\big(F_{Q'}\big)\|_{L^{\infty}(Q')}.\label{far-near-decom}
		\end{align}

			  We show that the second term in \eqref{far-near-decom} is negligible. The covering and its fixed enlargements have bounded overlap, and the number of boxes entering $G_{Q'}$ is $O((\lambda h^2)^c)$.
Note that
		\begin{align*}
			\left|\partial_{\xi}^{\alpha}\left[\psi\bigl(|(\bar{\xi},(\lambda/h)\xi_3)|-\lambda\sigma\bigr)\psi(|\bar{\xi}|-\lambda\tau) \varphi(\lambda^{-1}|\bar{\xi}|) \varphi(\lambda h^{-2}|\xi_{3}|) \right]\right|\leq C_{\alpha}
		\end{align*}
		holds for any $\alpha\in \mathbb{N}^3$. Under the frequency scaling $(\bar\xi,\xi_3)\mapsto(\lambda\bar\xi,(h^2/\lambda)\xi_3)$ appearing in the displayed symbol, the Jacobian factor in the original integral is $(\lambda h^2)^{-1}$, while Lemma~\ref{lem-intersection} shows that the rescaled support has measure $O(\lambda)$. Thus, for $(\tau,\sigma) \in [2^{-2},2^2+2^{-2}]^2$, integration by parts gives
		\[
|K_{\tau,\sigma}^{\lambda,h}(x)|\lesssim_N h^{-2}
\bigl(1+|(\lambda^{-1}\bar{x},\lambda h^{-2}x_3)|\bigr)^{-N},
\]
which holds for every $N>0$. This inequality implies
		\begin{align*}
				|\mathcal{T}_{\tau,\sigma}^{\lambda,h}F_{Q'}(x)|\lesssim h^{-2}\int_{\mr^3}\big(1+|(\lambda^{-1}(\bar{x}-\bar{y}),\lambda h^{-2}(x_3-y_3))|\big)^{-N} |F_{Q'}(y)|\,dy.
		\end{align*}

For $x\in Q'$ and $y$ in the support of $F_{Q'}$, the anisotropic distance in the last display is at least a constant times
$
    \min\{\lambda^c,(h^2/\lambda)^c\}\geq \lambda^{2c\delta}.
$
Moreover, bounded overlap gives $|F_{Q'}(y)|\lesssim\|f\|_\infty$. In the anisotropic change of variables, the volume factor $\lambda h^2$ combines with the kernel factor $h^{-2}$ to give $\lambda$. Decomposing the complement into anisotropic dyadic annuli, its contribution is therefore bounded by a constant times
\[
    \lambda\sum_{2^m\geq\lambda^{2c\delta}}2^{m(3-N)}\|f\|_\infty.
\]
Choosing $N$ sufficiently large in terms of $M,c$, and $\delta$, and then integrating over the fixed compact $(\tau,\sigma)$ set, we obtain, for any $M\geq 1$,
		\begin{equation}\label{error}
				\sup_{Q'}\|\mathcal{T}^{\lambda,h}F_{Q'}\|_{L^{\infty}(Q')}\lesssim_{M} \lambda^{-M}\|f\|_{L^{\infty}(\mr^3)}.
		\end{equation}
		Then, by \eqref{Wjk-Sjk-j/2<k<j}, \eqref{far-near-decom}, and \eqref{error}, the desired estimate \eqref{ali-main-jk} is reduced to
		\begin{align*}
					(\lambda h^2)^c\sup_{Q} \|\mathcal{T}^{\lambda,h}f_Q\|_{L^{\infty}(\mr^3)}\lesssim \lambda^{-\frac{1}{2}+\epsilon}\|f\|_{L^{\infty}(\mr^3)}.
		\end{align*}
		Since $h\leq\lambda$ and $c<\epsilon/10$, we have $(\lambda h^2)^c\leq\lambda^{3c}\leq\lambda^\epsilon$. It therefore remains to show
			\begin{align*}
					\|\mathcal{T}^{\lambda,h}f_Q\|_{L^{\infty}(\mr^3)}\lesssim \lambda^{-\frac12}\|f\|_{L^{\infty}(\mr^3)}.
			\end{align*}
	The bound $\Vert f_Q\Vert_{L^2(\mr^3)}
	\lesssim \lambda^{\frac12}h\Vert f\Vert_{L^{\infty}(\mr^3)}$,
	together with \eqref{ali-Sjk-Linfty-L2} and H\"older's inequality,
	proves the desired estimate.

\end{proof}

\subsubsection*{The regime $\lambda\leq h\leq\lambda^2$}

For $\lambda\leq h\leq\lambda^2$, we obtain the following analogue.
\begin{prop}\label{prop-main-j<k<2j}
	Let $2^3\leq\lambda\leq h\leq\lambda^2$. Then, for any $\epsilon>0$,
    \begin{align}
        \|\mathcal{S}f_{\lambda,h}\|_{L^{\infty}(\mr^3)}&\lesssim_\epsilon \lambda^{\frac{1}{2}+\epsilon}\|f\|_{L^\infty(\mr^3)}, \label{ali-main-j<k<2j} \\
        \|\mathcal{S}f_{\lambda,h}\|_{L^6(\mathbb{R}^3)}&\lesssim \lambda^{\frac16}\|f\|_{L^2(\mr^3)}, \label{ali-L6L2-L2-j-2j} \\
        \|\mathcal{S}f_{\lambda,h}\|_{L^{\infty}(\mathbb{R}^3)}&\lesssim \lambda^{\frac12}\|f\|_{L^2(\mr^3)}. \label{ali-LinftyL2-L2-j-2j}
    \end{align}
\end{prop}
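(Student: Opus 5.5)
The plan is to follow the proof of Proposition~\ref{prop-main-jk}, with the roles of the two variables swapped. When $\lambda\leq h\leq\lambda^2$ we have $|\xi|\sim h$, so the vertical frequency controls the second phase. After rescaling $\bar\xi\mapsto\lambda\bar\xi$ and $\xi_3\mapsto h\xi_3$, the phase becomes
\[
t\lambda|\bar\xi|+sh\,|((\lambda/h)\bar\xi,\xi_3)|.
\]
We expand the rescaled amplitude $\widetilde\varphi(|\bar\xi|)\widetilde\varphi(|\xi_3|)B(\lambda\bar\xi,h\xi_3,t,s)$ in a Fourier series on $[-\pi,\pi]^3$; the coefficients decay rapidly. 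We then insert $\widehat\psi(t)\widehat\psi(s)$ and apply Plancherel in $(t,s)$, now at the scales $\lambda$ and $h$ respectively. This reduces $\mathcal S f_{\lambda,h}$ to the square function
\[
\mathcal T^{\lambda,h}f=\Big(\int|K^{\lambda,h}_{\tau,\sigma}*f|^2\,d\tau\,d\sigma\Big)^{1/2},
\]
whose multiplier is
\[
\psi\bigl(h(|((\lambda/h)\bar\xi,\xi_3)|-\sigma)\bigr)\,\psi(\lambda(|\bar\xi|-\tau))\,\varphi(|\bar\xi|)\varphi(|\xi_3|).
\]
Tracking the Jacobians as in \eqref{Wjk-Sjk-j/2<k<j} should give, for $2\le p\le q\le\infty$,
\[
\|\mathcal S f_{\lambda,h}\|_{L^q}\lesssim (\lambda h)^{1/2}(\lambda^2h)^{\frac1p-\frac1q}\|\mathcal T^{\lambda,h}\|_{L^p\to L^q}\|f\|_{L^p}.
\]

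The key step, replacing Lemma~\ref{lem-intersection}, is a support statement for the multiplier. On its support, $||\bar\xi|-\tau|\le\lambda^{-1}$ holds directly. For $\xi_3$, note that $\partial_{\xi_3}|((\lambda/h)\bar\xi,\xi_3)|\sim 1$ because $|\xi_3|\sim1$ dominates $(\lambda/h)|\bar\xi|\lesssim1$. The uncertainty $\lambda^{-1}$ in $|\bar\xi|$ moves $R=|((\lambda/h)\bar\xi,\xi_3)|$ by only $O((\lambda/h)^2\lambda^{-1})=O(\lambda h^{-2})$, which is $\le h^{-1}$ since $\lambda\le h$. Hence $|\xi_3|$ lies in an interval of length $O(h^{-1})$ centered at $\sqrt{\sigma^2-(\lambda/h)^2\tau^2}$. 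So for each fixed $(\tau,\sigma)$ the Fourier support is an annulus of width $\lambda^{-1}$ in $\bar\xi$ times intervals of length $h^{-1}$ in $\xi_3$, with measure $\lesssim(\lambda h)^{-1}$. For fixed $\xi$, the admissible $(\tau,\sigma)$ form a set of measure $\lesssim(\lambda h)^{-1}$, so Plancherel gives $\int\|\mathcal T_{\tau,\sigma}f\|_2^2\,d\tau\,d\sigma\lesssim(\lambda h)^{-1}\|f\|_2^2$. Unlike the previous regime, no lower bound on $\sigma-\tau$ is needed here, which I expect makes this step easier.

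From this the two $L^2$ estimates follow.
\begin{itemize}
\item[(i)] For \eqref{ali-LinftyL2-L2-j-2j}: Fourier inversion, H\"older and the support measure give $\|\mathcal T^{\lambda,h}f\|_\infty\lesssim(\lambda h)^{-1}\|f\|_2$. Inserting this, $(\lambda h)^{1/2}(\lambda^2h)^{1/2}(\lambda h)^{-1}=\lambda^{1/2}$.
\item[(ii)] For \eqref{ali-L6L2-L2-j-2j}: apply Minkowski, then Lemma~\ref{lem-annulus-L2-L6} in $\bar x$ (with $\eta=\lambda^{-1}$, factor $\lambda^{-1/2}$), then Bernstein in $x_3$ (factor $h^{-1/3}$). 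This gives $\|\mathcal T^{\lambda,h}f\|_6\lesssim\lambda^{-1}h^{-5/6}\|f\|_2$. Inserting it, $(\lambda h)^{1/2}(\lambda^2h)^{1/3}\lambda^{-1}h^{-5/6}=\lambda^{1/6}$.
\end{itemize}

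For \eqref{ali-main-j<k<2j} I would run the near/far decomposition of Proposition~\ref{prop-main-jk}, now with boxes of dimensions $\lambda\times\lambda\times h$ adapted to the kernel.
\begin{itemize}
\item[(a)] Integration by parts on the rescaled symbol should give $|K^{\lambda,h}_{\tau,\sigma}(x)|\lesssim_N(\lambda h)^{-1}(1+\lambda^{-1}|\bar x|+h^{-1}|x_3|)^{-N}$. With this, the far part contributes $O(\lambda^{-M}\|f\|_\infty)$ exactly as in \eqref{error}, and the near part costs $(\lambda^2h)^{c}\le\lambda^{\epsilon}$ boxes.
\item[(b)] For a single box, $\|f_Q\|_2\lesssim(\lambda^2h)^{1/2}\|f\|_\infty$, and (i) gives $\|\mathcal T^{\lambda,h}f_Q\|_\infty\lesssim\lambda^{0}h^{-1/2}\|f\|_\infty$.
\end{itemize}
Multiplying by the factor $(\lambda h)^{1/2}$ from the case $p=q=\infty$ yields $\lambda^{1/2+\epsilon}$. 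The main obstacle is to verify that the rescaled symbol has uniformly bounded derivatives at exactly the scales $\lambda^{-1}$ in $\bar x$ and $h^{-1}$ in $x_3$, uniformly over the regime. If this fails near $h\approx\lambda$ or $h\approx\lambda^2$, I would shrink the boxes slightly and absorb the loss into $\lambda^\epsilon$.
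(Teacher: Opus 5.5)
Your proposal is correct and follows the paper's proof essentially step for step. It uses the same rescaled multiplier and support statement (the paper's Lemma~\ref{lem-intersection-j<k<2j}), the factor $(\lambda h)^{1/2}$ from Plancherel in $(t,s)$, the $L^2\to L^6$ and $L^2\to L^\infty$ bounds via Lemma~\ref{lem-annulus-L2-L6}, Bernstein's inequality and the support measure $(\lambda h)^{-1}$, and the near/far decomposition with $\lambda\times\lambda\times h$ boxes.

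The obstacle you flag does not arise. Each $\bar\xi$-derivative of $\psi(h(|((\lambda/h)\bar\xi,\xi_3)|-\sigma))$ costs at most $\lambda^2/h\le\lambda$, and each $\xi_3$-derivative costs at most $O(h)$, uniformly for $\lambda\le h\le\lambda^2$. So the kernel bound holds without shrinking the boxes.
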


To prove this proposition, we first establish the following analogue of Lemma~\ref{lem-intersection}.
\begin{lemma}\label{lem-intersection-j<k<2j}
	Let $2^{12}\leq\lambda\leq h\leq\lambda^2$. Suppose that $(\tau,\sigma)\in [2^{-2},2^2+2^{-2}]^2$ and
	\begin{align*}
		\psi\bigl(h(|((\lambda/h)\bar{\xi},\xi_{3})|-\sigma)\bigr)\psi(\lambda(|\bar{\xi}|-\tau)) \varphi(|\bar{\xi}|)\varphi(|\xi_{3}|)\neq 0.
	\end{align*}
	Then
	\begin{align}\label{ali-range-barxi-j<k<2j}
		\tau-\lambda^{-1}\leq |\bar{\xi}|\leq \tau+\lambda^{-1},
	\end{align}
	and
	\begin{align}\label{ali-range-xi3-j<k<2j}
		\sqrt{\sigma^2-(\lambda/h)^2\tau^2}-2^8h^{-1} \leq |\xi_3| \leq \sqrt{\sigma^2-(\lambda/h)^2\tau^2}+2^8h^{-1}.
	\end{align}

\end{lemma}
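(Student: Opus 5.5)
The argument runs parallel to the proof of Lemma~\ref{lem-intersection}, with the roles of the horizontal and vertical variables exchanged. Put $R:=|((\lambda/h)\bar\xi,\xi_3)|$. The support of $\psi$ in $(-1,1)$ gives $\bigl||\bar\xi|-\tau\bigr|\le\lambda^{-1}$, which is \eqref{ali-range-barxi-j<k<2j}, and it also gives $|R-\sigma|\le h^{-1}$.

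For \eqref{ali-range-xi3-j<k<2j}, write
\[
|\xi_3|^2=R^2-(\lambda/h)^2|\bar\xi|^2 .
\]
Combining $|R-\sigma|\le h^{-1}$ with $\bigl||\bar\xi|-\tau\bigr|\le\lambda^{-1}$ gives
\[
\bigl||\xi_3|^2-(\sigma^2-(\lambda/h)^2\tau^2)\bigr|\le 2h^{-1}(\sigma+h^{-1})+(\lambda/h)^2\bigl(2\lambda^{-1}\tau+\lambda^{-2}\bigr).
\]
Since $\sigma,\tau\le 2^2+2^{-2}$ and $\lambda\le h$, we have $(\lambda/h)^2\lambda^{-1}=\lambda h^{-2}\le h^{-1}$ and $h^{-1}\le 1$. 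The right-hand side is therefore at most $C h^{-1}$ with an absolute constant, say $C\le 2^5$.

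It remains to pass from squares to $|\xi_3|$ itself, and the point is that no lower bound on $\sigma^2-(\lambda/h)^2\tau^2$ has to be assumed. The condition $\varphi(|\xi_3|)\ne0$ forces $|\xi_3|\ge 1/2$. If $a:=\sigma^2-(\lambda/h)^2\tau^2$ satisfies $a\ge 1/16$, rationalizing gives
\[
\bigl||\xi_3|-\sqrt a\bigr|\le\frac{Ch^{-1}}{|\xi_3|+\sqrt a}\le 2Ch^{-1}\le 2^8h^{-1}.
\]
If instead $a<1/16$ (including $a<0$, where we read the square root as that of a real number once nonemptiness is established), then $|\xi_3|^2\le a+Ch^{-1}<1/4$ because $h\ge\lambda\ge2^{12}$. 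This contradicts $|\xi_3|\ge1/2$, so this case cannot occur under the nonvanishing hypothesis. This also shows that $a>0$ automatically, so $\sqrt a$ is well defined.

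The \textbf{main obstacle} is the denominator in the rationalization step; here it is handled by $|\xi_3|\ge1/2$ from the $\varphi$ cutoff. This is in fact easier than in Lemma~\ref{lem-intersection}, where the analogous lower bound had to be derived from the size of $\sigma-\tau$.

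The only remaining work is routine: checking the numerical constants, namely that the error $Ch^{-1}$ stays below $2^{-4}$ and that $2C\le2^8$. Both follow from $h\ge2^{12}$ and the bounds on $\sigma$ and $\tau$.
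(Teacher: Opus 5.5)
Your proposal is correct and follows essentially the same route as the paper. Both arguments bound $\bigl||\xi_3|^2-(\sigma^2-(\lambda/h)^2\tau^2)\bigr|\leq 2^5h^{-1}$ from the two $\psi$-support conditions and $\lambda\leq h$, then use $|\xi_3|\geq 1/2$ and $h\geq 2^{12}$ to force $\sigma^2-(\lambda/h)^2\tau^2>2^{-3}$ before rationalizing; your case split on $a\geq 1/16$ is just a rephrasing of the paper's direct lower bound $a\geq|\xi_3|^2-2^5h^{-1}$.
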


	\begin{proof}
			Put $\tilde{R}=|((\lambda/h)\bar{\xi},\xi_3)|$.
			The support condition on $\psi$ gives
			\[
			\bigl||\bar{\xi}|-\tau\bigr|\leq \lambda^{-1},
		\qquad
			|\tilde{R}-\sigma|\leq h^{-1},
			\]
		which immediately implies \eqref{ali-range-barxi-j<k<2j}. Moreover,
		\[|\xi_3|^2=\tilde{R}^2-(\lambda/h)^2|\bar{\xi}|^2.\]
		Consequently,
		\begin{align*}
			\left||\xi_3|^2-\bigl(\sigma^2-(\lambda/h)^2\tau^2\bigr)\right|
			\leq
			|\tilde{R}-\sigma|(\tilde{R}+\sigma)+(\lambda/h)^2\bigl||\bar{\xi}|-\tau\bigr|(|\bar{\xi}|+\tau).
		\end{align*}
		Using the fact that $(\tau,\sigma)\in [2^{-2},2^2+2^{-2}]^2$, we have
		$\tilde R+\sigma<2^4$ and $|\bar{\xi}|+\tau<2^4$. Since $\lambda\leq h$,
		we obtain
		\[
		\left||\xi_3|^2-\bigl(\sigma^2-(\lambda/h)^2\tau^2\bigr)\right|\leq 2^4h^{-1}+2^4(\lambda/h)^2\lambda^{-1}\leq 2^5h^{-1}.
		\]
			Note that $\varphi(|\xi_3|)\neq 0$ yields $|\xi_3|\geq 2^{-1}$.

			Since $h\geq2^{12}$, the preceding estimate gives
			$\sigma^2-(\lambda/h)^2\tau^2\geq |\xi_3|^2-2^5h^{-1}\geq2^{-2}-2^{-7}>2^{-3}$.
			Hence
		\begin{align*}
			\left||\xi_3|-\sqrt{\sigma^2-(\lambda/h)^2\tau^2}\right|
			\leq\frac{\left||\xi_3|^2-\bigl(\sigma^2-(\lambda/h)^2\tau^2\bigr)\right|}{|\xi_3|+\sqrt{\sigma^2-(\lambda/h)^2\tau^2}}
			\leq 2^8h^{-1}.
		\end{align*}
		This proves \eqref{ali-range-xi3-j<k<2j}.
	\end{proof}

Since the proof of Proposition~\ref{prop-main-j<k<2j} closely follows that of Proposition~\ref{prop-main-jk}, we only indicate the necessary modifications, using the notation from the proof of Proposition~\ref{prop-main-jk}.
\begin{proof}[Proof of Proposition~\ref{prop-main-j<k<2j}]
It suffices to treat the case $\lambda\geq2^{12}$, since the remaining case follows from H\"older's inequality and Young's convolution inequality. Set
		\begin{align*}
				\widetilde{K}_{\tau,\sigma}^{\lambda,h}(x):=\int_{\mr^3}e^{i x\cdot \xi}\psi\bigl(h(|((\lambda/h)\bar{\xi},\xi_{3})|-\sigma)\bigr)\psi(\lambda(|\bar{\xi}|-\tau)) \varphi(|\bar{\xi}|)\varphi(|\xi_{3}|)\,d\xi.
		\end{align*}
		Define the local operators and the associated square function by
		\begin{align*}
			\widetilde{\mathcal{T}}^{\lambda,h}_{\tau,\sigma}f
			&:=\widetilde{K}_{\tau,\sigma}^{\lambda,h}*f,\\
			\widetilde{\mathcal{T}}^{\lambda,h}f(x)
			&:=\left(\int_{\mr^2}|\widetilde{\mathcal{T}}^{\lambda,h}_{\tau,\sigma}f(x)|^2\,d\tau\,d\sigma\right)^{1/2}.
		\end{align*}
    As in the proof of Proposition~\ref{prop-main-jk}, one obtains
    \begin{align}
       \mathcal{S}f_{\lambda,h}(x)
	       &\lesssim \sum_{\ell\in \mathbb{Z}^3}\frac{(\lambda h)^{\frac12}}{(1+|\ell|)^{100}}
       \left(\int_{\mr^2}|\widetilde{\mathcal{T}}^{\lambda,h}_{\tau,\sigma}(\mathcal{D}_{\lambda,h}^\ell f)(\lambda\bar{x},hx_3)|^2\,d\tau\,d\sigma \right)^{1/2}.
       \label{ali-Wjk-tilde-j<k<2j}
    \end{align}
		    Here the spatial Jacobian is again $\lambda^2h$, while Plancherel's theorem in $(t,s)$, with parameter frequencies $\lambda$ and $h$, gives the factor $(\lambda h)^{1/2}$.
		    Thus, the estimates \eqref{ali-main-j<k<2j}, \eqref{ali-L6L2-L2-j-2j}, and \eqref{ali-LinftyL2-L2-j-2j} are reduced to
   \begin{align}\label{ali-square-improve-j<k<2j}
		\|\widetilde{\mathcal{T}}^{\lambda,h}f\|_{L^{\infty}(\mr^3)}\lesssim \lambda^\epsilon h^{-\frac12}\|f\|_{L^{\infty}(\mr^3)},
	\end{align}
   \begin{align}\label{ali-square-L6L2-L2-j-2j}
		\|\widetilde{\mathcal{T}}^{\lambda,h}f\|_{L^6(\mr^3)}\lesssim \lambda^{-1}h^{-\frac56}\|f\|_{L^2(\mr^3)},
   \end{align}
   and
    \begin{align}\label{ali-square-LinftyL2-L2-j-2j}
		\|\widetilde{\mathcal{T}}^{\lambda,h}f\|_{L^{\infty}(\mr^3)}\lesssim (\lambda h)^{-1}\|f\|_{L^{2}(\mr^3)}.
	\end{align}

	    It suffices to consider the parameters for which the multiplier defining $\widetilde{\mathcal T}_{\tau,\sigma}^{\lambda,h}$ is nonzero. The proof of Lemma~\ref{lem-intersection-j<k<2j} then gives $\sigma^2-(\lambda/h)^2\tau^2>2^{-3}$, and for every such $(\tau,\sigma)\in [2^{-2},2^2+2^{-2}]^2$, we have
			 \begin{align}\label{fourier-support--tildeSjk}
			 	\supp \mathcal F[\widetilde{\mathcal{T}}_{\tau,\sigma}^{\lambda,h}f]\subset \{\xi:\bigl||\bar{\xi}|-\tau\bigr|\leq \lambda^{-1}, \bigl||\xi_3|-\sqrt{\sigma^2-(\lambda/h)^2\tau^2}\bigr|\leq 2^8h^{-1}\}.
			 \end{align}
			 In the rescaled frequency variables, the horizontal radial thickness is $O(\lambda^{-1})$, and the vertical thickness is $O(h^{-1})$. Consequently, the support has measure $O((\lambda h)^{-1})$. The admissible parameter set for fixed $\xi$ has the same measure bound.
		 Combining \eqref{fourier-support--tildeSjk} with Minkowski's inequality,
		 Lemma~\ref{lem-annulus-L2-L6}, and Bernstein's inequality, we obtain
		 \begin{align*}
		 	\|\widetilde{\mathcal{T}}^{\lambda,h}f\|_{L^{6}(\mr^3)}^2
		 	&\lesssim
		 	\lambda^{-1}h^{-\frac23}
		 	\int_{\mr^2}\int_{\mr^3}
		 	|\widetilde{\mathcal{T}}^{\lambda,h}_{\tau,\sigma}f(x)|^2
		 	\,dx\,d\tau\,d\sigma.
		 \end{align*}
		 By Plancherel's theorem, this is bounded by
		 \begin{align*}
		 	\lambda^{-1}h^{-\frac23}
		 	\int_{\mr^3}
		 	\int_{\left|\tau-|\bar{\xi}|\right|\leq \lambda^{-1}}
		 	\int_{\left|\sigma-|((\lambda/h)\bar{\xi},\xi_3)|\right|\leq h^{-1}}
		 	d\tau\,d\sigma\,|\widehat{f}(\xi)|^2\,d\xi \lesssim
		 	\lambda^{-2}h^{-\frac53}\|f\|_{L^2(\mr^3)}^2,
		 \end{align*}
		 which proves \eqref{ali-square-L6L2-L2-j-2j}.

	    Note that, by \eqref{fourier-support--tildeSjk}, the measure of $\supp \mathcal F[ \widetilde{\mathcal{T}}_{\tau,\sigma}^{\lambda,h}f ]$ is $\lesssim (\lambda h)^{-1}$. Consequently, Minkowski's inequality, the Fourier inversion formula, H\"older's inequality, and Plancherel's theorem yield
		 \begin{align*}
		 	\|\widetilde{\mathcal{T}}^{\lambda,h}f\|_{L^{\infty}(\mr^3)}^2&\lesssim (\lambda h)^{-1}\int_{\mr^2}\int_{\mr^3}|\widetilde{\mathcal{T}}^{\lambda,h}_{\tau,\sigma}f(x)|^2\,dx\,d\tau\,d\sigma\\
		 	&\lesssim (\lambda h)^{-2}\|f\|_{L^2(\mr^3)}^2.
		 \end{align*}
    This proves \eqref{ali-square-LinftyL2-L2-j-2j}.

		   Finally, to prove \eqref{ali-square-improve-j<k<2j}, we cover $\mr^3$ by finitely overlapping boxes $\{\widetilde Q\}$ of dimensions $\lambda\times \lambda\times h$, with their longest side parallel to the $x_3$-axis. Let $\{\eta_{\widetilde Q}\}$ be a partition of unity subordinate to this covering and set $f_{\widetilde Q}:=f\eta_{\widetilde Q}$.

		   We reuse the preceding localization argument at the anisotropic scale $(\lambda,\lambda,h)$; the only change is that the vertical scale is now $h$. The rescaled kernel satisfies
\[
    |\widetilde K_{\tau,\sigma}^{\lambda,h}(x)|\lesssim_N
	    (\lambda h)^{-1}\bigl(1+|(\lambda^{-1}\bar x,h^{-1}x_3)|\bigr)^{-N}.
\]
		Splitting at horizontal and vertical distances $\lambda^{1+c}$ and $h^{1+c}$, respectively, leaves $O((\lambda^2h)^c)$ near boxes. The far boxes give $O(\lambda^{-M})\|f\|_\infty$ for every $M$ by the same anisotropic-annulus sum as above.

		Since $h\leq\lambda^2$ and $c<\epsilon/10$, we have $(\lambda^2h)^c\leq\lambda^{4c}\leq\lambda^\epsilon$. Hence \eqref{ali-square-improve-j<k<2j} reduces to
   \begin{align*}
			\|\widetilde{\mathcal{T}}^{\lambda,h}f_{\widetilde Q}\|_{L^{\infty}(\mr^3)}\lesssim h^{-\frac12}\|f\|_{L^{\infty}(\mr^3)}.
		\end{align*}
	        Indeed, H\"older's inequality gives
		        $\Vert f_{\widetilde Q}\Vert_{L^2(\mr^3)}\lesssim \lambda h^{\frac12}\Vert f\Vert_{L^{\infty}(\mr^3)}$, and \eqref{ali-square-LinftyL2-L2-j-2j} yields the desired bound.
\end{proof}

\subsection{Sharpness of the intermediate-frequency estimates}
\label{subsec-sharp-intermediate}

We show that the four estimates with $L^2$ input in Propositions
\ref{prop-main-jk} and \ref{prop-main-j<k<2j} are sharp throughout
their stated frequency ranges. For the two estimates with $L^\infty$
input, the common boundary case $h=\lambda$ rules out any uniform
improvement by a fixed negative power of $\lambda$.

\subsubsection*{Sharpness of \eqref{ali-main-jk} and
\eqref{ali-main-j<k<2j}}

Since $\varphi(1)=1$, choose a sufficiently small fixed $\delta_0>0$
and an arc $\Theta\Subset\mathbb S^1$ such that, with
$I_0:=[1-\delta_0,1+\delta_0]$, the conic patch
\[
\Gamma
:=
\{(\rho\omega,\rho u):
\rho,u\in I_0,\ \omega\in\Theta\}
\]
lies in the interior of
$\{\xi:\varphi(|\bar\xi|)\varphi(|\xi_3|)\neq0\}$. Choose
$\chi\in C_c^\infty(\mr^3)$ supported in $\Gamma$ and equal to $1$
on a smaller product patch. Then the amplitudes
$
B_\lambda(\xi,t,s)=\chi(\xi/\lambda)
$
satisfy \eqref{ali-condition-on-B} uniformly in $\lambda$. Let
\[
K_{t,s}^{\lambda}
:=
\mathcal F^{-1}\!\left[
e^{i(t|\bar\xi|+s|\xi|)}
\varphi_\lambda(|\bar\xi|)
\varphi_\lambda(|\xi_3|)
\chi(\xi/\lambda)
\right].
\]

Fix $s_0\in(2^{-3},2^{-2})$ and choose $N\sim\lambda$ points $t_m$
in a compact subinterval of $(1,2)$, separated by a sufficiently
large multiple of $\lambda^{-1}$. After decreasing $\delta_0$ and shrinking $\Theta$ so that $\chi=1$
on the resulting product patch, define
\[
\begin{aligned}
\Omega_m:=\bigg\{&
\left(
\big(t_m+s_0(1+u^2)^{-1/2}+v\big)\omega, \,
s_0u(1+u^2)^{-1/2}
\right):
\\[-2pt]
&\qquad \qquad \qquad \omega\in\Theta,\ u\in I_0,\ |v|\leq c\lambda^{-1}
\bigg\}.
\end{aligned}
\]
These are transverse $c\lambda^{-1}$-neighborhoods of fixed patches
of the stationary surfaces for $K_{t_m,s_0}^{\lambda}(-y)$. For
$y\in\Omega_m$, let
$(\omega_y,u_y,v_y)\in\Theta\times I_0\times[-c\lambda^{-1},c\lambda^{-1}]$
be the unique triple satisfying
\[
R_y:=t_m+s_0(1+u_y^2)^{-1/2}+v_y,
\qquad
\bar y=R_y\omega_y,
\qquad
y_3=s_0u_y(1+u_y^2)^{-1/2}.
\]
To estimate the kernel,
rescale $\xi=\lambda(\rho\omega,\rho u)$. For
$y\in\Omega_m$, write $\omega=\omega(\vartheta)$ in a unit-speed local angular
coordinate, with $\omega(\vartheta_y)=\omega_y$.
Then the oscillatory factor in
$K_{t_m,s_0}^{\lambda}(-y)$ becomes $e^{i\lambda\rho\Psi_y(\vartheta,u)}$, where
\[
\Psi_y(\vartheta,u)
:=t_m+s_0(1+u^2)^{1/2}-\bar y\cdot\omega(\vartheta)-u y_3.
\]
Since $\bar y=R_y\omega_y$ and
$\omega'(\vartheta_y)\perp\omega_y$, we have
$\partial_\vartheta\Psi_y(\vartheta_y,u_y)=0$. The identity
$y_3=s_0u_y(1+u_y^2)^{-1/2}$ also gives
$\partial_u\Psi_y(\vartheta_y,u_y)=0$. Moreover,
$\omega''(\vartheta)=-\omega(\vartheta)$, and the mixed derivative
vanishes. Consequently,
\[
\begin{aligned}
D^2_{\vartheta,u}\Psi_y(\vartheta_y,u_y)
&=
\begin{pmatrix}
R_y&0\\
0&s_0(1+u_y^2)^{-3/2}
\end{pmatrix}.
\end{aligned}
\]
Since
$\det D^2_{\vartheta,u}\Psi_y(\vartheta_y,u_y)\sim1$, the critical point
is uniformly nondegenerate. At this point, the definitions of $R_y$ and
$y_3$ give $\Psi_y(\vartheta_y,u_y)=-v_y$. Thus, stationary phase in
$\vartheta$ and $u$, uniformly for $\rho\in I_0$, yields an expansion of
order $\lambda^2$ whose remaining radial phase is
$e^{-i\lambda\rho v_y}$. Since $|\lambda v_y|\leq c$, choosing the
product patch and $c$ sufficiently small keeps the leading radial
coefficient uniformly away from zero. Therefore,
\[
|K_{t_m,s_0}^{\lambda}(-y)|\gtrsim\lambda^2
\]
for $y\in\Omega_m$.
The rescaling contributes $\lambda^3$, while stationary
phase in two variables contributes $\lambda^{-1}$, giving
$\lambda^2$.
Moreover, $|\Omega_m|\sim\lambda^{-1}$, and the
separation constant may be chosen so that the sets $\Omega_m$ are
pairwise disjoint. Define
\[
F_\lambda(y)
:=
\begin{cases}
\displaystyle
\frac{\overline{K_{t_m,s_0}^{\lambda}(-y)}}
{|K_{t_m,s_0}^{\lambda}(-y)|},&y\in\Omega_m,\\[8pt]
\hspace{20pt} 0,&y\notin\bigcup_m\Omega_m.
\end{cases}
\]
Then $\|F_\lambda\|_\infty\leq1$.
By the definition of $K_{t,s}^{\lambda}$, we have
\[
\mathcal U_{t,s}^{B_\lambda}
\bigl((F_\lambda)_{\lambda,\lambda}\bigr)(0)
=
\int_{\mr^3}K_{t,s}^{\lambda}(-y)F_\lambda(y)\,dy.
\]

For $|t-t_m|\leq c\lambda^{-1}$ and $|s-s_0|\leq c\lambda^{-1}$,
the contribution from $\Omega_m$ remains comparable to $\lambda$.
Let $L$ denote the separation constant in the choice of
the points $t_m$.
For $m'\neq m$ and $y\in\Omega_{m'}$, stationary phase in
$(\vartheta,u)$ leaves a radial phase whose $\rho$-derivative is
$t-t_{m'}+O(c\lambda^{-1})$. Since its absolute value is comparable to
$|t-t_{m'}|$ when $L$ is sufficiently large, repeated integration by
parts in $\rho$ gives, for every $M>0$,
\[
\sup_{y\in\Omega_{m'}}
|K_{t,s}^{\lambda}(-y)|
\lesssim_M
\lambda^2(1+\lambda|t-t_{m'}|)^{-M}.
\]
Since $|\Omega_{m'}|\sim\lambda^{-1}$ and
$\lambda|t-t_{m'}|\gtrsim L|m-m'|$, it follows that
\[
\sum_{m'\neq m}
\left|\int_{\Omega_{m'}}
K_{t,s}^{\lambda}(-y)F_\lambda(y)\,dy\right|
\lesssim_M
\lambda\sum_{\ell\neq0}(1+L|\ell|)^{-M}.
\]

Choosing $L$ sufficiently large makes this smaller than the
contribution from $\Omega_m$.
Hence
$
\left|
\mathcal U_{t,s}^{B_\lambda}
\bigl((F_\lambda)_{\lambda,\lambda}\bigr)(0)
\right|
\gtrsim\lambda
$
on $N\sim\lambda$ disjoint parameter rectangles of measure
comparable to $\lambda^{-2}$. Consequently,
\[
\left\|
\mathcal U_{t,s}^{B_\lambda}
\bigl((F_\lambda)_{\lambda,\lambda}\bigr)(0)
\right\|_{L^2_{t,s}(\mathbb J_0)}
\gtrsim
\lambda\bigl(\lambda\lambda^{-2}\bigr)^{1/2}
=\lambda^{1/2}.
\]
Since $h=\lambda$ belongs to the ranges of both propositions, this
rules out a uniform improvement by any fixed negative power of
$\lambda$ in \eqref{ali-main-jk} or \eqref{ali-main-j<k<2j}.

\subsubsection*{Sharpness of \eqref{ali-L6L2-L2-j/2-j} and
\eqref{ali-L6L2-L2-j-2j}}

Take $B\equiv1$. Fix positive constants $\tau_0,\zeta_0$ in the
interior of the support of $\varphi$, an arc
$\Theta\Subset\mathbb S^1$, and $(t_0,s_0)$ in the interior of
$\mathbb J_0$. In either of the ranges
\[
\lambda^{1/2+\delta}\leq h\leq\lambda
\qquad\text{or}\qquad
\lambda\leq h\leq\lambda^2,
\]
set
\[
R_{\lambda,h}
:=
\sqrt{(\lambda\tau_0)^2+(h\zeta_0)^2},
\qquad
d_{\lambda,h}:=\max\{1,\lambda/h\},
\]
and, for a sufficiently small fixed $c>0$, let
\[
E_{\lambda,h}
:=
\left\{\xi:
\bigl||\bar\xi|-\lambda\tau_0\bigr|\leq c,\quad
\bigl||\xi|-R_{\lambda,h}\bigr|\leq c,\quad
\frac{\bar\xi}{|\bar\xi|}\in\Theta,\quad
\xi_3>0
\right\}.
\]

For $c$ sufficiently small, $E_{\lambda,h}$ lies in
$\mathbb A_\lambda\times\mathbb B_h$. Indeed, in the coordinates
$r=|\bar\xi|$ and $R=|\xi|$,
$d\xi=r\tfrac{R}{\sqrt{R^2-r^2}}\,dr\,dR\,d\omega$.
On $E_{\lambda,h}$, $r\sim\lambda$ and
$R(R^2-r^2)^{-1/2}\sim d_{\lambda,h}$, and hence
\[
|E_{\lambda,h}|\sim\lambda d_{\lambda,h}.
\]

Define
\[
\widehat f_{\lambda,h}(\xi)
:=
e^{-i(t_0|\bar\xi|+s_0|\xi|)}
\chi_{E_{\lambda,h}}(\xi).
\]
Plancherel's theorem gives
\[
\|f_{\lambda,h}\|_2
\sim(\lambda d_{\lambda,h})^{1/2}.
\]
The definition of $E_{\lambda,h}$ also gives
$|\xi_3-h\zeta_0|\lesssim d_{\lambda,h}$. After removing a constant
phase, the nonconstant part is
\[
\bar x\cdot\bar\xi+x_3(\xi_3-h\zeta_0)
+(t-t_0)(|\bar\xi|-\lambda\tau_0)
+(s-s_0)(|\xi|-R_{\lambda,h}).
\]
If $(t,s)$ lies in a sufficiently small fixed neighborhood of
$(t_0,s_0)$, $|\bar x|\leq c\lambda^{-1}$, and
$|x_3|\leq c d_{\lambda,h}^{-1}$, then
$
|\mathcal U_{t,s}f_{\lambda,h}(x)|
\gtrsim |E_{\lambda,h}|
\sim\lambda d_{\lambda,h}.
$
Consequently,
\[
\mathcal Sf_{\lambda,h}(x)
\gtrsim\lambda d_{\lambda,h}
\]
throughout a spatial region of measure comparable to
$\lambda^{-2}d_{\lambda,h}^{-1}$.
It follows that
\[
\frac{\|\mathcal Sf_{\lambda,h}\|_6}
{\|f_{\lambda,h}\|_2}
\gtrsim
(\lambda d_{\lambda,h})^{1/2}
(\lambda^{-2}d_{\lambda,h}^{-1})^{1/6}
=
\lambda^{1/6}d_{\lambda,h}^{1/3}.
\]
For $h\leq\lambda$, this becomes
$\lambda^{1/2}h^{-1/3}$, matching
\eqref{ali-L6L2-L2-j/2-j}; for $h\geq\lambda$, it becomes
$\lambda^{1/6}$, matching \eqref{ali-L6L2-L2-j-2j}.

\subsubsection*{Sharpness of \eqref{ali-LinftyL2-L2-j/2-j} and
\eqref{ali-LinftyL2-L2-j-2j}}

We use the same $f_{\lambda,h}$. Since the preceding pointwise lower
bound holds on a spatial set of positive measure, it gives
\[
\frac{\|\mathcal Sf_{\lambda,h}\|_\infty}
{\|f_{\lambda,h}\|_2}
\gtrsim
(\lambda d_{\lambda,h})^{1/2}.
\]
For $h\leq\lambda$, this is $\lambda h^{-1/2}$ and matches
\eqref{ali-LinftyL2-L2-j/2-j}. For $h\geq\lambda$, it is
$\lambda^{1/2}$ and matches \eqref{ali-LinftyL2-L2-j-2j}. Thus all
four estimates with $L^2$ input are sharp throughout their stated
frequency ranges. The characteristic functions used above may be
replaced by harmless smooth cutoffs.

\subsection{The high-vertical-frequency regime}

 It remains to handle the regime $h\geq\lambda^2$.
 \begin{prop}\label{prop-L6L2-L2-k>2j}
       Let $\lambda\geq2^3$ and $h\geq\lambda^2$. Then
       \begin{align*}
	       	\|\mathcal{S}f_{\lambda,h}\|_{L^{6}(\mathbb{R}^{3})}\lesssim \lambda^{\frac16}\|f\|_{L^2(\mr^3)}.
       \end{align*}
       In addition, for any $h\geq2^3$,
       \begin{align*}
           \|\mathcal{S}f_{\leq 2^3,h}\|_{L^6(\mr^3)}\lesssim \|f\|_{L^2(\mr^3)}.
       \end{align*}
   \end{prop}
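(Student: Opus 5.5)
In the regime $h\geq\lambda^2$ we follow the reduction announced at the start of this section. On $\mathbb A_\lambda\times\mathbb B_h$ we write
\[
|\xi|=|\xi_3|+\bigl(|\xi|-|\xi_3|\bigr),
\qquad
|\xi|-|\xi_3|=\frac{|\bar\xi|^2}{|\xi|+|\xi_3|}=O(\lambda^2/h)=O(1).
\]
We then factor $e^{is|\xi|}=e^{is|\xi_3|}e^{is(|\xi|-|\xi_3|)}$ and absorb the second factor into the amplitude. After the rescaling $\xi\mapsto(\lambda\bar\xi,h\xi_3)$, its $\xi$-derivatives are uniformly bounded: the function $|\bar\xi|^2/(|\xi|+|\xi_3|)$ is homogeneous of degree $0$ after scaling, with size $\lambda^2/h\leq1$. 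As in the proofs of Lemma~\ref{lem-A4-A6}(d) and Proposition~\ref{prop-main-jk}, we expand the rescaled amplitude in a Fourier series on $[-\pi,\pi]^3$ with coefficients $O((1+|\ell|)^{-100})$ uniformly in $(t,s)$. Minkowski's inequality and translation invariance then reduce matters to the square function of the model operator
\[
\mathcal L'g(x,t,s):=\int e^{i(x\cdot\xi+t|\bar\xi|+s|\xi_3|)}\widehat g(\xi)\,d\xi,
\qquad \supp\widehat g\subset\mathbb A_\lambda\times\mathbb B_h .
\]
Splitting $\xi_3>0$ and $\xi_3<0$, this operator factors as a two-dimensional wave propagator $\mathcal W_+$ in $(\bar x,t)$ composed with the translation $x_3\mapsto x_3\pm s$.

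The key point is that translation in $x_3$ disappears after taking norms in $(x_3,s)$. For fixed $s$, the map $x_3\mapsto x_3+s$ preserves $L^6_{x_3}$ norms, and it commutes with the $\bar x$-operator. We write the $L^6(\mathbb R^3;L^2_{t,s})$ norm as
\[
\bigl\|\,\|\mathcal W_+ g(\cdot,x_3+s)(\bar x,t)\|_{L^2_{t,s}}\bigr\|_{L^6_{\bar x,x_3}},
\]
and use Minkowski's inequality ($6\geq2$) to move $L^2_s$ outside the $L^6_{x_3}$ norm. Substituting $x_3\mapsto x_3-s$ and integrating over the bounded $s$-interval gives
\[
\|\mathcal S f_{\lambda,h}\|_{L^6}\lesssim \bigl\|\mathcal W_+ g\bigr\|_{L^6_{x_3}L^6_{\bar x}L^2_t}.
\]
For each fixed $x_3$, Lemma~\ref{lem-one-qp-p} with $(p,q)=(2,6)$ gives the bound $\lambda^{\frac1{12}-\frac1{12}}=\lambda^{0}$, up to $\epsilon$; more precisely, the Stein--Tomas plus Plancherel argument gives the sharp constant in the $(2,6)$ case without $\epsilon$-loss. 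In the original normalization, where the amplitude $B$ of order $0$ gives the phase $t|\bar\xi|$ at frequency $\lambda$, the relevant quantity is the $\lambda$-scaled Stein--Tomas bound
\[
\|\mathcal W g\|_{L^6_{\bar x}L^2_t}\lesssim \lambda^{1/6}\|g\|_{L^2_{\bar x}}.
\]
This follows from $\|g\|_{L^6}\lesssim\lambda^{1/6}\|g\|_{L^2}$, which is Lemma~\ref{lem-annulus-L2-L6} after scaling, with annulus width $1$ at radius $\lambda$. Concretely, we apply Plancherel in $t$ to the radial variable, which splits $g$ into thin circles, and then apply Lemma~\ref{lem-annulus-L2-L6} at unit width.

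The remaining $x_3$-direction costs nothing, because we have $L^6_{x_3}$ on the left but only $L^2$ on the right. We handle this by not taking $L^6$ in $x_3$ pointwise; instead we exploit orthogonality in $s$. Since $e^{is|\xi_3|}$ at frequency $h$, Plancherel in $s$ localizes $|\xi_3|$ to intervals of length $O(1)$, exactly as Plancherel in $t$ localizes $|\bar\xi|$. Hence the correct plan is to treat $\mathcal L'$ jointly: Plancherel in $(t,s)$ reduces us to pieces whose Fourier support lies in $\{||\bar\xi|-\tau|\leq1,\ ||\xi_3|-\sigma|\leq1\}$. Each such piece satisfies the product estimate $L^2\to L^6$ with constant $\lambda^{1/6}\cdot1$, using Lemma~\ref{lem-annulus-L2-L6} in $\bar x$ and Bernstein on an interval of length $1$ in $x_3$. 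We then combine the pieces with Minkowski's inequality (square function outside, $L^6\supset L^2$) and Plancherel, as in the proof of Proposition~\ref{prop-main-jk}. The parameter-set measure is $O(1)$ for each $\xi$, and the factor from $(t,s)$-Plancherel is $O(1)$ here because there is no spatial rescaling. This yields $\lambda^{1/6}\|f\|_2$.

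For the second statement, with $|\bar\xi|\leq2^3$, the same argument applies with $\lambda$ replaced by $O(1)$. In the $\bar x$ direction we use plain Bernstein, since the horizontal frequency is bounded. After absorbing $|\xi|-|\xi_3|=O(1/h)$ into the amplitude and using Plancherel in $s$, we get a uniform constant. The main obstacle is justifying the amplitude absorption uniformly, including the $t$-dependence through $B$; the Fourier-series device of Proposition~\ref{prop-main-jk} handles this.
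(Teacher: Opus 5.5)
Your final argument, the joint $(t,s)$-Plancherel plan in your third paragraph, is correct, but it is not the paper's route. The second paragraph should be deleted.

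\textbf{The second paragraph is a dead end.} There you move $L^2_s$ outside $L^6_{x_3}$ and then translate the shift away. That discards exactly the averaging in $s$ that makes the estimate lossless. What remains is $\lambda^{1/6}\|g\|_{L^6_{x_3}L^2_{\bar x}}$, which costs $h^{1/3}$ by Bernstein. There are also two slips in that paragraph. Lemma~\ref{lem-one-qp-p} at $(p,q)=(2,6)$ gives $\lambda^{1/2-1/3}=\lambda^{1/6}$, not $\lambda^0$. And your sentence saying the $x_3$-direction ``costs nothing'' contradicts the reasoning that follows it.

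\textbf{How the paper proceeds.} It keeps $L^2_s$ inside. After the same amplitude absorption and Fourier-series reduction, it writes
\[
\widetilde{\mathcal L}f=\sum_\pm\mathcal W_+\bigl(f^\pm(\cdot,x_3\pm s)\bigr)(\bar x,t).
\]
It then applies the lossless $(2,6)$ mixed-norm Stein--Tomas bound for $\mathcal W_+$, together with Minkowski, to reach
\[
\lambda^{1/6}\bigl\|\|f^\pm(\bar x,x_3\pm s)\|_{L^6_{x_3}L^2_s(\mathbb I_{-3})}\bigr\|_{L^2_{\bar x}}.
\]
Finally it shows $\|g(x_3\pm s)\|_{L^6_{x_3}L^2_s(\mathbb I_{-3})}\lesssim\|g\|_{L^2}$ by interpolating two trivial bounds: $L^2_{x_3}L^2_s$ (Fubini) and $L^\infty_{x_3}L^2_s$. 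This last step is a physical-space argument and never uses $|\xi_3|\sim h$.

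\textbf{How your route differs.} You localize with $\widehat\psi(t)\widehat\psi(s)$ and apply Plancherel in both parameters. This reduces matters to pieces with Fourier support in $\{||\bar\xi|-\tau|\le1,\ ||\xi_3|-\sigma|\le1\}$. You bound each piece in $L^2\to L^6$ by $\lambda^{1/6}$, using the rescaled Lemma~\ref{lem-annulus-L2-L6} in $\bar x$ and Bernstein on unit intervals in $x_3$. You then recombine with Minkowski and Plancherel; the admissible $(\tau,\sigma)$-set has measure $O(1)$ for each $\xi$, and there is no rescaling factor. This is the unit-scale analogue of the proofs of Propositions~\ref{prop-main-jk} and \ref{prop-main-j<k<2j}. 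It needs no sign splitting and makes the orthogonality in $s$ explicit. The paper's route is shorter, given Lemma~\ref{lem-one-qp-p}.

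\textbf{Smaller points.} The correction $|\xi|-|\xi_3|$ is not homogeneous of degree $0$. What you actually need, and what holds, is that after rescaling it has size and derivatives $O(\lambda^2/h)=O(1)$. The low-horizontal-frequency statement goes through as you describe.
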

   \begin{proof}
       We first prove the estimate for $\mathcal{S}f_{\lambda,h}$ with
       $\lambda\geq2^3$. Let $\widetilde{\varphi}\in
       C_c^{\infty}((1/4,3))$ satisfy
       $\widetilde{\varphi}\varphi=\varphi$. After a change of variables,
       \begin{align*}
       \mathcal{U}_{t,s}f_{\lambda,h}(x)
       &=\lambda^2h\int_{\mr^3}e^{ix\cdot(\lambda\bar\xi,h\xi_3)}
       e^{i(t\lambda|\bar\xi|+sh|\xi_3|)}B_{\lambda,h,3}(\xi,t,s)
       \widehat{f_{\lambda,h}}(\lambda\bar\xi,h\xi_3)\,d\xi,
       \end{align*}
       where
       \begin{align*}
       B_{\lambda,h,3}(\xi,t,s)
       &:={e^{is\left(|(\lambda\bar\xi,h\xi_3)|-h|\xi_3|\right)}}
       \widetilde{\varphi}(|\bar\xi|)\widetilde{\varphi}(|\xi_3|)
       B(\lambda\bar\xi,h\xi_3,t,s).
       \end{align*}
       On the support of this amplitude,
       $|(\lambda\bar\xi,h\xi_3)|-h|\xi_3|$ equals
       $\lambda^2|\bar\xi|^2/(|(\lambda\bar\xi,h\xi_3)|+h|\xi_3|)$.
       Hence its derivatives are $O(\lambda^2/h)=O(1)$, and
       \eqref{ali-condition-on-B} gives
       $\sup_\xi|\partial_\xi^\alpha
       B_{\lambda,h,3}(\xi,t,s)|\leq C_\alpha$ uniformly on
       $\mathbb J_0$.

       Expanding $B_{\lambda,h,3}$ in a Fourier series gives coefficients
       satisfying $|C_{\ell,3}(t,s)|\lesssim(1+|\ell|)^{-100}$. Define
       \begin{align*}
       \widetilde{\mathcal L}f(x,t,s)
       :=\int_{\mr^3}e^{i(x\cdot\xi+t|\bar\xi|+s|\xi_3|)}
       \widehat f(\xi)\,d\xi.
       \end{align*}
       Reversing the change of variables, each Fourier mode is a spatial
       translate of $\widetilde{\mathcal L}f_{\lambda,h}$ by
       $(\lambda^{-1}\ell_1,\lambda^{-1}\ell_2,h^{-1}\ell_3)$; thus no
       scale factor is lost. Minkowski's inequality and translation
       invariance give
       \begin{align*}
       \|\mathcal Sf_{\lambda,h}\|_{L^6(\mr^3)}
       \lesssim
       \|\widetilde{\mathcal L}f_{\lambda,h}\|_
       {L_x^6(\mr^3;L_{t,s}^2(\mathbb J_0))}.
       \end{align*}

       Define $f^\pm$ by
       $\widehat{f^\pm}(\xi)=\chi_{(0,\infty)}(\pm\xi_3)\widehat f(\xi)$.
       Then
       \begin{align*}
       \widetilde{\mathcal L}f(x,t,s)
       =\sum_\pm\mathcal W_+
       \bigl(f^\pm(\cdot,x_3\pm s)\bigr)(\bar x,t).
       \end{align*}
       Applying the lossless $(2,6)$ Stein--Tomas endpoint used in the proof
       of Lemma~\ref{lem-one-qp-p}, together with Minkowski's inequality,
       yields
       \begin{align*}
       \|\widetilde{\mathcal L}f_{\lambda,h}\|_
       {L_x^6(\mr^3;L_{t,s}^2(\mathbb J_0))}
       \lesssim \lambda^{1/6}\sum_\pm
       \left\|\|f_{\lambda,h}^\pm(\bar x,x_3\pm s)\|_
       {L_{x_3}^6(\mr;L_s^2(\mathbb I_{-3}))}
       \right\|_{L_{\bar x}^2(\mr^2)}.
       \end{align*}
	       For each fixed $\bar x$, Fubini's theorem gives the corresponding
	       bound in $L_{x_3}^2(L_s^2)$. Restricting the $s$-integral to
	       $\mathbb I_{-3}$ gives the bound in $L_{x_3}^\infty(L_s^2)$.
	       Interpolating these bounds and applying Plancherel's theorem to
	       the projections onto $\{\pm\xi_3>0\}$ gives
       \begin{align*}
       \|f_{\lambda,h}^\pm(\bar x,x_3\pm s)\|_
       {L_{x_3}^6(\mr;L_s^2(\mathbb I_{-3}))}
       \lesssim\|f_{\lambda,h}(\bar x,\cdot)\|_{L^2(\mr)}.
       \end{align*}
       This proves the first estimate.

       The same argument applies to $f_{\leq2^3,h}$. Indeed, the horizontal
       frequency remains in a fixed ball, so the phase correction has
       uniformly bounded derivatives, and the bounded-frequency analogue of
       the preceding $(2,6)$ estimate follows from Bernstein's inequality and
       Plancherel's theorem. This proves the second estimate.
\end{proof}

\subsection{Square function estimates for averages over tori}

Propositions~\ref{prop-main-jk}, \ref{prop-main-j<k<2j}, and
\ref{prop-L6L2-L2-k>2j} yield the following square-function estimates
for averages over tori.
Recall that $P_1=(0,0)$, $P_6=(1/2,1/6)$, and $Q_1=(1/2,0)$.
\begin{cor}\label{cor-L6L2-L2}
	Let $\alpha,\beta\in\{0,1\}$, $\lambda\geq2^3$, $\delta>0$, and
	$(1/p,1/q)\in\{P_1,P_6,Q_1\}$. For any $\epsilon>0$, the following
	hold.
	\begingroup
	\addtolength{\leftmargini}{-2em}
	\begin{enumerate}
		\item[(a)] If $\lambda^{1/2+\delta}\leq h\leq\lambda$ and
		$\supp\widehat f\subset\mathbb A_\lambda\times\mathbb B_h$, then
		\begin{align*}
		\|\partial_t^\alpha\partial_s^\beta\mathcal Af\|_
		{L^q(\mr^3;L^2(\mathbb J_0))}
		&\lesssim_{\delta,\epsilon,\alpha,\beta}
		\lambda^{\alpha+\beta+\frac1p-\frac3q-\frac12
		+\epsilon(1-\frac2p)}
		h^{-\frac1p+\frac1q}\|f\|_{L^p(\mr^3)}.
		\end{align*}

		\item[(b)] If $\lambda\leq h\leq\lambda^2$ and
		$\supp\widehat f\subset\mathbb A_\lambda\times\mathbb B_h$, then
		\begin{align*}
		\|\partial_t^\alpha\partial_s^\beta\mathcal Af\|_
		{L^q(\mr^3;L^2(\mathbb J_0))}
		&\lesssim_{\epsilon,\alpha,\beta}
		\lambda^{\alpha-\frac2q+\epsilon(1-\frac2p)}
		h^{\beta-\frac12}\|f\|_{L^p(\mr^3)}.
		\end{align*}

		\item[(c)] If $(1/p,1/q)=P_6$, the estimate in part (b) remains
		valid for $h\geq\lambda^2$ under the same Fourier-support assumption.
		Moreover, if $h\geq2^3$ and
		$\supp\widehat f\subset\mathbb A_{2^3}^{\circ}\times\mathbb B_h$,
		then
		\begin{align*}
		\|\partial_t^\alpha\partial_s^\beta\mathcal Af\|_
		{L^6(\mr^3;L^2(\mathbb J_0))}
		&\lesssim_{\alpha,\beta}
		h^{\beta-\frac12}\|f\|_{L^2(\mr^3)}.
		\end{align*}
	\end{enumerate}
	\endgroup
\end{cor}
\begin{proof}
	By \eqref{ali-expansion-sigmats} and the support properties stated
	after that identity, we consider separately the terms associated with
	$B_{0,0}$, $B_{0,1}^{\varepsilon}$, and
	$B_{1,1}^{\kappa,\kappa'}$, where
	$\varepsilon,\kappa,\kappa'\in\{+,-\}$.

	The term associated with $B_{0,0}$ occurs only in the fixed
	low-frequency region and is covered by the fixed-frequency estimates
		in Lemma~\ref{lem-A1-A2}. The terms associated with $B_{0,1}^{\varepsilon}$ are supported where
	$|\bar\xi|\leq2^3$ and $|\xi_3|>2^2$, and hence vanish on
	$\mathbb A_\lambda\times\mathbb B_h$ when $\lambda\geq2^4$. On
	$\mathbb A_{2^3}^{\circ}\times\mathbb B_h$ with $h\geq2^3$,
	\eqref{asm-B-symbol} gives
	\[|B_{0,1}^{\varepsilon}(\xi,t,s)|\lesssim h^{-1/2}.\] A $t$-derivative
	preserves this bound, while an $s$-derivative costs at most a factor of
	order $h$.
	Let $\mathcal{A}_{0,1}^{\varepsilon}$ denote the operator with
	multiplier
	$e^{\varepsilon is|\xi_3|}B_{0,1}^{\varepsilon}(\xi,t,s)$.
	After separating the two signs of $\xi_3$, the one-dimensional
	translation estimate established in the proof of Proposition
	\ref{prop-L6L2-L2-k>2j}, together with Bernstein's inequality in the
	compact horizontal-frequency variables, gives
	\[
	\bigl\|
	\partial_t^\alpha\partial_s^\beta
	\mathcal{A}_{0,1}^{\varepsilon}f
	\bigr\|_{L^6(\mathbb{R}^3;L^2(\mathbb{J}_0))}
	\lesssim
	h^{\beta-\frac12}
	\|f\|_{L^2(\mathbb{R}^3)}.
	\]
	Thus the $B_{0,1}^{\varepsilon}$ terms satisfy the second statement of
	part (c). Since they vanish when $\lambda\geq2^4$, the same estimate
	also covers their possible $\lambda=2^3$ contribution to the first
	statement of part (c). In parts (a) and (b), the remaining
	$B_{0,1}^{\varepsilon}$ terms occur for only finitely many dyadic pairs
	$(\lambda,h)$ and are therefore covered by Lemma~\ref{lem-A1-A2},
	after adjusting the constants.

	After these low-horizontal-frequency terms have been handled, the
	main contributions in the horizontal high-frequency regimes are the
	doubly oscillatory terms
	\[
	e^{\kappa it|\bar{\xi}|}
	e^{\kappa' i s|\xi|}
	B_{1,1}^{\kappa,\kappa'}(\xi,t,s).
	\]
	The proofs of Propositions~\ref{prop-main-jk},
	\ref{prop-main-j<k<2j}, and \ref{prop-L6L2-L2-k>2j} depend only on
	the support geometry and symbol bounds, so their estimates apply
	uniformly to all four choices of $(\kappa,\kappa')$.
	On $\mathbb{A}_\lambda\times\mathbb{B}_h$, \eqref{asm-B-symbol} gives
	\[
	\bigl|
	B_{1,1}^{\kappa,\kappa'}(\xi,t,s)
	\bigr|
	\lesssim
	\begin{cases}
		\lambda^{-1}, & h\leq\lambda,\\[2pt]
		(\lambda h)^{-1/2}, & h\geq\lambda.
	\end{cases}
	\]
	Moreover, differentiation with respect to $t$ introduces at most a
	factor of order $\lambda$. Differentiation with respect to $s$ introduces
	at most a factor of order $\lambda$ when $h\leq\lambda$ and at most a factor
	of order $h$ when $h\geq\lambda$.

	Combining these amplitude and derivative bounds with the three
	propositions gives the asserted estimates. Multiplying the three estimates
	in Proposition~\ref{prop-main-jk} by $\lambda^{\alpha+\beta-1}$ gives the
	formula in part (a) at $P_1$, $P_6$, and $Q_1$, respectively. Likewise,
	multiplying the three estimates in Proposition
	\ref{prop-main-j<k<2j} by
	$\lambda^{\alpha-1/2}h^{\beta-1/2}$ gives the formula in part (b) at
	the same three points. The extension asserted in the first statement
	of part (c) follows from Proposition
	\ref{prop-L6L2-L2-k>2j} with the latter factor.
	For the low-horizontal estimate in part (c), \eqref{asm-B-symbol} gives
	$|B_{1,1}^{\kappa,\kappa'}(\xi,t,s)|\lesssim h^{-1/2}$; differentiation
	in $t$ costs $O(1)$ and differentiation in $s$ costs $O(h)$. The second
	estimate in Proposition~\ref{prop-L6L2-L2-k>2j} therefore applies with
	the factor $h^{\beta-1/2}$.

	Derivatives falling on any of the symbols, including the smooth
	remainders in \eqref{ali-expansion-sigmats}, are controlled by
	\eqref{asm-B-symbol}.
\end{proof}

Lemmas~\ref{lem-A1-A2} and \ref{lem-A4-A6}, together with Corollary~\ref{cor-L6L2-L2}, provide all the estimates used as interpolation inputs in Section~\ref{sec-Lp-Lq-bounds}.

 \section{\texorpdfstring{Mixed-norm interpolation and dyadic summation}{Mixed-norm interpolation and dyadic summation}}\label{sec-Lp-Lq-bounds}
In this section, we prove the $L^p$--$L^q$ estimates for the interior variation $\widetilde{V}_r(\mathcal{A})$ needed for the sufficient part of Theorem~\ref{thm-Lp-Lq}.

\subsection{Mixed-norm interpolation estimates}
The estimates in Lemmas~\ref{lem-A1-A2} and \ref{lem-A4-A6}
at $P_1,P_4,$ and $P_5$, together with the square-function
estimates in Corollary~\ref{cor-L6L2-L2} at $P_6$ and $Q_1$,
give the interpolation data at the five points used below. At each
of these points, the parameter exponent $r$ equals the input exponent
$p$, so the interpolation takes place on the plane $1/r=1/p$.

For points $R_1,\ldots,R_N$ in the $(1/p,1/q)$-plane, $N\geq2$,
we write
\[[R_1,\ldots,R_N]:=\operatorname{conv}\{R_1,\ldots,R_N\}.\]
When $N=2$, $[R_1,R_2]$ is the closed line segment joining $R_1$
and $R_2$. With this notation, the projected interpolation region
is divided into the three triangles $[P_5,P_6,P_4]$,
$[P_5,Q_1,P_6]$, and $[P_5,P_1,Q_1]$.

The exponents in the estimates below are the affine functions
determined by these interpolation data. We use the standard complex
interpolation theorem for mixed-norm Lebesgue spaces. Since the input
estimates hold with arbitrarily small losses, the resulting losses are
absorbed into the $\epsilon$-loss appearing below.
The relevant region is illustrated in Figure~\ref{fig:mixed-norm-region}.

	\begin{prop}\label{prop-LqLp-Lp-one}
	    Let  $\alpha,\beta\in \{0,1\}$. 	   The following hold.
	    \begingroup
	    \addtolength{\leftmargini}{-2em}
        \begin{enumerate}
	        \item[(a)]  Let $(1/p,1/q)\in[P_1,Q_1,P_4]$ and
        $\delta,\epsilon>0$. If $\lambda\geq2^3$ and $\supp \widehat{f}\subset \mathbb{A}_\lambda\times \mathbb{B}_{\lambda^{1/2+\delta}}^{\circ}$, then we have
        \begin{align*}
			\|\partial_t^{\alpha}\partial_s^{\beta}\mathcal{A}f\|_{L^{q}(\mathbb{R}^{3};L^p(\mathbb{J}_0))}\lesssim
			\begin{cases}
					\lambda^{\alpha+\beta-\frac{1}{2}+\frac{1}{2p}-\frac{5}{2q}+300\delta+\epsilon} \|f\|_{L^p(\mathbb{R}^3)},
                & \frac{1}{p}+\frac{3}{q}\leq 1,\\[2pt]
	                \lambda^{\alpha+\beta+\frac{1}{p}-\frac{1}{q}-1+300\delta+\epsilon}\|f\|_{L^p(\mathbb{R}^3)},
                & \frac{1}{p}+\frac{3}{q}>1.
			\end{cases}
		\end{align*}
         \item[(b)] Let $(1/p,1/q)\in[P_1,P_6,P_4]$ and
        $\epsilon>0$. If $\lambda\geq2^3$, $h\geq\lambda^2$, and $\supp \widehat{f}\subset \mathbb{A}_\lambda\times \mathbb{B}_h$, then we have
			\begin{align*}
				\|\partial_t^{\alpha}\partial_s^{\beta}\mathcal{A}f\|_{L^{q}(\mathbb{R}^{3};L^p(\mathbb{J}_0))}\lesssim
				\begin{cases}
					\lambda^{\alpha-\frac{2}{q}+\epsilon}h^{\beta-\frac{1}{2}}\|f\|_{L^p(\mathbb{R}^3)},
                    & \frac{1}{p}+\frac{3}{q}\leq 1,\\[2pt]
                    \lambda^{\alpha-\frac{1}{2}+\frac{1}{2p}-\frac{1}{2q}+\epsilon}h^{\beta-\frac{1}{2}}\|f\|_{L^p(\mathbb{R}^3)},
                    & \frac{1}{p}+\frac{3}{q}>1.
				\end{cases}
			\end{align*}

            \item[(c)] Let $(1/p,1/q)\in[P_1,P_6,P_4]$. If $h\geq2^3$ and $\supp \widehat{f}\subset \mathbb{A}_{2^3}^{\circ}\times \mathbb{B}_h$, then we have
			    \begin{align*}
				   \|\partial_t^{\alpha}\partial_s^{\beta}\mathcal{A}f\|_{L^{q}(\mathbb{R}^{3};L^p(\mathbb{J}_0))}\lesssim h^{\beta-\frac{1}{2}}\|f\|_{L^p(\mr^3)}.
			    \end{align*}
		\end{enumerate}
		\endgroup
\end{prop}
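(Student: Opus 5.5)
The plan is to prove all three parts by complex interpolation of mixed-norm Lebesgue spaces. We interpolate along the plane $1/r=1/p$, with vertex data at $P_1=(0,0)$, $P_4=(1/2,1/2)$, $P_5=(1/4,1/4)$, $P_6=(1/2,1/6)$ and $Q_1=(1/2,0)$.

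\emph{Where the two cases come from.} The line $1/p+3/q=1$ passes through $P_5$ and $P_6$. Hence the case split in (a) and (b) is exactly the split of the relevant triangles into the sub-triangles listed before the statement:
\begin{itemize}
\item $[P_5,P_6,P_4]$, where $1/p+3/q\geq 1$;
\item $[P_1,P_5,P_6]$ and $[P_5,P_1,Q_1]$, together with $[P_5,Q_1,P_6]$, where $1/p+3/q\leq 1$.
\end{itemize}
Each exponent in the statement is affine in $(1/p,1/q)$ on each sub-triangle. So it suffices to check the exponent at the vertices and interpolate, absorbing the $\epsilon$-losses as explained before the statement. Since each $f$ has fixed dyadic frequency support, we interpolate the operator $f\mapsto \partial_t^\alpha\partial_s^\beta\mathcal{A}(f_{\lambda,\cdot})$ composed with a fixed Littlewood--Paley projection; this keeps the support assumption harmless.

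\emph{Diagonal vertices.} At $P_1$, $P_4$ and $P_5$ we have $p=q$, so $L^q(\mr^3;L^p(\mathbb{J}_0))=L^p(\mr^3\times\mathbb{J}_0)$, and the data come directly from the earlier lemmas:
\begin{itemize}
\item in (a), from Lemma~\ref{lem-A4-A6}(d); for instance at $P_5$ both formulas give $\lambda^{\alpha+\beta-1/2+3/8-7/8}$, which matches;
\item in (b), from Lemma~\ref{lem-A4-A6}(c); with $p=q$ this gives $h^{\beta-1/2}$, and the $\lambda$-powers agree at $P_1$ ($\lambda^\alpha$), $P_5$ ($\lambda^{\alpha-1/2}$) and $P_4$ ($\lambda^{\alpha}$);
\item in (c), from Lemma~\ref{lem-A1-A2}(b) with $p=q$.
\end{itemize}

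\emph{Vertices with $p=2$.} At $P_6$ and $Q_1$ the inner norm is $L^2_{t,s}$.
\begin{itemize}
\item For (b) and (c), the value at $P_6$ is exactly Corollary~\ref{cor-L6L2-L2}(c): the first statement for $h\geq\lambda^2$, the second in the low-horizontal case. Since $[P_1,P_6,P_4]$ contains $P_5$, these give all the vertex data needed for (b) and (c).
\item For (a), Corollary~\ref{cor-L6L2-L2}(a) is unavailable, because $h<\lambda^{1/2+\delta}$. Here I will repeat the reduction used in Lemma~\ref{lem-A4-A6}(d): factor out $e^{is(|\xi|-|\bar\xi|)}$ and expand the rescaled amplitude in Fourier series, at a cost of $\lambda^{200\delta}$. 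This reduces matters to $\mathcal{L}f(x,t,s)=\mathcal{W}_+(f(\cdot,x_3))(\bar x,t+s)$, a function of $t+s$ only. The $L^2(\mathbb{J}_0)$ norm of $\mathcal{L}f$ is then controlled by $\|\mathcal{W}_+g\|_{L^2_t}$ on an interval.
\begin{itemize}
\item At $P_6$, the $(2,6)$ Stein--Tomas case of Lemma~\ref{lem-one-qp-p}, with Bernstein in $x_3$ (cost $\lambda^{(1/2+\delta)/3}$), gives $\lambda^{1/6+1/6+C\delta}$. After multiplying by the amplitude size $\lambda^{\alpha+\beta-1}$, this matches $\lambda^{\alpha+\beta-1/2+1/4-5/12}$.
\item At $Q_1$, the $(2,\infty)$ case of Lemma~\ref{lem-one-qp-p}, with Bernstein in $x_3$ (cost $\lambda^{1/4+\delta/2}$), gives $\lambda^{1/2+1/4}$. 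After the same amplitude factor, this matches $\lambda^{\alpha+\beta-1/4}$.
\end{itemize}
\end{itemize}
The parameter derivatives are handled as at the end of the proof of Lemma~\ref{lem-A4-A6}(d); the resulting $\lambda^{C\delta}$ losses are absorbed into the $300\delta$ in (a).

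\emph{Main obstacle.} The main difficulty will be this $Q_1$/$P_6$ verification in (a). One must check that the Fourier-series reduction is compatible with the mixed norm: the translations act only in $x$, and the symbol error must remain uniform in $(t,s)$. One must also check that the Bernstein losses in $x_3$ are correctly balanced against the exponents $1/(2p)-5/(2q)$, including at the low piece $h\leq\lambda^{1/2}$, where the same argument is run with $\delta=0$. The remaining work is bookkeeping: checking that each displayed exponent coincides with the affine interpolant of the vertex values on each sub-triangle.
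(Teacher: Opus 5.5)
Your proposal is correct and follows the paper's proof. Parts (b) and (c) are exactly the paper's interpolation of Lemma~\ref{lem-A4-A6}~(c), respectively Lemma~\ref{lem-A1-A2}~(b), at $P_1,P_5,P_4$ with Corollary~\ref{cor-L6L2-L2}~(c) at $P_6$. For (a), the paper applies the Fourier-series reduction of Lemma~\ref{lem-A4-A6}~(d) and then Lemma~\ref{lem-one-qp-p} directly at every $(p,q)$ with $2\leq p\leq q\leq\infty$; your vertex interpolation reassembles what Lemma~\ref{lem-one-qp-p} already packages. One arithmetic slip: in (b) the common power at $P_4$ is $\lambda^{\alpha-1/2}h^{\beta-1/2}$, not $\lambda^{\alpha}$, but the agreement you need still holds.
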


\begin{proof}
Part (a) follows from Lemma~\ref{lem-one-qp-p} by the same multiplier
decomposition and symbol estimates used in the proof of
Lemma~\ref{lem-A4-A6} (d). Since the range
$2\leq p\leq q\leq\infty$ corresponds to
$(1/p,1/q)\in[P_1,Q_1,P_4]$, this proves part (a) throughout its
stated region.

    For part (b), the estimates at $(p,q)=(2,2),(4,4),$ and $(\infty,\infty)$ follow from Lemma~\ref{lem-A4-A6} (c), whereas the estimates at the same three points for part (c) follow from Lemma~\ref{lem-A1-A2} (b). In both parts, the estimate at $(p,q)=(2,6)$ follows from Corollary~\ref{cor-L6L2-L2} (c).
    Interpolation gives the stated estimates throughout $[P_1,P_6,P_4]$.
\end{proof}
  \begin{figure}[t]
	\centering
	\begin{tikzpicture}[scale=0.8]
		\draw[thick,->]  (0,0) -- (7,0);
		\draw[thick,->] (0,0) -- (0,7);
		\node[left] at (0,7) {$\frac{1}{q}$};
        \node[right] at (7,0) {$\frac{1}{p}$};
		\node[left] at (0,6) {$\frac{1}{2}$};
		\node[left] at (0,2) {$\frac{1}{6}$};
		\node[below] at (3,0) {$\frac{1}{4}$};
		\node[below] at (6,0) {$\frac{1}{2}$};
		\draw (6,-2pt) -- (6,2pt);
		\draw (-2pt,2) -- (2pt,2);
		\draw (3,-2pt) -- (3,2pt);
		\draw (-2pt,6) -- (2pt,6);

		\draw (0,0) -- (6,6) -- (6,0);
		\draw (3,3) -- (6,2) ;
		\draw (3,3) -- (6,0) ;

		\node[left] at (0,0) {$P_{1}$};
		\node[above right] at (6,0) {$Q_1$};
		\node[above right] at (6,2) {$P_6$};
		\node[above right] at (6,6) {$P_4$};
		\node[above left] at (3,3) {$P_5$};

		\node[circle, draw=black, fill=white, inner sep=0pt,minimum size=3.6pt] at (0,0) {};
		\node[circle,draw=black, fill=white, inner sep=0pt,minimum size=3.6pt] at (6,0) {};
		\node[circle,draw=black, fill=white, inner sep=0pt,minimum size=3.6pt] at (6,2) {};

		\node[circle,draw=black, fill=white, inner sep=0pt,minimum size=3.6pt] at (3,3) {};

		\node[circle,draw=black, fill=white, inner sep=0pt,minimum size=3.6pt] at (6,6) {};
	\end{tikzpicture}
	\caption{The exponent region used for the mixed-norm estimates.}
	\label{fig:mixed-norm-region}
\end{figure}

\begin{prop}\label{prop-LqLp-Lp}
	Let $\lambda\geq2^3$ and $\alpha,\beta\in \{0,1\}$. In each estimate below, the first, second, and third lines apply, respectively, when $(1/p,1/q)\in[P_5,P_6,P_4]$, $(1/p,1/q)\in[P_5,Q_1,P_6]$, and $(1/p,1/q)\in[P_5,P_1,Q_1]$. For any $\delta,\epsilon>0$, the following hold:
	\begingroup
	\addtolength{\leftmargini}{-2em}
	\begin{enumerate}
		\item[(a)] If $\lambda^{1/2+\delta}\leq h\leq\lambda$ and $\supp \widehat{f}\subset \mathbb{A}_\lambda\times \mathbb{B}_h$, then we have
			\begin{align*}
				\|\partial_t^{\alpha}\partial_s^{\beta}\mathcal{A}f\|_{L^{q}(\mathbb{R}^{3};L^p(\mathbb{J}_0))}\lesssim
				\begin{cases}
					\lambda^{\alpha+\beta+\frac{3}{2p}-\frac{3}{2q}-1+\epsilon}h^{-\frac{1}{p}+\frac{1}{q}}\|f\|_{\Lp(\mr^3)},\\[2pt]
					\lambda^{\alpha+\beta-\frac{1}{2}+\frac{1}{p}-\frac{3}{q}+\epsilon}h^{-\frac{1}{p}+\frac{1}{q}} \|f\|_{\Lp(\mr^3)},\\[2pt]
					\lambda^{\alpha+\beta+\frac{2}{p}-\frac{2}{q}-1+\epsilon}h^{1-\frac{3}{p}-\frac{1}{q}} \|f\|_{\Lp(\mr^3)}.
				\end{cases}
			\end{align*}
			\item[(b)] If $\lambda\leq h\leq\lambda^2$ and $\supp \widehat{f}\subset \mathbb{A}_\lambda\times \mathbb{B}_h$, then we have
			\begin{align*}
				\|\partial_t^{\alpha}\partial_s^{\beta}\mathcal{A}f\|_{L^{q}(\mathbb{R}^{3};L^p(\mathbb{J}_0))}\lesssim
				\begin{cases}
					\lambda^{\alpha+\frac{1}{2p}-\frac{1}{2q}-\frac{1}{2}+\epsilon}h^{\beta-\frac{1}{2}}\|f\|_{\Lp(\mr^3)},\\[2pt]
					\lambda^{\alpha-\frac{2}{q}+\epsilon}h^{\beta-\frac{1}{2}}\|f\|_{\Lp(\mr^3)},\\[2pt]
					\lambda^{\alpha-\frac{2}{p}-\frac{4}{q}+1+\epsilon}h^{\beta+\frac{1}{p}+\frac{1}{q}-1}\|f\|_{\Lp(\mr^3)}.
				\end{cases}
			\end{align*}
	\end{enumerate}
	\endgroup
\end{prop}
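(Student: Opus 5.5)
The proof will be a mixed-norm complex interpolation on the plane $1/r=1/p$, using the five anchor points $P_1=(0,0)$, $P_4=(1/2,1/2)$, $P_5=(1/4,1/4)$, $P_6=(1/2,1/6)$, and $Q_1=(1/2,0)$. The target space is $L^q(\mathbb{R}^3;L^p(\mathbb{J}_0))$. Because each displayed bound is an affine function of $(1/p,1/q)$ in the exponents of $\lambda$ and $h$, it is enough to verify the estimate at the three vertices of each triangle $[P_5,P_6,P_4]$, $[P_5,Q_1,P_6]$, $[P_5,P_1,Q_1]$. Interpolation then gives the whole triangle, with the implicit constants uniform in $\lambda,h$ and the $\epsilon$-losses absorbed. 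Since $\lambda$ and $h$ are fixed dyadic parameters, interpolating the $\lambda^{a}h^{b}$ factors is harmless.

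First I record the data at the diagonal points $P_1,P_5,P_4$, where $q=p$ and $L^q(L^p)=L^p_{x,t,s}$. For part (a), these come from Lemma~\ref{lem-A4-A6}(e) with $(p,q)=(\infty,\infty),(4,4),(2,2)$. For instance, at $P_5$ one has $1/p+3/q=1$, and (e) gives $\lambda^{\alpha+\beta-1/2+\epsilon}h^{0}$, which matches the first and second lines evaluated at $(1/4,1/4)$. For part (b), I use Lemma~\ref{lem-A4-A6}(a) at the same three points. For example, at $P_4$ it gives $\lambda^{\alpha-1/2}h^{\beta-1/2}$, and at $P_5$ it gives $\lambda^{\alpha-1/2}h^{\beta-1/2}$; both agree with the stated lines. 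The off-diagonal points $P_6$ and $Q_1$ are supplied by Corollary~\ref{cor-L6L2-L2}(a),(b). There $p=2$, so $L^q(L^2(\mathbb{J}_0))$ is exactly the square-function norm controlled in the corollary. At $Q_1$, part (a) of the corollary gives $\lambda^{\alpha+\beta+1/2-1/2}h^{-1/2}=\lambda^{\alpha+\beta}h^{-1/2}$, which should coincide with the second and third lines at $(1/2,0)$. At $P_6$ it gives $\lambda^{\alpha+\beta-1/2}h^{-1/3}$, which should coincide with the first and second lines at $(1/2,1/6)$. At $Q_1$ and $P_6$, part (b) of the corollary gives $\lambda^{\alpha}h^{\beta-1/2}$ and $\lambda^{\alpha-1/3}h^{\beta-1/2}$, respectively.

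Next comes a bookkeeping step. On each triangle I check that the stated affine exponent takes the values just listed at its three vertices. Since each triangle is non-degenerate, this determines the affine function uniquely, and the lines in the proposition are exactly these interpolants. Where two triangles share an edge, the neighbouring formulas must agree on it; for example, the second and third lines of (b) both give $\lambda^{\alpha}h^{\beta-1/2}$ at $Q_1$. This is the routine part.

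I expect the main obstacle to be justifying the interpolation itself. The family $f\mapsto \partial_t^\alpha\partial_s^\beta\mathcal{A}f$, with $\widehat f$ restricted to $\mathbb{A}_\lambda\times\mathbb{B}_h$, is a fixed linear operator, and the spaces $L^q(L^p)$ form a complex interpolation scale, with $[L^{q_0}(L^{p_0}),L^{q_1}(L^{p_1})]_\theta=L^{q_\theta}(L^{p_\theta})$. The frequency restriction is handled by composing with the fixed projection $f\mapsto f_{\lambda,h}$. This projection is bounded on every $L^p$, $1\le p\le\infty$, uniformly in $\lambda$ and $h$, because its symbol is a product of dilated smooth bumps in $\bar\xi$ and in $\xi_3$. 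The $L^\infty$ endpoints at $P_1$ and $Q_1$ require Riesz--Thorin-type interpolation rather than complex interpolation in its strict form. I would handle this either by interpolating with simple functions in the standard way, or by noting that at $P_1$ and $Q_1$ the target norms are $L^\infty(L^\infty)$ and $L^\infty(L^2)$, which are acceptable endpoints for the Benedek--Panzone mixed-norm Riesz--Thorin theorem. Finally, the $\epsilon$ in Lemma~\ref{lem-A4-A6} and Corollary~\ref{cor-L6L2-L2} is replaced by a smaller one, so that after interpolation the total loss is at most $\lambda^\epsilon$.
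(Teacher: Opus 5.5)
Your proposal is the paper's proof: on the plane $1/r=1/p$, interpolate Lemma~\ref{lem-A4-A6}(e) (for part (a)) or Lemma~\ref{lem-A4-A6}(a) (for part (b)) at $P_1,P_5,P_4$ with Corollary~\ref{cor-L6L2-L2}(a) or (b) at $P_6,Q_1$. Your vertex values and edge-consistency checks are all correct except one. At $P_5$, Lemma~\ref{lem-A4-A6}(e) gives $\lambda^{\alpha+\beta-1+\epsilon}$, not $\lambda^{\alpha+\beta-1/2+\epsilon}$, because both $\frac2p-\frac2q$ and $1-\frac1p-\frac3q$ vanish there; $\lambda^{\alpha+\beta-1+\epsilon}$ is exactly what all three lines of (a) give at $(1/4,1/4)$, so the bookkeeping closes once this arithmetic slip is fixed.
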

\begin{proof}
    For part (a), interpolate the estimates in Lemma~\ref{lem-A4-A6} (e) at $P_4,P_5,$ and $P_1$ with those in Corollary~\ref{cor-L6L2-L2} (a) at $P_6$ and $Q_1$. Part (b) follows in the same way from Lemma~\ref{lem-A4-A6} (a) and Corollary~\ref{cor-L6L2-L2} (b).
\end{proof}

\subsection{Mixed-norm bounds in the three exponent regions}

We next interpolate in the parameter exponent to obtain the following mixed-norm bounds.

\begin{cor}\label{cor-pqr-top}
    Let $\lambda\geq2^3$, $\alpha,\beta\in \{0,1\}$, $(1/p,1/q)\in [P_5,P_6,P_4]$, and $p\leq r \leq q$.
    For any $\delta,\epsilon>0$, the following hold:
	    \begingroup
	    \addtolength{\leftmargini}{-2em}
	    \begin{enumerate}
	        \item[(a)] If $\supp \widehat{f}\subset \mathbb{A}_\lambda\times \mathbb{B}_{\lambda^{1/2+\delta}}^{\circ}$, then we have
			\begin{align*}
						\|\partial_t^{\alpha}\partial_s^\beta\mathcal{A}f\|_{L^q(\mr^3;L^r(\mathbb{J}_0))}\lesssim \lambda^{\alpha+\beta+\frac{2}{p}-\frac{1}{q}-\frac{1}{r}-1+300\delta+\epsilon}\|f\|_{\Lp(\mr^3)}.
			\end{align*}

			\item[(b)] If $\lambda^{1/2+\delta}\leq h\leq\lambda$ and $\supp \widehat{f}\subset \mathbb{A}_\lambda\times \mathbb{B}_h$, then we have
			\begin{align*}
				\|\partial_t^{\alpha}\partial_s^\beta\mathcal{A}f\|_{L^q(\mr^3;L^r(\mathbb{J}_0))}\lesssim \lambda^{\alpha+\beta+\frac{2}{p}-\frac{3}{2q}-\frac{1}{2r}-1+\epsilon}h^{-\frac{1}{r}+\frac{1}{q}}\|f\|_{\Lp(\mr^3)}.
			\end{align*}
			\item[(c)] If $\lambda\leq h\leq\lambda^2$ and $\supp \widehat{f}\subset \mathbb{A}_\lambda\times \mathbb{B}_h$, then we have
			\begin{align*}
					\|\partial_t^\alpha\partial_s^\beta \mathcal{A}f\|_{L^q(\mr^3;L^r(\mathbb{J}_0))}\lesssim \lambda^{\alpha+\frac{1}{2p}-\frac{1}{2q}-\frac{1}{2}+\epsilon}h^{\beta-\frac{1}{2}+\frac{3}{2p}-\frac{3}{2r}}\|f\|_{\Lp(\mr^3)}.
			\end{align*}
	    \end{enumerate}
	    \endgroup
\end{cor}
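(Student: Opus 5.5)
The plan is to interpolate, for each fixed $(p,q)\in[P_5,P_6,P_4]$, between the two endpoint values $r=p$ and $r=q$ of the parameter exponent. The case $r=p$ is exactly the first line of Proposition~\ref{prop-LqLp-Lp} (parts (a), (b)) and the corresponding case of Proposition~\ref{prop-LqLp-Lp-one} (a). The case $r=q$ is the unmixed $L^q(\mr^3\times\mathbb J_0)$ estimate of Lemma~\ref{lem-A4-A6}. Since $L^q(\mr^3;L^r(\mathbb J_0))$ for $p\le r\le q$ is the complex interpolation space $[L^q(L^p),L^q(L^q)]_\theta$ with $1/r=(1-\theta)/p+\theta/q$, and the operator $f\mapsto\partial_t^\alpha\partial_s^\beta\mathcal Af$ (restricted to the given Fourier support) is fixed, it suffices to interpolate the two bounds. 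The resulting exponent is then the affine combination $(1-\theta)E_p+\theta E_q$, which is affine in $1/r$.

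Step one is to record the $r=q$ endpoint bounds on the relevant region. For (a) I use Lemma~\ref{lem-A4-A6} (d). For (b) I use Lemma~\ref{lem-A4-A6} (e). For (c) I use Lemma~\ref{lem-A4-A6} (a). On the triangle $[P_5,P_6,P_4]$ we have $1/p+3/q\ge1$, with equality only on $[P_5,P_6]$ where both formulas agree, so the second case of each lemma applies. This gives the exponents $\lambda^{\alpha+\beta-1+2/p-2/q}$ in (a) and (b), and $\lambda^{\alpha-\frac12+\frac1{2p}-\frac1{2q}}h^{\beta-\frac12+\frac3{2p}-\frac3{2q}}$ in (c). Step two is to record the $r=p$ bounds:
\begin{itemize}
\item For (a): $\lambda^{\alpha+\beta+1/p-1/q-1}$ from Proposition~\ref{prop-LqLp-Lp-one} (a), second case, valid since $[P_5,P_6,P_4]\subset[P_1,Q_1,P_4]$.
\item For (b): the first line of Proposition~\ref{prop-LqLp-Lp} (a).
\item For (c): the first line of Proposition~\ref{prop-LqLp-Lp} (b).
\end{itemize}

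Step three is to check that the affine interpolant in $1/r$ matches the stated exponents. At $1/r=1/p$ and $1/r=1/q$ the claimed formulas must reduce to the two endpoint exponents.

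For (a), the claim $2/p-1/q-1/r-1$ gives $1/p-1/q-1$ at $r=p$ and $2/p-2/q-1$ at $r=q$, which matches.

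For (b), at $r=p$ the $\lambda$-exponent is $3/(2p)-3/(2q)-1$ and the $h$-exponent is $-1/p+1/q$. At $r=q$ the $\lambda$-exponent is $2/p-2/q-1$ and the $h$-exponent is $0$. Both match.

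For (c), the $\lambda$-power is independent of $r$. The $h$-power is $\beta-1/2$ at $r=p$ and $\beta-\frac12+\frac3{2p}-\frac3{2q}$ at $r=q$, which also matches.

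The $\epsilon$- and $300\delta$-losses carry through unchanged. Interpolation with a fixed multiplier family and mixed-norm targets is standard (Benedek--Panzone/Calderón), so no analytic family is needed.

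The main obstacle is bookkeeping rather than analysis. One must make sure both endpoint inputs are valid on the whole triangle, in particular at the $1/p+3/q$ case boundary, so that a single affine formula applies throughout. One must also check that the endpoint $r=q$ estimate in (a) uses Lemma~\ref{lem-A4-A6} (d) with the same low-vertical support $\mathbb B^\circ_{\lambda^{1/2+\delta}}$.
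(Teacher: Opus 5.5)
Your proposal is correct and follows the paper's proof: for fixed $(p,q)$ you interpolate in the inner exponent between the $r=p$ mixed-norm bounds and the $r=q$ bounds from Lemma~\ref{lem-A4-A6}~(d), (e), (a), respectively. The $r=p$ bounds come from Proposition~\ref{prop-LqLp-Lp-one}~(a) for part (a) and from the upper-triangle lines of Proposition~\ref{prop-LqLp-Lp}~(a), (b) for parts (b), (c), and your endpoint exponent checks, including the observation that $1/p+3/q\geq1$ on $[P_5,P_6,P_4]$, are all accurate.
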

\begin{proof}
    Part (a) follows by interpolating Lemma~\ref{lem-A4-A6} (d) with Proposition~\ref{prop-LqLp-Lp-one} (a). Parts (b) and (c) follow by interpolating Lemma~\ref{lem-A4-A6} (e) and (a), respectively, with the upper-triangle estimates in Proposition~\ref{prop-LqLp-Lp} (a) and (b).
\end{proof}
\begin{cor}\label{cor-pqr-mid}
    Let $\lambda\geq2^3$, $\alpha,\beta\in\{0,1\}$, $(1/p,1/q)\in[P_5,Q_1,P_6]$, and $p\leq r\leq q$.
    Define $r_{p,q}$\footnote{For $(1/p,1/q)\in[P_5,Q_1,P_6]$, one has $p\leq r_{p,q}\leq q$.} by
        \[
            \frac{1}{r_{p,q}}:=1-\frac{1}{p}-\frac{2}{q}.
        \]

	For any $\delta,\epsilon>0$, the following hold:
	    \begingroup
	    \addtolength{\leftmargini}{-2em}
	    \begin{enumerate}
            \item[(a)] If $\supp \widehat{f}\subset \mathbb{A}_\lambda\times \mathbb{B}_{\lambda^{1/2+\delta}}^{\circ}$, then
			\begin{align*}
					 \|\partial_t^\alpha\partial_s^\beta \mathcal{A}f\|_{L^q(\mr^3;L^r(\mathbb{J}_0))}\lesssim \lambda^{\alpha+\beta-\frac{1}{2}+\frac{3}{2p}-\frac{5}{2q}-\frac{1}{r}+300\delta+\epsilon}\|f\|_{\Lp(\mr^3)}.
			\end{align*}

			\item[(b)] Suppose that $\lambda^{1/2+\delta}\leq h\leq\lambda$ and $\supp \widehat{f}\subset \mathbb{A}_\lambda\times \mathbb{B}_h$. Then
			\begin{align*}
				\|\partial_t^\alpha\partial_s^\beta \mathcal{A}f\|_{L^q(\mr^3;L^r(\mathbb{J}_0))}
				\lesssim
				\begin{cases}
					\lambda^{\alpha+\beta-\frac{1}{2}-\frac{1}{2r}+\frac{3}{2p}-\frac{3}{q}+\epsilon}h^{-\frac{1}{r}+\frac{1}{q}}\|f\|_{\Lp(\mr^3)},
					& p\leq r\leq r_{p,q},\\[2pt]
					\lambda^{\alpha+\beta+\frac{2}{p}-\frac{2}{q}-1+\epsilon}h^{1-\frac{1}{p}-\frac{1}{q}-\frac{2}{r}}\|f\|_{\Lp(\mr^3)},
					& r_{p,q}\leq r\leq q.
				\end{cases}
			\end{align*}
			\item[(c)] If $\lambda\leq h\leq\lambda^2$, $\supp \widehat{f}\subset \mathbb{A}_\lambda\times \mathbb{B}_h$, and $p\leq r\leq r_{p,q}$, then
			\begin{align*}
					\|\partial_t^\alpha\partial_s^\beta \mathcal{A}f\|_{L^q(\mr^3;L^r(\mathbb{J}_0))}\lesssim \lambda^{\alpha-\frac{2}{q}+\epsilon}h^{\beta-\frac{3}{2r}-\frac{1}{2}+\frac{3}{2p}}\|f\|_{\Lp(\mr^3)}.
			\end{align*}
	        \end{enumerate}
	        \endgroup
\end{cor}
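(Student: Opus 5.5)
The plan is to obtain all three parts by complex interpolation for mixed-norm Lebesgue spaces $L^q_x(L^r_{t,s})$ (Benedek--Panzone), applied to the fixed frequency-localized operator $f\mapsto\partial_t^\alpha\partial_s^\beta\mathcal Af$. The interpolation will run in the full three-dimensional exponent space $(1/p,1/q,1/r)$, not just in $1/r$ with $(p,q)$ fixed. The relevant exponent set is the prism
\[
\Pi:=\{(1/p,1/q,1/r):(1/p,1/q)\in[P_5,Q_1,P_6],\ 1/q\le 1/r\le 1/p\}.
\]
Throughout, the prescribed bound is of the form $\lambda^{a}h^{b}$ with $a,b$ affine in $(1/p,1/q,1/r)$ on each convex piece. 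So it suffices to verify the estimate at the vertices of a convex decomposition of $\Pi$ (or of the relevant part of it) and to check that the claimed exponents are the affine interpolants of the vertex data. Arbitrarily small losses at the vertices are absorbed into $\epsilon$ and $300\delta$.

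\emph{Part (a).} Here the exponent is affine on all of $\Pi$, so I would simply interpolate, for each fixed $(p,q)$, between $r=p$ and $r=q$. At $r=p$, Proposition~\ref{prop-LqLp-Lp-one} (a), first case (valid since $1/p+3/q\le1$ on $[P_5,Q_1,P_6]$), gives $\lambda^{\alpha+\beta-\frac12+\frac1{2p}-\frac5{2q}}$. At $r=q$, Lemma~\ref{lem-A4-A6} (d), first case, gives $\lambda^{\alpha+\beta-\frac12+\frac3{2p}-\frac7{2q}}$. The affine function of $1/r$ matching these two values is exactly $-\frac12+\frac3{2p}-\frac5{2q}-\frac1r$, which is the claimed exponent.

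\emph{Part (b).} This is the substantive case, because the claimed bound has a kink along the plane $1/r=1/r_{p,q}=1-1/p-2/q$. Straight interpolation between $r=p$ and $r=q$ would give a worse bound, since $h\ge\lambda^{1/2}$. The key observation is that this plane passes through three data points where estimates are already available: $(Q_1,r=2)$, $(P_5,r=4)$ and $(P_6,r=6)$. I would therefore split $\Pi$ into two tetrahedra:
\[
\{r\le r_{p,q}\}=\operatorname{conv}\{(P_5,4),(Q_1,2),(P_6,6),(P_6,2)\},\qquad \{r\ge r_{p,q}\}=\operatorname{conv}\{(P_5,4),(Q_1,2),(Q_1,\infty),(P_6,6)\}.
\]
The vertex data are supplied as follows.
\begin{itemize}
\item $(Q_1,2)$ and $(P_6,2)$ are square-function estimates from Corollary~\ref{cor-L6L2-L2} (a).
\item $(P_5,4)$, $(P_6,6)$ and $(Q_1,\infty)$ are $L^p\to L^q_{x,t,s}$ estimates from Lemma~\ref{lem-A4-A6} (e), first case, since $1/p+3/q\le1$ at all three points.
\end{itemize}
I would then check that the two affine interpolants are the two displayed lines. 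For the upper line, $r=p$ reproduces Proposition~\ref{prop-LqLp-Lp} (a), second line. For the lower line, $r=q$ reproduces Lemma~\ref{lem-A4-A6} (e), namely $h^{1-\frac1p-\frac3q}$. On $1/r=1-1/p-2/q$, both lines give the common value
\[
\lambda^{\alpha+\beta-1+\frac2p-\frac2q}\,h^{-1+\frac1p+\frac3q}.
\]
Since an affine function on a tetrahedron is determined by its vertex values, this consistency check, together with agreement at the four vertices of each tetrahedron, confirms that the formulas are the correct interpolants.

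\emph{Part (c).} I would repeat the argument for the first tetrahedron only, with data from Corollary~\ref{cor-L6L2-L2} (b) at $(Q_1,2)$ and $(P_6,2)$, and from Lemma~\ref{lem-A4-A6} (a), first case, at $(P_5,4)$ and $(P_6,6)$. At $r=p$ the interpolant must reproduce Proposition~\ref{prop-LqLp-Lp} (b), second line.

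The main obstacle is identifying this tetrahedral decomposition: the kinked bound in (b) cannot arise from interpolating in $r$ alone. After that, the main work is the bookkeeping that each claimed exponent is the affine interpolant on its tetrahedron. One further point needs confirming: Corollary~\ref{cor-L6L2-L2} applies at $P_6$ with $r=2$ and at $Q_1$ with $r=2$, whereas Lemma~\ref{lem-A4-A6} is used at $P_6$ with $r=6$ and at $Q_1$ with $r=\infty$. These are genuinely different vertices of the prism, and this is exactly what is needed.
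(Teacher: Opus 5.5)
Your proposal is correct and is essentially the paper's argument. The paper first obtains the estimate on the plane $1/r=1-1/p-2/q$ by interpolating the same three on-plane data: $(p,q,r)=(4,4,4)$ and $(2,6,6)$ from Lemma~\ref{lem-A4-A6} (e) (resp.\ (a)), and $(2,\infty,2)$ from Corollary~\ref{cor-L6L2-L2}. It then interpolates in $r$ against the $r=p$ face (Proposition~\ref{prop-LqLp-Lp}) or the $r=q$ face (Lemma~\ref{lem-A4-A6}), which is just a two-step factorization of your direct interpolation on the two tetrahedra, and part (a) is handled exactly as you describe.
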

\begin{proof}
    Part (a) follows by interpolating Lemma~\ref{lem-A4-A6} (d) with Proposition~\ref{prop-LqLp-Lp-one} (a). For parts (b) and (c), we first establish the following two estimates at $r=r_{p,q}$. If $\lambda^{1/2+\delta}\leq h\leq\lambda$ and $\supp \widehat{f}\subset \mathbb{A}_\lambda\times\mathbb{B}_h$, then
        \begin{align}\label{ali-q-r_pq-p-j/2}
            \|\partial_t^{\alpha}\partial_s^{\beta}\mathcal{A}f\|_{L^{q}(\mathbb{R}^{3}; L^{r_{p,q}}(\mathbb{J}_0))}\lesssim \lambda^{\alpha+\beta+\frac{2}{p}-\frac{2}{q}-1+\epsilon}h^{\frac{1}{p}+\frac{3}{q}-1}\|f\|_{\Lp(\mr^3)}.
        \end{align}
    If $\lambda\leq h\leq\lambda^2$ under the same Fourier-support assumption, then
        \begin{align}\label{ali-q-r_pq-p-j}
            \|\partial_t^{\alpha}\partial_s^{\beta}\mathcal{A}f\|_{L^{q}(\mathbb{R}^{3}; L^{r_{p,q}}(\mathbb{J}_0))}\lesssim \lambda^{\alpha-\frac{2}{q}+\epsilon}h^{\beta+\frac{3}{p}+\frac{3}{q}-2}\|f\|_{\Lp(\mr^3)}.
        \end{align}
    For \eqref{ali-q-r_pq-p-j/2}, the cases $(p,q,r_{p,q})=(4,4,4)$ and $(2,6,6)$ follow from Lemma~\ref{lem-A4-A6} (e), while $(2,\infty,2)$ follows from Corollary~\ref{cor-L6L2-L2} (a). For \eqref{ali-q-r_pq-p-j}, the first two cases follow from Lemma~\ref{lem-A4-A6} (a), and the third follows from Corollary~\ref{cor-L6L2-L2} (b). Interpolation in $(1/p,1/q)$ proves the two displayed estimates. Interpolating \eqref{ali-q-r_pq-p-j/2} with Proposition~\ref{prop-LqLp-Lp} (a) for $p\leq r\leq r_{p,q}$ and with Lemma~\ref{lem-A4-A6} (e) for $r_{p,q}\leq r\leq q$ gives part (b). Finally, interpolating \eqref{ali-q-r_pq-p-j} with Proposition~\ref{prop-LqLp-Lp} (b) gives part (c).
\end{proof}

\begin{cor}\label{cor-pqr-below}
    Let $\lambda\geq2^3$, $\alpha,\beta\in\{0,1\}$, and $(1/p,1/q)\in[P_5,P_1,Q_1]$.     Define $r_q$\footnote{For $(1/p,1/q)\in[P_5,P_1,Q_1]$, one has $r_q\leq p\leq q$.} by
        \[
            \frac{1}{r_q}:=\frac{1}{2}-\frac{1}{q}.
        \]
	For any $\delta,\epsilon>0$, the following hold:
	    \begingroup
	    \addtolength{\leftmargini}{-2em}
	    \begin{enumerate}
            \item[(a)] If $\supp \widehat{f}\subset \mathbb{A}_\lambda\times \mathbb{B}_{\lambda^{1/2+\delta}}^{\circ}$ and $p\leq r\leq q$, then
			\begin{align*}
					 \|\partial_t^\alpha\partial_s^\beta \mathcal{A}f\|_{L^q(\mr^3;L^r(\mathbb{J}_0))}\lesssim \lambda^{\alpha+\beta-\frac{1}{2}+\frac{3}{2p}-\frac{5}{2q}-\frac{1}{r}+300\delta+\epsilon}\|f\|_{\Lp(\mr^3)}.
			\end{align*}

			\item[(b)] Suppose that $\lambda^{1/2+\delta}\leq h\leq\lambda$ and $\supp \widehat{f}\subset \mathbb{A}_\lambda\times \mathbb{B}_h$. Then
			\begin{align*}
				\|\partial_t^\alpha\partial_s^\beta \mathcal{A}f\|_{L^q(\mr^3;L^r(\mathbb{J}_0))}
				\lesssim
				\begin{cases}
					\lambda^{\alpha+\beta+\frac{1}{r}+\frac{1}{p}-\frac{2}{q}-1+\epsilon}h^{-\frac{2}{r}-\frac{1}{p}-\frac{1}{q}+1}\|f\|_{\Lp(\mr^3)},
					& r_q\leq r\leq p,\\[2pt]
					\lambda^{\alpha+\beta+\frac{2}{p}-\frac{2}{q}-1+\epsilon}h^{1-\frac{1}{p}-\frac{1}{q}-\frac{2}{r}}\|f\|_{\Lp(\mr^3)},
					& p\leq r\leq q.
				\end{cases}
			\end{align*}

			\item[(c)] If $\lambda\leq h\leq\lambda^2$, $\supp \widehat{f}\subset \mathbb{A}_\lambda\times \mathbb{B}_h$, and $r_q\leq r\leq p$, then
			\begin{align*}
					\|\partial_t^\alpha\partial_s^\beta \mathcal{A}f\|_{L^q(\mr^3;L^r(\mathbb{J}_0))}\lesssim \lambda^{\alpha+1-\frac{2}{r}-\frac{4}{q}+\epsilon}h^{\beta-1+\frac{1}{r}+\frac{1}{q}}\|f\|_{\Lp(\mr^3)}.
			\end{align*}
	        \end{enumerate}
	        \endgroup
\end{cor}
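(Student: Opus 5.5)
The plan follows the pattern of Corollaries~\ref{cor-pqr-top} and \ref{cor-pqr-mid}. For each $(1/p,1/q)\in[P_5,P_1,Q_1]$, we first establish the bound at the two endpoint values of the parameter exponent $r$ in the stated range. We then interpolate in $1/r$ with $p$ and $q$ fixed. This step uses complex interpolation for the mixed-norm spaces $L^q(\mr^3;L^r(\mathbb J_0))$ applied to a single operator $f\mapsto\partial_t^\alpha\partial_s^\beta\mathcal Af$ on $L^p$. On this triangle we always have $1/p+3/q\le1$, with equality only at $P_5$, so only the first alternatives in Lemma~\ref{lem-A4-A6} and Proposition~\ref{prop-LqLp-Lp-one} occur. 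Since every exponent claimed in the corollary is affine in $(1/p,1/q,1/r)$, it suffices to match the exponents at the endpoints. The $\epsilon$- and $\delta$-losses are absorbed as before.

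\textbf{Part (a).} For $r=q$, Lemma~\ref{lem-A4-A6} (d) gives $\lambda^{\alpha+\beta-\frac12+\frac3{2p}-\frac7{2q}+300\delta+\epsilon}$, which agrees with the claimed exponent at $1/r=1/q$. For $r=p$, Proposition~\ref{prop-LqLp-Lp-one} (a) gives $\lambda^{\alpha+\beta-\frac12+\frac1{2p}-\frac5{2q}+300\delta+\epsilon}$, which again agrees. Interpolating in $1/r$ gives (a).

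\textbf{Part (b), range $p\le r\le q$.} At $r=p$ we use the third line of Proposition~\ref{prop-LqLp-Lp} (a), namely $\lambda^{\alpha+\beta+\frac2p-\frac2q-1+\epsilon}h^{1-\frac3p-\frac1q}$. At $r=q$ we use Lemma~\ref{lem-A4-A6} (e), namely $\lambda^{\alpha+\beta+\frac2p-\frac2q-1+\epsilon}h^{1-\frac1p-\frac3q}$. Both agree with $h^{1-\frac1p-\frac1q-\frac2r}$ at the respective endpoints.

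\textbf{Part (b), range $r_q\le r\le p$.} Here the key new input is the endpoint $r=r_q$. Substituting $1/r_q=1/2-1/q$ into the claimed bound gives
\[
\|\partial_t^\alpha\partial_s^\beta\mathcal Af\|_{L^q(L^{r_q})}\lesssim\lambda^{\alpha+\beta+\frac1p-\frac3q-\frac12+\epsilon}h^{\frac1q-\frac1p}\|f\|_{L^p}.
\]
We prove this on $[P_5,P_1,Q_1]$ by interpolating in $(1/p,1/q)$, with $1/r_q$ varying affinely, among three vertex estimates. At $P_1$ and $Q_1$ we have $r_q=2$, and the bound is exactly Corollary~\ref{cor-L6L2-L2} (a); the factor $\epsilon(1-2/p)$ there is harmless. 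At $P_5$ we have $r_q=4=p=q$, and the bound is Lemma~\ref{lem-A4-A6} (e) with $(p,q)=(4,4)$: the exponents $\lambda^{-1}$ and $h^0$ match. Interpolating this $r=r_q$ estimate with the $r=p$ estimate above gives the first line of (b).

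\textbf{Part (c).} The argument is the same. At $r=r_q$ the claimed bound reduces to $\lambda^{\alpha-\frac2q+\epsilon}h^{\beta-\frac12}$. This is Corollary~\ref{cor-L6L2-L2} (b) at $P_1$ and $Q_1$, and Lemma~\ref{lem-A4-A6} (a) at $P_5$, where both give $\lambda^{\alpha-\frac12}h^{\beta-\frac12}$. At $r=p$ the bound is the third line of Proposition~\ref{prop-LqLp-Lp} (b), namely $\lambda^{\alpha-\frac2p-\frac4q+1+\epsilon}h^{\beta+\frac1p+\frac1q-1}$. Interpolating in $1/r$ finishes the proof.

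\textbf{Main obstacle.} I expect the delicate point to be the $r=r_q$ step, where the inner exponent $r_q$ changes along with $(p,q)$. There one needs complex interpolation of $L^q(L^{r})$-valued bounds with all three exponents moving simultaneously. This is the standard Benedek--Panzone mixed-norm interpolation, and it is legitimate because $(1/p,1/q,1/r_q)$ depends affinely on the interpolation parameter. I would also check that the constraint $r_q\le p\le q$ from the footnote holds throughout the triangle, so that all interpolation ranges in $1/r$ are nonempty.
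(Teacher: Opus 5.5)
Your proposal is correct and follows the paper's proof essentially step for step. Part (a) and the range $p\le r\le q$ of part (b) come from interpolating in $1/r$ between Lemma~\ref{lem-A4-A6} and Propositions~\ref{prop-LqLp-Lp-one}/\ref{prop-LqLp-Lp}, and the ranges $r_q\le r\le p$ in (b) and (c) go through the auxiliary $r=r_q$ estimates built from the same vertex inputs as in the paper ($P_5$ via Lemma~\ref{lem-A4-A6} (e)/(a), and $P_1,Q_1$ via Corollary~\ref{cor-L6L2-L2} (a)/(b)), then interpolated with the lower-triangle lines of Proposition~\ref{prop-LqLp-Lp}; your endpoint exponent checks are all correct.
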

\begin{proof}
    Part (a) follows by interpolating Lemma~\ref{lem-A4-A6} (d) with Proposition~\ref{prop-LqLp-Lp-one} (a). The estimate in part (b) for $p\leq r\leq q$ follows by interpolating Lemma~\ref{lem-A4-A6} (e) with the lower-triangle estimate in Proposition~\ref{prop-LqLp-Lp} (a). For the remaining range in part (b) and for part (c), we first establish estimates at $r=r_q$. If $\lambda^{1/2+\delta}\leq h\leq\lambda$ and $\supp \widehat{f}\subset \mathbb{A}_\lambda\times \mathbb{B}_h$, then
        \begin{align}\label{ali-q-r_q-p-j/2}
            \|\partial_t^{\alpha}\partial_s^{\beta}\mathcal{A}f\|_{L^{q}(\mathbb{R}^{3};L^{r_q}(\mathbb{J}_0))}\lesssim \lambda^{\alpha+\beta+\frac{1}{p}-\frac{3}{q}-\frac{1}{2}+\epsilon}h^{-\frac{1}{p}+\frac{1}{q}}\|f\|_{\Lp(\mr^3)}.
        \end{align}
    If $\lambda\leq h\leq\lambda^2$ under the same Fourier-support assumption, then
        \begin{align}\label{ali-q-r_q-p-j}
            \|\partial_t^{\alpha}\partial_s^{\beta}\mathcal{A}f\|_{L^{q}(\mathbb{R}^{3};L^{r_q}(\mathbb{J}_0))}\lesssim \lambda^{\alpha-\frac{2}{q}+\epsilon}h^{\beta-\frac{1}{2}}\|f\|_{\Lp(\mr^3)}.
        \end{align}
    For \eqref{ali-q-r_q-p-j/2}, the case $(p,q,r_q)=(4,4,4)$ follows from Lemma~\ref{lem-A4-A6} (e), while the cases $(\infty,\infty,2)$ and $(2,\infty,2)$ follow from Corollary~\ref{cor-L6L2-L2} (a). For \eqref{ali-q-r_q-p-j}, the first case follows from Lemma~\ref{lem-A4-A6} (a), and the latter two follow from Corollary~\ref{cor-L6L2-L2} (b). Interpolation in $(1/p,1/q)$ proves the two displayed estimates. Interpolating \eqref{ali-q-r_q-p-j/2} with the lower-triangle estimate in Proposition~\ref{prop-LqLp-Lp} (a) gives the estimate in part (b) for $r_q\leq r\leq p$. Interpolating \eqref{ali-q-r_q-p-j} with the corresponding estimate in Proposition~\ref{prop-LqLp-Lp} (b) gives part (c).
\end{proof}

\subsection{Frequency decomposition of the interior variation}\label{subsect-frequency-decomposition}
We decompose the interior variation according to the horizontal and
vertical frequency scales.

    We first separate the terms with bounded horizontal frequency. Define
    \begin{align*}
        I_r^{\mathrm{low}}(x)
        &:=\widetilde{V}_r(\mathcal A f_{\leq2^3,\leq2^3})(x)
        +\sum_{h>2^3}\widetilde{V}_r(\mathcal A f_{\leq2^3,h})(x),\\
        I_r^{\mathrm{high}}(x)
        &:=\sum_{\lambda>2^3}\bigl[\widetilde{V}_r(\mathcal A f_{\lambda,<\lambda^2})(x)
        +\widetilde{V}_r(\mathcal A f_{\lambda,\geq\lambda^2})(x)\bigr].
    \end{align*}
    By subadditivity,
    \[
        \widetilde{V}_r(\mathcal A f)(x)\leq I_r^{\mathrm{low}}(x)+I_r^{\mathrm{high}}(x).
    \]
  For $r>2$, $2<p\leq q$, and
$(1/p,1/q)\in[P_1,P_6,P_4]$,  the contribution from bounded horizontal frequencies satisfies the required $L^p$--$L^q$ bound.
    Indeed, if $2<r\leq p\leq q$, Lemma~\ref{lem-embedding}, Lemma~\ref{lem-A1-A2} (a), Proposition~\ref{prop-LqLp-Lp-one} (c), and H\"older's inequality give the following estimate.
    \[
        \|I_r^{\mathrm{low}}\|_{L^q(\mr^3)}
        \lesssim\Bigl(1+\sum_{h>2^3}h^{\frac1r-\frac12}\Bigr)\|f\|_{L^p(\mr^3)}
        \lesssim\|f\|_{L^p(\mr^3)}.
    \]
    When $r>p$, applying the monotonicity $\widetilde{V}_r\leq\widetilde{V}_p$ to each frequency piece reduces the estimate to the preceding case with variation exponent $p$.
    The case $p=q=r=\infty$ follows by choosing any finite $\rho>2$ and using $\widetilde{V}_\infty\leq\widetilde{V}_\rho$ termwise.

     Thus it remains to estimate the high-horizontal contribution $I_r^{\mathrm{high}}$, which we decompose further.
    For $0<\delta<1/2$, define
	\[I_{r,\delta}^{\mathrm{hd}}(x)
	:=\sum_{\lambda>2^3}\widetilde{V}_{r}
	\bigl(\mathcal{A}f_{\lambda,\leq \lambda^{1/2+\delta}}\bigr)(x),\qquad
	I_{r,\delta}^{\mathrm{tr}}(x)
	:=\sum_{\lambda>2^3}\sum_{\lambda^{1/2+\delta}<h\leq\lambda}
	\widetilde{V}_{r}(\mathcal{A}f_{\lambda,h})(x),\]
	and
	\[I_{r}^{\mathrm{mid}}(x):=\sum_{\lambda>2^3}\sum_{\lambda<h<\lambda^2}\widetilde{V}_{r}(\mathcal{A}f_{\lambda,h})(x), \qquad I_{r}^{\mathrm{vert}}(x):=\sum_{\lambda>2^3}\sum_{h\geq\lambda^2}\widetilde{V}_{r}(\mathcal{A}f_{\lambda,h})(x).\]
    These four dyadic index ranges are disjoint and cover all pieces with $\lambda>2^3$. Hence, by subadditivity,
    \[
        I_r^{\mathrm{high}}(x)\leq I_{r,\delta}^{\mathrm{hd}}(x)+I_{r,\delta}^{\mathrm{tr}}(x)+I_r^{\mathrm{mid}}(x)+I_r^{\mathrm{vert}}(x).
    \]
		The superscripts ``hd'', ``tr'', ``mid'', and ``vert'' stand for the horizontally dominant, transition, intermediate, and vertical regimes, respectively.

\begin{figure}[t]
		\centering
		\begin{tikzpicture}[scale=.8]
			\draw[thick,->]  (0,0) -- (7,0);
			\draw[thick,->] (0,0) -- (0,7);
			\node[left] at (0,7) {$\frac{1}{q}$};
			\node[left] at (0,6) {$\frac{1}{2}$};
			\node[right] at (7,0) {$\frac{1}{p}$};
			\draw (-2pt,6) -- (2pt,6);
			\draw (6,-2pt) -- (6,2pt);
			\node[below] at (6,-0.1) {$\frac{1}{2}$};

			\draw[thin] (0,0) -- (6,6) -- (60/11,24/11) -- (36/7,12/7) -- (0,0);
			\draw[thin] (3,3) -- (60/11,24/11) ;
            \draw[thin] (3,3) -- (9/2,3/2) ;
            \draw[densely dotted] (9/2,3/2) -- (6,0) ;
			\node[left] at (0,0) {$P_{1}$};
			\node[above right] at (6,6) {$P_4$};
            \node[above left] at (3,3) {$P_5$};
            \node[above right] at (60/11,24/11) {$P_3$};
            \node[below] at (36/7+1/10,12/7-1/10) {$P_2$};
            \node[below] at (9/2,3/2-1/10) {$P_7$};
            \node[above] at (6+1/10,0) {$Q_1$};

			\node[circle, draw=black, fill=white, inner sep=0pt,minimum size=3.6pt] at (0,0) {};
			\node[circle,draw=black, fill=white, inner sep=0pt,minimum size=3.6pt] at (60/11,24/11) {};
			\node[circle,draw=black, fill=white, inner sep=0pt,minimum size=3.6pt] at (3,3) {};
			\node[circle,draw=black, fill=white, inner sep=0pt,minimum size=3.6pt] at (36/7,12/7) {};
			\node[circle,draw=black, fill=white, inner sep=0pt,minimum size=3.6pt] at (6,6) {};
			\node[circle,draw=black, fill=white, inner sep=0pt,minimum size=3.6pt] at (9/2,3/2) {};
			\node[circle,draw=black, fill=white, inner sep=0pt,minimum size=3.6pt] at (6,0) {};
		\end{tikzpicture}
		\caption{The $(1/p,1/q)$ region used in the proof.}
		\label{fig:proof-region}
	\end{figure}

\subsection{Frequency summation in the three exponent regions}\label{subsect-Lpq-variation}
For the subdivision of the exponent region used below, set
\[ P_7:=(3/8,1/8). \]  (See Figure~\ref{fig:proof-region}.)
Recall the affine functions $L_1,\ldots,L_5$ defined in
\eqref{ali-affine-functions}. We also introduce the auxiliary affine
functions
\begin{equation}\label{ali-auxiliary-affine-functions}
\begin{aligned}
    M_1(p,q)&:=-\frac{7}{p}+\frac{1}{q}+3,\\
    M_2(p,q)&:=-\frac{3}{p}+\frac{2}{q}+1,\\
    M_3(p,q)&:=-\frac{1}{p}+\frac{3}{q}.
\end{aligned}
\end{equation}
Their zero sets are the lines containing $[P_3,P_4]$,
$[P_2,P_3]$, and $[P_1,P_2]$, respectively. The next three
propositions treat the top, middle, and lower exponent regions shown
in Figure~\ref{fig:proof-region}, in that order.

In the proofs of these propositions, $0<\delta<1/2$ is the
frequency-threshold parameter introduced above, while $\epsilon>0$
denotes the arbitrarily small loss in the frequency-localized estimates.
For each fixed
$(p,q,r)$ in the relevant range, we first
choose $\delta$ and then $\epsilon$ sufficiently small so that the
resulting dyadic sums converge.

\begin{prop}\label{prop-top}
    Let $r>2$, and let $(1/p,1/q)\in [P_5,P_3,P_4]$ but not on the closed segment $[P_3,P_4]$.
    Then $\widetilde{V}_r(\mathcal{A}):L^p(\mr^3)\to L^q(\mr^3)$ is bounded provided one of the following conditions holds.
    \begin{enumerate}
       \item[(i)] $r>q$;
        \item[(ii)] $p\leq r\leq q$ and $L_3(p,q,r)>0$;
        \item[(iii)] $r<p$ and $L_5(p,q,r)>0$.
    \end{enumerate}
\end{prop}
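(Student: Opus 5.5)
We bound each of the four pieces $I_{r,\delta}^{\mathrm{hd}}$, $I_{r,\delta}^{\mathrm{tr}}$, $I_r^{\mathrm{mid}}$, $I_r^{\mathrm{vert}}$ from Section~\ref{subsect-frequency-decomposition} separately in $L^q$; the low-horizontal part $I_r^{\mathrm{low}}$ is already controlled. For each dyadic piece $\mathcal{A}f_{\lambda,h}$ we use Lemma~\ref{lem-embedding} with $N_1=\lambda$ and with $N_2=\lambda$ for $h\le\lambda$, $N_2=h$ for $h\ge\lambda$. Minkowski's inequality in $x$ then gives
\[
\|\widetilde V_r(\mathcal{A}f_{\lambda,h})\|_{L^q}\lesssim \sum_{\alpha,\beta\in\{0,1\}} \lambda^{\frac1r-\alpha}N_2^{\frac1r-\beta}\|\partial_t^\alpha\partial_s^\beta\mathcal{A}f_{\lambda,h}\|_{L^q(\mathbb{R}^3;L^r(\mathbb{J}_0))}.
\]
With this choice the derivative gains $\lambda^{\alpha}h^{\beta}$ (or $\lambda^{\alpha+\beta}$) in Corollary~\ref{cor-pqr-top} are exactly cancelled. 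So each piece is bounded by $\lambda^{1/r}N_2^{1/r}$ times the $\alpha=\beta=0$ bound.

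\textbf{Case (i), $r>q$.} We use $\widetilde V_r\le \widetilde V_q$ and reduce to $r=q$, which lies in the range $p\le r\le q$ of Corollary~\ref{cor-pqr-top}. The conclusion is stronger than in case (ii) at $r=q$, since $L_3(p,q,q)=1-2/p>0$ for $p>2$.

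\textbf{Case (ii), $p\le r\le q$.} We read off the exponents from Corollary~\ref{cor-pqr-top}.
\begin{itemize}
\item[\emph{hd.}] Part (a) gives $\lambda^{2/r+2/p-1/q-1/r-1+300\delta+\epsilon}$ after the embedding factor $\lambda^{2/r}$. The exponent equals $-L_3(p,q,r)+300\delta+\epsilon$. This is summable once $\delta,\epsilon$ are small relative to $L_3$.
\item[\emph{tr.}] Part (b) with factor $\lambda^{2/r}$ gives $\lambda^{2/p-3/(2q)+3/(2r)-1+\epsilon}h^{1/q-1/r}$. Since $r\le q$, the $h$-power is nonpositive, so the sum over $h\in[\lambda^{1/2+\delta},\lambda]$ is dominated by $h=\lambda^{1/2+\delta}$. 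This gives $\lambda^{2/p-1/q-1/r-1}\cdot\lambda^{(1/q-1/r)\delta+\epsilon}$ up to logs, which again has exponent $-L_3+O(\delta+\epsilon)$.
\item[\emph{mid.}] Part (c) with factor $\lambda^{1/r}h^{1/r}$ gives $\lambda^{1/r+1/(2p)-1/(2q)-1/2+\epsilon}h^{3/(2p)-1/(2r)-1/2}$.
\item[\emph{vert.}] Proposition~\ref{prop-LqLp-Lp-one} (b), interpolated with Lemma~\ref{lem-A4-A6} (c) in the $r$ variable, gives a bound of the same form.
\end{itemize}
In the mid and vert regimes one must check that the $h$-exponent is negative; otherwise the sum is dominated by $h=\lambda^2$. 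At both ends $h=\lambda$ and $h=\lambda^2$ the total exponent must be negative under $L_3>0$ and $(1/p,1/q)$ off $[P_3,P_4]$. At $h=\lambda$ the exponent is $1/(2r)+2/p-1/(2q)-1$, which should reduce to a combination of $-L_3$ and $-M_1$. At $h=\lambda^2$ it involves the $M_1$ condition, and $M_1>0$ is exactly the statement that we are strictly away from $[P_3,P_4]$. For the vert piece, the $h$-sum converges by the $h^{\beta-1/2}$-type decay from Proposition~\ref{prop-LqLp-Lp-one}(b).

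\textbf{Case (iii), $r<p$.} Corollary~\ref{cor-pqr-top} requires $r\ge p$, so we first need bounds with $r<p$. We use Hölder on the compact $\mathbb{J}_0$: $L^r\subset L^p$ with $\|\cdot\|_{L^r(\mathbb{J}_0)}\lesssim\|\cdot\|_{L^p(\mathbb{J}_0)}$. The embedding factor $\lambda^{1/r}N_2^{1/r}$, though, is larger than at $r=p$. Carrying this through, the hd and tr exponents become $2/r+2/p-2/p-1/q-1=2/r-1/q-1$ for $r<p$, i.e.\ $-(1-1/q-2/r)$. This is not quite $L_5$, so Hölder alone may be lossy here. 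The better route is to interpolate between the $r=p$ estimate and an $L^q(L^2)$-type bound: since $r>2$ we can use the $r=2$ endpoint via the square-function estimates from Corollary~\ref{cor-L6L2-L2} and Proposition~\ref{prop-LqLp-Lp}, with $L^2$ in the parameters. I expect the resulting exponent, after the embedding factor, to be $-L_5(p,q,r)+O(\delta+\epsilon)$ in the hd/tr regimes. The mid and vert regimes should again be controlled by $M_1>0$.

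\textbf{Main obstacle.} The main obstacle is this $r<p$ bookkeeping. We must produce, inside $[P_5,P_3,P_4]$, a mixed-norm bound for $r<p$ that is sharp enough to give $L_5$. We must also confirm that in every regime the dyadic sums over $h$ and $\lambda$ converge exactly when $L_5>0$ and $M_1>0$. I would first try interpolating (in $1/r$) the $r=p$ bound of Corollary~\ref{cor-pqr-top} with $r=2$ bounds obtained from Proposition~\ref{prop-LqLp-Lp}(a),(b) at $P_6$ by Hölder in $q$. I would then check the $h$-exponents at $h=\lambda^{1/2}$, $h=\lambda$, and $h=\lambda^2$.
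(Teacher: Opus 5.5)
Your cases (i) and (ii) follow the paper's argument: Lemma~\ref{lem-embedding} with $N_1=\lambda$ and $N_2=\max\{\lambda,h\}$, then Corollary~\ref{cor-pqr-top}, Proposition~\ref{prop-LqLp-Lp-one}~(b), and \eqref{est-dyadic-sum}. There are two slips. First, in the transition regime the exponent after summing in $h$ is $\frac2p-\frac1q+\frac1r-1=-L_3(p,q,r)$; you wrote $-\frac1r$. Second, the $h=\lambda$ endpoint of the intermediate sum is not controlled by $M_1$. Instead,
\[
\frac1{2r}+\frac2p-\frac1{2q}-1=-\frac12L_3(p,q,r)+\Big(\frac1p-\frac12\Big),
\]
which is negative because $L_3>0$ and $p>2$ on the region. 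Only the $h=\lambda^2$ endpoint and the vertical piece produce $-\frac12M_1(p,q)$.

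The genuine gap is case (iii). You abandon the H\"older route because of an arithmetic error, and what replaces it is only an expectation.

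\emph{Horizontally dominant regime.} At inner exponent $p$, the bound is $\lambda^{\alpha+\beta+\frac1p-\frac1q-1+300\delta+\epsilon}$. This is Corollary~\ref{cor-pqr-top}~(a) with $r=p$, equivalently the second case of Proposition~\ref{prop-LqLp-Lp-one}~(a), since $1/p+3/q\geq1$ on $[P_5,P_3,P_4]$. Apply $\|\cdot\|_{L^r(\mathbb J_0)}\lesssim\|\cdot\|_{L^p(\mathbb J_0)}$ and multiply by the embedding factor $\lambda^{2/r-\alpha-\beta}$. This gives $\lambda^{\frac2r+\frac1p-\frac1q-1+O(\delta+\epsilon)}=\lambda^{-L_5(p,q,r)+O(\delta+\epsilon)}$, not $\lambda^{\frac2r-\frac1q-1}$.

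\emph{Transition regime.} The first line of Proposition~\ref{prop-LqLp-Lp}~(a) gives the same. The factor $h^{\frac1q-\frac1p}$, summed from $h=\lambda^{1/2+\delta}$, yields $\lambda^{-L_5-\delta(\frac1p-\frac1q)+\epsilon}$.

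\emph{Intermediate and vertical regimes.} Multiply the first line of Proposition~\ref{prop-LqLp-Lp}~(b), and Proposition~\ref{prop-LqLp-Lp-one}~(b), by $\lambda^{1/r}h^{1/r}$. Both give $\lambda^{\frac1r+\frac1{2p}-\frac1{2q}-\frac12+\epsilon}h^{\frac1r-\frac12}$. Since $r>2$, the sum over $h\geq\lambda$ is dominated by $h=\lambda$, giving $\lambda^{-\frac12L_5+\frac1r-\frac12+\epsilon}$.

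So case (iii) uses only $L_5>0$ and $r>2$, not $M_1$ as you expected. This H\"older argument is exactly the paper's proof.

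Your proposed substitute, interpolating with an $r=2$ endpoint, is not carried out, and the inputs it needs are not at hand. The square-function estimates of Corollary~\ref{cor-L6L2-L2} sit at $P_1,P_6,Q_1$, and their convex hull does not meet $[P_5,P_3,P_4]$. Moving from $P_6$ to other $q$ by H\"older is not legitimate on $\mathbb R^3$. Replace that part of your proposal with the computation above.
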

\begin{proof}
	 We first treat case (ii). We use the elementary dyadic summation estimate
	 \begin{align}\label{est-dyadic-sum}
	 	\sum_{\lambda^a<h<\lambda^b}h^c\lesssim
        \begin{cases}
            \lambda^{bc},&c>0,\\[2pt]
            \lambda^{ac},&c<0,\\[2pt]
            1+\log\lambda,&c=0,
        \end{cases}
        \qquad 0<a<b<\infty.
	 \end{align}
    The logarithmic factor when $c=0$ is bounded by an arbitrarily small power of $\lambda$. For case (ii), Lemma~\ref{lem-embedding}, Corollary~\ref{cor-pqr-top}, Proposition~\ref{prop-LqLp-Lp-one} (b), and \eqref{est-dyadic-sum} give
    \begin{align*}
        \|I_{r,\delta}^{\mathrm{hd}}+I_{r,\delta}^{\mathrm{tr}}\|_{L^q}
        &\lesssim \sum_{\lambda>2^3}\lambda^{-L_3(p,q,r)+C(\delta+\epsilon)}\|f\|_{L^p},\\
        \|I_r^{\mathrm{mid}}+I_r^{\mathrm{vert}}\|_{L^q}
        &\lesssim \sum_{\lambda>2^3}\left(
        \lambda^{-\frac12L_3(p,q,r)+\frac1p-\frac12+C\epsilon}
        +\lambda^{-\frac12M_1(p,q)+C\epsilon}\right)\|f\|_{L^p}.
    \end{align*}
    The affine function $M_1$ defined in
    \eqref{ali-auxiliary-affine-functions} is positive on
    $[P_5,P_3,P_4]\setminus[P_3,P_4]$, and this region also satisfies
    $p>2$. Together with $L_3(p,q,r)>0$, the positivity of $M_1(p,q)$ and the inequality $p>2$ make all three exponents negative after choosing $\delta$ and then $\epsilon$ sufficiently small. This proves case (ii).

    Case (i) follows from case (ii) with variation exponent $q$, since $L_3(p,q,q)=1-2/p>0$, and from the monotonicity $\widetilde{V}_r\leq\widetilde{V}_q$ for $r>q$.

    For case (iii), Lemma~\ref{lem-embedding}, Propositions~\ref{prop-LqLp-Lp-one} and \ref{prop-LqLp-Lp}, and H\"older's inequality give
    \begin{align*}
        \|I_{r,\delta}^{\mathrm{hd}}+I_{r,\delta}^{\mathrm{tr}}+I_r^{\mathrm{mid}}+I_r^{\mathrm{vert}}\|_{L^q}
        &\lesssim
        \sum_{\lambda>2^3}\lambda^{-L_5(p,q,r)+C(\delta+\epsilon)}\|f\|_{L^p}\\
        &\qquad+\sum_{\lambda>2^3}
        \lambda^{-\frac12L_5(p,q,r)+\frac1r-\frac12+C(\delta+\epsilon)}
        \|f\|_{L^p}.
    \end{align*}
    Both exponents are negative after $\delta$ and $\epsilon$ are chosen sufficiently small, because $L_5(p,q,r)>0$ and $r>2$. This proves case (iii).
\end{proof}

\begin{prop}\label{prop-mid}
    Let $r>2$, and let $(1/p,1/q)\in [P_5,P_7,P_2,P_3]$ but not on the closed segments $[P_2,P_3]$ or $[P_7,P_2]$.
    Then $\widetilde{V}_r(\mathcal{A}):L^p(\mr^3)\to L^q(\mr^3)$ is bounded provided one of the following conditions holds.
    \begin{enumerate}
       \item[(i)] $r>q$;
        \item[(ii)] $p\leq r\leq q$ and $L_2(p,q,r)>0$;
        \item[(iii)] $r<p$ and $L_4(p,q,r)>0$.
    \end{enumerate}
\end{prop}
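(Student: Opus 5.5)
The proof will mirror that of Proposition~\ref{prop-top}, replacing the top-region interpolation inputs with those for the middle region. The quadrilateral $[P_5,P_7,P_2,P_3]$ sits inside the union of the middle triangle $[P_5,Q_1,P_6]$ and the upper triangle $[P_5,P_6,P_4]$. I therefore split it along the segment $[P_5,P_6]$ and use Corollary~\ref{cor-pqr-mid} or Corollary~\ref{cor-pqr-top} as appropriate. On the common boundary the two sets of bounds agree, because both come from the same interpolation data.

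\textbf{Case (ii): $p\le r\le q$ and $L_2>0$.}

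1. Apply Lemma~\ref{lem-embedding} to each frequency piece $\mathcal A f_{\lambda,h}$, with $N_1\approx\lambda$ and with $N_2$ equal to the $s$-frequency cost. That cost is $\lambda$ for $h\le\lambda$ and $h$ for $h\ge\lambda$.
   - This reduces each $\|\widetilde V_r(\mathcal A f_{\lambda,h})\|_{L^q}$ to $N_1^{1/r}N_2^{1/r}$ times the $\alpha=\beta=0$ mixed-norm bound in $L^q(L^r)$.
   - The derivative terms are handled the same way, since each derivative costs exactly $N_1$ or $N_2$.
2. Estimate the four regimes as follows.
   - For $I^{\mathrm{hd}}_{r,\delta}$, use part (a) of Corollary~\ref{cor-pqr-mid}. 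The exponent is $\lambda^{-1/2+3/(2p)-5/(2q)+1/r+C\delta}$, which equals $\lambda^{-L_2(p,q,r)+C\delta}$.
   - For $I^{\mathrm{tr}}_{r,\delta}$, use part (b). Summing over $\lambda^{1/2+\delta}<h\le\lambda$ with \eqref{est-dyadic-sum}, the worst endpoint is either $h=\lambda$ or $h\approx\lambda^{1/2}$. At $h\approx\lambda^{1/2}$ the bound again produces $\lambda^{-L_2}$. At $h=\lambda$ it produces $\lambda^{-M_2(p,q)}$, or a combination involving $L_2$.
   - For $I^{\mathrm{mid}}_r$, use part (c), which is valid when $r\le r_{p,q}$. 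Summing over $h$ from $\lambda$ to $\lambda^2$ gives exponents controlled by $L_2$ and by $M_2$ or $M_3$.
   - For $I^{\mathrm{vert}}_r$, use Proposition~\ref{prop-LqLp-Lp-one}(b) after interpolating in $r$, as in Proposition~\ref{prop-top}. The factor $h^{\beta-1/2}$ times $h^{1/r}$ is summable since $r>2$. What remains is a power of $\lambda$ controlled by $M_2$.
3. The excluded segments explain the hypotheses on the region. The condition $M_2>0$ excludes $[P_2,P_3]$. The segment $[P_7,P_2]$ is excluded because that is where the $h\approx\lambda^{1/2}$ and $h\approx\lambda$ regimes degenerate. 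On the region, the signs of $M_2$ and $M_3$ together with $p>2$ make every exponent negative once $\delta$, and then $\epsilon$, are chosen small.

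\textbf{Where part (c) does not apply.} When $r_{p,q}<r\le q$ in the lower triangle, part (c) of Corollary~\ref{cor-pqr-mid} is unavailable. There I interpolate in $r$ between \eqref{ali-q-r_pq-p-j} and Lemma~\ref{lem-A4-A6}(a) at $r=q$. I expect this to give a bound no worse than what $L_2>0$ requires, since $L_2$ is affine in $1/r$.

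\textbf{Case (i): $r>q$.} This follows from case (ii) with $r=q$ together with the monotonicity $\widetilde V_r\le\widetilde V_q$. One must check that $L_2(p,q,q)>0$ on the region. Since $L_2(p,q,q)=-\tfrac{3}{2p}+\tfrac{3}{2q}+\tfrac12$ and $1/p-1/q\le1/4$ there, this holds.

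\textbf{Case (iii): $r<p$ and $L_4>0$.} Here I need $L^q(L^r)$ bounds with $r<p$.
- I use Hölder in $(t,s)$ from $L^r$ up to $L^p$ only where it is not wasteful.
- Otherwise I use the $r=r_q$ estimates \eqref{ali-q-r_q-p-j/2} and \eqref{ali-q-r_q-p-j}, interpolated with the $r=p$ bounds. This is the analogue of Corollary~\ref{cor-pqr-below}(b),(c), carried over to the middle region by interpolating with the $P_6$ data.
- The dominant term should come from $h\approx\lambda^{1/2}$, with exponent $-L_4$.
- The other regimes should contribute $-\tfrac12 L_4$ plus $\tfrac1r-\tfrac12$, which is negative for $r>2$.

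\textbf{Main obstacle.} The hard step is the bookkeeping of dyadic exponents in the transition and intermediate regimes. Specifically, I must verify that after summing over $h$, each endpoint exponent is a nonnegative combination of $L_2$ (or $L_4$), $M_2$, $M_3$, and $1-2/p$, and that each of these is positive on the stated region. I would first compute the exponents at the vertices $P_5,P_7,P_2,P_3$ and then extend to the whole region by affinity.
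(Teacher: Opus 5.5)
Your argument for cases (i) and (ii) follows the paper's proof. It uses Lemma~\ref{lem-embedding} with $N_1=\lambda$ and $N_2=\max\{\lambda,h\}$, Corollary~\ref{cor-pqr-mid}~(a)--(c) in the horizontally dominant, transition and intermediate ranges, and $\widetilde V_r\le\widetilde V_p$ with Proposition~\ref{prop-LqLp-Lp-one}~(b) in the vertical range, followed by \eqref{est-dyadic-sum}. In the vertical range the $h$-power after embedding is $h^{1/p-1/2}$, summable because $p>2$.

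Your interpolation between \eqref{ali-q-r_pq-p-j} and Lemma~\ref{lem-A4-A6}~(a) for $r_{p,q}<r\le q$ in the intermediate range does work. After embedding, both endpoints give the same bound $\lambda^{1-\frac1p-\frac4q}h^{-1+\frac2p+\frac1q}$, which sums to $\lambda^{-M_2}+\lambda^{-M_3}$.

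Several bookkeeping claims need correcting:
\begin{itemize}
\item The quadrilateral lies entirely in $[P_5,Q_1,P_6]$, since $P_3$ is on $1/p+3/q=1$ and $P_7$ is on $1/p+1/q=1/2$. No split along $[P_5,P_6]$ is needed.
\item In the transition range, the $h=\lambda$ endpoint gives $\lambda^{-M_3}$, not $\lambda^{-M_2}$. The line $M_3=0$ passes through $P_1$, $P_7$ and $P_2$, and that is the actual reason $[P_7,P_2]$ is excluded.
\item In case (i), the claim $1/p-1/q\le 1/4$ is false on the region: it equals $2/7$ at $P_2$. What you need is $1/p-1/q<1/3$, and that does hold.
\end{itemize}

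In case (iii), the fallback you propose is not available. On $[P_5,Q_1,P_6]$ one has $1/p+1/q\ge 1/2$, so $1/r_q=1/2-1/q\le 1/p$, i.e.\ $r_q\ge p$. The estimates \eqref{ali-q-r_q-p-j/2} and \eqref{ali-q-r_q-p-j} are proved only on $[P_5,P_1,Q_1]$. In any case, interpolating $r=r_q$ data with $r=p$ data cannot reach $r<p$ in this region.

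The fallback is also unnecessary. The paper uses H\"older, $\|\cdot\|_{L^r(\mathbb J_0)}\lesssim\|\cdot\|_{L^p(\mathbb J_0)}$, in every range, together with the $L^q(L^p)$ bounds of Propositions~\ref{prop-LqLp-Lp-one} and~\ref{prop-LqLp-Lp}. This gives:
\begin{itemize}
\item Horizontally dominant and transition ranges: $\lambda^{-L_4+C(\delta+\epsilon)}$. The transition range is summed at $h\approx\lambda^{1/2+\delta}$, since its $h$-exponent is $1/q-1/p\le 0$.
\item Intermediate range: $\lambda^{-L_4+\frac12(\frac1q-\frac1p)+C\epsilon}$. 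This is summed at $h=\lambda$, since its $h$-exponent $1/r-1/2$ is negative.
\item Vertical range: smaller still, by a factor $\lambda^{1/r-1/2}$.
\end{itemize}
So H\"older is never wasteful here. Your guessed exponent $-\tfrac12 L_4+\tfrac1r-\tfrac12$ for the non-dominant ranges should be replaced by $-L_4+\tfrac12(\tfrac1q-\tfrac1p)$. The conclusion then follows from $L_4>0$ and $p\le q$.
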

\begin{proof}
    The affine functions $M_2$ and $M_3$ defined in
    \eqref{ali-auxiliary-affine-functions} are positive on the stated
    quadrilateral away from the excluded segments. For case (ii), Lemma~\ref{lem-embedding}, Corollary~\ref{cor-pqr-mid}, and \eqref{est-dyadic-sum} give
    \[
        \|I_{r,\delta}^{\mathrm{hd}}+I_{r,\delta}^{\mathrm{tr}}\|_{L^q}
        \lesssim\sum_{\lambda>2^3}\left(
        \lambda^{-L_2(p,q,r)+C(\delta+\epsilon)}
        +\lambda^{-M_3(p,q)+C(\delta+\epsilon)}
        \right)\|f\|_{L^p}.
    \]
    For the intermediate regime, the two ranges in Corollary~\ref{cor-pqr-mid} yield the following estimates. If $r_{p,q}\leq r\leq q$, then
    \[
        \|I_r^{\mathrm{mid}}\|_{L^q}\lesssim
        \sum_{\lambda>2^3}\left(
        \lambda^{-M_2(p,q)+C\epsilon}+\lambda^{-M_3(p,q)+C\epsilon}
        \right)\|f\|_{L^p}.
    \]
    If $p\leq r\leq r_{p,q}$, then
    \[
        \|I_r^{\mathrm{mid}}\|_{L^q}\lesssim
        \sum_{\lambda>2^3}\left(
        \lambda^{-\frac12L_2(p,q,r)-\frac12L_2(p,q,q)+C\epsilon}
        +\lambda^{-M_2(p,q)+C\epsilon}
        \right)\|f\|_{L^p}.
    \]
    Finally, Proposition~\ref{prop-LqLp-Lp-one} (b) and $\widetilde{V}_r\leq\widetilde{V}_p$ give
    \[
        \|I_r^{\mathrm{vert}}\|_{L^q}\lesssim
        \sum_{\lambda>2^3}\lambda^{-M_2(p,q)+C\epsilon}\|f\|_{L^p}.
    \]
    By the hypothesis in case (ii), $L_2(p,q,r)>0$, while the geometric restrictions on $(1/p,1/q)$ give $L_2(p,q,q)>0$, $M_2(p,q)>0$, and $M_3(p,q)>0$. Hence, choosing $\delta$ and then $\epsilon$ sufficiently small makes all the preceding series converge. This proves case (ii). Case (i) follows by taking the variation exponent to be $q$, since $L_2(p,q,q)>0$ throughout the stated quadrilateral, and using $\widetilde{V}_r\leq\widetilde{V}_q$ for $r>q$.

    For case (iii), Lemma~\ref{lem-embedding}, Propositions~\ref{prop-LqLp-Lp-one} and \ref{prop-LqLp-Lp}, and H\"older's inequality give
    \begin{align*}
        \|I_{r,\delta}^{\mathrm{hd}}+I_{r,\delta}^{\mathrm{tr}}+I_r^{\mathrm{mid}}+I_r^{\mathrm{vert}}\|_{L^q}
        &\lesssim
        \sum_{\lambda>2^3}\lambda^{-L_4(p,q,r)+C(\delta+\epsilon)}\|f\|_{L^p}\\
        &\qquad+\sum_{\lambda>2^3}
        \lambda^{-L_4(p,q,r)+\frac12(\frac1q-\frac1p)+C(\delta+\epsilon)}
        \|f\|_{L^p}.
    \end{align*}
    Since $p\leq q$, both exponents are negative when $L_4(p,q,r)>0$ and $\delta,\epsilon$ are sufficiently small. This proves case (iii).
\end{proof}

\begin{prop}\label{prop-below}
    Let $r>2$, and let $(1/p,1/q)\in [P_5,P_1,P_7]$ but not on the closed segment $[P_1,P_7]$.
    Then $\widetilde{V}_r(\mathcal{A}):L^p(\mr^3)\to L^q(\mr^3)$ is bounded provided one of the following conditions holds.
    \begin{enumerate}
        \item[(i)] $r>q$;
        \item[(ii)] $p\leq r\leq q$ and $L_2(p,q,r)>0$;
        \item[(iii)] $r_q\leq r\leq p$, $L_1(p,q,r)>0$, and $L_4(p,q,r)>0$;
        \item[(iv)] $r<r_q$ and $L_4(p,q,r)>0$.
    \end{enumerate}
\end{prop}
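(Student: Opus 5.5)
We follow the template of Propositions~\ref{prop-top} and \ref{prop-mid}. By subadditivity it is enough to bound the four high-horizontal pieces $I_{r,\delta}^{\mathrm{hd}}$, $I_{r,\delta}^{\mathrm{tr}}$, $I_r^{\mathrm{mid}}$ and $I_r^{\mathrm{vert}}$ in $L^q$. The low piece $I_r^{\mathrm{low}}$ was already handled in Section~\ref{subsect-frequency-decomposition}.

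For each dyadic piece we apply Lemma~\ref{lem-embedding} with $N_1=N_2$ chosen as the natural parameter frequencies. Concretely, we take $N_1\sim\lambda$ for the $t$-direction, and $N_2\sim\max\{\lambda,h\}$ for the $s$-direction (or $N_2\sim\lambda$ in the hd/tr regimes). This reduces $\widetilde V_r(\mathcal A f_{\lambda,h})$ to the four mixed norms $\|\partial_t^\alpha\partial_s^\beta\mathcal A f_{\lambda,h}\|_{L^q(L^r(\mathbb J_0))}$, weighted by $N_1^{1/r-\alpha}N_2^{1/r-\beta}$. We then insert Corollary~\ref{cor-pqr-below}, because $[P_5,P_1,P_7]\subset[P_5,P_1,Q_1]$. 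The roles of the corollary's parts are as follows.
\begin{itemize}
\item Part (a) handles $I^{\mathrm{hd}}$.
\item Part (b) handles $I^{\mathrm{tr}}$; we sum over $\lambda^{1/2+\delta}<h\le\lambda$ using \eqref{est-dyadic-sum}.
\item Part (c) handles $I^{\mathrm{mid}}$ when $r_q\le r\le p$.
\item Proposition~\ref{prop-LqLp-Lp} (b) (lower line) handles $I^{\mathrm{mid}}$ when $p\le r\le q$.
\item Proposition~\ref{prop-LqLp-Lp-one} (b), together with H\"older or monotonicity in $r$, handles $I^{\mathrm{vert}}$.
\end{itemize}

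The cases split as follows.

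\emph{Case (ii), $p\le r\le q$.} After summing in $h$, the hd and tr regimes give exponents $-L_2(p,q,r)$, plus possibly the endpoint exponent $h=\lambda$ of the tr sum. The exponent $h^{1-1/p-1/q-2/r}$ may be positive, which moves the sum to $h=\lambda$, or negative, which moves it to $h=\lambda^{1/2}$. At $h=\lambda^{1/2}$ one should recover exactly $-L_2$, since this is the anisotropic scale singled out in the introduction. The mid and vert regimes give exponents controlled by $M_3(p,q)>0$ and $L_2$, in analogy with Proposition~\ref{prop-mid}. Here we use that $M_3>0$ on $[P_5,P_1,P_7]\setminus[P_1,P_7]$; this segment lies on the line $M_3=0$, which explains why it is excluded.

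\emph{Case (iii), $r_q\le r\le p$.} This is the new case. The first line of Corollary~\ref{cor-pqr-below} (b) carries the weight $h^{1-2/r-1/p-1/q}$.
\begin{itemize}
\item Evaluated at $h=\lambda^{1/2}$, after multiplying by $\lambda^{2/r}$, it should produce $\lambda^{-L_1(p,q,r)}$.
\item Evaluated at $h=\lambda$, it should produce $\lambda^{-L_4(p,q,r)}$.
\item The hd piece from part (a) needs $-\tfrac12+\tfrac3{2p}-\tfrac5{2q}+\tfrac1r<0$. This should follow from $L_1>0$ and $L_4>0$ by convexity, being an intermediate combination.
\item The mid piece from part (c) is summed over $\lambda<h<\lambda^2$ with $h$-exponent $1/r+1/q-1+1/r$ (from $\beta=1$ and $N_2=h$). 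This should give $-L_4$ at $h=\lambda$, and a term bounded by $M_3$ at $h=\lambda^2$.
\end{itemize}

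\emph{Case (iv), $r<r_q$.} Here $r$ is below the square-function exponent, so we use H\"older in $(t,s)$ to raise $r$ up to $r_q$ at no cost. We then argue as in case (iii) at $r=r_q$. There $L_1(p,q,r_q)=3/q-1/2+1/q$ is compared with $L_4$, and we check that $L_4(p,q,r)>0$ alone suffices, presumably because $L_4\le L_1$ when $r\le r_q$.

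\emph{Case (i).} This follows from case (ii) with exponent $q$ and the monotonicity $\widetilde V_r\le\widetilde V_q$. For this we need $L_2(p,q,q)>0$ on the region, which is again a consequence of $M_3>0$ there.

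The main obstacle is the bookkeeping in case (iii): verifying that every endpoint exponent arising from the $h$-sums in the tr and mid regimes is dominated by $-\min\{L_1,L_4,M_3\}$. One must also check that the choice of $N_2$ in Lemma~\ref{lem-embedding} is compatible with the derivative costs $h^\beta$ in the mid regime. I would first tabulate the $\lambda$-exponents at the three vertices $P_5,P_1,P_7$ and at $r=r_q$ and $r=p$, then conclude by affinity in $(1/p,1/q,1/r)$.
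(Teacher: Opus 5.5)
Your architecture is the paper's: the same split into $I_{r,\delta}^{\mathrm{hd}},I_{r,\delta}^{\mathrm{tr}},I_r^{\mathrm{mid}},I_r^{\mathrm{vert}}$, Lemma~\ref{lem-embedding} with $N_1=\lambda$ and $N_2=\max\{\lambda,h\}$, Corollary~\ref{cor-pqr-below}, Proposition~\ref{prop-LqLp-Lp-one}~(b), H\"older or monotonicity in $r$, and \eqref{est-dyadic-sum}. The hypotheses you end up needing are also the right ones. However, your exponent bookkeeping has one invalid step and several misattributions.

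\emph{The invalid step.} In case (iii), and again in (iv), you insert Corollary~\ref{cor-pqr-below}~(a) at $r<p$. That bound is only stated for $p\le r\le q$: it interpolates between the $r=p$ and $r=q$ endpoints, so nothing below $r=p$ is available. Your H\"older step up to $r_q$ in case (iv) does not reach this range either, since $r_q\le p$. The correct move is H\"older in $(t,s)$ from $L^r(\mathbb J_0)$ up to $L^p(\mathbb J_0)$, then (a) at $r=p$. With the weight $\lambda^{2/r-\alpha-\beta}$ this gives $\lambda^{-L_4(p,q,r)}$, up to $\lambda^{C(\delta+\epsilon)}$. This is where $L_4>0$ enters for the horizontally dominant piece. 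Your convexity remark is also wrong as stated: $L_2$ is not a combination of $L_1$ and $L_4$. One simply has $L_2-L_4=1/r-1/p\ge0$ when $r\le p$.

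\emph{Case (iii): the endpoint attributions are reversed.} After the weight, the first line of Corollary~\ref{cor-pqr-below}~(b) becomes $\lambda^{3/r+1/p-2/q-1}h^{1-2/r-1/p-1/q}$. This equals $\lambda^{-L_4}$ at $h=\lambda^{1/2}$ and $\lambda^{-L_1}$ at $h=\lambda$, not the other way round. In the intermediate regime, part (c) with $N_1=\lambda$, $N_2=h$ gives $\lambda^{1-1/r-4/q}h^{2/r+1/q-1}$. Since $r\ge r_q$ forces $2/r+1/q-1\le -1/q\le 0$, only $h=\lambda$ matters, and this yields $\lambda^{-L_1}$. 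There is no $M_3$ contribution from $h=\lambda^2$.

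\emph{Case (iv).} The embedding weights stay at exponent $r$, so this is not literally case (iii) at $r_q$. Using \eqref{ali-q-r_q-p-j/2} and \eqref{ali-q-r_q-p-j} with weights at $r$, one gets $\lambda^{-L_4}$ and $\lambda^{-L_4+\frac12(\frac1q-\frac1p)}$. So $L_4>0$ suffices directly; you do not need to pass through $L_4\le L_1$.

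\emph{Case (ii).} The $h=\lambda$ end of the transition sum gives $\lambda^{-M_3}$. The intermediate sum gives $\lambda^{-M_2}$ or $\lambda^{-M_3}$, depending on the sign of its $h$-exponent. The vertical regime gives $\lambda^{-M_2}$, not anything involving $L_2$. For the intermediate regime you must first use $\widetilde V_r\le\widetilde V_p$, because Proposition~\ref{prop-LqLp-Lp}~(b) is an $L^q(L^p)$ bound. Since $M_2>0$ on the whole closed triangle, no new hypothesis arises.

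With these corrections, your argument coincides with the paper's proof.
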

\begin{proof}
    Recall the affine functions $M_2$ and $M_3$ defined in
    \eqref{ali-auxiliary-affine-functions}. On the stated triangle, $M_2>0$, while $M_3>0$ away from the excluded segment $[P_1,P_7]$. The estimate for $I_r^{\mathrm{vert}}$ follows by the same argument as in the proof of Proposition~\ref{prop-mid}, using $M_2$ in cases (i)--(ii) and $L_4(p,q,r)$ in cases (iii)--(iv).

    We first treat case (ii). Lemma~\ref{lem-embedding}, Corollary~\ref{cor-pqr-below}, and \eqref{est-dyadic-sum} give
    \[
        \|I_{r,\delta}^{\mathrm{hd}}+I_{r,\delta}^{\mathrm{tr}}\|_{L^q}
        \lesssim\sum_{\lambda>2^3}\left(
        \lambda^{-L_2(p,q,r)+C(\delta+\epsilon)}
        +\lambda^{-M_3(p,q)+C(\delta+\epsilon)}
        \right)\|f\|_{L^p},
    \]
    while $\widetilde{V}_r\leq\widetilde{V}_p$ and the $r=p$ estimate in Corollary~\ref{cor-pqr-below} give
    \[
        \|I_r^{\mathrm{mid}}\|_{L^q}
        \lesssim\sum_{\lambda>2^3}\left(
        \lambda^{-M_2(p,q)+C\epsilon}+\lambda^{-M_3(p,q)+C\epsilon}
        \right)\|f\|_{L^p}.
    \]
    Thus, case (ii) follows from $L_2(p,q,r)>0$, $M_2(p,q)>0$, and $M_3(p,q)>0$. Case (i) follows by taking the variation exponent to be $q$, since $L_2(p,q,q)>0$ on the stated triangle, and using $\widetilde{V}_r\leq\widetilde{V}_q$ for $r>q$.

    For case (iii), the same embedding and Corollary~\ref{cor-pqr-below} give
    \[
        \|I_{r,\delta}^{\mathrm{hd}}+I_{r,\delta}^{\mathrm{tr}}\|_{L^q}
        \lesssim\sum_{\lambda>2^3}\left(
        \lambda^{-L_4(p,q,r)+C(\delta+\epsilon)}
        +\lambda^{-L_1(p,q,r)+C(\delta+\epsilon)}
        \right)\|f\|_{L^p}.
    \]
    Moreover, $r\geq r_q$ implies $2/r+1/q-1\leq0$. Hence the sum in $h$ for the intermediate regime is controlled by its lower endpoint and
    \[
        \|I_r^{\mathrm{mid}}\|_{L^q}\lesssim
        \sum_{\lambda>2^3}\lambda^{-L_1(p,q,r)+C\epsilon}\|f\|_{L^p}.
    \]
    These estimates prove case (iii) under the stated conditions $L_1(p,q,r)>0$ and $L_4(p,q,r)>0$.

    Finally, in case (iv), Lemma~\ref{lem-embedding}, Corollary~\ref{cor-pqr-below}, and H\"older's inequality yield
    \begin{align*}
        \|I_{r,\delta}^{\mathrm{hd}}+I_{r,\delta}^{\mathrm{tr}}+I_r^{\mathrm{mid}}\|_{L^q}
        &\lesssim\sum_{\lambda>2^3}\lambda^{-L_4(p,q,r)+C(\delta+\epsilon)}\|f\|_{L^p}\\
        &\qquad+\sum_{\lambda>2^3}
        \lambda^{-L_4(p,q,r)+\frac12(\frac1q-\frac1p)+C(\delta+\epsilon)}
        \|f\|_{L^p}.
    \end{align*}
    Since $p\leq q$, both sums converge when $L_4(p,q,r)>0$ and $\delta,\epsilon$ are sufficiently small. This proves case (iv) and completes the proof.
\end{proof}

As shown in Figure~\ref{fig:proof-region}, the three closed regions
$[P_5,P_3,P_4]$,
$[P_5,P_7,P_2,P_3]$, and $[P_5,P_1,P_7]$ cover
$\overline{\mathcal Q}$. Their common edges $[P_5,P_3]$ and
$[P_5,P_7]$ are included, whereas the excluded portions are precisely
the three outer edges
\[
    [P_1,P_2]\cup[P_2,P_3]\cup[P_3,P_4].
\]
Except for $P_1$, these edges lie outside $\mathcal Q$, and $P_1$
does not belong to $\mathfrak P(r)$ because $L_1(p,q,r)>0$ fails
there. Consequently, every point of $\mathfrak P(r)$ lies in the
geometric domain of at least one of Propositions
\ref{prop-top}--\ref{prop-below}.

For every $(1/p,1/q)\in\mathfrak P(r)$, the applicable case is determined as follows. In Proposition~\ref{prop-top}, use case (i) when $r>q$, case (ii) when $p\leq r\leq q$, and case (iii) when $r<p$. The same correspondence applies to cases (i)--(iii) in Proposition~\ref{prop-mid}. In Proposition~\ref{prop-below}, use case (i) when $r>q$, case (ii) when $p\leq r\leq q$, case (iii) when $r_q\leq r<p$, and case (iv) when $r<r_q$. The geometry of the corresponding region gives the required positivity of the auxiliary affine functions $M_1,M_2,$ and $M_3$ defined in \eqref{ali-auxiliary-affine-functions}, while the defining inequalities of $\mathfrak P(r)$ give $L_1,\ldots,L_5>0$. Consequently, the interior variation is bounded throughout $\mathfrak P(r)$.

To pass from the interior variation to $V_r(\mathcal A)$, it remains to control the two one-parameter boundary variations. These are treated in the next section.

 \section{One-parameter variational inequalities}\label{sec-one-para}

In this section, we establish the one-parameter variational
inequalities required for the two boundary terms of
$V_r(\mathcal A)$ and for $V_r(\mathcal A_1)$. We first treat the
boundary variations $[\mathcal A f(x,1,\cdot)]_{r,1}$ and
$[\mathcal A f(x,\cdot,2^{-3})]_{r,1}$, and then turn to the
one-parameter operator $\mathcal A_1$.

Throughout this section, we use the following boundary quantities:
\[
\begin{aligned}
\mathfrak B_{q,r}^{t,\alpha}(f)
&:=
\|\partial_t^\alpha\mathcal Af(\cdot,\cdot,2^{-3})\|_{L^q(\mr^3;L^r(\mathbb I))},\\
\mathfrak B_{q,r}^{s,\alpha}(f)
&:=
\|\partial_s^\alpha\mathcal Af(\cdot,1,\cdot)\|_{L^q(\mr^3;L^r(\mathbb I_{-3}))}.
\end{aligned}
\]

\subsection{Mixed-norm estimates for the boundary terms}

\begin{lem}\label{lem-boundary}
    Let $(1/p,1/q)\in [P_1,P_6,P_4]$ and $\alpha\in \{0,1\}$.
    For any $\epsilon>0$, the following hold:
    \begingroup
    \addtolength{\leftmargini}{-2em}
    \begin{enumerate}
        \item[(a)] If $\supp \widehat{f}\subset \mathbb{A}_{2^3}^{\circ}\times \mathbb{B}_{2^3}^{\circ}$, then
        \begin{align*}
            \mathfrak B_{q,p}^{t,\alpha}(f)
            +\mathfrak B_{q,p}^{s,\alpha}(f)
            \lesssim \|f\|_{L^p(\mr^3)}.
        \end{align*}
        \item[(b)] If $\lambda\geq2^3$, $h\geq\lambda^2$, and $\supp \widehat{f}\subset \mathbb{A}_\lambda\times \mathbb{B}_h$, then
            \begin{align*}
                \mathfrak B_{q,p}^{t,\alpha}(f)
                &\lesssim
                \begin{cases}
                    \lambda^{\alpha-\frac{2}{q}+\epsilon}h^{\frac{1}{p}-\frac{1}{2}}\|f\|_{L^p(\mr^3)},
                    & \frac{1}{p}+\frac{3}{q}\leq 1,\\[2pt]
                    \lambda^{\alpha-\frac{1}{2}+\frac{1}{2p}-\frac{1}{2q}+\epsilon}h^{\frac{1}{p}-\frac{1}{2}}\|f\|_{L^p(\mr^3)},
                    & \frac{1}{p}+\frac{3}{q}>1,
                \end{cases}\\[2pt]
                \mathfrak B_{q,p}^{s,\alpha}(f)
                &\lesssim
                \begin{cases}
                    \lambda^{\frac{1}{p}-\frac{2}{q}+\epsilon}h^{\alpha-\frac{1}{2}}\|f\|_{L^p(\mr^3)},
                    & \quad \,\frac{1}{p}+\frac{3}{q}\leq 1,\\[2pt]
                    \lambda^{\frac{3}{2p}-\frac{1}{2q}-\frac{1}{2}+\epsilon}h^{\alpha-\frac{1}{2}}\|f\|_{L^p(\mr^3)},
                    & \quad \,\frac{1}{p}+\frac{3}{q}>1.
                \end{cases}
            \end{align*}
        \item[(c)] If $h\geq2^3$ and $\supp \widehat{f}\subset \mathbb{A}_{2^3}^{\circ}\times\mathbb{B}_h$, then
            \begin{align*}
                \mathfrak B_{q,p}^{t,\alpha}(f)
                &\lesssim h^{\frac{1}{p}-\frac{1}{2}}\|f\|_{L^p(\mr^3)},\\[2pt]
                \mathfrak B_{q,p}^{s,\alpha}(f)
                &\lesssim h^{\alpha-\frac{1}{2}}\|f\|_{L^p(\mr^3)}.
            \end{align*}
    \end{enumerate}
    \endgroup
\end{lem}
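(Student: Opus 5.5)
The plan is to follow the strategy of Proposition~\ref{prop-LqLp-Lp-one} (b)--(c). I would first prove each estimate at the four vertices $(p,q)=(2,2),(4,4),(\infty,\infty)$ and $(2,6)$ of the relevant exponent region. Then I would interpolate with mixed-norm complex interpolation, letting the $\epsilon$-losses absorb everything.

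As in the proof of Corollary~\ref{cor-L6L2-L2}, I would first use \eqref{ali-expansion-sigmats} to split $\widehat{\sigma_{t,s}}$ into the $B_{0,0}$, $B_{0,1}^{\pm}$ and $B_{1,1}^{\kappa,\kappa'}$ pieces.
\begin{itemize}
\item Part (a), and the bounded-frequency pieces in general, follow from Lemma~\ref{lem-A1-A2} (a). For each fixed parameter the kernel is uniformly $L^1$ with compact Fourier support, so Young's and Bernstein's inequalities apply. Since $\mathbb I$ and $\mathbb I_{-3}$ are compact, the $L^r$ norm in the parameter costs nothing.
\item For parts (b) and (c), $h\geq\lambda^2$ in (b) and the horizontal frequency is bounded in (c). In both cases the phase correction $|\xi|-|\xi_3|=O(\lambda^2/h)=O(1)$ has uniformly bounded derivatives after rescaling. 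As in the proof of Proposition~\ref{prop-L6L2-L2-k>2j}, I would write $e^{is|\xi|}=e^{is|\xi_3|}e^{is(|\xi|-|\xi_3|)}$ and absorb the second factor into the amplitude. A Fourier-series expansion on $[-\pi,\pi]^3$ then reduces matters to translates of $\widetilde{\mathcal L}f_{\lambda,h}$.
\end{itemize}
After splitting $f=f^++f^-$ according to the sign of $\xi_3$, the factor $e^{\pm is|\xi_3|}$ is just a translation $x_3\mapsto x_3\pm s$.

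For the $t$-boundary term $\mathfrak B^{t,\alpha}_{q,p}$, the parameter $s=2^{-3}$ is frozen. The translation is therefore a fixed map, and the family in $t$ is the two-dimensional wave propagator $\mathcal W_\pm$ acting in $\bar x$ for each $x_3$. Lemma~\ref{lem-one-qp-p} with Minkowski's inequality in $x_3$ gives the $\lambda$-powers $\lambda^{1/2-2/q}$ or $\lambda^{1/(2p)-1/(2q)}$.
\begin{itemize}
\item The amplitude contributes $(\lambda h)^{-1/2}$ and each $t$-derivative contributes $\lambda^{\alpha}$, which produces the stated $\lambda$-exponents.
\item The factor $h^{1/p-1/2}$ in place of $h^{-1/2}$ reflects that, with $s$ frozen, there is no averaging in $s$ to exploit. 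At $p=\infty$ the full amplitude decay $h^{-1/2}$ is available pointwise. At $p=2$ one loses a factor $h^{1/2}$: the sign projections $f^\pm$ and the translation must be handled through Bernstein's inequality in $x_3$ on a frequency band of width $h$, or at worst through a kernel $L^1$ bound.
\item The main obstacle is pinning down exactly where this $h^{1/p}$ loss arises at the endpoints $p=\infty$ and $(2,6)$, where the projections $f^\pm$ are not bounded on $L^\infty$. I would first try a direct $L^1$-kernel estimate for the fixed-$s$ multiplier $e^{i2^{-3}|\xi|}\varphi_h(|\xi_3|)$ at $p=\infty$. At $p=2$ I would use Plancherel together with Lemma~\ref{lem-one-qp-p}.
\end{itemize}

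For the $s$-boundary term $\mathfrak B^{s,\alpha}_{q,p}$, the parameter $t=1$ is frozen. The horizontal part $e^{i|\bar\xi|}$ is then a fixed-time wave operator. By the fixed-time estimate underlying Lemma~\ref{lem-one-qq-p}, with Bernstein's inequality used to pass from $L^p$ to $L^q$, it costs $\lambda^{1/p-2/q}$ in the regime $\frac1p+\frac3q\leq1$ and $\lambda^{\frac{3}{2p}-\frac{1}{2q}-\frac12}$ otherwise. The $s$-family is the pure translation $x_3\mapsto x_3\pm s$. The mixed-norm argument from the end of the proof of Proposition~\ref{prop-L6L2-L2-k>2j} (Fubini at $L^2$, restriction of the $s$-integral at $L^\infty$, then interpolation) controls $\|\cdot\|_{L^q_{x_3}(L^p_s)}$ without loss. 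The amplitude gives $(\lambda h)^{-1/2}$ and each $s$-derivative gives $h^{\alpha}$, matching (b); part (c) is the same argument with $\lambda=O(1)$.

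Finally, I would interpolate the vertex estimates over $[P_1,P_6,P_4]$, using the same vertex bookkeeping as in Proposition~\ref{prop-LqLp-Lp-one}.
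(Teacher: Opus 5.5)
Your approach is workable but differs from the paper's, and two points in it need correcting.

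\textbf{Comparison with the paper.} The paper never estimates the boundary operators directly. It writes $e^{i2^{-3}|\xi|}=e^{is|\xi|}e^{i(2^{-3}-s)|\xi|}$ and expands the rescaled amplitude in a Fourier series. This shows that $|\mathcal U^B_{t,2^{-3}}f(x)|$ is dominated by a rapidly decaying sum of translates of $|\mathcal U^1_{t,s}f|$ for every $s\in[2^{-3},2^{-3}+h^{-1}]$. The same holds for every $t\in[1,1+\lambda^{-1}]$ on the other boundary. Averaging over that short interval costs $h^{1/p}$ (resp.\ $\lambda^{1/p}$) and reduces the lemma to the two-parameter bounds of Proposition~\ref{prop-LqLp-Lp-one}~(b)--(c). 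The factors $h^{1/p}$ and $\lambda^{1/p}$ in the statement are exactly this transfer cost; they are not intrinsic. The paper's route is short because it recycles estimates already proved. Yours is self-contained and, done correctly, gives strictly stronger bounds.

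\textbf{The $t$-boundary: the obstacle you name does not exist.} You only need to stay below $h^{1/p-1/2}$, not reproduce it. For each $x_3$, apply Lemma~\ref{lem-one-qp-p} in $(\bar x,t)$, then Minkowski's inequality ($q\ge p$) and Bernstein's inequality in $x_3$. This gives $\lambda^{\alpha-\frac2q+\epsilon}h^{\frac1p-\frac1q-\frac12}$ in the first regime, and the analogous bound in the second, which implies (b). At $(2,2)$, Plancherel gives $h^{-1/2}$ with no loss, so your remark that one loses $h^{1/2}$ at $p=2$ is inaccurate, though harmless. The projections $f^\pm$ are also harmless at $L^\infty$: on $\mathbb B_h$ they coincide with the smooth multipliers $\widetilde\varphi(\pm\xi_3/h)$, which are uniformly bounded on every $L^p$.

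\textbf{The $s$-boundary: your $\lambda$-bookkeeping double-counts the amplitude.} The fixed-time operator $e^{i|\bar\xi|}\varphi_\lambda(|\bar\xi|)$ maps $L^p(\mathbb R^2)$ to $L^q(\mathbb R^2)$ with norm $\lambda^{\frac12-\frac1p}\cdot\lambda^{\frac2p-\frac2q}=\lambda^{\frac12+\frac1p-\frac2q}$. For instance, this is $\lambda^{1/2}$ at $p=q=\infty$ and $\lambda^{2/3}$ from $L^2$ to $L^6$. It is not $\lambda^{\frac1p-\frac2q}$: the exponents you wrote are already the final exponents in (b). Multiplying them by $(\lambda h)^{-1/2}$, as you propose, would assert a bound that is too strong by $\lambda^{1/2}$. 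At $p=q=\infty$ it would claim that the fixed-time wave kernel at frequency $\lambda$ has $L^1$ norm $O(1)$, whereas that norm is $\sim\lambda^{1/2}$.

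With the correct cost, combine:
\begin{itemize}
\item the amplitude bound $(\lambda h)^{-1/2}$ and the factor $h^\alpha$;
\item the lossless translation bound $L^p_{x_3}\to L^q_{x_3}(L^p_s)$;
\item Minkowski's inequality to move $L^p_{x_3}$ inside $L^q_{\bar x}$.
\end{itemize}
This gives $\lambda^{\frac1p-\frac2q}h^{\alpha-\frac12}$ on all of $[P_1,P_6,P_4]$. That is the first line of (b). It also implies the second line, because there the difference of the $\lambda$-exponents is $\frac12\bigl(\frac1p+\frac3q-1\bigr)>0$. With these corrections your argument proves the lemma.
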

	\begin{proof}
		Part (a) follows from Lemma~\ref{lem-A1-A2}. We prove part (b). By \eqref{ali-expansion-sigmats}, it suffices to consider a high-horizontal-frequency term with amplitude $A=B_{1,1}^{\kappa,\kappa'}$; the signs $\kappa,\kappa'$ in the phases do not affect the argument. Set
		\[
		B(\xi,t,s):=(1+|\bar\xi|)^{1/2}(1+|\xi|)^{1/2}A(\xi,t,s).
		\]
		Then \eqref{asm-B-symbol} shows that $B$ satisfies the order-zero symbol estimates in \eqref{ali-condition-on-B}. On $\mathbb A_\lambda\times\mathbb B_h$, the original amplitude $A$ is bounded by $C(\lambda h)^{-1/2}$, while differentiation of the phases in $t$ and $s$ contributes at most $\lambda^\alpha$ and $h^\alpha$, respectively.
		We first estimate the restrictions of the normalized operator $\mathcal U_{t,s}^B$ to the parameter boundaries $s=2^{-3}$ and $t=1$.
		Since $2^3\leq\lambda\leq h$, we have $|\bar\xi|\sim\lambda$ and $|\xi|\sim h$ on the Fourier support. The Fourier-series argument below therefore gives local-constancy estimates at scales $\lambda^{-1}$ in $t$ and $h^{-1}$ in $s$. More precisely,
		\begin{align}\label{ali-local-cons-s}
			|\mathcal{U}_{t,2^{-3}}^Bf(x)|
			\lesssim \sum_{\ell\in \mathbb{Z}^3}(1+|\ell|)^{-100}
			|\mathcal{U}_{t,s}^1f(x+\tau_{\lambda,h}^{\ell})|,
		\end{align}
		for all $(t,s)\in \mathbb I\times[2^{-3},2^{-3}+h^{-1}]$, and
		\begin{align}\label{ali-local-cons-t}
			|\mathcal{U}_{1,s}^Bf(x)|
			\lesssim \sum_{\ell\in \mathbb{Z}^3}(1+|\ell|)^{-100}
			|\mathcal{U}_{t,s}^1f(x+\tau_{\lambda,h}^{\ell})|,
		\end{align}
		for all $(t,s)\in [1,1+\lambda^{-1}]\times \mathbb I_{-3}$,
		where \(\mathcal{U}_{t,s}^{1}\) denotes the operator
in \eqref{def-wjk} with \(B\equiv1\) and $\tau_{\lambda,h}^{\ell}:=(\lambda^{-1}\ell_1,\lambda^{-1}\ell_2,h^{-1}\ell_3)$ for $\ell=(\ell_1,\ell_2,\ell_3)\in \mathbb{Z}^3$.

	Indeed, let $\widetilde{\varphi}\in C_c^{\infty}((1/4,3))$ be equal to $1$ on $[2^{-1},2]$.
	For $s\in J_h:=[2^{-3},2^{-3}+h^{-1}]$, write
	$
	e^{i2^{-3}|\xi|}
	=e^{is|\xi|}e^{i(2^{-3}-s)|\xi|},
	$
	and then rescale the frequency variables. This gives
	\begin{align*}
		\mathcal{U}_{t,2^{-3}}^Bf(x)
		=\lambda^2h\int_{\mr^3}&e^{ix\cdot (\lambda\bar{\xi},h\xi_3)}e^{it\lambda|\bar{\xi}|}e^{is|(\lambda\bar{\xi},h\xi_3)|} \\
		&\quad\times B_{\lambda,h,4}(\xi,t,s)\widehat f(\lambda\bar{\xi},h\xi_3)\,d\xi,
	\end{align*}
	where
	\[
	B_{\lambda,h,4}(\xi,t,s):=\widetilde{\varphi}(|\bar{\xi}|)\widetilde{\varphi}(|\xi_3|)e^{i(2^{-3}-s)|(\lambda\bar{\xi},h\xi_3)|}B(\lambda\bar{\xi},h\xi_3,t,2^{-3}).
	\]
	Since $h\geq\lambda$ and $|s-2^{-3}|\leq h^{-1}$, for every multi-index $\gamma$,
	$
	\sup_{\xi}|\partial_\xi^{\gamma}B_{\lambda,h,4}(\xi,t,s)|\leq C_{\gamma}
	$
	holds uniformly in $(t,s)\in\mathbb{I}\times J_h$. Expanding
	$B_{\lambda,h,4}(\xi,t,s)=\sum_{\ell\in\mathbb Z^3}C_{\ell,4}(t,s)e^{i\ell\cdot\xi}$
	and arguing as in the proof of Lemma~\ref{lem-A4-A6} (d), we have
	$|C_{\ell,4}(t,s)|\lesssim(1+|\ell|)^{-100}$.
	Reversing the rescaling shows that each Fourier mode produces a
	spatial translation by $\tau_{\lambda,h}^{\ell}$.
	This proves \eqref{ali-local-cons-s}. The same argument, using $|\bar\xi|\sim\lambda$ and $|t-1|\leq\lambda^{-1}$, proves \eqref{ali-local-cons-t}.

	Consequently,
	\begin{align*}
		\|\mathcal{U}_{t,2^{-3}}^Bf\|_{L_x^q(\mr^3;L_t^p(\mathbb I))}&\lesssim h^{\frac{1}{p}}\sum_{\ell\in \mathbb{Z}^3}(1+|\ell|)^{-100}\|\mathcal{U}_{t,s}^1f\|_{L_x^q(\mr^3;L_{t,s}^p(\mathbb{I}\times J_h))}\\
		&\lesssim h^{\frac{1}{p}}\|\mathcal{U}_{t,s}^1f\|_{L_x^q(\mr^3;L_{t,s}^p(\mathbb{J}_0))},
	\end{align*}
	and
	\begin{align*}
		\|\mathcal{U}_{1,s}^Bf\|_{L_x^q(\mr^3;L_s^p(\mathbb I_{-3}))}\lesssim \lambda^{\frac{1}{p}}\|\mathcal{U}_{t,s}^1f\|_{L_x^q(\mr^3;L_{t,s}^p(\mathbb{J}_0))}.
	\end{align*}
		Combining the preceding mixed-norm estimates with the amplitude decay and the bounds for the parameter derivatives, as in the proof of Proposition~\ref{prop-LqLp-Lp-one} (b), gives part (b).
	For part (c), the horizontal frequency is bounded and the amplitudes $B_{0,1}^{\pm}$ and $B_{1,1}^{\kappa,\kappa'}$ are bounded by $Ch^{-1/2}$; the same argument gives the stated estimates. Derivatives falling on the amplitudes are controlled by \eqref{asm-B-symbol}.
\end{proof}

For every $1\leq q\leq\infty$ and every inner exponent
$1\leq u\leq\infty$, the same Fourier-series argument yields the
corresponding local-constancy estimates, with the convention
$N^{1/\infty}=1$,  and remains valid after one differentiation in either parameter. On $\mathbb A_\lambda\times\mathbb B_h$, the local-constancy scales in $t$ and $s$ are $\lambda^{-1}$ and $\max\{\lambda,h\}^{-1}$, respectively.
Thus, for $\alpha,\beta\in\{0,1\}$, the following estimates hold:
\begin{equation}\label{ali-boundary-transfer}
\begin{aligned}
 \|\partial_t^\alpha\mathcal A f_{\lambda,h}(\cdot,\cdot,2^{-3})\|_{L_x^qL_t^u}
 &\lesssim \max\{\lambda,h\}^{1/u}
 \|\partial_t^\alpha\mathcal A f_{\lambda,h}\|_{L_x^qL_{t,s}^u},\\
	 \|\partial_s^\beta\mathcal A f_{\lambda,h}(\cdot,1,\cdot)\|_{L_x^qL_s^u}
	 &\lesssim \lambda^{1/u}
	 \|\partial_s^\beta\mathcal A f_{\lambda,h}\|_{L_x^qL_{t,s}^u}.
\end{aligned}
\end{equation}
Here $L_x^qL_t^u$, $L_x^qL_s^u$, and $L_x^qL_{t,s}^u$ denote the mixed norms over $\mr^3\times\mathbb I$, $\mr^3\times\mathbb I_{-3}$, and $\mr^3\times\mathbb J_0$, respectively.

For $h\leq\lambda$, the multipliers produced by both parameter derivatives have size $\lambda$; for $h\geq\lambda$, those produced by $\partial_t$ and $\partial_s$ have sizes $\lambda$ and $h$, respectively.
Taking $u=p$ recovers the mixed-norm estimates used in the proof of Lemma~\ref{lem-boundary}.
Taking $u=r$ and combining the two estimates in \eqref{ali-boundary-transfer} with the mixed-norm estimates of Section~\ref{sec-Lp-Lq-bounds} gives the powers displayed in the corollaries below. These powers agree at $h=\lambda$ and $h=\lambda^2$ with those in the adjacent frequency regimes.

\begin{cor}\label{cor-boundary-above}
    Let $\lambda\geq2^3$, $\alpha\in \{0,1\}$, $(1/p,1/q)\in [P_5,P_6,P_4]$, and $p\leq r \leq q$.
    For any $\delta,\epsilon>0$, the following hold:
	    \begingroup
	    \addtolength{\leftmargini}{-2em}
	    \begin{enumerate}
	        \item[(a)] If $\supp \widehat{f}\subset \mathbb{A}_\lambda\times \mathbb{B}_{\lambda^{1/2+\delta}}^{\circ}$, then
	            \begin{align*}
						\mathfrak B_{q,r}^{t,\alpha}(f)+\mathfrak B_{q,r}^{s,\alpha}(f)
						&\lesssim \lambda^{\alpha+\frac{2}{p}-\frac{1}{q}-1+300\delta+\epsilon}
						\|f\|_{\Lp(\mr^3)}.
			\end{align*}

			\item[(b)] If $\lambda^{1/2+\delta}\leq h\leq\lambda$ and $\supp \widehat{f}\subset \mathbb{A}_\lambda\times \mathbb{B}_h$, then
			\begin{align*}
				\mathfrak B_{q,r}^{t,\alpha}(f)+\mathfrak B_{q,r}^{s,\alpha}(f)
				&\lesssim \lambda^{\alpha+\frac{2}{p}-\frac{3}{2q}+\frac{1}{2r}-1+\epsilon}h^{-\frac{1}{r}+\frac{1}{q}}\|f\|_{\Lp(\mr^3)}.
			\end{align*}

			\item[(c)] If $\lambda\leq h\leq\lambda^2$ and $\supp \widehat{f}\subset \mathbb{A}_\lambda\times \mathbb{B}_h$, then
			\begin{align*}
				\mathfrak B_{q,r}^{t,\alpha}(f)
				&\lesssim \lambda^{\alpha+\frac{1}{2p}-\frac{1}{2q}-\frac{1}{2}+\epsilon}h^{-\frac{1}{2}+\frac{3}{2p}-\frac{1}{2r}}\|f\|_{\Lp(\mr^3)},\\[2pt]
				\mathfrak B_{q,r}^{s,\alpha}(f)
				&\lesssim \lambda^{\frac{1}{2p}-\frac{1}{2q}-\frac{1}{2}+\frac{1}{r}+\epsilon}h^{\alpha-\frac{1}{2}+\frac{3}{2p}-\frac{3}{2r}}\|f\|_{\Lp(\mr^3)}.
			\end{align*}
	    \end{enumerate}
	    \endgroup
\end{cor}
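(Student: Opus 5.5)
The plan is to combine the transfer estimates \eqref{ali-boundary-transfer} with $u=r$ and the interior mixed-norm bounds of Corollary~\ref{cor-pqr-top}.

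\textbf{Step 1: the $t$-boundary.} In parts (a) and (b) we have $h\le\lambda$, so $\max\{\lambda,h\}=\lambda$. The first line of \eqref{ali-boundary-transfer} then gives
\[
\mathfrak B^{t,\alpha}_{q,r}(f)\lesssim\lambda^{1/r}\,\|\partial_t^\alpha\mathcal Af\|_{L^q_xL^r_{t,s}}.
\]
In part (c), $\max\{\lambda,h\}=h$, which gives instead the factor $h^{1/r}$.

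\textbf{Step 2: the $s$-boundary.} The second line of \eqref{ali-boundary-transfer} always contributes $\lambda^{1/r}$, applied to $\|\partial_s^\alpha\mathcal Af\|_{L^q_xL^r_{t,s}}$.

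For (a) and (b), I will check whether the finer form of the Fourier-series argument lets the boundary-$s$ scale be $\max\{\lambda,h\}^{-1}$ or better. I do not expect the symmetric bound in (b) to need anything beyond $\lambda^{1/r}$ in both cases.

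\textbf{Step 3: insert Corollary~\ref{cor-pqr-top}.} I use $(\alpha,\beta)=(\alpha,0)$ for the $t$-term and $(0,\alpha)$ for the $s$-term.

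In part (b), Corollary~\ref{cor-pqr-top}(b) gives the exponent $\lambda^{\alpha+\beta+2/p-3/(2q)-1/(2r)-1}h^{-1/r+1/q}$. Multiplying by $\lambda^{1/r}$ yields exactly $\lambda^{\alpha+2/p-3/(2q)+1/(2r)-1}h^{-1/r+1/q}$, as claimed.

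In part (a), Corollary~\ref{cor-pqr-top}(a) times $\lambda^{1/r}$ gives $\lambda^{\alpha+2/p-1/q-1}$. The $-1/r$ cancels, which matches the claim.

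In part (c), Corollary~\ref{cor-pqr-top}(c) has $h$-exponent $\beta-\tfrac12+\tfrac3{2p}-\tfrac3{2r}$. For the $t$-term, multiplying by $h^{1/r}$ gives $h^{-1/2+3/(2p)-1/(2r)}$ with $\lambda^{\alpha+1/(2p)-1/(2q)-1/2}$, which is the claim. For the $s$-term, multiplying by $\lambda^{1/r}$ gives $\lambda^{1/(2p)-1/(2q)-1/2+1/r}h^{\alpha-1/2+3/(2p)-3/(2r)}$, again as stated.

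\textbf{Main obstacle.} The real content is the transfer inequality \eqref{ali-boundary-transfer} for general inner exponent $u=r$, together with one parameter derivative. This requires redoing the local-constancy argument of Lemma~\ref{lem-boundary}: write the boundary value $s=2^{-3}$ (resp.\ $t=1$) as the propagator at nearby $(t,s)$ with a corrected amplitude, rescale, and expand in Fourier series.

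\begin{itemize}
\item The correction factor $e^{i(2^{-3}-s)|\xi|}$ has bounded derivatives after rescaling only if $|s-2^{-3}|\lesssim\max\{\lambda,h\}^{-1}$. In the regimes $h\le\lambda$ this holds with scale $\lambda^{-1}$, since $|\xi|\sim\lambda$ there.
\item In the anisotropic rescaling for $h\le\lambda$, the rescaled $\xi_3$-variable lives at scale $h$, but $|\xi|\sim\lambda$. One must check that $\partial_{\xi_3}$ of $(2^{-3}-s)|(\lambda\bar\xi,h\xi_3)|$ is $O(h^2/\lambda\cdot\lambda^{-1})=O(1)$; it is.
\end{itemize}

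Averaging over the short parameter interval and applying Minkowski in $\ell$ then gives the $\max\{\lambda,h\}^{1/r}$ and $\lambda^{1/r}$ factors. Derivatives on the amplitude are controlled by \eqref{asm-B-symbol}.
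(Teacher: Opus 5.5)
Your proposal is correct and follows the paper's own route. The paper obtains this corollary by taking $u=r$ in the transfer estimates \eqref{ali-boundary-transfer} (factor $\max\{\lambda,h\}^{1/r}$ for the $t$-boundary, $\lambda^{1/r}$ for the $s$-boundary), inserting Corollary~\ref{cor-pqr-top} with $(\alpha,0)$ and $(0,\alpha)$, and relying on the same Fourier-series local-constancy argument as in Lemma~\ref{lem-boundary}; your exponent bookkeeping matches in all three parts.
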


The next corollary treats the middle interpolation region
$[P_5,Q_1,P_6]$, where the estimate in part (b) changes at
$r=r_{p,q}$.

\begin{cor}\label{cor-boundary-middle}
    Let $\lambda\geq2^3$, $\alpha\in\{0,1\}$, $(1/p,1/q)\in[P_5,Q_1,P_6]$, and $p\leq r\leq q$.
    Let $r_{p,q}$ be as in Corollary~\ref{cor-pqr-mid}. For any $\delta,\epsilon>0$, the following hold:
	    \begingroup
	    \addtolength{\leftmargini}{-2em}
        \begin{enumerate}
	            \item[(a)] If $\supp \widehat{f}\subset \mathbb{A}_\lambda\times \mathbb{B}_{\lambda^{1/2+\delta}}^{\circ}$, then
	            \begin{align*}
						 \mathfrak B_{q,r}^{t,\alpha}(f)+\mathfrak B_{q,r}^{s,\alpha}(f)
						 &\lesssim \lambda^{\alpha-\frac{1}{2}+\frac{3}{2p}-\frac{5}{2q}+300\delta+\epsilon}
						 \|f\|_{\Lp(\mr^3)}.
			\end{align*}

			\item[(b)] Suppose that $\lambda^{1/2+\delta}\leq h\leq\lambda$ and $\supp \widehat{f}\subset \mathbb{A}_\lambda\times \mathbb{B}_h$. Then
			\begin{align*}
				\mathfrak B_{q,r}^{t,\alpha}(f)+\mathfrak B_{q,r}^{s,\alpha}(f)
				&\lesssim
				\begin{cases}
					\lambda^{\alpha-\frac{1}{2}+\frac{1}{2r}+\frac{3}{2p}-\frac{3}{q}+\epsilon}
					h^{-\frac{1}{r}+\frac{1}{q}}\|f\|_{\Lp(\mr^3)},
					& p\leq r\leq r_{p,q},\\[2pt]
					\lambda^{\alpha+\frac{2}{p}-\frac{2}{q}-1+\frac{1}{r}+\epsilon}
					h^{1-\frac{1}{p}-\frac{1}{q}-\frac{2}{r}}\|f\|_{\Lp(\mr^3)},
					& r_{p,q}\leq r\leq q.
				\end{cases}
			\end{align*}
			\item[(c)] If $\lambda\leq h\leq\lambda^2$, $\supp \widehat{f}\subset \mathbb{A}_\lambda\times \mathbb{B}_h$, and $p\leq r\leq r_{p,q}$, then
			\begin{align*}
				\mathfrak B_{q,r}^{t,\alpha}(f)
				&\lesssim \lambda^{\alpha-\frac{2}{q}+\epsilon}h^{-\frac{1}{2r}-\frac{1}{2}+\frac{3}{2p}}\|f\|_{\Lp(\mr^3)},\\[2pt]
				\mathfrak B_{q,r}^{s,\alpha}(f)
				&\lesssim \lambda^{\frac{1}{r}-\frac{2}{q}+\epsilon}h^{\alpha-\frac{3}{2r}-\frac{1}{2}+\frac{3}{2p}}\|f\|_{\Lp(\mr^3)}.
			\end{align*}
        \end{enumerate}
	    \endgroup
\end{cor}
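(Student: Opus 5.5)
The statement to prove is Corollary~\ref{cor-boundary-middle}. I will derive it from the full-parameter mixed-norm bounds of Corollary~\ref{cor-pqr-mid} by the transfer inequalities \eqref{ali-boundary-transfer} with $u=r$. Concretely, I will bound $\mathfrak B_{q,r}^{t,\alpha}(f)$ by $\max\{\lambda,h\}^{1/r}$ times $\|\partial_t^\alpha\mathcal Af\|_{L^q(\mr^3;L^r(\mathbb J_0))}$. Likewise, $\mathfrak B_{q,r}^{s,\alpha}(f)$ is bounded by $\lambda^{1/r}$ times $\|\partial_s^\alpha\mathcal Af\|_{L^q(\mr^3;L^r(\mathbb J_0))}$. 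I then insert the bounds of Corollary~\ref{cor-pqr-mid} with $(\alpha,\beta)=(\alpha,0)$ for the $t$-boundary and $(0,\alpha)$ for the $s$-boundary.

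\emph{Part (b), $h\le\lambda$.} Here both transfer factors equal $\lambda^{1/r}$, and the derivative cost is $\lambda^{\alpha}$ in either parameter. Multiplying the two cases of Corollary~\ref{cor-pqr-mid}~(b) by $\lambda^{1/r}$ gives the following.
\begin{itemize}
\item For $p\le r\le r_{p,q}$, the exponent of $\lambda$ becomes $\alpha-\tfrac12+\tfrac1{2r}+\tfrac3{2p}-\tfrac3q$, with the same $h$-power.
\item For $r_{p,q}\le r\le q$, the exponent gains $+\tfrac1r$.
\end{itemize}
Both match the claim.

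\emph{Part (c), $\lambda\le h\le\lambda^2$ and $p\le r\le r_{p,q}$.} For the $t$-boundary I multiply Corollary~\ref{cor-pqr-mid}~(c) with $\beta=0$ by $h^{1/r}$. This turns $h^{-\frac3{2r}-\frac12+\frac3{2p}}$ into $h^{-\frac1{2r}-\frac12+\frac3{2p}}$. For the $s$-boundary I use $\beta=\alpha$ and multiply by $\lambda^{1/r}$, which matches as well.

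\emph{Part (a).} The claimed bound carries no $-1/r$ in the exponent. Multiplying Corollary~\ref{cor-pqr-mid}~(a) by $\lambda^{1/r}$ exactly cancels the $-1/r$ there. Here $\max\{\lambda,h\}=\lambda$ because $h\le\lambda^{1/2+\delta}$. For the low vertical frequencies in this range ($h\lesssim\lambda^{1/2}$, including the $\mathbb B^\circ$ piece), the local-constancy scale in $s$ is still $\lambda^{-1}$, so the transfer remains valid.

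\emph{Main obstacle.} The only real work is justifying \eqref{ali-boundary-transfer} uniformly in these regimes, including after a parameter derivative. I would redo the Fourier-series local-constancy argument of Lemma~\ref{lem-boundary}, this time on intervals of length $\max\{\lambda,h\}^{-1}$ in $s$ and $\lambda^{-1}$ in $t$. Two points need checking:
\begin{itemize}
\item the phase corrections $e^{i(2^{-3}-s)|\xi|}$ and $e^{i(1-t)|\bar\xi|}$ have $O(1)$ derivatives after rescaling;
\item the rescaled amplitudes stay uniformly smooth when the vertical frequency is low, which may cost a harmless $\lambda^{C\delta}$ absorbed into the $300\delta$ loss.
\end{itemize}
Averaging the pointwise domination over the short parameter interval and using translation invariance then yields the stated factors. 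The remaining steps are exponent bookkeeping.
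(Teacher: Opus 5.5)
Your proposal is correct and is essentially the paper's own argument. The paper obtains this corollary by taking $u=r$ in \eqref{ali-boundary-transfer} and combining it with Corollary~\ref{cor-pqr-mid}, using $(\alpha,0)$ for the $t$-boundary (factor $\max\{\lambda,h\}^{1/r}$) and $(0,\alpha)$ for the $s$-boundary (factor $\lambda^{1/r}$); your exponent bookkeeping in (a)--(c) checks out, and the only caveat is that the Fourier-series step of Lemma~\ref{lem-boundary} dominates the boundary restriction by translates of the amplitude-free propagator $\mathcal U^1_{t,s}$, not by $\|\partial_t^\alpha\mathcal Af\|_{L^q(L^r(\mathbb J_0))}$ itself, so you should feed in the Section~\ref{sec-Lp-Lq-bounds} bounds in their amplitude-uniform form with the amplitude size and derivative costs restored, as the paper implicitly does.
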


Finally, we consider the lower interpolation region
$[P_5,P_1,Q_1]$; here $r_q$ is the lower endpoint of the
$r$-range appearing in parts (b) and (c).

\begin{cor}\label{cor-boundary-below}
    Let $\lambda\geq2^3$, $\alpha\in\{0,1\}$, and $(1/p,1/q)\in[P_5,P_1,Q_1]$.
    Let $r_q$ be as in Corollary~\ref{cor-pqr-below}. For any $\delta,\epsilon>0$, the following hold:
	    \begingroup
	    \addtolength{\leftmargini}{-2em}
        \begin{enumerate}
	            \item[(a)] If $\supp \widehat{f}\subset \mathbb{A}_\lambda\times \mathbb{B}_{\lambda^{1/2+\delta}}^{\circ}$ and $p\leq r\leq q$, then
				\begin{align*}
						 \mathfrak B_{q,r}^{t,\alpha}(f)+\mathfrak B_{q,r}^{s,\alpha}(f)
						 &\lesssim \lambda^{\alpha-\frac{1}{2}+\frac{3}{2p}-\frac{5}{2q}+300\delta+\epsilon}
						 \|f\|_{\Lp(\mr^3)}.
				\end{align*}

			\item[(b)] Suppose that $\lambda^{1/2+\delta}\leq h\leq\lambda$ and $\supp \widehat{f}\subset \mathbb{A}_\lambda\times \mathbb{B}_h$. Then
				\begin{align*}
					\mathfrak B_{q,r}^{t,\alpha}(f)+\mathfrak B_{q,r}^{s,\alpha}(f)
					&\lesssim
					\begin{cases}
						\lambda^{\alpha+\frac{1}{p}-\frac{2}{q}-1+\frac{2}{r}+\epsilon}
						h^{1-\frac{1}{p}-\frac{1}{q}-\frac{2}{r}}\|f\|_{\Lp(\mr^3)},
						& r_q\leq r\leq p,\\[2pt]
						\lambda^{\alpha+\frac{2}{p}-\frac{2}{q}-1+\frac{1}{r}+\epsilon}
						h^{1-\frac{1}{p}-\frac{1}{q}-\frac{2}{r}}\|f\|_{\Lp(\mr^3)},
						& p\leq r\leq q.
					\end{cases}
				\end{align*}

			\item[(c)] If $\lambda\leq h\leq\lambda^2$, $\supp \widehat{f}\subset \mathbb{A}_\lambda\times \mathbb{B}_h$, and $r_q\leq r\leq p$, then
			\begin{align*}
				\mathfrak B_{q,r}^{t,\alpha}(f)
				&\lesssim \lambda^{\alpha+1-\frac{2}{r}-\frac{4}{q}+\epsilon}h^{-1+\frac{2}{r}+\frac{1}{q}}\|f\|_{\Lp(\mr^3)},\\[2pt]
				\mathfrak B_{q,r}^{s,\alpha}(f)
				&\lesssim \lambda^{1-\frac{1}{r}-\frac{4}{q}+\epsilon}h^{\alpha-1+\frac{1}{r}+\frac{1}{q}}\|f\|_{\Lp(\mr^3)}.
			\end{align*}
        \end{enumerate}
	    \endgroup
\end{cor}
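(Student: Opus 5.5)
The plan is to derive Corollary~\ref{cor-boundary-below} from Corollary~\ref{cor-pqr-below} and the transfer estimates \eqref{ali-boundary-transfer} with inner exponent $u=r$. The $s$-variation part is handled first. For $\mathfrak B_{q,r}^{s,\alpha}(f)$ the second line of \eqref{ali-boundary-transfer} gives
\[
\mathfrak B_{q,r}^{s,\alpha}(f)\lesssim \lambda^{1/r}\|\partial_s^\alpha\mathcal Af\|_{L^q(\mr^3;L^r(\mathbb J_0))}.
\]
We then insert Corollary~\ref{cor-pqr-below} with $(\alpha,\beta)$ replaced by $(0,\alpha)$.

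\emph{$t$-derivative factor.} For $h\leq\lambda$, the multiplier produced by $\partial_s$ has size $\lambda$, just as $\partial_t$ does. So the corollary's exponent $\alpha+\beta$ becomes $\alpha$ in either slot, and parts (a) and (b) follow by multiplying by $\lambda^{1/r}$. In (b), the range $r_q\le r\le p$ gives $\lambda^{\alpha+\frac1r+\frac1p-\frac2q-1+\frac1r}=\lambda^{\alpha+\frac1p-\frac2q-1+\frac2r}$. The exponent of $h$ is $-\frac2r-\frac1p-\frac1q+1$, which matches the displayed power.

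In (a), the corollary gives $\lambda^{\alpha-\frac12+\frac3{2p}-\frac5{2q}-\frac1r}$, and the factor $\lambda^{1/r}$ cancels the $-\frac1r$. In part (c), where $h\ge\lambda$, the $s$-derivative costs $h$. Applying Corollary~\ref{cor-pqr-below}(c) with $\beta=\alpha$, $\alpha=0$ and multiplying by $\lambda^{1/r}$ yields $\lambda^{1-\frac1r-\frac4q}h^{\alpha-1+\frac1r+\frac1q}$, as claimed.

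For $\mathfrak B_{q,r}^{t,\alpha}(f)$ the first line of \eqref{ali-boundary-transfer} gives the factor $\max\{\lambda,h\}^{1/r}$. In (a), the horizontally dominant regime, $\max\{\lambda,h\}=\lambda$ after enlarging the support harmlessly; in (b) it equals $\lambda$ as well. The same computation therefore applies with $(\alpha,\beta)=(\alpha,0)$. In (c) the factor is $h^{1/r}$. Corollary~\ref{cor-pqr-below}(c) with $\beta=0$ times $h^{1/r}$ gives $\lambda^{\alpha+1-\frac2r-\frac4q}h^{-1+\frac2r+\frac1q}$, which is the claimed estimate.

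The remaining point is justifying \eqref{ali-boundary-transfer} in each regime. This is where I expect the only real care to be needed: the local-constancy scales and the sizes of the derivative multipliers must match those used in Corollary~\ref{cor-pqr-below}. The plan is to repeat the Fourier-series argument of Lemma~\ref{lem-boundary} with inner exponent $r$. We freeze $s=2^{-3}$ (respectively $t=1$) and write the boundary operator as an average over a parameter strip of width $\max\{\lambda,h\}^{-1}$ (respectively $\lambda^{-1}$). The phase correction is absorbed into an amplitude with uniformly bounded $\xi$-derivatives after anisotropic rescaling, and translation invariance sums the Fourier modes.

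In the regime $h<\lambda^{1/2+\delta}$ the $s$-phase $s|\xi|$ has frequency about $\lambda$, so the strip width $\lambda^{-1}$ suffices. Any residual $\lambda^{C\delta}$ loss is absorbed into the $300\delta$ term. Derivatives on amplitudes are controlled by \eqref{asm-B-symbol}.
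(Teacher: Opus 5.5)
Your proposal is correct and follows the paper's own route. The paper likewise takes $u=r$ in \eqref{ali-boundary-transfer}, with factor $\max\{\lambda,h\}^{1/r}$ for $\mathfrak B_{q,r}^{t,\alpha}$ and $\lambda^{1/r}$ for $\mathfrak B_{q,r}^{s,\alpha}$, and inserts Corollary~\ref{cor-pqr-below} with $(\alpha,\beta)=(\alpha,0)$ or $(0,\alpha)$; your exponent bookkeeping reproduces the stated powers in parts (a)--(c).
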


\subsection{Boundary variation estimates and two-parameter bounds}

\begin{prop}\label{prop-boundary-terms}
    Let $\mathfrak{P}(r)$ be as in Theorem~\ref{thm-Lp-Lq}. Then the boundary variation operators $[\mathcal{A}f(\cdot,1,\cdot)]_{r,1}$ and $[\mathcal{A}f(\cdot,\cdot,2^{-3})]_{r,1}$ are bounded from $L^p(\mr^3)$ to $L^q(\mr^3)$ if $(1/p,1/q)\in\mathfrak{P}(r)$.
\end{prop}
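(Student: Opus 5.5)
The plan is to repeat, for the two boundary terms, the frequency-decomposition and dyadic-summation argument used for $\widetilde V_r(\mathcal A)$ in Section~\ref{sec-Lp-Lq-bounds}. The one-parameter embedding, Lemma~\ref{lem-embed-1-var}, replaces Lemma~\ref{lem-embedding}. For a frequency piece $f_{\lambda,h}$ we apply Lemma~\ref{lem-embed-1-var} with $N=\lambda$ to $t\mapsto\mathcal Af_{\lambda,h}(x,t,2^{-3})$. For $s\mapsto\mathcal Af_{\lambda,h}(x,1,s)$ we take $N=\lambda$ when $h\le\lambda$ and $N=h$ when $h\ge\lambda$, matching the size of the parameter derivative. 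Taking $L^q_x$ norms then gives
\[
\|[\mathcal Af_{\lambda,h}(\cdot,\cdot,2^{-3})]_{r,1}\|_{L^q}\lesssim \lambda^{1/r}\mathfrak B^{t,0}_{q,r}(f_{\lambda,h})+\lambda^{1/r-1}\mathfrak B^{t,1}_{q,r}(f_{\lambda,h}),
\]
and the analogous bound for the $s$-boundary term. As before, the case $r>p$ reduces to $r=p$ (or $r=q$) by monotonicity of one-parameter variation. We decompose $f$ exactly as in Subsection~\ref{subsect-frequency-decomposition}: the low horizontal part, and the hd, tr, mid and vert pieces.

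\emph{Low and vertical regimes.} The low horizontal frequencies are handled by Lemma~\ref{lem-boundary} (a),(c) together with H\"older in the parameter ($r\le p$). The terms contribute $h^{1/r}h^{1/p-1/2}$-type factors, or $h^{1/r-1}h^{1/2}$ for the $s$-term. Both sum in $h$ because $r>2$, or after passing to exponent $p$ when $r>p$. The regime $h\ge\lambda^2$ uses Lemma~\ref{lem-boundary} (b). Here the $t$-boundary term carries the factor $\lambda^{1/r}h^{1/p-1/2}$, and since $p>2$ on $\mathfrak P(r)$ the sum over $h\ge\lambda^2$ produces $\lambda^{1/r+2/p-1}$ times the $\lambda$-factor. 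We then check that the resulting exponent is negative under the conditions defining $\mathfrak P(r)$, using $M_1,M_2$ as in the vertical estimates of Propositions~\ref{prop-top}--\ref{prop-mid}.

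\emph{Main regimes.} For $h<\lambda^2$ we insert Corollaries~\ref{cor-boundary-above}, \ref{cor-boundary-middle} and \ref{cor-boundary-below} in the three regions $[P_5,P_3,P_4]$, $[P_5,P_7,P_2,P_3]$ and $[P_5,P_1,P_7]$, with the same case splits in $r$ as in Propositions~\ref{prop-top}, \ref{prop-mid} and \ref{prop-below}. For $r<p$ we additionally use H\"older in $t$ or $s$ to pass to $u=p$ or $u=r_q$, via \eqref{ali-boundary-transfer} with the smaller inner exponent. The key observation is that the boundary powers in these corollaries, after multiplying by $\lambda^{1/r}$ (respectively $\max\{\lambda,h\}^{1/r}$), are no larger than the corresponding interior powers in Corollaries~\ref{cor-pqr-top}--\ref{cor-pqr-below}. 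The interior bounds carry an extra $N_1^{1/r}N_2^{1/r}$ from the embedding, while the boundary bounds gain back one factor of $\lambda^{1/r}$ or $h^{1/r}$ through local constancy. Consequently each dyadic sum is dominated by $\lambda^{-L_j(p,q,r)+C(\delta+\epsilon)}$ or $\lambda^{-M_j+C\epsilon}$, with the same $j$ as in the interior proofs, and it converges on $\mathfrak P(r)$ once $\delta$ and then $\epsilon$ are chosen small.

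\emph{Main obstacle.} The step I expect to be hardest is the bookkeeping in the intermediate regime $\lambda\le h\le\lambda^2$ for the $s$-boundary term. There the derivative costs $h$ and the embedding scale is $h$, so the exponent of $h$ in Corollary~\ref{cor-boundary-below} (c), namely $-1+1/r+1/q$ after the factor $h^{1/r}$, must be checked to be nonpositive. This holds precisely when $r\ge r_q$, and it makes the $h$-sum controlled by the endpoint $h=\lambda$, where the powers match the transition regime. I would first verify these endpoint matchings ($h=\lambda$ and $h=\lambda^2$), as the paper remarks, so that each summation reduces to evaluating the exponent at the endpoints and comparing it with the $L_j$.
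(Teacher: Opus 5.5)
Your plan is essentially the paper's proof. Applying Lemma~\ref{lem-embed-1-var} with $N=\lambda$ for the $t$-boundary and $N=\max\{\lambda,h\}$ for the $s$-boundary, the transfer estimate \eqref{ali-boundary-transfer} bounds each boundary term by the $(\alpha,0)$ or $(0,\beta)$ term of Lemma~\ref{lem-embedding} with $N_1=\lambda$, $N_2=\max\{\lambda,h\}$, so the dyadic margins of Propositions~\ref{prop-top}--\ref{prop-below} carry over unchanged.

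There are two bookkeeping slips to correct. First, at bounded horizontal frequency the $t$-term should use $N\sim1$ and then contributes only $h^{1/p-1/2}$, which is summable because $p>2$; the factor $h^{1/r+1/p-1/2}$ you wrote does not sum near $P_4$, and $r>2$ does not rescue it. Second, in Corollary~\ref{cor-boundary-below} (c) the $h$-exponent after multiplying by $h^{1/r}$ is $-1+2/r+1/q$, not $-1+1/r+1/q$; the condition $r\geq r_q$ forces it to be at most $-1/q$, which is an implication rather than an equivalence.
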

\begin{proof} 
The pieces with bounded horizontal frequency are controlled by
parts (a) and (c) of Lemma~\ref{lem-boundary}. For $\lambda\geq2^3$,
Lemma~\ref{lem-embed-1-var} gives
   \begin{align*}
   \|[\mathcal A f_{\lambda,h}(\cdot,\cdot,2^{-3})]_{r,1}\|_{L^q}
   &\lesssim \sum_{\alpha=0}^1
   \lambda^{1/r-\alpha}\mathfrak B_{q,r}^{t,\alpha}(f_{\lambda,h}),\\
   \|[\mathcal A f_{\lambda,h}(\cdot,1,\cdot)]_{r,1}\|_{L^q}
   &\lesssim \sum_{\beta=0}^1
   \max\{\lambda,h\}^{1/r-\beta}\mathfrak B_{q,r}^{s,\beta}(f_{\lambda,h}).
   \end{align*}
   Lemma~\ref{lem-boundary} (b) handles the high-vertical range,
   while Corollaries~\ref{cor-boundary-above}--\ref{cor-boundary-below}
   handle the remaining ranges. By \eqref{ali-boundary-transfer}, the
   first sum is bounded by the terms in Lemma~\ref{lem-embedding} with
   $\beta=0$, and the second by those with $\alpha=0$, upon taking
   $N_1=\lambda$ and $N_2=\max\{\lambda,h\}$.

   To make this termwise comparison explicit, suppose that
   $(1/p,1/q)\in[P_5,P_3,P_4]$ and $p\leq r\leq q$, and consider
   the transition range $\lambda^{1/2+\delta}\leq h\leq\lambda$.
   Since $[P_5,P_3,P_4]\subset[P_5,P_6,P_4]$, Corollary
   \ref{cor-boundary-above} (b) gives, for
   $\alpha,\beta\in\{0,1\}$,
   \begin{align*}
   \lambda^{1/r-\alpha}\mathfrak B_{q,r}^{t,\alpha}(f_{\lambda,h})
   &\lesssim
   \lambda^{\frac{2}{p}-\frac{3}{2q}+\frac{3}{2r}-1+\epsilon}
   h^{-\frac1r+\frac1q}\|f_{\lambda,h}\|_{L^p},\\
   \lambda^{1/r-\beta}\mathfrak B_{q,r}^{s,\beta}(f_{\lambda,h})
   &\lesssim
   \lambda^{\frac{2}{p}-\frac{3}{2q}+\frac{3}{2r}-1+\epsilon}
   h^{-\frac1r+\frac1q}\|f_{\lambda,h}\|_{L^p}.
   \end{align*}
   The same powers are obtained by combining Corollary
   \ref{cor-pqr-top} (b) with the $(\alpha,0)$ and $(0,\beta)$ terms,
   respectively, in Lemma~\ref{lem-embedding}. Since
   $-1/r+1/q\leq0$, \eqref{est-dyadic-sum} gives
   \[
   \lambda^{-L_3(p,q,r)+\delta(\frac1q-\frac1r)+C\epsilon}.
   \]
   The term involving $\delta$ is nonpositive; when $r=q$, the
   resulting logarithm is absorbed into the $\epsilon$-loss. Thus
   this is bounded by $\lambda^{-L_3(p,q,r)+C\epsilon}$, as in the
   proof of Proposition~\ref{prop-top}.

   In the remaining frequency and exponent regions, the same
   termwise comparison, combined with the H\"older and monotonicity
   reductions used in Section~\ref{subsect-Lpq-variation}, shows that
   neither boundary variation produces a larger dyadic exponent.

   The controlling margins are inherited from Propositions
   \ref{prop-top}--\ref{prop-below}. Here $L_1,\ldots,L_5$ and
   $M_1,M_2,M_3$ are defined in \eqref{ali-affine-functions} and
   \eqref{ali-auxiliary-affine-functions}, respectively. On the top region they are $L_3$
   together with $M_1$ when $r\geq p$, and $L_5$ when $r<p$. On the
   middle region they are $L_2,M_2,M_3$ when $r\geq p$, and $L_4$
   when $r<p$. On the lower region they are $L_2,M_2,M_3$ for
   $r\geq p$, $L_4,L_1$ for $r_q\leq r\leq p$, and $L_4$ for
   $r<r_q$; the same monotonicity reductions cover $r>q$. Applying
   the dyadic summation argument from the proofs of those
   propositions, including \eqref{est-dyadic-sum}, bounds each
   frequency region by $\sum_{\lambda>2^3}\lambda^{-\vartheta}$ for
   some $\vartheta>0$, after first choosing $\delta$ and then
   $\epsilon$ sufficiently small.
   Combining these estimates with the fact that the three
   regions considered in Propositions~\ref{prop-top}--\ref{prop-below}
   cover $\mathfrak P(r)$ proves the proposition.
\end{proof}

Having established the boundary variation estimates, we now combine
them with the interior variation bounds from Section
\ref{subsect-Lpq-variation}.

\begin{proof}[Proof of the sufficient parts of Theorems~\ref{thm-two-para} and \ref{thm-Lp-Lq}]
    By \eqref{ali-def-two-parameter-variation}, the interior variation
    term is controlled by Propositions
    \ref{prop-top}--\ref{prop-below}, and the two boundary variation
    terms are controlled by Proposition~\ref{prop-boundary-terms}.
    Moreover,
    $|\mathcal Af(x,1,2^{-3})|\leq\mathcal M_cf(x)$, and $\mathcal M_c$
    satisfies the required $L^p$--$L^q$ estimate on $\mathcal Q$ by
    \cite[Theorem~1.2]{LL}. Since $\mathfrak P(r)\subset\mathcal Q$,
    this proves the sufficient part of Theorem~\ref{thm-Lp-Lq}.

    To obtain Theorem~\ref{thm-two-para}, set $q=p$. On the diagonal
    section of $\mathcal Q$, where $0\leq1/p<1/2$, the conditions
    $L_2>0$ and $L_3>0$ are redundant, and the remaining three
    inequalities reduce to
    \[
        \frac1r<\min\left\{\frac3p,\frac1p+\frac14,\frac12\right\}.
    \]
    In the $(1/p,1/r)$-plane, this is precisely the interior of
    $\mathfrak Q_1$ together with the open segment $(P_1,Q_1)$. At
    $P_1$, where $p=r=\infty$, the required bound follows from the
    $L^\infty$ boundedness of $\mathcal A$ and the definition of
    $V_\infty$. This proves the sufficient part of Theorem
    \ref{thm-two-para} on its stated region.
\end{proof}

\subsection{Variational estimates for the one-parameter operator \texorpdfstring{$\mathcal A_1$}{A1}}

Finally, we prove that $V_r(\mathcal A_1)$ is bounded on
$L^p(\mr^3)$. The proof relies on the following lemma.
\begin{lem}\label{lem-one-para-tori}
    Let $\alpha\in \{0,1\}$. For any $\epsilon>0$, the following hold:
	\begingroup
	\addtolength{\leftmargini}{-2em}
    \begin{enumerate}
         \item[(a)] If $2^3\leq\lambda\leq h\leq\lambda^2$ and $\supp \widehat{f}\subset \mathbb{A}_\lambda\times \mathbb{B}_h$, then
                 \begin{align*}
                   \|\partial_t^\alpha \mathcal{A}_1f\|_{\Lp(\mr^3\times \mathbb{I})}\lesssim
                    \begin{cases}
                      \lambda^{1-\frac{5}{p}+\epsilon}h^{\alpha-1+\frac{2}{p}}\|f\|_{\Lp(\mr^3)},& p\geq6, \\[2pt]
                      \lambda^{\frac{1}{2}-\frac{2}{p}+\epsilon}h^{\alpha+\frac{1}{2p}-\frac{3}{4}}\|f\|_{\Lp(\mr^3)},& 2\leq p\leq 6.
                    \end{cases}
                  \end{align*}

         \item[(b)] If $p\geq 4$, $\lambda\geq2^3$, and $\supp \widehat{f}\subset \mathbb{A}_\lambda\times \mathbb{B}_\lambda^{\circ}$, then
               \begin{align*}
                   \|\partial_t^\alpha \mathcal{A}_1f\|_{\Lp(\mr^3\times \mathbb{I})}\lesssim \lambda^{\alpha-\frac{3}{p}+\epsilon}\|f\|_{\Lp(\mr^3)}.
               \end{align*}

         \item[(c)] If $p\geq 4$, $\lambda\geq2^3$, $h\geq\lambda^2$, and $\supp \widehat{f}\subset \mathbb{A}_\lambda\times \mathbb{B}_h$, then
             \begin{align*}
                    \|\partial_t^{\alpha}\mathcal{A}_1f\|_{\Lp(\mr^{3}\times \mathbb I)}\lesssim
                    \lambda^{-\frac{2}{p}+\epsilon}h^{\alpha-\frac{1}{2}}\|f\|_{\Lp(\mr^3)}.
             \end{align*}
     \end{enumerate}
	\endgroup
\end{lem}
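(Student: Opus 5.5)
The plan is to treat $\mathcal A_1$ as a one-parameter propagator and reuse the asymptotic expansion \eqref{ali-expansion-sigmats} with $s=c_0t$, $c_0=2^{-3}$. By the support and symbol properties recorded after \eqref{ali-expansion-sigmats}, the terms with $B_{0,0}$ and $B_{0,1}^{\pm}$ do not meet $\mathbb A_\lambda\times\mathbb B_h$ once $\lambda\geq 2^3$, up to finitely many dyadic pieces handled by Lemma~\ref{lem-A1-A2}. So it suffices to bound
\[
\mathcal U^1_tf(x):=\int e^{i(x\cdot\xi+t(|\bar\xi|+c_0|\xi|))}B(\xi,t,c_0t)\widehat f(\xi)\,d\xi ,
\]
multiplied by the amplitude size $\lambda^{-1/2}|\xi|^{-1/2}$. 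A $t$-derivative of the phase costs $|\bar\xi|+c_0|\xi|\sim\max\{\lambda,h\}$, and this produces the factor $\lambda^\alpha$ in (b) and $h^\alpha$ in (a) and (c). Derivatives falling on the amplitude are controlled by \eqref{asm-B-symbol}.

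For (c), where $h\geq\lambda^2$, I would argue as in Proposition~\ref{prop-L6L2-L2-k>2j}. Write $|\xi|=|\xi_3|+O(\lambda^2/h)$, absorb $e^{ic_0t(|\xi|-|\xi_3|)}$ into the amplitude, and expand it in a Fourier series with rapidly decaying coefficients. Splitting $\pm\xi_3>0$, the remaining operator is the two-dimensional wave operator $\mathcal W_+$ applied to $f^\pm(\cdot,x_3\pm c_0t)$. A change of variables $x_3\mapsto x_3\mp c_0t$ removes the translation, and Lemma~\ref{lem-one-qq-p} with $p=q\geq4$ gives $\lambda^{1/2-2/p+\epsilon}$. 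Multiplying by the amplitude $(\lambda h)^{-1/2}$ yields $\lambda^{-2/p+\epsilon}h^{\alpha-1/2}$.

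For (b), where $h\lesssim\lambda$, I would restrict the two-parameter estimate to the line $s=c_0t$. This uses local constancy in the direction transverse to the line at scale $\lambda^{-1}$, via the same Fourier-series argument as \eqref{ali-boundary-transfer}. The transfer costs $\lambda^{1/p}$. Lemma~\ref{lem-A4-A6}(b) with $q=p\geq4$ gives $\lambda^{\alpha-4/p+\epsilon}$, and multiplying by $\lambda^{1/p}$ gives the claimed $\lambda^{\alpha-3/p+\epsilon}$.

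For (a), where $\lambda\leq h\leq\lambda^2$, the same restriction argument loses $h^{1/p}$ and does not reach the stated exponent. I would therefore interpolate three direct estimates, and check that the claimed exponents are the affine interpolants of them:
\begin{itemize}
\item At $p=2$: Plancherel and the amplitude bound give $\lambda^{-1/2}h^{\alpha-1/2}$.
\item At $p=\infty$: an $L^1$ kernel bound as in the proof of Lemma~\ref{lem-A4-A6}(e), aiming for $\lambda h^{\alpha-1}$. I would decompose into $\lambda^{-1/2}$ angular sectors in $\bar\xi$ and suitable slabs in $\xi_3$. On each piece the phase $|\bar\xi|+c_0|\xi|$ is linearized, and the kernel is bounded by integration by parts.
\item At $p=6$: an $L^6$ local smoothing estimate, aiming for $\lambda^{1/6}h^{\alpha-2/3}$.
\end{itemize}

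I expect the $p=6$ estimate to be the main obstacle. The surface $\tau=|\bar\xi|+c_0|\xi|$ in $\mathbb R^4$ is homogeneous of degree one, so it is conic, with two nonvanishing principal curvatures of anisotropic sizes, roughly $\lambda^{-1}$ in the $\bar\xi$ angular direction and $\lambda^2h^{-3}$ in the $\xi_3$ direction. My first attempt would be to rescale parabolically in each direction separately and apply $\ell^2$ decoupling for cones with two nonvanishing curvatures, with an $\epsilon$-loss. The decoupled pieces would then be summed using Lemma~\ref{lem-annulus-L2-L6}-type $L^2\to L^6$ bounds on each plate. The remaining risk is whether the numerology at $p=6$ matches the stated exponent $\frac12-\frac2p$ in $\lambda$ and $\frac1{2p}-\frac34$ in $h$.
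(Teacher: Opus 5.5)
\textbf{Verdict: part (a) has a genuine gap.} Your Plancherel bound at $p=2$ is fine. So is your $L^\infty$ endpoint $\lambda h^{\alpha-1}$; up to $\lambda^\epsilon$ it also follows by restricting Lemma~\ref{lem-A4-A6}(a) with $p=q=\infty$ to the diagonal. The gap is the $L^6$ endpoint
\[
\|\partial_t^\alpha\mathcal A_1 f\|_{L^6(\mathbb R^3\times\mathbb I)}\lesssim \lambda^{\frac16+\epsilon}h^{\alpha-\frac23}\|f\|_{L^6(\mathbb R^3)},\qquad \lambda\le h\le\lambda^2,
\]
which you do not prove. Both lines of (a) are interpolants through this point, so without it neither follows.

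The decoupling sketch would not work as written, for three reasons.
\begin{itemize}
\item $\ell^2$ decoupling for a three-dimensional cone in $\mathbb R^4$ whose cross-section has two nonvanishing curvatures holds with only an $\epsilon$-loss for $p\le 4$. Applying it directly at $p=6$ costs an extra power.
\item Summing the decoupled pieces with $L^2\to L^6$ bounds such as Lemma~\ref{lem-annulus-L2-L6} cannot produce an $L^6\to L^6$ estimate. After decoupling, the per-plate input must be the trivial $L^p\to L^p$ bound (the phase is essentially linear on a plate), together with $\sum_\theta\|f_\theta\|_p^p\lesssim\|f\|_p^p$.
\item The expansion \eqref{ali-expansion-sigmats} also produces the mixed-sign phases $\pm t(|\bar\xi|-c_0|\xi|)$. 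Their Hessian has eigenvalues of opposite signs, roughly $1/\lambda$ and $-c_0/h$, so $\ell^2$ decoupling is false there and you would need $\ell^p$ decoupling.
\end{itemize}

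Heuristically, the exponents are not the obstacle. The correct plate count is $N\approx\lambda^{3/2}h^{-1/2}$, which is exactly the $L^\infty$ bound after removing the amplitude factor $(\lambda h)^{-1/2}$. An $\ell^p$ decoupling at $p=4$ (loss $N^{1/4+\epsilon}$), interpolated with that $L^\infty$ bound, gives $\lambda^{3/4+\epsilon}h^{-1/4}$ at $p=6$ for the normalized operator. Since $h\ge\lambda$, this is no worse than the required $\lambda^{2/3}h^{-1/6}$. What is missing is a decoupling theorem for this anisotropic cone (hyperbolic in the mixed-sign case), uniform in $h/\lambda$. Your proposal neither states nor proves one.

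The paper does not prove this endpoint either. It takes (a) for $p\ge6$, and (b) and (c) for $\alpha=0$, from \cite[Proposition~4.2]{LL}. Its own steps are only the Plancherel bound at $p=2$, interpolation for $2\le p\le 6$, and the $O(h)$ or $O(\lambda)$ cost of the $t$-derivative. To close (a) you must either invoke that result or supply the decoupling argument in full.

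Your derivations of (b) and (c) are sound and give a self-contained alternative to the citation for those two parts. For (b) you transfer Lemma~\ref{lem-A4-A6}(b) to the diagonal by local constancy at scale $\lambda^{-1}$, at a cost of $\lambda^{1/p}$. For (c) you reduce to $\mathcal W_\pm$ and apply Lemma~\ref{lem-one-qq-p}.
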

\begin{proof}
    For $\alpha=0$, parts (a)--(c) for $p\geq6$, $p\geq4$, and
    $p\geq4$, respectively, follow from
    \cite[Proposition~4.2 (a)--(c)]{LL} in the present dyadic scale
    notation. In part (a), apply the cited estimate with loss
    $\epsilon/2$. Since $\lambda\leq h\leq\lambda^2$, its factor
    $h^{\epsilon/2}$ is bounded by $\lambda^\epsilon$.

    It remains to prove part (a) for $2\leq p\leq6$. On
    $\mathbb A_\lambda\times\mathbb B_h$, with $\lambda\leq h$, the
    expansion \eqref{ali-expansion-sigmats} and the symbol bounds
    \eqref{asm-B-symbol} give a multiplier of size
    $O((\lambda h)^{-1/2})$, uniformly for $t\in\mathbb I$.
    Plancherel's theorem in the spatial variables therefore gives
    \[
        \|\mathcal A_1f\|_{L^2(\mr^3\times\mathbb I)}
        \lesssim(\lambda h)^{-1/2}\|f\|_{L^2(\mr^3)}.
    \]
    Interpolation with the $p=6$ estimate gives the second line in
    part (a).

    For $\alpha=1$, the identity
    $\mathcal A_1f(x,t)=\mathcal Af(x,t,c_0t)$ shows that
    differentiation acts as $\partial_t+c_0\partial_s$. The phase
    multipliers in \eqref{ali-expansion-sigmats} have size $O(h)$ on
    $\mathbb A_\lambda\times\mathbb B_h$ when $h\geq\lambda$, and
    size $O(\lambda)$ on
    $\mathbb A_\lambda\times\mathbb B_\lambda^\circ$. Parameter
    derivatives falling on the symbols are controlled by
    \eqref{asm-B-symbol} and produce no larger contribution. Applying
    the preceding $\alpha=0$ estimates with these respective bounds
    proves the assertions for $\alpha=1$.
\end{proof}

\begin{proof}[Proof of the sufficient part of Theorem~\ref{thm-one-para}]
    By Theorem~\ref{thm-two-para} and
    \eqref{ali-diagonal-var-comparison}, it remains to treat the part
    of the one-parameter region not already covered by the
    two-parameter result.
    We prove the following estimate
    \[
        \|V_r(\mathcal A_1f)\|_{\Lp(\mr^3)}
        \lesssim\|f\|_{\Lp(\mr^3)}
    \]
    for exponents in the larger range $p\geq4$ and
    $2/p\leq1/r<\min\{1/2,3/p\}$.
    Decompose
    \[
        f=f_{\leq2^3,\leq2^3}
        +\sum_{h>2^3}f_{\leq2^3,h}
        +\sum_{\lambda>2^3}f_{\lambda,\leq\lambda}
        +\sum_{\lambda>2^3}\sum_{\lambda<h<\lambda^2}f_{\lambda,h}
        +\sum_{\lambda>2^3}\sum_{h\geq\lambda^2}f_{\lambda,h}.
    \]
    The fully low-frequency term is controlled by
    Lemmas~\ref{lem-embed-1-var} and \ref{lem-A1-A2} (a) at a fixed
    scale. For $f_{\leq2^3,h}$, apply Lemma~\ref{lem-embed-1-var}
    with $N=h$ and use Lemma~\ref{lem-A1-A2} (b). For the remaining
    high-horizontal-frequency terms, apply that lemma with $N=\lambda$
    to $f_{\lambda,\leq\lambda}$ and with $N=h$ to the last two sums,
    and then use Lemma~\ref{lem-one-para-tori}. H\"older's inequality
    and summation first in $h$ give the required geometric decay.

    For $p\geq6$, the resulting powers of $\lambda$ in the intermediate
    and vertical ranges are, respectively,
    $1/r-3/p+\epsilon$ and
    $-2/p+2/r-1+\epsilon$. The bounded-horizontal and low-vertical
    sums converge because $1/r<1/2$ and $1/r<3/p$, respectively.
    Hence, after choosing $\epsilon>0$ sufficiently small, there is
    $\vartheta>0$ such that
    \[
        \|V_r(\mathcal A_1f)\|_{\Lp(\mr^3)}
        \lesssim
        \Big(1+\sum_{\lambda>2^3}\lambda^{-\vartheta}\Big)
        \|f\|_{\Lp(\mr^3)}
        \lesssim\|f\|_{\Lp(\mr^3)}.
    \]

    For $4\leq p\leq6$, the second line of Lemma
    \ref{lem-one-para-tori} (a) gives the intermediate exponent
    $1/r-3/(2p)-1/4+\epsilon$, while the vertical exponent remains
    $-2/p+2/r-1+\epsilon$. Both are negative because
    $2/p\leq1/r<1/2\leq3/(2p)+1/4$. The bounded-horizontal and
    low-vertical sums converge since $1/r<1/2\leq3/p$. The preceding
    estimate therefore also holds in this range, with a possibly
    different $\vartheta>0$.
\end{proof}

This completes the proofs of the sufficient parts of all three main theorems.
\section{Necessary conditions}\label{sec-nec-cond}
We now prove the necessary parts of Theorems~\ref{thm-one-para}, \ref{thm-two-para}, and \ref{thm-Lp-Lq}.
Here $j$ and $k$ are integer parameters used in the constructions below, rather than dyadic frequency indices. Whenever a construction uses $k$, we write $h:=2^k$, so that $h^{-1}=2^{-k}$.
Throughout this section, $\varepsilon_0>0$ denotes a sufficiently
small constant independent of $k$, and whenever $k\gg1$, we write
$N_k:=\lfloor\varepsilon_0 2^k\rfloor$. We begin with the following proposition.

\subsection{Necessary conditions for
\texorpdfstring{$V_r(\mathcal A_1)$ and $V_r(\mathcal A)$}
{Vr(A1) and Vr(A)}}

We recall the affine functions $L_1,\ldots,L_5$ defined in
\eqref{ali-affine-functions}. 

\begin{prop}\label{prop-nece-r>2}
     Suppose that either $V_{r}(\mathcal{A}_1)$ or $V_r(\mathcal{A})$ is bounded from $L^{p}(\mr^3)$ to $L^q(\mr^3)$.
     Then the following hold:

     \textnormal{(a)} $r\geq 2$.

     \textnormal{(b)} $L_1(p,q,r)\geq 0$.

     \textnormal{(c)} If the boundedness assumption holds for $V_r(\mathcal A)$, then
     $L_4(p,q,r)\geq 0$.
\end{prop}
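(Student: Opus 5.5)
We prove the three conditions by explicit constructions. We first reduce the statement to two cases. Suppose $V_r(\mathcal A)$ is bounded $L^p\to L^q$. Then \eqref{ali-diagonal-var-comparison} shows that $V_{r+\eta}(\mathcal A_1)$ is bounded for every fixed $\eta>0$, with a constant depending only on $(r,\eta)$. Both (a) and (b) are closed conditions that are continuous in $1/r$. So it suffices to prove (a) and (b) for $V_r(\mathcal A_1)$ and then let $\eta\to0$. Part (c) is genuinely two-parameter and will use the rectangular component $\{\mathcal A\}_{r,2}\le V_r(\mathcal A)$.

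\emph{(a).} We would adapt the random-sign argument of Jones--Seeger--Wright. Fix a smooth bump, and let $f$ be a finite sum $\sum_k \pm_k f_k$ with independent random signs. Each $f_k$ is frequency localized at scale $2^k$ and normalized so that, for $x$ in a fixed set of positive measure, $t\mapsto\mathcal A_1f_k(x,t)$ oscillates about $2^k$ times on $\mathbb I$ with amplitude $\approx 2^{-k/2}$. This amplitude follows from the decay $(\lambda h)^{-1/2}$ in \eqref{asm-B-symbol}, after the $f_k$ are normalized in $L^p$. By Khintchine's inequality, the sum then behaves like a Brownian path in $t$. Sampling the partition at scale $2^{-k}$ gives $r$-variation at least $(2^k\cdot 2^{-kr/2})^{1/r}$. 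This is unbounded in $k$ if $r<2$, while $\mathbb E\|f\|_p$ stays bounded.

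\emph{(b).} Let $\delta=2^{-k}$, and choose $t_j$, $1\le j\le N_k$, in $\mathbb I$ with separation $C\delta$. Let $f$ be the indicator of the union of the $\delta$-neighborhoods of the tori $\mathbb T_{t_j,c_0t_j}$ reflected through the origin. These tori are pairwise $\gtrsim\delta$-separated and fill a region of bounded measure, so $\|f\|_p\lesssim1$. For $|x|\le c\delta$, the average $\mathcal A_1f(x,t)$ is $\gtrsim1$ when $|t-t_j|\le c\delta$. Halfway between consecutive $t_j$ it is $\le 1/2$, provided $C$ is large. Alternating between these two families of times gives $V_r(\mathcal A_1f)(x)\gtrsim N_k^{1/r}\approx\delta^{-1/r}$ on a set of measure $\approx\delta^3$. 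Hence $\delta^{3/q-1/r}\lesssim1$, and letting $k\to\infty$ yields $L_1(p,q,r)\ge0$.

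\emph{(c).} This is the main obstacle, and the construction must live at the anisotropic scale $h\approx\lambda^{1/2}$ singled out in the introduction. With $\lambda=2^{2k}$, we would take $f$ to be a sum of $\delta$-plates, with $\delta=\lambda^{-1}$. The plates are adapted to the two-parameter family $\mathbb T_{t_j,s_l}$, which is why no single torus neighborhood suffices: a Knapp-type piece of width $\lambda^{-1/2}$ in the $x_3$-direction is used, so that its $L^p$ norm contributes the $-\tfrac{1}{2p}$ term. The aim is that, for $x$ in a ball of radius $\approx\lambda^{-1}$ stretched to $\lambda^{-1/2}$ vertically, $\mathcal Af(x,\cdot,\cdot)$ approximately factors as a checkerboard $g(t)\tilde g(s)$. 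Each factor should have about $N\approx\lambda^{1/2}$ sign changes and total amplitude $a$. Then every rectangular increment over the product grid is $\approx a$, and $\{\mathcal Af(x)\}_{r,2}\gtrsim a\,N^{2/r}$; this is where the $-2/r$ in $L_4$ originates. The plan is to tune $N$, $a$, the measure of the good $x$-set (giving $5/(2q)$) and $\|f\|_p$ (giving $-1/(2p)$) so that the resulting inequality is exactly $L_4\ge0$. The delicate point is the approximate factorization. The torus phase $t|\bar\xi|+s|\xi|$ couples $t$ and $s$, and the coupling is only negligible, at the level of Lemma~\ref{lem-intersection}, when $\xi_3\lesssim\lambda^{1/2}$. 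Verifying that the rectangular increments do not cancel in this regime is the step where I expect the real work to lie.
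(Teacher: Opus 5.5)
\textbf{Part (c) is not proved.} Your reductions match the paper: \eqref{ali-diagonal-var-comparison} with $\eta\downarrow0$ for (a) and (b), and the rectangular component for (c). But in (c) no function is constructed, and the step you defer (non-cancellation of the rectangular increments) is the entire content.

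Your bookkeeping also cannot give $L_4$. With a good set of measure $\lambda^{-5/2}$ and $\|f\|_p\approx\lambda^{-1/(2p)}$, you need $aN^{2/r}\approx\lambda^{2/r-1/2}$ with $a,N$ independent of $r$. That forces $a\approx\lambda^{-1/2}$ and $N\approx\lambda$ sign changes per parameter, i.e.\ spacing equal to the plate thickness $\lambda^{-1}$. With your $N\approx\lambda^{1/2}$ you only get $\lambda^{1/r}$.

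The missing idea is that no Fourier-side factorization is needed; work in physical space at the inner equator $\theta=\pi$. There $\mathbb T_{t,s}$ has horizontal radius $t-s$. The band $|s\sin\theta|\lesssim h^{-1/2}$ stays within $O(h^{-1})$ of that radius, since $s(1+\cos\theta)=s\sin^2\theta/(1-\cos\theta)$. The paper's construction runs as follows.
\begin{itemize}
\item Take $f=\sum_j(-1)^j\chi_{E_j}$, with $h=2^k$ and $E_j$ annular shells of radius $1-2^{-3}-2^{-5}+jh^{-1}$, radial width $\approx h^{-1}$ and height $\approx h^{-1/2}$.
\item On the grid $t_m=1+mh^{-1}$, $s_n=2^{-3}+nh^{-1}$ (with $m\le n$), only the shell $j_{m,n}=2^{k-5}+m-n$ is met.
\item Hence $(-1)^{j_{m,n}}\mathcal Af(x,t_m,s_n)\gtrsim h^{-1/2}$ for $|\bar x|\lesssim h^{-1}$ and $|x_3|\lesssim h^{-1/2}$.
\item Since $(-1)^{m-n}=(-1)^m(-1)^n$, a function of $t-s$ alternating at scale $h^{-1}$ is automatically a checkerboard on the grid. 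All four terms of each rectangular increment then have the same sign, so each increment is $\gtrsim h^{-1/2}$ on $\approx h^2$ rectangles, with nothing to cancel.
\item The shells occupy a fixed fraction of a slab of height $h^{-1/2}$, so $\|f\|_p\approx h^{-1/(2p)}$. The ratio $h^{2/r-1/2-5/(2q)+1/(2p)}$ then gives $L_4\geq0$.
\end{itemize}
Your remark that the $t$--$s$ coupling disappears for $|\xi_3|\lesssim\lambda^{1/2}$ is the Fourier-side shadow of this: there the phase $t|\bar\xi|-s|\xi|$ is essentially $(t-s)|\bar\xi|$.

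\textbf{Parts (b) and (a) contain false claims.} In (b), the tori $\mathbb T_{t,c_0t}=t\,\mathbb T_{1,c_0}$ are not pairwise separated. The meridian circles $(R-t)^2+y_3^2=c_0^2t^2$ and $(R-t')^2+y_3^2=c_0^2t'^2$ intersect whenever $t'/t\leq(1+c_0)/(1-c_0)$, near the envelope $\cos\theta=-c_0$. Your bound ``halfway the average is $\leq1/2$'' is still true, but it needs a transversality count near the envelope that uses that $f$ is the indicator of a union. The simpler fix is the paper's: restrict the neighborhoods to the outer cap $\theta\in[0,\pi/8]$. There the pieces are genuinely disjoint and $\mathcal A_1f(x,t_{2m-1})=0$ exactly.

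In (a), the random signs are unnecessary. The claim that $\mathbb E\|f\|_p$ stays bounded is unjustified: for $L^p$-normalized $f_k$ on a common region, Khintchine gives growth like $K^{1/2}$. However, your single-scale count already gives the ratio $2^{k(1/r-1/2)}$, which is exactly what the paper uses. The lower bound on $|\mathcal A_1f_k|$ must come from a stationary-phase computation for a concrete function, not from \eqref{asm-B-symbol}, which gives only upper bounds. The paper uses the chirp $f_k=2^{k/2}e^{-i2^{k-1}y_3^2}\vartheta(y_3)\psi(\bar y)$, whose phase has the nondegenerate critical point $\theta=\pi/2$. This yields $\mathcal A_1f_k=b\,e^{-i2^{k-1}(c_0t-x_3)^2}+O(2^{-k})$ with $|b|\approx1$, and hence $\approx2^k$ sign reversals on a fixed set.
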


When $r=\infty$, the necessary
conditions  are immediate for $V_\infty(\mathcal A_1)$.
For $V_\infty(\mathcal A)$, they follow from the corresponding
maximal-function obstruction, since
$L_i(p,q,\infty)\geq0$, $1\leq i\leq5$, throughout
$\overline{\mathcal Q}$. Thus, in the proofs below, we may assume
that $r<\infty$.

\begin{proof}
    (a) For every fixed $\eta>0$, \eqref{ali-diagonal-var-comparison}
    gives
    \[
        V_{r+\eta}(\mathcal A_1f)\leq C_{r,\eta}V_r(\mathcal Af).
    \]
    It therefore suffices to prove that boundedness of
    $V_\rho(\mathcal A_1)$ from $L^p$ to $L^q$ forces $\rho\geq2$.
    Under the direct boundedness assumption for $V_r(\mathcal A_1)$,
    take $\rho=r$. Under the boundedness assumption for
    $V_r(\mathcal A)$, take $\rho=r+\eta$ and then let
    $\eta\downarrow0$; no uniformity of $C_{r,\eta}$ is needed.

    Choose a nonnegative function $\psi\in C_c^\infty(\mr^{2})$ such that $\psi(\bar y)=1$ whenever $\frac12\leq |\bar y|\leq 3$.
    Let $\vartheta\in C_c^\infty(\mr)$ be nonnegative, with $\vartheta=1$ on $[-\frac13,-\frac1{10}]$ and $\supp \vartheta\subset [-\frac12,-\frac1{16}]$.
	For $k\gg 1$, define
	\[f_{k}(\bar y,y_{3}):=2^{\frac k2}e^{-i2^{k-1}y_{3}^{2}}\vartheta(y_{3})\psi(\bar y).\]
	Then $\|f_{k}\|_{L^{p}(\mr^{3})}\lesssim 2^{k/2}$.
	Next we estimate $\mathcal{A}_1f_{k}(x,t)$ for $x$ near the origin. Let
	\[\widetilde{E}:=\{x=(\bar x,x_{3})\in \mr^{3}: |\bar x|\leq\varepsilon_0,\ 0\leq x_{3}\leq\varepsilon_0\}.\]
	For $x\in \widetilde{E}$ and $t\in [1,2]$, we have
	\[
		\mathcal{A}_1f_{k}(x,t)=2\pi\,2^{\frac k2}\int_{0}^{2\pi}e^{-i2^{k-1}(x_{3}-c_{0}t\sin\theta)^{2}}\vartheta(x_{3}-c_{0}t\sin\theta)\,d\theta.
	\]
	This is a one-dimensional oscillatory integral. On the support of the
	amplitude, the phase $\theta\mapsto(x_3-c_0t\sin\theta)^2$, with
	$(x_3,t)\in[0,\varepsilon_0]\times[1,2]$, has the unique critical point
	$\theta=\pi/2$, which is nondegenerate. More precisely, if
	$\phi_{t,x_3}(\theta)=(x_3-c_0t\sin\theta)^2$, then
	\[\phi_{t,x_3}''(\pi/2)=2c_0t(x_3-c_0t),\] whose absolute value is bounded
	below uniformly on the indicated $(t,x_3)$-rectangle. The other critical point, at $3\pi/2$, lies outside the support of the amplitude. Hence, the stationary phase method (see, for example,
	\cite[Theorem~7.7.5]{Hor}) yields, uniformly in $(t,x_3)$,
	\[\mathcal{A}_1f_{k}(x,t)=b(t,x_3)\,e^{-i2^{k-1}(c_{0}t-x_{3})^{2}}+O(2^{-k}),\]
	where $b$ is smooth, $|b(t,x_3)|\sim1$, and
	$|\partial_tb(t,x_3)|\lesssim1$ uniformly for
	$(x_3,t)\in[0,\varepsilon_0]\times[1,2]$.

    For $1\leq n\leq N_k$ and $0\leq x_3\leq\varepsilon_0$, define
    \[
        t_n(x_3):=2^3\big(\sqrt{2^{-6}+n\pi2^{1-k}}+x_3\big).
    \]
   Then, for $1\leq n\leq N_k-1$,
\[
e^{-i2^{k-1}(c_0t_{n+1}(x_3)-x_3)^2}
=-e^{-i2^{k-1}(c_0t_n(x_3)-x_3)^2}.
\]
For the same range of $n$, we have
$t_n(x_3),t_{n+1}(x_3)\in[1,2]$ and
$|t_{n+1}(x_3)-t_n(x_3)|\sim2^{-k}$. Consequently, the mean value
	    theorem and the bounds on $b$ give
	    $b(t_{n+1}(x_3),x_3)$ $=b(t_n(x_3),x_3)+O(2^{-k})$.
    Combining these estimates with the stationary-phase expansion, we obtain
    \[
        \big|
        \mathcal A_1f_k(x,t_{n+1}(x_3))
        -\mathcal A_1f_k(x,t_n(x_3))
        \big|
        =2|b(t_n(x_3),x_3)|+O(2^{-k})
        \gtrsim1
    \]
    uniformly for $x\in\widetilde E$ and $1\leq n\leq N_k-1$, provided that $k$ is sufficiently large. Therefore,
    \[
        V_r(\mathcal A_1f_k)(x)
        \geq
        \bigg(
        \sum_{n=1}^{N_k-1}
        \big|
        \mathcal A_1f_k(x,t_{n+1}(x_3))
        -\mathcal A_1f_k(x,t_n(x_3))
        \big|^r
        \bigg)^{1/r}
        \gtrsim2^{k/r}.
    \]
    Since $\widetilde E$ is fixed and $\|f_k\|_{\Lp(\mr^3)}\lesssim2^{k/2}$, it follows that
    \[
        \frac{
        \|V_r(\mathcal A_1f_k)\|_{L^q(\mr^3)}
        }{
        \|f_k\|_{\Lp(\mr^3)}
        }
        \gtrsim2^{k(1/r-1/2)}.
    \]
    Hence boundedness forces $1/r-1/2\leq0$, and therefore $r\geq2$.

	    (b) First suppose that $V_r(\mathcal A_1)$ is bounded from $L^p$ to $L^q$.
	    Let $k\gg 1$, $t_n:=1+n2^{-2-k}$ and $s_n:=2^{-3}t_n$ for $1\leq n\leq 2^k$. Define
	    \[
	        E_{2m}:=\{\Phi_{t_{2m},u}(\theta,\phi):\theta\in[0,\pi/8],\ \phi\in[0,2\pi),\ u\in[s_{2m}-\varepsilon_0 2^{-k},s_{2m}+\varepsilon_0 2^{-k}]\},
	    \]
	    and set $f_k:=\sum_{m=1}^{2^{k-1}}\chi_{E_{2m}}$.

	    On the indicated parameter region, the Jacobian of
	    $(\theta,\phi,u)\mapsto\Phi_{t_{2m},u}(\theta,\phi)$ is
	    $u(t_{2m}+u\cos\theta)\sim1$, and hence $|E_{2m}|\sim2^{-k}$.
		    The central symmetry of the torus and the uniform local
		    parametrization imply that, for
		    $x\in\widetilde E:=\{x\in\mr^3:|x|\leq\varepsilon_0^2 2^{-k}\}$,
    \[
        \mathcal Af_k(x,t_{2m},s_{2m})
        \geq\int_{[0,2\pi)^2}\chi_{E_{2m}}\big(x-\Phi_{t_{2m},s_{2m}}(\theta,\phi)\big)\,d\theta\,d\phi
        \geq C.
    \]

    We next verify the separation. For $y\in E_{2m}$, put $R=|\bar y|$. Then
    \[
        (R-t_{2m})^2+y_3^2=u^2,
        \qquad
        \frac{R-t_{2m}}u\geq\cos(\pi/8).
    \]
    Suppose that $E_{2m}$ meets
    $x-\mathbb T_{t_n,s_n}$ for some $n\neq2m$. For
    $y\in E_{2m}\cap(x-\mathbb T_{t_n,s_n})$, put
    $R=|\bar y|$ and $d=t_n-t_{2m}$. Subtracting the corresponding
    circle equations gives
    \[
        2d(R-t_{2m})
        =(1-c_0^2)d^2-2c_0^2t_{2m}d
        +u^2-c_0^2t_{2m}^2+O(\varepsilon_0^2 2^{-k}),
    \]
    where the error is due to the translation by $x$. Since
    $|d|\geq2^{-k-2}$ and
    $|u-c_0t_{2m}|\leq\varepsilon_0 2^{-k}$, it follows that
    \[
        \frac{R-t_{2m}}u
        =\frac{(1-c_0^2)d/2-c_0^2t_{2m}}{c_0t_{2m}}
        +O(\varepsilon_0)
        <\cos(\pi/8).
    \]
    Here $|d|\leq1/4$ and $1\leq t_{2m}\leq5/4$; when $d<0$,
    the main term is even
    smaller. This contradicts the preceding lower bound.

    The same calculation, now allowing the minor-radius parameter to vary
within $\varepsilon_0 2^{-k}$ of $s_n$, shows that the sets $E_{2m}$
are pairwise disjoint. Therefore only the matching even index contributes, and no set contributes at an odd index. Since $s_n=c_0t_n$,
    \[
        \mathcal A_1f_k(x,t_{2m})\geq C,
        \qquad
        \mathcal A_1f_k(x,t_{2m-1})=0.
    \]
	    Moreover, $\|f_k\|_{\Lp(\mr^3)}\lesssim1$.  	    Thus, for $x\in\widetilde E$,
	    \[
	        V_r(\mathcal A_1f_k)(x)\geq\left(\sum_{n=1}^{2^k-1}|\mathcal A_1f_k(x,t_{n+1})-\mathcal A_1f_k(x,t_n)|^r\right)^{1/r}\geq C2^{k/r},
	    \]
	    which gives
	    \[
	        \|V_r(\mathcal A_1f_k)\|_{L^q(\mr^3)}\gtrsim2^{k/r}|\widetilde E|^{1/q}=C2^{k(1/r-3/q)}.
	    \]
	    Consequently, boundedness forces $1/r\leq3/q$. If instead
	    $V_r(\mathcal A)$ is bounded, then
	    \eqref{ali-diagonal-var-comparison} gives the boundedness of
	    $V_{r+\eta}(\mathcal A_1)$ for every fixed $\eta>0$. Applying the
	    preceding conclusion with $r+\eta$ in place of $r$ and letting
	    $\eta\downarrow0$ gives $1/r\leq3/q$. This proves $L_1(p,q,r)=\frac{3}{q}-\frac{1}{r}\geq 0$.

    (c) Let $k\gg1$. Define
    \[
        f_k:=\sum_{j=0}^{2^{k-5}}(-1)^j\chi_{E_j},
    \]
    where
    \[
        E_j:=\{(\bar x,x_3)\in\mr^3:
        \big||\bar x|-(1-2^{-3}-2^{-5}+j2^{-k})\big|
        \leq2^{-k-5},\
        |x_3|\leq2^{-2-k/2}\}.
    \]
	    For $0\leq m,n\leq2^{k-5}$, set
	    \[
	        t_m:=1+m2^{-k}, \quad
	        s_n:=2^{-3}+n2^{-k}, \quad
	        j_{m,n}:=2^{k-5}+m-n.
		    \]

    Let
    \[
        \widetilde E:=
        \{(\bar x,x_3)\in\mr^3:
        |\bar x|\leq\varepsilon_0^2 2^{-k},\
        |x_3|\leq\varepsilon_0 2^{-k/2}\}.
    \]
    We verify the relevant support properties. Recall that $h^{-1}=2^{-k}$.
	    If $x-\Phi_{t_m,s_n}(\theta,\phi)$ belongs to one of the shells,
	    then its vertical support gives
	    $|s_n\sin\theta|\leq(2^{-2}+\varepsilon_0)h^{-1/2}$, while its
	    radial support forces $\cos\theta\leq-1/2$. Using
	    $s_n(1+\cos\theta)=s_n\sin^2\theta/(1-\cos\theta)$, we obtain
    \[
        \big|
        |\overline{x-\Phi_{t_m,s_n}(\theta,\phi)}|
        -(t_m-s_n)
        \big|
	        <\frac{3}{8h},
    \]
	where the bar denotes the horizontal component.
    For $0\leq m\leq n\leq2^{k-5}$, one has
    $0\leq j_{m,n}\leq2^{k-5}$. Since $2^{-5}=2^{k-5}h^{-1}$, the
    definitions of $t_m$, $s_n$, and $j_{m,n}$ give
    $t_m-s_n=1-2^{-3}-2^{-5}+j_{m,n}h^{-1}$. Thus the radial center of
    $E_{j_{m,n}}$ is exactly $t_m-s_n$. The shell centers are separated by
    $h^{-1}$ and have radial half-width $1/(32h)$. Hence, only
    $E_{j_{m,n}}$ can contribute.
	   Conversely, if $|\theta-\pi|\leq2^{-4}h^{-1/2}$ and
$0\leq\phi<2\pi$, then
$x-\Phi_{t_m,s_n}(\theta,\phi)$ lies in the matching shell.
This parameter region has measure comparable to $h^{-1/2}$.  Therefore,
	    for $x\in\widetilde E$ and $0\leq m\leq n\leq2^{k-5}$,
	    \[
	        (-1)^{j_{m,n}}\mathcal Af_k(x,t_m,s_n)
	        \gtrsim h^{-1/2}=2^{-k/2}.
	    \]

    Replacing $(m,n)$ by $(m+1,n)$ or $(m,n+1)$ changes
    $j_{m,n}$ by $1$ or $-1$, whereas changing both indices leaves
    its parity unchanged. Thus, once the signs in the rectangular difference are taken into
    account, all four terms have the same sign. Hence, for $0\leq m<n<2^{k-5}$,
    \[
    \begin{aligned}
        \big|
        &\mathcal Af_k(x,t_m,s_n)
        -\mathcal Af_k(x,t_{m+1},s_n) \\
        &-\mathcal Af_k(x,t_m,s_{n+1})
        +\mathcal Af_k(x,t_{m+1},s_{n+1})
        \big|
        \gtrsim2^{-k/2}.
    \end{aligned}
    \]
    Including the points $t_m$ and $s_n$ in partitions of
    $\mathbb I$ and $\mathbb I_{-3}$, respectively, and summing over the
    $\approx 2^{2k}$ pairs
    $0\leq m<n<2^{k-5}$, we obtain
    \[
        \widetilde{V}_r(\mathcal Af_k)(x)
        \gtrsim2^{k(2/r-1/2)}.
    \]
    Since $|\widetilde E|\sim2^{-5k/2}$,
    \[
        \|\widetilde{V}_r(\mathcal Af_k)\|_{L^q(\mr^3)}
        \gtrsim2^{k(2/r-1/2-5/(2q))}.
    \]
    Moreover, the shells $E_j$ are pairwise disjoint and
    \[
        \|f_k\|_{\Lp(\mr^3)}
        =\Big(\sum_{j=0}^{2^{k-5}}|E_j|\Big)^{1/p}
        \lesssim2^{-k/(2p)}.
    \]
    Boundedness therefore yields $L_4(p,q,r)=-\frac{1}{2p}+\frac{5}{2q}-\frac{2}{r}+\frac{1}{2}\geq0$.
\end{proof}

\subsection{Additional necessary conditions for
\texorpdfstring{$V_r(\mathcal A)$}{Vr(A)}}

Proposition~\ref{prop-nece-r>2} gives $r\geq2$ and
$L_1(p,q,r)\geq0$ for both $V_r(\mathcal A_1)$ and
$V_r(\mathcal A)$, as well as $L_4(p,q,r)\geq0$ for
$V_r(\mathcal A)$.
Three further constructions establish the remaining necessary conditions.

\begin{prop}\label{prop-nec-Lp-Lq}
	Suppose that $V_r(\mathcal{A})$ is bounded from $L^p(\mr^3)$ to
	$L^q(\mr^3)$. Then the following hold:

	\textnormal{(a)} $L_2(p,q,r)\geq 0$.

	\textnormal{(b)} $L_5(p,q,r)\geq 0$.

	\textnormal{(c)} $L_3(p,q,r)\geq 0$.
\end{prop}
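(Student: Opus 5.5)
The plan is to prove each of the three conditions with an explicit family $f_k$ modeled on the construction in Proposition~\ref{prop-nece-r>2} (c). Each family combines three ingredients: a lattice of parameter points $(t_m,s_n)$ with spacing comparable to $2^{-k}$ in at least one variable, a sum of alternating-sign shells adapted to the relevant portion of the tori, and a small spatial set $\widetilde E$ on which every rectangular increment $\mathcal A f_k(I_m\times J_n)$ has a definite size. In each case the argument ends with the bookkeeping
\[
\|V_r(\mathcal A f_k)\|_{L^q}\gtrsim (\#\{(m,n)\})^{1/r}\, a_k\, |\widetilde E|^{1/q},\qquad \|f_k\|_{L^p}\lesssim b_k ,
\]
where $a_k$ is the common lower bound for the increments. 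Boundedness then forces the resulting power of $2^k$ to be nonpositive. The coefficients of $1/p$, $1/q$, $1/r$ in $L_2$, $L_5$, $L_3$ tell us what to aim for. The term $-2/r$ in $L_5$ means roughly $2^{2k}$ rectangles, as in (c). The terms $-1/r$ in $L_2$ and $L_3$ mean roughly $2^k$ rectangles, with only one parameter varying at scale $2^{-k}$. The coefficients of $1/p$ and $1/q$ fix the measures of $\operatorname{supp} f_k$ and of $\widetilde E$, and the size $a_k$.

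For (b) ($L_5\ge0$), I would repeat the proof of (c) at the diagonal scale $h=\lambda=2^k$. The vertical cutoff $|x_3|\le 2^{-2-k/2}$ is replaced by a full angular window, and the shells are replaced by $2^{-k}$-neighborhoods of whole tori $\mathbb T_{t_m,s_n}$ carrying the signs $(-1)^{m+n}$. These would be arranged so that, for $x$ in $\widetilde E=\{|x|\le\varepsilon_0 2^{-k}\}$, only the matching neighborhood meets $x-\mathbb T_{t_m,s_n}$, giving increments with $a_k\sim1$. The count is $2^{2k}$ rectangles and $|\widetilde E|\sim2^{-3k}$. The input would be normalized so that $\|f_k\|_p$ produces the required $1/q-1/p$ balance. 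Since $L_5=1-2/r+1/q-1/p$, I expect the target is $2^{k(2/r-1)}\cdot 2^{k(1/p-1/q)}$, i.e.\ $a_k\sim 2^{-k}$ with $|\widetilde E|\sim|\operatorname{supp} f_k|$. If that is right, the more natural route is to keep the shells of (c) but replace $\widetilde E$ by a set of measure comparable to the support of $f_k$, exploiting translation invariance in $\bar x$ along circles.

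For (a) and (c), which involve $-1/r$, I would fix one parameter and let the other run over $2^k$ (or $2^{k/2}$) points, so that the rectangular increment reduces to a one-parameter difference of a differenced pair. For (a) ($L_2$, where $5/(2q)$ and $3/(2p)$ appear), the anisotropic scale $h\approx\lambda^{1/2}$ enters. I would use shells of radial width $2^{-k}$ and vertical height $2^{-k/2}$ as in (c), but with signs depending only on $j$ through $t_m$ while $s$ is frozen at two values. This gives $\widetilde E$ of measure $2^{-5k/2}$ and $a_k\sim2^{-k/2}$, and the remaining task is to match the coefficient $-3/(2p)$ through the number of shells used. For (c) ($L_3=1-2/p+1/q-1/r$), I would use a Knapp-type example at the intermediate scale $\lambda\le h\le\lambda^2$. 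The data would be concentrated on $2^k$ thin plates adapted to caps of the torus where both curvatures matter, reproducing the $P_3P_4$ edge of the maximal region when $r=\infty$ (where $L_3$ reduces to $M_1$-type geometry).

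The main obstacle is the separation step. In each construction one must verify, as in (c), that for $x\in\widetilde E$ only the matching piece of $f_k$ meets $x-\mathbb T_{t,s}$, so that all four corners of every rectangle have aligned signs. I would carry this out using the identity $s(1+\cos\theta)=s\sin^2\theta/(1-\cos\theta)$ and the radial-centre computation from the proof of (c), adapting the window in $\theta$ to each example. I am least sure of the exact normalization needed for (c) ($L_3$), and would calibrate it by checking that the construction degenerates at $r=\infty$ to the known sharpness example of \cite{LL} for the segment $[P_3,P_4]$.
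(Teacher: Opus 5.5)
Your bookkeeping framework is the right one, and your calibrated target for (b) matches what the paper achieves: $\sim2^{2k}$ rectangles, $a_k\sim2^{-k}$, and $|\widetilde E|\sim|\operatorname{supp}f_k|\sim2^{-k}$. However, none of the concrete constructions you propose produces the claimed exponents, so there are genuine gaps.

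In (a), the coefficient $-\frac{3}{2p}$ forces $|\operatorname{supp}f_k|\sim2^{-3k/2}$, i.e.\ a single shell of radial width $\sim2^{-k}$ and height $\sim2^{-k/2}$. For $x\in\widetilde E$, such a shell meets $x-\mathbb T_{t,s}$ only when $t-s$ lies within $O(2^{-k})$ of its radius. So with $s$ frozen at two values, only $O(1)$ of your rectangles $I_m\times[s,s']$ have a nonzero increment. If you instead keep $\sim2^k$ alternating shells, the increments align, but $\|f_k\|_p\sim2^{-k/(2p)}$. You then only obtain the weaker condition $-\frac1{2p}+\frac5{2q}-\frac1r+\frac12\geq0$. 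The missing idea is to move both parameters at once. Take one shell of radius $1-2^{-3}-2^{-5}+2^{-k}$ and the diagonal chain $t_m=1+m2^{-k}$, $s_m=2^{-3}+2^{-5}+(m-1)2^{-k}$. Then $\mathcal Af_k(x,t_m,s_n)=0$ for $m\neq n$, and each $[t_m,t_{m+1}]\times[s_m,s_{m+1}]$ has increment $\mathcal Af_k(x,t_m,s_m)+\mathcal Af_k(x,t_{m+1},s_{m+1})\gtrsim2^{-k/2}$. This gives $2^k$ rectangles and $L_2\geq0$.

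In (b), the whole-torus version cannot work. Circles of radius $s$ centered at $(t,0)$ and $(t+2^{-k},0)$ are within Hausdorff distance $2^{-k}$, so $\mathbb T_{t,s}$ lies entirely in the $2^{-k}$-neighborhood of $\mathbb T_{t+2^{-k},s}$, and similarly for other grid neighbors. Hence the claimed separation is false, and the signs $(-1)^{m+n}$ cancel rather than align.

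Your fallback with the shells of Proposition~\ref{prop-nece-r>2}~(c) fails as well, for two reasons. First, those shells give only $a_k\sim2^{-k/2}$ (window $|\theta-\pi|\lesssim2^{-k/2}$, all $\phi$). Second, $\widetilde E$ cannot be enlarged: for $|\bar x|\gg2^{-k}$, the horizontal radius of $x-\Phi_{t,s}(\theta,\phi)$ varies with $\phi$ by $\sim|\bar x|$, so $x-\mathbb T_{t,s}$ crosses many shells of both signs.

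What (b) and (c) need is a Knapp-type box of size $\varepsilon_02^{-k}\times\varepsilon_02^{-k/2}\times\varepsilon_02^{-k/2}$ on the $x_1$-axis, met by $x-\mathbb T_{t,s}$ near the outer-equator point $x-\Phi_{t,s}(0,0)=(x_1-t-s,x_2,x_3)$. For $|\theta|,|\phi|\leq c2^{-k/2}$, the deviation from this point is $O(\theta^2+\phi^2)$ longitudinally and $O(c2^{-k/2})$ transversally. Thus a parameter set of measure $\sim2^{-k}$ lands in the box, giving $a_k\sim2^{-k}$. The second ingredient is the $x$-dependent choice $t_m(x)=x_1-2+m2^{-k}$. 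It makes $x_1-t_m(x)-s_n$ independent of $x_1$, so $E$ can have length $\varepsilon_0$ in $x_1$ and $|E|\sim2^{-k}$.

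For (b), place $2^{k-4}$ such boxes at spacing $2^{-k}$ with signs $(-1)^j$, and use the grid $s_n=2^{-3}+n2^{-k}$, so that $(t_m(x),s_n)$ selects $j=2^{k-4}-m-n$. For (c), use a single box, so that $\|f_k\|_p\sim2^{-2k/p}$; your $2^k$ plates give the wrong power of $1/p$. Pair it with the anti-diagonal $s_n=2^{-3}+2^{-4}-(n+1)2^{-k}$, so only $m=n$ contributes.

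Your calibration of (c) is also misdirected. $L_3(p,q,\infty)=1-\frac2p+\frac1q>0$ on $\overline{\mathcal Q}$, so $L_3$ has no maximal-function counterpart and does not degenerate to the $[P_3,P_4]$ edge (which is $M_1=0$). Moreover, the relevant box is a plate with horizontal normal, so the example lives in the regime $h\lesssim\lambda^{1/2}$, not $\lambda\leq h\leq\lambda^2$.
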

	\begin{proof}
	        (a) Let $k\gg1$ and define $f_k:=\chi_{E_{1,k}}$, where
        \[
            E_{1,k}:=\{(\bar{x},x_3)\in \mr^3:
            \big||\bar{x}|-(1-2^{-3}-2^{-5}+2^{-k})\big|
            \leq 2^{-k-5}, |x_3|\leq 2^{-2-k/2}\}.
        \]
        For $0\leq m\leq 2^{k-5}$, set
	    \[
	        t_m:=1+m2^{-k},\qquad
	        s_m:=2^{-3}+2^{-5}+(m-1)2^{-k}.
	    \]
	    Then $t_m-s_m=1-2^{-3}-2^{-5}+2^{-k}$.
        If
        \[
            x\in \widetilde{E}:=
            \{(\bar{x},x_3)\in \mr^3:
            |\bar{x}|\leq\varepsilon_0^2 2^{-k},
            |x_3|\leq\varepsilon_0 2^{-k/2}\},
        \]
	    then
	    \[
	    \begin{aligned}
	        \mathcal{A}f_k(x,t_m,s_m)
	        &=\int_{[0,2\pi)^2}
	        \chi_{E_{1,k}}(x-\Phi_{t_m,s_m}(\theta,\phi))
	        \,d\theta\,d\phi \\
	        &\geq C2^{-k/2}.
	    \end{aligned}
	    \]
	    On the other hand, $\mathcal{A}f_k(x,t_m,s_n)=0$ whenever
	    $m\neq n$.

        Indeed, the support calculation in the proof of
        Proposition~\ref{prop-nece-r>2} (c) applies with $h^{-1}=2^{-k}$.
		        Here $t_m-s_n=(t_m-s_m)+(m-n)h^{-1}$. Thus, when
		        $m\neq n$, the value $t_m-s_n$ differs from the radial center of
		        $E_{1,k}$ by at least $h^{-1}$, and the support calculation excludes
		        every off-diagonal pair. When $m=n$, the region
		        $|\theta-\pi|\lesssim h^{-1/2}$ has measure comparable to
		        $h^{-1/2}$.
	        The same
	        calculation also gives $|E_{1,k}|\sim h^{-3/2}$ and
	        $|\widetilde E|\sim h^{-5/2}$.

        Thus, for $0\leq m<2^{k-5}$ and $x\in \widetilde{E}$, we obtain
	        \[
	        \begin{aligned}
	            \big|
	            &\mathcal{A}f_{k}(x,t_m,s_m)
	            -\mathcal{A}f_{k}(x,t_{m+1},s_m) \\
            &\quad -\mathcal{A}f_{k}(x,t_m,s_{m+1})
	            +\mathcal{A}f_{k}(x,t_{m+1},s_{m+1})
	            \big|
	            \geq C2^{-k/2}.
	        \end{aligned}
	        \]
	    Taking partitions containing the points $t_m$ and $s_m$, we obtain
	    \[
	    \begin{aligned}
	        \widetilde{V}_r(\mathcal{A}f_k)(x)
	        &\geq
	        \Big(
	        \sum_{m=0}^{2^{k-5}-1}
	        |\mathcal{A}f_k(x,[t_m,t_{m+1}]\times [s_m,s_{m+1}])|^r
	        \Big)^{1/r}\\
	        &\gtrsim2^{k(1/r-1/2)}.
	    \end{aligned}
	    \]
	    Hence
	    \[
	        \|\widetilde{V}_r(\mathcal{A}f_k)\|_{L^q(\mr^3)}
	        \gtrsim2^{k(1/r-1/2-5/(2q))}.
	    \]
	    Moreover,
	    \[
	        \|f_k\|_{\Lp(\mr^3)}=|E_{1,k}|^{1/p}
	        \lesssim2^{-3k/(2p)}.
	    \]
	    Boundedness therefore yields $L_2(p,q,r)= -\frac{3}{2p}+\frac{5}{2q}-\frac{1}{r}+\frac{1}{2}\geq0$, which proves (a).

			For (b), let $k\gg1$ and define
			$f_k:=\sum_{j=1}^{2^{k-4}}(-1)^j\chi_{E_j}$, where
			\[
			\begin{aligned}
				E_j := \Big\{ x \in \mr^3 :
				&\, |x_1 - \bigl(3 - (1 + 2^{-4} + 2^{-3}) + j2^{-k}\bigr)| \le \varepsilon_0 2^{-k}, \\[4pt]
				&\, |x_2| \le \varepsilon_0 2^{-k/2}, \ |x_3| \le \varepsilon_0 2^{-k/2} \Big\}.
			\end{aligned}
			\]
			Let
			\[
				E:=\{x\in \mr^3: 0\leq x_1-3\leq\varepsilon_0,|x_2|\leq\varepsilon_0 2^{-k/2},|x_3|\leq\varepsilon_0 2^{-k/2}\}.
			\]
			For $x\in E$ and $0\leq m,n\leq N_k$, set
	        \[
	            t_m(x):=x_1-3+1+m2^{-k},\qquad
	            s_n:=2^{-3}+n2^{-k}.
	        \]
	        Set $j_{m,n}:=2^{k-4}-m-n$. Our choice of
	        $\varepsilon_0$ ensures that $t_m(x)\in\mathbb I$,
	        $s_n\in\mathbb I_{-3}$, and
	        $1\leq j_{m,n}\leq2^{k-4}$ for all the indicated indices. Since
	        $2^{-4}=2^{k-4}2^{-k}$, we also have
	        \[
	            x_1-t_m(x)-s_n
	            =3-(1+2^{-4}+2^{-3})+j_{m,n}2^{-k}.
	        \]
			Thus, for $x\in E$, we have
			\[
					\mathcal{A}f_k(x,t_m(x),s_n)=c_{m,n}(x)(-1)^{j_{m,n}}2^{-k},
	            \qquad c_{m,n}(x)\sim1,
			\]
				where the comparison constants in $c_{m,n}(x)\sim1$ are uniform in $x,m,n$.

	            To verify this, recall that $h^{-1}=2^{-k}$.
	            The transverse support conditions imply
	            $
	            \operatorname{dist}(\theta,\pi\mathbb Z)
	            +\operatorname{dist}(\phi,\pi\mathbb Z)
	            \lesssim\varepsilon_0 h^{-1/2}.
	            $
	            The longitudinal support condition excludes every branch on
	            which either variable is close to an odd multiple of $\pi$.
	            By periodicity, we may therefore choose representatives satisfying
	            $|\theta|+|\phi|\lesssim\varepsilon_0 h^{-1/2}$. On this region,
            \[
                0\leq
                (t_m+s_n)-(t_m+s_n\cos\theta)\cos\phi
                \lesssim\theta^2+\phi^2
                \lesssim\varepsilon_0^2 h^{-1}
                \ll\varepsilon_0 h^{-1}.
            \]
	            The preceding identity and the above bound on the
	            longitudinal error show that only $E_{j_{m,n}}$ can contribute,
	            since the box centers are separated by $h^{-1}$.
	            Conversely,  for every $(\theta,\phi)$ in the rectangle
$|\theta|\leq ch^{-1/2}$, $|\phi|\leq ch^{-1/2}$ with a
	            sufficiently small $c>0$, 
the point $x-\Phi_{t_m(x),s_n}(\theta,\phi)$ lies in the matching
box; this rectangle has measure comparable to $h^{-1}$. 
	            This proves the asserted uniform bounds for $c_{m,n}(x)$.

            Increasing either $m$ or $n$ by one decreases $j_{m,n}$ by $1$,
            whereas increasing both decreases it by $2$. Hence, once the signs
            in the rectangular difference are taken into account, all four
            contributions to a given increment have the same sign. The absolute
            value of each rectangular increment is therefore bounded below by
            $ch^{-1}$.
	            Moreover, the boxes are pairwise
	            disjoint, $|E_j|\sim h^{-2}$, and $|E|\sim h^{-1}$.
	            For each fixed $x\in E$, take partitions containing the
	            points $t_m(x)$ and $s_n$. Then
			\[
			\begin{aligned}
				V_r(\mathcal{A}f_k)(x)
				&\geq C\left(\sum_{m=1}^{N_k}\sum_{n=1}^{N_k}
				|\mathcal{A}f_k(x,[t_{m-1}(x),t_m(x)]
				\times[s_{n-1},s_n])|^r\right)^{1/r}\\
				&\gtrsim2^{k(2/r-1)}.
			\end{aligned}
			\]
			Consequently,
			\[
				\|V_r(\mathcal{A}f_k)\|_{L^q(\mr^3)}
				\gtrsim2^{k(2/r-1)}|E|^{1/q}
				\gtrsim2^{k(2/r-1-1/q)}.
			\]
			Moreover,
			\[
				\|f_k\|_{\Lp(\mr^3)}
				\lesssim\Big(\sum_{j=1}^{2^{k-4}}|E_j|\Big)^{1/p}
				\lesssim2^{-k/p}.
			\]
			Boundedness therefore yields $L_5(p,q,r)=-\frac{1}{p}+\frac{1}{q}-\frac{2}{r}+1\geq0$, which proves (b).

			For (c), let $k\gg1$ and define $f_k:=\chi_{E_{1,k}}$, where
			\[
			\begin{aligned}
				E_{1,k} := \Big\{ x \in \mr^3 :
				&\, |x_1 - \bigl(3 - (1 + 2^{-4} + 2^{-3}) + 2^{-k}\bigr)| \le \varepsilon_0 2^{-k}, \\[4pt]
				&\, |x_2| \le \varepsilon_0 2^{-k/2}, \ |x_3| \le \varepsilon_0 2^{-k/2} \Big\}.
			\end{aligned}
			\]
			Let $E$ be as in part (b). For $x\in E$ and
			$0\leq m,n\leq N_k$, set
	        \[
	            t_m(x):=x_1-3+1+m2^{-k},
            \qquad
            s_n:=2^{-3}+2^{-4}-(n+1)2^{-k}.
        \]
	        Our choice of $\varepsilon_0$ ensures that
	        $s_n\in\mathbb I_{-3}$ for $0\leq n\leq N_k$, with
	        $s_{N_k}<\cdots<s_0$.
	        With
	        $h^{-1}=2^{-k}$,
	        \[
	        x_1-t_m(x)-s_n
	        =3-(1+2^{-4}+2^{-3})+h^{-1}+(n-m)h^{-1}.
	        \]
	        Hence only $m=n$ matches the support of $E_{1,k}$. The
	        box-separation argument in part (b), together with the angular
	        rectangle of measure comparable to $h^{-1}$ in the matching case,
	        gives
	        \[
	        \mathcal{A}f_k(x,t_m(x),s_m)\geq c2^{-k},
	        \qquad
	        \mathcal{A}f_k(x,t_m(x),s_n)=0 \quad(m\neq n).
	        \]
	        The same argument gives $|E_{1,k}|\sim h^{-2}$ and
	        $|E|\sim h^{-1}$.
	        Taking partitions containing
	        $t_0(x)<\cdots<t_{N_k}(x)$ and $s_{N_k}<\cdots<s_0$, we obtain
	        \[
	        \begin{aligned}
	        V_r(\mathcal{A}f_k)(x)
	        &\geq C\Big(\sum_{m=1}^{N_k}
	        |\mathcal{A}f_k(x,[t_{m-1}(x),t_m(x)]
	        \times[s_m,s_{m-1}])|^r\Big)^{1/r}\\
	        &\gtrsim2^{k(1/r-1)}.
	        \end{aligned}
	        \]
	        Hence
	        \[
	            \|V_r(\mathcal{A}f_k)\|_{L^q(\mr^3)}
	            \gtrsim2^{k(1/r-1)}|E|^{1/q}
	            \gtrsim2^{k(1/r-1-1/q)}.
	        \]
	        Moreover,
	        \[
	            \|f_k\|_{\Lp(\mr^3)}
	            \lesssim|E_{1,k}|^{1/p}
	            \lesssim2^{-2k/p}.
	        \]
	        Boundedness therefore yields $L_3(p,q,r)=-\frac{2}{p}+\frac{1}{q}-\frac{1}{r}+1\geq0$, which proves (c).
		\end{proof}

\medskip
\noindent\emph{Completion of the necessary parts.}
Propositions~\ref{prop-nece-r>2} and \ref{prop-nec-Lp-Lq} yield
$r\geq2$ and $L_i(p,q,r)\geq0$ for $1\leq i\leq5$.
Moreover, $V_r(\mathcal A)$ dominates $\mathcal M_c$, and
\cite[Theorem~1.2]{LL} shows that $\mathcal M_c$ is unbounded whenever
$(1/p,1/q)\notin\overline{\mathcal Q}\setminus\{P_4\}$. Combining this unboundedness result with
the inequalities above proves the necessary inclusion in
Theorem~\ref{thm-Lp-Lq}.

When $q=p$, the condition $L_5(p,p,r)\geq0$ is equivalent to
$r\geq2$. Under the necessary conditions $r\geq2$ and $p>2$, the
conditions $L_2(p,p,r)\geq0$ and $L_3(p,p,r)\geq0$ are redundant.
Thus the conditions $r\geq2$, $L_1(p,p,r)\geq0$, $L_4(p,p,r)\geq0$, and $p>2$ are equivalent to
$(1/p,1/r)\in\mathfrak Q_1\setminus[Q_1,P_4]$, proving the
necessary part of Theorem~\ref{thm-two-para}.
Finally, Proposition~\ref{prop-nece-r>2} (a) and (b) give
\[
\frac1r\leq\min\left\{\frac12,\frac3p\right\}
\]
for $\mathcal A_1$, while boundedness of the corresponding one-parameter maximal operator requires $p>2$.
These are exactly the necessary conditions in
Theorem~\ref{thm-one-para}.

\section*{Acknowledgments}
S.~Lee and S.~Zhao were supported by the National Research Foundation
of Korea under Grant No.~RS-2024-00342160.
J.~Lee was supported by KIAS Individual Grant No. MG098901.
F.~Zhang was supported by the China Scholarship Council (CSC) and the National Natural Science Foundation of China (Grant No. 12571106).

\end{document}